\definecolor{darkgreen}{rgb}{0,0.45,0} 
  \renewcommand*{\backref}[1]{}
  \renewcommand*{\backrefalt}[4]{({%
      \ifcase #1 Not cited.%
            \or On p.~#2%
            \else On pp.~#2%
      \fi%
    })}
\let\setof\Set
\def\mdef#1#2{\expandafter\expandafter\expandafter\gdef\expandafter\expandafter\noexpand#1\expandafter{\expandafter\ensuremath\expandafter{#2}}}
\newcommand{\cE}{\ensuremath{\mathcal{E}}}
\newcommand{\cM}{\ensuremath{\mathcal{M}}}
\newcommand{\cT}{\ensuremath{\mathcal{T}}}
\newcommand{\bbV}{\ensuremath{\mathbb{V}}}
\DeclareSymbolFont{bbold}{U}{bbold}{m}{n}
\DeclareSymbolFontAlphabet{\mathbbb}{bbold}
\mdef\bbSet{\mathbbb{Set}}
\newcommand{\bA}{\ensuremath{\mathbf{A}}}
\newcommand{\bB}{\ensuremath{\mathbf{B}}}
\newcommand{\bC}{\ensuremath{\mathbf{C}}}
\newcommand{\bS}{\ensuremath{\mathbf{S}}}
\newcommand{\bV}{\ensuremath{\mathbf{V}}}
\newcommand{\Xbar}{\ensuremath{\overline{X}}}
\newcommand{\Ybar}{\ensuremath{\overline{Y}}}
\newcommand{\inv}{^{-1}}
\newcommand{\meet}{\ensuremath{\wedge}}
\newcommand{\join}{\ensuremath{\vee}}
\newcommand{\pullbackcorner}[1][dr]{\save*!/#1-1.2pc/#1:(-1,1)@^{|-}\restore}
\mdef\eqv{\simeq}
\newcommand{\too}[1][]{\ensuremath{\overset{#1}{\longrightarrow}}}
\newcommand{\imp}{\ensuremath{\Rightarrow}}
\mdef\impp{\,\Longrightarrow\,}
\newcommand{\toto}{\ensuremath{\rightrightarrows}}
\newcommand{\into}{\ensuremath{\hookrightarrow}}
\newcommand{\mono}{\rightarrowtail}
\newcommand{\epi}[1][]{\ensuremath{\overset{#1}{\twoheadrightarrow}}}
\newcommand{\maps}{\colon}
\let\xto\xrightarrow
\def\rightarrowtailfill@{\arrowfill@{\Yright\joinrel\relbar}\relbar\rightarrow}
\newcommand\xrightarrowtail[2][]{\ext@arrow 0055{\rightarrowtailfill@}{#1}{#2}}
\let\xmono\xrightarrowtail
\def\twoheadrightarrowfill@{\arrowfill@{\relbar\joinrel\relbar}\relbar\twoheadrightarrow}
\newcommand\xtwoheadrightarrow[2][]{\ext@arrow 0055{\twoheadrightarrowfill@}{#1}{#2}}
\let\xepi\xtwoheadrightarrow
\def\twoheadleftarrowfill@{\arrowfill@\twoheadleftarrow\relbar{\relbar\joinrel\relbar}}
\newcommand\xtwoheadleftarrow[2][]{\ext@arrow 0055{\twoheadleftarrowfill@}{#1}{#2}}
\newtheorem{thm}{Theorem}[section]
\newtheorem{cor}{Corollary}
  \let\c@cor\c@thm
  \numberwithin{cor}{section}
\newtheorem{prop}{Proposition}
  \let\c@prop\c@thm
  \numberwithin{prop}{section}
\newtheorem{lem}{Lemma}
  \let\c@lem\c@thm
  \numberwithin{lem}{section}
\theoremstyle{definition}
\newtheorem{defn}{Definition}
  \let\c@defn\c@thm
  \numberwithin{defn}{section}
\newtheorem{defns}{Definitions}
  \let\c@defns\c@thm
  \numberwithin{defns}{section}
\newtheorem{notn}{Notation}
  \let\c@notn\c@thm
  \numberwithin{notn}{section}
  \let\c@notns\c@thm
  \numberwithin{notns}{section}
\newtheorem{cnv}{Convention}
  \let\c@cnv\c@thm
  \numberwithin{cnv}{section}
\theoremstyle{remark}
\newtheorem{rmk}{Remark}
  \let\c@rmk\c@thm
  \numberwithin{rmk}{section}
  \let\c@rmks\c@thm
  \numberwithin{rmks}{section}
  \let\c@eg\c@thm
  \numberwithin{eg}{section}
  \let\c@egs\c@thm
  \numberwithin{egs}{section}
\def\thmqedhere{\expandafter\csname\csname @currenvir\endcsname @qed\endcsname}
\let\c@equation\c@thm
\numberwithin{equation}{section}
\newlength\oldleftmargini       
\newlength\oldleftmarginii
\newlength\oldleftmarginiii
\newlength\oldleftmarginiv
\newlength\oldleftmarginv
\newlength\oldleftmarginvi
\newif\ifkillspacing
\def\@adjust@enum@labelwidth{%
  \advance\@listdepth by 1\relax
  \ifkillspacing                
    \csname c@\@enumctr\endcsname\maxenum
    \settowidth{\@tempdima}{%
      \csname label\@enumctr\endcsname\hspace{\labelsep}}%
    \csname leftmargin\romannumeral\@listdepth\endcsname
      \@tempdima
  \else                         
    \csname fixspacing\romannumeral\@listdepth\endcsname
  \fi
  \advance\@listdepth by -1\relax}
\def\fixspacingi{\ifnum\oldleftmargini=0\setlength\oldleftmargini\leftmargini\else\setlength\leftmargini\oldleftmargini\fi}
\def\fixspacingii{\ifnum\oldleftmarginii=0\setlength\oldleftmarginii\leftmarginii\else\setlength\leftmarginii\oldleftmarginii\fi}
\def\fixspacingiii{\ifnum\oldleftmarginiii=0\setlength\oldleftmarginiii\leftmarginiii\else\setlength\leftmarginiii\oldleftmarginiii\fi}
\def\fixspacingiv{\ifnum\oldleftmarginiv=0\setlength\oldleftmarginiv\leftmarginiv\else\setlength\leftmarginiv\oldleftmarginiv\fi}
\def\fixspacingv{\ifnum\oldleftmarginv=0\setlength\oldleftmarginv\leftmarginv\else\setlength\leftmarginv\oldleftmarginv\fi}
\def\fixspacingvi{\ifnum\oldleftmarginvi=0\setlength\oldleftmarginvi\leftmarginvi\else\setlength\leftmarginvi\oldleftmarginvi\fi}
\def\pl@label#1#2{%
  \edef\pl@the{\noexpand#1{\@enumctr}}%
  \pl@lab\expandafter{\the\pl@lab\csname yourthe\@enumctr\endcsname}%
  \advance\@tempcnta1
  \pl@loop}
\def\@enumlabel@#1[#2]{%
  \@plmylabeltrue
  \@tempcnta0
  \pl@lab{}%
  \let\pl@the\pl@qmark
  \expandafter\pl@loop\@gobble#2\@@@
  \ifnum\@tempcnta=1\else
    \PackageWarning{paralist}{Incorrect label; no or multiple
      counters.\MessageBreak The label is: \@gobble#2}%
  \fi
  \expandafter\edef\csname label\@enumctr\endcsname{\the\pl@lab}%
  \expandafter\edef\csname the\@enumctr\endcsname{\the\pl@lab}%
  \expandafter\let\csname yourthe\@enumctr\endcsname\pl@the
  #1}
\def\alwaysmath#1{\expandafter\expandafter\expandafter\global\expandafter\expandafter\expandafter\let\expandafter\expandafter\csname your@#1\endcsname\csname #1\endcsname
  \expandafter\def\csname #1\endcsname{\ensuremath{\csname your@#1\endcsname}}}
\newcommand{\ph}{\ensuremath{\varphi}}
\mdef\na{\nabla}
\let\Gm\Gamma
\let\Th\Theta
\let\vth\vartheta
\mdef\ff{\Vdash}
\mdef\fff{\Vvdash}
\mdef\ss{\vDash}
\mdef\pp{\mathrel{\,\vdash\,}}
\mdef\vthtil{\widetilde{\vth}}
\mdef\Set{\mathbf{Set}}
\mdef\ffs{\ff_\bS}
\mdef\ffc{\ff_\bC}
\def\nno{\textsc{nno}}
\def\apg{\textsc{apg}}
\def\apgs{\textsc{apg}s}
\let\Iff\Leftrightarrow
\def\qq#1{$\ulcorner${\sf#1}$\urcorner$}
\mdef\St{\mathcal{S}\mathit{t}}
\mdef\Cat{\mathcal{C}\mathit{at}}
\mdef\iin{\mathrel{\epsilon}}
\mdef\ordiin{{\mathord{\epsilon}}}
\let\yourexists\exists
\def\exists#1.{(\yourexists#1)}
\let\yourforall\forall
\def\forall#1.{(\yourforall#1)}
\let\im\yourexists
\let\coim\yourforall
\newcommand{\mm}[1]{\ensuremath{\llbracket#1\rrbracket}}
\DeclareMathOperator\Sub{Sub}
\mdef\bSh{\mathbf{Sh}}
\mdef\phhat{\widehat{\ph}}
\mdef\psihat{\widehat{\psi}}
\mdef\phtil{\widetilde{\ph}}
\mdef\psitil{\widetilde{\psi}}
\def\toiso{\xto{\smash{\raisebox{-.5mm}{$\scriptstyle\sim$}}}}
\mdef\ordin{\mathord{\in}}
\mdef\cin{\mathrel{\colon\hspace{-1mm}\mathord{\in}}}
\mdef\csub{\mathrel{\colon\hspace{-1mm}\mathord{\subseteq}}}
\newenvironment{blist}{\begin{list}{\labelitemi}{\leftmargin=1.3em\labelwidth=1em}}{\end{list}}
\def\setcurlabel#1{\def\@currentlabel{#1}}
\def\csl{\!\sslash\!}
\mdef\ddo{\Delta_0}
\def\pair(#1,#2){\langle #1,#2\rangle}
\title{Comparing material and structural set theories}
\author{Michael Shulman}
\thanks{The author was supported by a National Science Foundation postdoctoral fellowship while writing the original version of this paper.
  While revising it he was supported by the United States Air Force Research Laboratory under agreement number FA9550-15-1-0053.  The U.S. Government is authorized to reproduce and distribute reprints for Governmental purposes notwithstanding any copyright notation thereon.  The views and conclusions contained herein are those of the author and should not be interpreted as necessarily representing the official policies or endorsements, either expressed or implied, of the United States Air Force Research Laboratory, the U.S. Government, or Carnegie Mellon University.\\
\begin{tabular}{cp{0.8\textwidth}}\raisebox{-10px}{\includegraphics[width=44px]{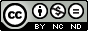}} &
This work is licensed under a \href{https://creativecommons.org/licenses/by-nc-nd/4.0/}{Creative Commons Attribution-NonCommercial-NoDerivatives 4.0 International License.}
\end{tabular}}
\address{University of San Diego, 5998 Alcala Park, San Diego, California, USA}
\begin{document}

\begin{abstract}
  We study elementary theories of well-pointed toposes and pretoposes, regarded as category-theoretic or ``structural'' set theories in the spirit of Lawvere's ``Elementary Theory of the Category of Sets''.
  We consider weak intuitionistic and predicative theories of pretoposes, and we also propose category-theoretic versions of stronger axioms such as unbounded separation, replacement, and collection.
  Finally, we compare all of these theories formally to traditional membership-based or ``material'' set theories, using a version of the classical construction based on internal well-founded relations.
\end{abstract}

\maketitle

\setcounter{tocdepth}{1}
\tableofcontents

\section{Introduction}
\label{sec:introduction}

\subsection{Overview}
\label{sec:overview}

It is well-known that elementary topos theory is equiconsistent with a weak form of set theory.
The most common statement of this relates the theory of a well-pointed topos with a natural numbers object and satisfying the axiom of choice (a.k.a.\ ETCS; see~\cite{lawvere:etcs}) to ``bounded Zermelo'' set theory (BZC).
This is originally due to Cole~\cite{cole:cat-sets} and Mitchell~\cite{mitchell:topoi-sets}; see also Osius~\cite{osius:cat-setth} and for a more recent exposition~\cite[Ch.~VI]{mm:shv-gl}.
Moreover, the proof is very direct: the category of sets in any model of BZC is a model of ETCS, while from any model of ETCS one can construct a model of BZC out of well-founded relations.\footnote{These ``constructions'' can also be expressed as logical translations from one first-order language into another, thereby obtaining an equiconsistency result that does not depend on any ability to ``construct'' new models in the metatheory. See \autoref{rmk:translation}.}

Our goal is to obtain analogous results for stronger and weaker theories in a unified framework.
On the one hand, by removing axioms such as choice, classical logic, and power sets from BZC, we obtain intuitionistic and predicative set theories which correspond to more general toposes and pretoposes.
On the other hand, we propose category-theoretic properties which correspond to set-theoretic axioms such as separation, replacement, and collection.
In particular, we characterize toposes that are the category of sets in a model of ZFC, as did Osius, Lawvere, and also McLarty~\cite{mclarty:catstruct}; but our result generalizes to the intuitionistic case.

In this paper we deal entirely with (pre)toposes which correspond \emph{directly} to set theories, in such a way that the set-theoretic elements of a set $X$ are in bijection with its category-theoretic ``global elements'' $1\to X$.  In particular, any object of such a category must be determined by its global elements, which is to say that the terminal object $1$ is a generator; that is, the category is \emph{well-pointed}.  (We will work with a constructive version of well-pointedness appropriate to intuitionistic logic.)

Of course, there is a further aspect to the classical story: if we drop well-pointedness, we can interpret BZC in any Boolean topos with an \nno\ and choice by using the \emph{internal logic}.
In~\cite{shulman:stacksem}, we will extend the usual internal logic to an interpretation we call the \emph{stack semantics} that contains unbounded quantifiers, enabling all the results of this paper to be carried out ``internally'' in non-well-pointed categories.
Other related ways to deal with unbounded quantifiers include~\cite{jm:ast,abss:long-version}.

\subsection{Some remarks on terminology}
\label{sec:terminology}

While ``set theory'' often refers only to membership-based theories such as ZFC and BZC, clearly ETCS and its ilk are also ``theories about sets'', and Lawvere has observed that ETCS can serve just as well as BZC as a foundation for much mathematics.
Pursuant to this philosophy, we will use the term \emph{material set theory} for theories based on a global membership predicate, such as BZC and ZFC, and \emph{structural set theory} for theories which take functions as fundamental, such as ETCS.

In material set theories, the elements of a set $X$ have an independent identity, apart from being collected together as the elements of $X$.
Frequently, they are also sets themselves.
The adjective ``material'' is a suggestion of Steve Awodey~\cite{awodey:struct-mathlog}; these are also called ``membership-based'' set theories.

In structural set theories, the elements of a set $X$ have no identity independent of $X$, and in particular are not sets themselves; they are merely abstract ``elements'' with which we build mathematical structures.
Moreover, at least in the structural set theories we describe here, it is not meaningful to ask whether two elements of two different sets are equal; only whether two elements of a given ambient set are equal.
Structural set theories are sometimes called ``categorical''\footnote{To avoid conflict with other meanings of ``categorical'', some people use ``categorial'' instead.} set theories; we call them ``structural'' because they are closely aligned with the mathematical philosophy of ``structuralism'' (see e.g.~\cite{mclarty:numbers,awodey:struct}), and because although they are usually formulated in the language of category theory, there is nothing intrinsically necessary about that (see e.g.~\cite{nlab:sear}).

As foundations for mathematics, material and structural set theory each have advantages and niches.
Material set theory is well-adapted to the study of transfinite recursion and related subjects in pure set theory.
However, structural set theory is arguably closer to mathematical practice outside of set theory, where the intricate $\in$-structure of material set theory is irrelevant and objects need only be defined up to isomorphism.\footnote{Indeed, Bourbaki, who enshrined a structuralist philosophy in their ``general theory of structure''~\cite{bourbaki:sets}, had to carefully specify exactly how the superfluous data of material set theory was to be forgotten, in order that all of their structures would be invariant under isomorphism.}
Structural set theory is also more closely related to type theory, and might be considered a halfway-point between a material set-theoretic foundation and a type-theoretic one.
The value of equivalences such as those we study here, then, is that they enable us to use whichever foundation is most appropriate for a given subject, with a canonical way to pass back and forth as necessary.

\subsection{Summary of contents}
\label{sec:organization-paper}

We begin in \S\ref{sec:sets} by listing some common axioms of material set theory, most of which should be familiar.
Then in \S\S\ref{sec:struct-set}--\ref{sec:strong-ax} we develop general aspects of structural set theory.
Little of this appears in the literature; the only papers we are aware of specifically about structural set theory are Lawvere's original~\cite{lawvere:etcs} about ETCS, and the more recent~\cite{palmgren:cetcs} which presents a predicative version thereof (``CETCS''), plus expository works such as~\cite{lr:sets-for-math,leinster:etcs}.
Our treatment is also a little idiosyncratic, due to a desire to remain isomorphism-invariant. 

We begin in \S\ref{sec:struct-set} by setting out the formal language we will use for category theory and recalling the basic notion of a \emph{Heyting pretopos}, which underlies all our structural set theories.
In \S\ref{sec:constr-well-point} we add the further axiom of \emph{well-pointedness}, which makes a Heyting pretopos into a structural set theory.
Since well-pointedness has mostly been studied in a classical setting, we spend some time on the appropriate version thereof for a constructive theory, and its consequences.
Then in \S\ref{sec:sst-as-setth} we rephrase the notion of ``constructively well-pointed Heyting pretopos'' in a way which looks more like a set theory.

In \S\ref{sec:more-sst} we consider three further fairly straightforward axioms---full separation, fullness, and full induction---while in \S\ref{sec:strong-ax} we study collection and replacement \mbox{axioms}, which are rather more subtle.
Replacement-type axioms for ETCS have been proposed by Lawvere~\cite{lawvere:etcs}, Cole~\cite{cole:cat-sets}, Osius~\cite{osius:cat-setth}, and McLarty~\cite{mclarty:catstruct}, but none of these is really suitable intuitionistically.
We begin with collection, which relaxes the hypothesis of replacement by not requiring uniqueness.
This is well-known to be important in weaker set theories, and its structural version is also easier to state.
The structural replacement axiom is surprisingly subtle, since \emph{uniqueness} is asserted only up to unique canonical isomorphism; we give a precise formulation of this and show that it follows from collection.

As we introduce each axiom of structural set theory, we prove that if \bV\ is a model of material set theory satisfying the corresponding material axiom, then the structural axiom is satisfied by its category of sets $\bbSet(\bV)$.
Not until \S\ref{sec:constr-mat} do we turn to the reverse construction, originally due to Cole, Mitchell, and Osius, which produces a model $\bbV(\Set)$ of material set theory starting from a suitable pretopos \Set.
As expected, we then show that if \Set\ satisfies any given structural axiom, $\bbV(\Set)$ satisfies the corresponding material axiom.
In particular, we conclude that the following pairs of theories are equiconsistent:
\begin{blist}
\item Lawvere's ETCS and BZC, as in the classical theorems.
\item ETCS plus our axioms of collection or replacement (which are equivalent over ETCS and imply separation), and classical ZFC.
\item ``Intuitionistic ETCS'' (the theory of a constructively well-pointed topos with a \nno), and ``Intuitionistic BZ'' (containing power sets and infinity).
\item Intuitionistic ETCS plus our axioms of separation and collection, and IZF.
\item The theory of a constructively well-pointed $\Pi$-pretopos with a \nno\ satisfying our axioms of fullness, collection, and extensional well-founded induction, and Aczel's CZF augmented by Mostowski's principle.
\end{blist}

Finally, in \S\ref{sec:conclusion} we consider the extent to which $\bbV(-)$ and $\bbSet(-)$ are inverses.
This depends on foundation-type axioms in material set theory, and on a dual class of axioms for structural set theory that we call ``materialization''.



\subsection{Conventions and Notation}
\label{sec:conventions-notation}

We treat both material and structural set theories as theories in
intuitionistic first-order logic (with a twist in the structural case; see \S\ref{sec:struct-set}).  We regard classical logic as an
axiom schema, $\ph\vee\neg\ph$ for all (or some) formulas $\ph$, which may or
may not be assumed in a theory or satisfied by a model.



All axioms and formulas are of course properly expressed in a formal
first-order language.  However, to make the paper easier to read for
non-logicians, we will usually write such formulas out in English, trusting the
reader to translate them into formal symbols as necessary.  To make it
clear when our mathematical English is to be interpreted as code for a
logical formula, we will often use sans serif font and corner quotes.
Thus, for example, in a structural set theory, \qq{every surjection
  has a section} represents the axiom of choice.

\subsection{Acknowledgements}
\label{sec:acknowledgements}

I would like to thank Steve Awodey, Toby Bartels, \mbox{Peter} Lumsdaine, Colin McLarty, Thomas Streicher, Paul Taylor, and the patrons of the $n$-Category Caf\'e for many helpful conversations and useful comments on early drafts of this paper.
The referee of the preliminary version also gave many helpful suggestions.


\section{Material set theories}
\label{sec:sets}

Our \emph{material set theories} will be theories in one-sorted first
order logic, with a single binary relation symbol \ordin, whose
variables are called \emph{sets}.  (We will briefly mention the
possibility of ``atoms'' which are not sets at the end of this
section.)

\begin{notn}\label{notn:contexts}
  There are two common conventions regarding the treatment of free
  variables in logical formulas.  According to one convention, we
  should write $\ph(x_1,\dots,x_n)$ to indicate that all the free
  variables of \ph\ are among $x_1,\dots,x_n$, and then write
  $\ph(a_1,\dots,a_n)$ for the result of substituting each $a_i$ for
  the corresponding $x_i$.  According to the other convention, in the
  first case we write $\ph$ without notating the free
  variables, and then $\ph[a_1/x_1, \dots, a_n/x_n]$ for the result of
  substitution.

  The former convention has the advantage of an easy and intuitive
  notation for substitution, plus explicit indication of the free
  variables.  The latter convention is cleaner if there are many free
  variables but only a few are being substituted for.

  We choose a convention that borrows something from both standard ones.  By a \textbf{context} we
  mean a finite list $\Gamma = (x_1, \dots, x_n)$ of distinct
  variables.  (This definition will change slightly in the next
  section, where we use a typed logic.)  A formula \ph, written with
  no variables indicated, is said to be \textbf{in context \Gamma} if
  all of its free variables are found in \Gamma.  However, we say a
  formula written as $\ph(x,y)$ (for instance) is \textbf{in context
    \Gamma} if all of its free variables \emph{other} than $x$ and $y$
  are found in \Gamma\ (and $x$ and $y$ do not appear in \Gamma).
  This convention allows us to carry around many free variables with
  unchanging values without needing to notate them constantly, yet
  retains a convenient notation indicating those free variables whose
  values do change.

  In particular, this convention simplifies the statements of many of
  the axioms below.  In all axiom schemas which are parameterized over
  a formula \ph, $\ph(x)$, or $\ph(x,y)$, we assume this formula to
  exist in an ambient context \Gamma, whose variables are implicitly
  universally quantified outside each instance of the schema.
\end{notn}

\begin{defn}
  In material set theory, a \textbf{\ddo-quantifier} is one of the form
  \begin{blist}
  \item $\exists x. (x\in a \wedge \cdots)$, abbreviated $\exists x\in
    a.$; or
  \item $\forall x. (x\in a \imp \cdots)$, abbreviated $\forall x\in
    a.$.
  \end{blist}
  A \textbf{\ddo-formula} (also called \textbf{restricted} or
  \textbf{bounded}) is one with only \ddo-quantifiers.
\end{defn}

The prefix ``\ddo-'' on any axiom schema indicates that the formulas
appearing in it are restricted to be \ddo; we sometimes use the
adjective ``full'' to indicate the lack of any such restriction.

\begin{defn}\label{defn:rels}\ 
  \begin{blist}
  \item We write $\setof{ x | \ph(x)}$ for a set $a$ such that $x\in a
    \Iff \ph(x)$, if such a set exists.
  \item A \textbf{relation} from $a$ to $b$ is a set $r$ of ordered
    pairs $(x,y)$ such that $x\in a$ and $y\in b$, where $(x,y) =
    \{\{x\},\{x,y\}\}$.\footnote{It is straightforward to prove that
      this Kuratowski definition of ordered pairs still has the
      correct behavior intuitionistically; see for
      instance~\cite{aczel:cst}.}
  \item A relation $r$ from $a$ to $b$ is \textbf{entire} (or a
    \emph{many-valued function}) if for all $x\in a$ there exists
    $y\in b$ with $(x,y)\in r$.  If each such $y$ is unique, $r$ is a
    \textbf{function}.
  \item A relation $r$ is \textbf{bi-entire} if both $r$ and $r^o =
    \setof{(y,x)|(x,y)\in r}$ are entire.
  \end{blist}
\end{defn}

The following set of axioms (in intuitionistic logic) we call the
\textbf{core axioms}; they form a subsystem of basically all material
set theories.
\begin{blist}
\item \emph{Extensionality}: \qq{for any $x$ and $y$, $x=y$ if and
    only if for all $z$, $z\in x$ iff $z\in y$}.
\item \emph{Empty set:} \qq{the set $\emptyset = \{\}$ exists}.
\item \emph{Pairing:} \qq{for any $x$ and $y$, the set $\{x,y\}$
    exists}.
\item \emph{Union:} \qq{for any $x$, the set $\setof{z | \exists y\in
      x. (z\in y)}$ exists}.
\item \emph{\ddo-Separation:} For any \ddo-formula $\ph(x)$, \qq{for
    any $a$, $\setof{x\in a | \ph(x)}$ exists}.
\item \emph{Limited \ddo-replacement}: For any \ddo-formula
  $\ph(x,y)$, \qq{for all $a$ and $b$, if for every $x\in a$ there
    exists a unique $y$ with $\ph(x,y)$, and moreover $z\subseteq b$
    for all $z\in y$, then the set $\setof{y | \exists x\in
      a.\ph(x,y)}$ exists}.
\end{blist}

The first five of these are standard, while the last, though very
weak, is unusual.  We include it because we want our core axioms to
imply a well-behaved category of sets, for which purpose the other
five are not quite enough.  This axiom is satisfied in basically all
material set theories, since it is implied both by ordinary
replacement (even \ddo-replacement), and by the existence of power
sets (using \ddo-separation).



We now list some additional common and important axioms of material
set theory, pointing out the most important implications between them.


One major axis along which set theories vary is ``impredicativity,''
expressed by the following sequence of axioms (of increasing
strength):
\begin{blist}
\item \emph{Exponentiation:} \qq{for any $a$ and $b$, the set $b^a$ of
    all functions from $a$ to $b$ exists}.
\item \emph{Fullness:} \qq{for any $a,b$ there exists a set $m$ of
    entire relations from $a$ to $b$ such that for any entire
    relation $f$ from $a$ to $b$, there is a $g\in m$ with
    $g\subseteq f$}.
\item \emph{Power set:} \qq{for any set $x$, the set $P x = \setof{y |
      y \subseteq x}$ exists}.
\end{blist}
Each of these is strictly stronger than the preceding one, but in the
presence of either of the following:
\begin{blist}
\item \emph{\ddo-classical logic:} for any \ddo-formula \ph, we have
  $\ph\join\neg\ph$.
\item \emph{Full classical logic:} for any formula \ph, we have
  $\ph\join\neg\ph$.
\end{blist}
the hierarchy of predicativity collapses, since power sets can then be
obtained (using limited \ddo-replacement) from sets of functions into a
2-element set.  Hence the combination of classical logic with exponentiation is also impredicative.  Also
considered impredicative is:
\begin{blist}
\item \emph{(Full) Separation:} For any formula $\ph(x)$, \qq{for any
    $a$, $\setof{x\in a | \ph(x)}$ exists}.
\end{blist}
Full separation and \ddo-classical logic together imply full classical
logic, since we can form $a = \setof{\emptyset|\ph}\subseteq
\setof{\emptyset}$, and then $(\emptyset\in a)\join (\emptyset\not\in
a)$ is an instance of \ddo-classical logic that implies
$\ph\join\neg\ph$.
A similar argument shows that \ddo-classical logic is equivalent to the assertion that every subset has a complement.


We also have the collection and replacement axioms.
Our collection axiom is often called ``Strong Collection''.
\begin{blist}
\item \emph{Collection:} for any formula $\ph(x,y)$, \qq{for any $a$,
    if for all $x\in a$ there is a $y$ with $\ph(x,y)$, then there is
    a $b$ such that for all $x\in a$, there is a $y\in b$ with
    $\ph(x,y)$, and for all $y\in b$, there is a $x\in a$ with
    $\ph(x,y)$}.
\item \emph{Replacement:} for any formula $\ph(x,y)$, \qq{for any $a$,
    if for every $x\in a$ there exists a unique $y$ with $\ph(x,y)$,
    then the set $\setof{y | \exists x\in a.\ph(x,y)}$ exists}.
\end{blist}
With an evident extension of \autoref{defn:rels} to formulas, we can state
the collection axiom as ``if $\ph$ is entire on $a$, then there exists
$b$ such that $\ph$ is bi-entire from $a$ to $b$.''  Collection also
evidently implies replacement.  Classical ZF set theory is often
defined using replacement rather than collection, but in the presence
of the other axioms of ZF, collection can be proven.  (Replacement and
full classical logic together also imply full separation.)  This is no
longer true in intuitionistic logic, though, or in classical logic
without the axiom of foundation, and in practice the extra strength of
collection is sometimes important.  There are also other versions of
collection, all of which are equivalent in the presence of full
separation, but the one we have chosen seems most appropriate in its
absence.


We consider only the version of the axiom of infinity which asserts
the existence of the set $\omega$ of finite von Neumann ordinals.
\begin{blist}
\item \emph{(von Neumann) Infinity:} \qq{there exists a set $\omega$
    such that $\emptyset\in\omega$, if $x\in \omega$ then $x\cup
    \{x\}\in \omega$, and if $z$ is any other set such that $\emptyset
    \in z$ and if $x\in z$ then $x\cup \{x\}\in z$, then
    $\omega\subseteq z$}.
\end{blist}
We can omit the final clause in this axiom in the presence of power sets or full separation, since it can be recovered by replacing $\omega$ with its smallest subset containing $\emptyset$ and closed under successor $x\mapsto x\cup \{x\}$.
But otherwise we need to include this clause explicitly, in order to prove that
mathematical induction holds for \ddo-formulas.  For a stronger
version of induction, we need a separate axiom.
\begin{blist}
\item \emph{(Full) Induction:} for any formula $\ph(x)$, \qq{if $\ph(\emptyset)$
    and $\ph(x) \imp \ph(x\cup \{x\})$ for any $x\in \omega$, then
    $\ph(x)$ for all $x\in \omega$}.
\end{blist}
Of course, infinity plus full separation implies full induction.

There are also various axioms of choice; we will consider only the following two.
\begin{blist}
\item \emph{Choice:} \qq{for any set $a$, if for all $x\in a$ there
    exists a $y\in x$, then there exists a function $f$ from $a$ to
    $\bigcup a$ such that $f(x)\in x$ for all $x\in a$}.
\item \emph{Presentation:} \qq{For every set $a$, there exists a
    surjection $b\to a$, where $b$ has the property that any
    surjection $c\to b$ has a section.}
\end{blist}
By Diaconescu's argument~\cite{diaconescu:ac}, choice and power set
together imply \ddo-classical logic.  A set $b$ as in the presentation
axiom is called \emph{projective}; thus the presentation axiom is also
called ``EPSets'' (Enough Projective Sets) or ``COSHEP'' (the Category
Of Sets Has Enough Projectives).  It implies countable dependent
choice.



Finally, we have the family of ``foundation-type'' axioms.  Recall
that a set $x$ is called \emph{transitive} if $z\in y\in x$ implies
$z\in x$.
A set $X$ with a binary relation $\prec$ is called \emph{well-founded}
if whenever $Y\subseteq X$ is \emph{inductive}, in the sense that
$\forall y\prec x. (y\in Y)$ implies $x\in Y$, then in fact $Y=X$.  It
is \emph{extensional} if $\forall z\in X.(z\prec x\Iff z\prec y)$
implies $x=y$.
\begin{blist}
\item \emph{Set-induction:} for any formula $\ph(x)$, \qq{if for any
    set $y$, $\ph(x)$ for all $x\in y$ implies $\ph(y)$, then $\ph(x)$
    for any set $x$}.
\item \emph{Foundation}, a.k.a.\ \emph{\ddo-set-induction:} like
  set-induction, but only for \ddo-formulas.
\item \emph{Transitive closures:} \qq{every set is a subset of a
    smallest transitive set}.
\item \emph{Mostowski's principle:} \qq{every well-founded extensional
    relation is isomorphic to a transitive set equipped with the
    relation $\in$}.
\end{blist}
The usual ``classical'' formulation of foundation is
\begin{blist}
\item \emph{Regularity:} \qq{if $x$ is nonempty, there is some $y\in
    x$ such that $x\cap y = \emptyset$}.
\end{blist}
In the presence of transitive closures, regularity is equivalent to
the conjunction of \ddo-set-induction and \ddo-classical logic.  Thus,
in an intuitionistic theory, \ddo-set-induction is preferable.  Of
course, \ddo-set-induction follows from full set-induction, while the
converse is true if we have full separation.  Set-induction also
implies full ordinary induction, since $<$ and $\in$ are the same for von
Neumann ordinals.

Transitive closures can be constructed either from set-induction and
replacement (see~\cite{aczel:cst}), or from infinity, ordinary (full)
induction, and replacement.  Mostowski's original proof shows that his
principle follows from replacement and full separation.  Transitive
closures and Mostowski's principle are not usually considered
``foundation'' axioms, but we group them thusly for several reasons:
\begin{blist}
\item The conjunction of transitive closures, Mostowski's principle,
  and foundation is equivalent to the single statement ``to give a
  \emph{set} is the same as to give a well-founded extensional
  relation with a specified element.''  One could argue that this sums
  up the objects of study of well-founded material set theory.
\item As we will see in \S\ref{sec:constr-mat}, it follows that the (well-founded)
  material set theories that can be constructed from structural ones
  are precisely those which satisfy these three axioms.
\item Finally, the various axioms of \emph{anti-foundation} are also
  naturally stated as ``to give a set is the same as to give a
  \emph{(blank)} relation with a specified element,'' where
  \emph{(blank)} denotes some stronger type of extensionality (there
  are several choices; see~\cite{aczel:afa}).  Thus, these axioms
  naturally include analogues not just of foundation, but also of
  transitive closures and Mostowski's principle.
\end{blist}
Similar groupings can be found in, for
instance,~\cite{abss:long-version} and~\cite{mathias:str-maclane}.

Some particular material set theories with names include the
following.  Note that in this context, the prefix \emph{constructive}
is generally reserved for ``predicative'' theories (including,
potentially, exponentiation and fullness, but never power sets or full
separation) while \emph{intuitionistic} refers to ``impredicative''
(but still non-classical) theories which may have power sets or full
separation.  None of these theories include our axiom of limited
\ddo-replacement explicitly, but all include either power sets or a
stronger version of replacement, both of which imply it.
\begin{blist}
\item The standard \textbf{Zermelo-Fraenkel set theory with Choice
    (ZFC)} includes \emph{all} the axioms mentioned above, although it
  suffices to assume the core axioms together with full classical
  logic, power sets, replacement, infinity, foundation, and choice.
\item By \textbf{Zermelo set theory (Z)} one usually means the core
  axioms together with full classical logic, power sets, full
  separation, and infinity.
\item The theory \textbf{ZBQC} of~\cite{maclane:formandfunction},
  called \textbf{RZC} in~\cite[VI.10]{mm:shv-gl} and \textbf{BZC} elsewhere, consists of the core
  axioms together with full classical logic, power sets, infinity,
  choice, and regularity.
\item By \textbf{Intuitionistic BZ} we will mean the core axioms together with power sets, infinity, and foundation.
\item The \textbf{Mostowski set theory (MOST)}
  of~\cite{mathias:str-maclane} consists of \textbf{ZBQC} together
  with transitive closures and Mostowski's principle.
\item By \textbf{Kripke-Platek set theory (KP)} is usually meant the
  core axioms together with full classical logic, a form of
  \ddo-collection (sufficient to imply our limited \ddo-replacement),
  and set-induction for some class of formulas.
\item The theory \textbf{ZFC$-$} of~\cite{ghj:zfcminus} consists of the
  core axioms together with full classical logic, infinity,
  foundation, choice, and replacement (the ``sufficient axioms''
  listed above for ZFC, with power sets removed), while their
  \textbf{ZFC$^-$} has collection instead of replacement.  (Actually,
  they use a version of choice that is stronger than ours in the
  absence of power sets, but is unsuitable intuitionistically.)
\item Aczel's \textbf{Constructive Zermelo-Fraenkel (CZF)} consists of
  the core axioms together with fullness, collection, infinity, and
  set-induction; see~\cite{aczel:cst}.  His \textbf{CZF$_0$} consists
  of the core axioms, replacement, infinity, and induction.
\item The usual meaning of \textbf{Intuitionistic Zermelo-Fraenkel
    (IZF)} is the core axioms together with power sets, full
  separation, collection, infinity, and set-induction.  This is the
  strongest theory that can be built from the above axioms without any
  form of classical logic or choice.
\item The \textbf{Rudimentary Set Theory (RST)}
  of~\cite{vdbm:pred-i-exact} consists of the core axioms together
  with collection and set-induction.
\end{blist}

So far we have considered only theories of pure sets, i.e.\ in which
everything is a set.  If we want to also allow ``atoms'' or
``urelements'' which are not sets, we can modify any of the above
theories by adding a predicate \qq{$x$ is a set}, whose negation is
read as \qq{$x$ is an atom}.  We must then add the following axiom:
\begin{blist}
\item \emph{Only sets have elements:} \qq{if $x\in y$, then $y$ is a set}.
\end{blist}
and modify the extensionality axiom so that it applies only to sets:
\begin{blist}
\item \emph{Extensionality for sets:} \qq{for any sets $x$ and $y$,
    $x=y$ if and only if for all $z$, $z\in x$ iff $z\in y$}.
\end{blist}
We may choose whether or not to add the following axioms:
\begin{blist}
\item \emph{Decidability of sethood:} \qq{every $x$ is either a set or
    an atom}.
\item \emph{Set of atoms:} \qq{the set $\setof{x | x \text{ is an
        atom}}$ exists}.
\end{blist}
as well as whether to allow the predicate \qq{$x$ is a set} to appear
in the formulas that parameterize the axiom schemas.  For instance:
\begin{blist}
\item The theory \textbf{IZFA} of~\cite{jm:ast} consists of IZF, as
  above, with atoms and decidable sethood, and does allow sethood in
  axiom schemas.
\item If \textbf{ZFA} has a standard meaning, it is probably
  IZFA plus full classical logic.
\item The \textbf{Basic Constructive Set Theory (BCST)}
  of~\cite{aw:pred-ast,afw:ast-ideals} consists of the core axioms
  together with replacement, with atoms.  Their \textbf{Constructive
    Set Theory (CST)} adds exponentials to this, and the \textbf{Basic
    Intuitionistic Set Theory (BIST)}
  of~\cite{af:ast-classes,afw:ast-ideals,abss:bulletin-announcement,abss:long-version}
  adds power sets and a non-von-Neumann version of infinity.  These
  theories do not assume decidability of sethood or a set of atoms,
  and do not allow sethood in axiom schemas.
\item The theory \textbf{BIZFA} of~\cite{abss:long-version} consists
  of BIST, as above, together with decidable sethood, foundation,
  transitive closures, and Mostowski's principle (suitably modified
  for the existence of atoms), and does allow sethood in axiom
  schemas.
\end{blist}

\section{Heyting pretoposes}
\label{sec:struct-set}

We now move on to structural set theories, beginning with the underlying logic.
The theory of a category can, of course, be formulated in ordinary first-order logic, but this is unsatisfactory to us because it allows us to discuss equality of objects.
%
However, we cannot simply remove the equality predicate on objects from the ordinary first-order theory, because sometimes we \emph{do} need to know that two objects are the same.
For example, to compose two arrows $f$ and $g$, we need to know that the source of $g$ is the same as the target of $f$.
The solution (c.f.~\cite{freyd:invar-eqv,blanc:eqv-log,makkai:folds,makkai:comparing}) is to use a language of \emph{dependent types}.
We give here a somewhat informal description in an attempt to avoid the complicated precise syntax of dependent type theory, but the reader familiar with DTT should easily be able to expand our description into a formal one.

\begin{defn}\label{defn:lang-cat}
  The \textbf{language of categories} consists of the following.
  \killspacingtrue
  \begin{enumerate}
  \item There is a type of \emph{objects}.
    By an \emph{object-variable} we mean a variable that has been assigned to the type of objects.
  \item Each object-variable is, in particular, an \emph{object-term}.
  \item For object-terms $X$ and $Y$, there is a type $X\to Y$ of \emph{arrows from $X$ to $Y$}.
    By an \emph{arrow-variable} we mean a variable that has been assigned to some type of arrows (which entails specifying the object-terms $X$ and $Y$).
  \item Every arrow-variable is, in particular, an \emph{arrow-term}.
    Like arrow-variables, every arrow-term is assigned to a particular type of arrows.
  \item For each object-term $X$, there is an arrow-term $1_X\colon X\to X$.
  \item For any object-terms $X$, $Y$, and $Z$ and each pair of arrow-terms $f\maps X\to Y$ and $g\maps Y\to Z$, there is an arrow-term $(g\circ f)\maps X\to Z$.
  \item For any arrow-terms $f,g\colon X\to Y$, there is an atomic formula $(f=g)$.
  \end{enumerate}
  \killspacingfalse
  Non-atomic formulas are built up from atomic ones in the usual way, using connectives $\top$, $\bot$, $\meet$, $\join$, $\imp$, $\neg$, and quantifiers $\im$ and $\coim$.
  There are of course two types of quantifiers, for object-variables and for arrow-variables; the former is restricted in that we may not quantify over an object-variable if it appears in the type of any \emph{free} arrow-variable.
\end{defn}

Of course, this theory uses very little of the full machinery of DTT, and in particular it may be said to live in the ``first-order'' fragment of DTT.
In this way it is closely related to the logic FOLDS studied in~\cite{makkai:folds}, although we are allowing some term constructors in addition to relations.

\begin{notn}\label{notn:struct-context}
  We will use the same notational convention for free variables as in \autoref{notn:contexts}, but adapted appropriately for dependent type theory.
  In particular, by a \textbf{context} in this language, we mean a finite (ordered) list \Gm\ of typed variables, with the property that whenever an arrow-variable $f\colon X\to Y$ occurs in \Gm, each of $X$ and $Y$ is an object-variable occurring in \Gm\ prior to $f$.
  If $\Theta$ is a further list of typed variables which is an \textbf{extension} of \Gm, in the sense that the concatenation $(\Gamma,\Theta)$ is a context, then we say that a formula written $\ph(\Theta)$ is \textbf{in context \Gm} if all its free variables are in $(\Gamma,\Theta)$.

  Thus, for instance, $\Gm=(X,Y,f\colon X\to Y)$ is a context, and if $g\colon X\to Y$ is a further arrow-variable, then a formula whose free variables are among $X,Y,f,g$ may be denoted $\ph(g)$ in context \Gm.
\end{notn}

\begin{defn}
  Let \bA\ be a category, \Gm\ a context, and let $\rho$ be a well-typed assignment of the variables in \Gm\ to objects and morphisms of \bA.
  We denote this by $\rho\colon \Gm \to\bA$.
  Then the \textbf{satisfaction} relation $\bA \ss_\rho \ph$ is defined as usual, by induction on the structure of \ph.
  If $\rho$ is obvious from context, we omit it from the notation.
\end{defn}

The main thing to note about this language is that, as in FOLDS, the only atomic formulas are equalities between parallel arrows; the language does not allow us to even discuss whether two objects are equal.
(However, we do still assume the usual rules of equality for equalities of arrows, such as substitution, reflexivity, symmetry, and transitivity.)
This implies that satisfaction is isomorphism- and equivalence-invariant, in the following sense.

First, suppose $\rho,\rho'\colon\Gm\to\bA$.
By definition, an \textbf{isomorphism} $\rho\cong \rho'$ consists of an isomorphism $\rho(X) \cong \rho'(X)$ for each object-variable $X\in\Gm$, such that for every arrow-variable $f\colon X\to Y$ in \Gm, the following square commutes:
\[\xymatrix@-.5pc{ \rho(X) \ar[r]^{\rho(f)} \ar[d]_{\cong} & \rho(Y)
  \ar[d]^{\cong}\\
  \rho'(X) \ar[r]_{\rho'(f)} & \rho'(Y). } \]

\begin{lem}[Isomorphism-invariance of truth]\label{thm:isoinvar-truth}
  If $\rho\cong\rho'$ as above, and \ph\ is a formula in context \Gm, then $\bA\ss_\rho \ph$ if and only if $\bA\ss_{\rho'}\ph$.\qed
\end{lem}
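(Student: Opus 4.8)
The plan is to prove the statement by induction on the structure of \ph. Because the subformulas of \ph\ live in \emph{larger} contexts than \Gm\ (for instance a quantifier $\im X.\psi$ in context \Gm\ has $\psi$ in context $(\Gm,X)$), I would first strengthen the induction hypothesis to range over all contexts at once: for every context $\Delta$, every pair of assignments $\sigma,\sigma'\colon\Delta\to\bA$ equipped with an isomorphism $\sigma\iso\sigma'$ (a family of isomorphisms $\theta_X\colon\sigma(X)\iso\sigma'(X)$ making the squares of the definition commute), and every formula $\psi$ in context $\Delta$, one has $\bA\ss_\sigma\psi$ iff $\bA\ss_{\sigma'}\psi$. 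Here $\theta_X$ denotes the component at an object-variable $X$, so that by definition $\theta_Y\circ\sigma(f)=\sigma'(f)\circ\theta_X$ for every arrow-variable $f\colon X\to Y$. Since $\iso$ is symmetric (invert each $\theta_X$), it suffices throughout to prove one direction of each biconditional.

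Before the main induction I would record the corresponding statement for arrow-\emph{terms}: for every arrow-term $t\colon X\to Y$ in context $\Delta$, $\sigma'(t)=\theta_Y\circ\sigma(t)\circ\theta_X^{-1}$. This is a short secondary induction on the structure of $t$: the case of an arrow-variable is exactly the commuting square in the definition of $\sigma\iso\sigma'$; the identity $1_X$ is immediate since $\theta_X\circ 1_{\sigma(X)}\circ\theta_X^{-1}=1_{\sigma'(X)}$; and a composite $g\circ f$ follows by pasting the two instances of the inductive hypothesis, the middle $\theta$'s cancelling. With this in hand the atomic case of the main induction is immediate: for parallel arrow-terms $f,g\colon X\to Y$ we have $\sigma(f)=\sigma(g)$ iff $\theta_Y\circ\sigma(f)\circ\theta_X^{-1}=\theta_Y\circ\sigma(g)\circ\theta_X^{-1}$ (as $\theta_X,\theta_Y$ are invertible) iff $\sigma'(f)=\sigma'(g)$. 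The cases of the connectives $\top,\bot,\meet,\join,\imp,\neg$ are routine, applying the inductive hypothesis to the immediate subformulas in the same context $\Delta$.

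The substance is in the quantifier cases. For an arrow-quantifier $\im f.\psi$ with $f\colon X\to Y$ (so $X,Y$ are object-variables of $\Delta$), the key observation is that $a\mapsto \theta_Y\circ a\circ\theta_X^{-1}$ is a bijection from $\bA(\sigma(X),\sigma(Y))$ to $\bA(\sigma'(X),\sigma'(Y))$. Given a witness $a$ for $\sigma$, its image $a'=\theta_Y\circ a\circ\theta_X^{-1}$ is a witness for $\sigma'$: indeed $\sigma[f\mapsto a]\iso\sigma'[f\mapsto a']$ via the same family $\theta$ together with the new commuting square for $f$ (which holds by the very definition of $a'$), so the inductive hypothesis applied to $\psi$ in context $(\Delta,f)$ transfers $\bA\ss_{\sigma[f\mapsto a]}\psi$ to $\bA\ss_{\sigma'[f\mapsto a']}\psi$. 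Running this through the bijection handles $\im$, and the same argument handles $\coim$. For an object-quantifier $\im X.\psi$ I would instead reuse the \emph{same} object: given a witnessing object $c$ of \bA\ for $\sigma$, I take $c$ again for $\sigma'$ and extend $\theta$ by the identity $1_c\colon c\iso c$ at the new variable $X$. This is still an isomorphism $\sigma[X\mapsto c]\iso\sigma'[X\mapsto c]$ precisely because the restriction on object-quantification (equivalently, the requirement that $(\Delta,X)$ be a valid context) guarantees that no free arrow-variable of $\psi$ has $X$ as source or target, so no new commuting square involving $X$ is required; the inductive hypothesis then applies as before.

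The step I expect to be most delicate is the object-quantifier case, precisely because it is where the restriction on object-quantification is used: without it, binding $X$ could orphan a free arrow-variable typed by $X$, and one could not extend the isomorphism by an identity (there would be a square relating two genuinely different arrows that need not commute). The arrow-quantifier case, by contrast, is the conceptual heart of ``isomorphism-invariance'', being exactly the transport of an arrow witness along the given isomorphisms. Everything else is bookkeeping, and the overall structure is the standard semantic-invariance induction once the strengthened hypothesis and the arrow-term lemma are in place.
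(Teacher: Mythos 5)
Your proof is correct and is exactly the argument the paper has in mind: the paper records this lemma with the remark that it is ``proven by straightforward induction on the structure of \ph,'' and your write-up supplies precisely the details that induction requires (strengthening the hypothesis to arbitrary contexts and isomorphic pairs of assignments, the conjugation identity $\sigma'(t)=\theta_Y\circ\sigma(t)\circ\theta_X^{-1}$ for arrow-terms, and transport of witnesses in the quantifier cases, with the object-quantifier case relying on the paper's restriction on object-quantification). There is nothing to correct.
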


Second, if $F\colon \bA\to\bB$ is a functor and $\rho\colon \Gm\to \bA$, we write $F\circ\rho\colon \Gm\to\bB$ for the assignment determined by $(F\circ\rho)(X) = F(\rho(X))$ and $(F\circ\rho)(f) = F(\rho(f))$.

\begin{lem}[Equivalence-invariance of truth]
  If \ph\ is a formula in context \Gm, and the functor $F$, as above, is fully faithful and essentially surjective, then $\bA\ss_{\rho}\ph$ if and only if $\bB \ss_{F\circ\rho} \ph$.\qed
\end{lem}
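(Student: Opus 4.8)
The plan is to prove equivalence-invariance of truth by induction on the structure of the formula $\ph$, exactly mirroring the proof of isomorphism-invariance (\autoref{thm:isoinvar-truth}) but using the hypotheses on $F$ in place of the componentwise isomorphisms. The key point to exploit is that $F$ is fully faithful and essentially surjective, so it induces a bijection on hom-sets $\bA(\rho(X),\rho(Y)) \cong \bB(F\rho(X),F\rho(Y))$, and on isomorphism-classes of objects. Because the language of categories has \emph{only} equalities between parallel arrows as atomic formulas (as emphasized just before \autoref{thm:isoinvar-truth}), the base case is immediate: for arrow-terms $f,g\colon X\to Y$, we have $\bA\ss_\rho (f=g)$ iff $\rho(f)=\rho(g)$, and since $F$ is faithful this holds iff $F(\rho(f)) = F(\rho(g))$, i.e.\ iff $\bB\ss_{F\circ\rho}(f=g)$. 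The propositional connectives $\top,\bot,\meet,\join,\imp,\neg$ pass through the induction trivially, since satisfaction of a compound formula is defined from satisfaction of its immediate subformulas by the same Boolean recipe on both sides.

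The substance is in the two quantifier cases, and these split according to the type being quantified. First I would handle the arrow-quantifiers. For $\im h.\,\psi(h)$ where $h\colon X\to Y$ is an arrow-variable, satisfaction in \bA\ means there exists a morphism $a\in\bA(\rho(X),\rho(Y))$ such that $\bA\ss_{\rho[h\mapsto a]}\psi$. Full faithfulness gives a bijection carrying such an $a$ to $F(a)\in\bB(F\rho(X),F\rho(Y))$, and one checks that $(F\circ\rho)[h\mapsto F(a)] = F\circ(\rho[h\mapsto a])$ as assignments, so the inductive hypothesis applies directly; the existential (and dually the universal) quantifier is therefore preserved in both directions. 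The subtler case is object-quantification. For $\im Z.\,\psi(Z)$, essential surjectivity supplies, for each witnessing object $c\in\bB$, an object $b\in\bA$ with $F(b)\cong c$; but this isomorphism is generally not an identity, so one cannot simply substitute.

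The hard part will be bridging this gap cleanly, and the recorded isomorphism-invariance lemma is exactly the tool for it. The restriction in \autoref{defn:lang-cat} that we may not quantify over an object-variable appearing in the type of a free arrow-variable is essential here: it guarantees that when we bind $Z$, no free arrow-variable of the surrounding context has $Z$ in its type, so replacing the witness $c$ by the isomorphic object $F(b)$ only changes the assignment on $Z$ itself. I would make this precise by producing an isomorphism of assignments $(F\circ\rho)[Z\mapsto c] \cong (F\circ\rho)[Z\mapsto F(b)] = F\circ(\rho[Z\mapsto b])$ --- identity on all old components and the given isomorphism $c\cong F(b)$ on $Z$, with every required naturality square commuting vacuously because no free arrow-variable involves $Z$. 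Applying \autoref{thm:isoinvar-truth} transports satisfaction of $\psi$ across this isomorphism, after which the inductive hypothesis relates $\bA\ss_{\rho[Z\mapsto b]}\psi$ to $\bB\ss_{F\circ(\rho[Z\mapsto b])}\psi$, closing the loop. The converse direction, starting from a witness in \bA\ and pushing it forward along $F$, is easier and needs no essential surjectivity. Thus the main obstacle is not any single computation but correctly orchestrating essential surjectivity together with the object-quantification side condition, invoking the already-proven isomorphism-invariance at precisely the step where the object witnesses fail to match on the nose.
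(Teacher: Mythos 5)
Your proposal is correct and matches the paper's intended argument: the paper dispatches both invariance lemmas with the remark that they are ``proven by straightforward induction on the structure of $\ph$,'' and your write-up is exactly that induction carried out in detail. In particular, your handling of the object-quantifier case---using essential surjectivity to find a preimage up to isomorphism, then invoking \autoref{thm:isoinvar-truth} (which is available precisely because the language forbids quantifying over an object-variable occurring in the type of a free arrow-variable)---is the key step the paper leaves implicit, and you have orchestrated it correctly.
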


Both of these lemmas are proven by straightforward induction on the structure of \ph.
(In fact, the dependently typed theory of categories characterizes exactly those properties of categories which are invariant under equivalence in this sense; see~\cite{blanc:eqv-log,freyd:invar-eqv,makkai:folds}.)

We now list the basic category-theoretic properties we will consider.
First of all, we must have a category, so we include:
\begin{blist}
\item \emph{Identity:} \qq{for all $X,Y$, and $f\colon X\to Y$, we have $1_Y\circ f = f = f\circ 1_X$}.
\item \emph{Associativity:} \qq{for all $X,Y,Z,W$ and $f\colon X\to Y$, $g\colon Y\to Z$, and $h\colon Z\to W$, we have $h\circ (g\circ f)=(h\circ g)\circ f$}.
\end{blist}
Recall that a \textbf{Heyting category} is a category satisfying the following axioms.
\begin{blist}
\item \emph{Finite limits:} \qq{there exists a terminal object, binary products, and equalizers}.
\item \emph{Regularity:} \qq{every morphism factors as a regular epi followed by a mono, and regular epis are stable under pullback}.
\item \emph{Coherency:} \qq{finite unions of subobjects exist and are stable under pullback}.
\item \emph{Dual images:} \qq{for any $f\maps X\to Y$, the pullback functor $f^*\maps \Sub(Y)\to\Sub(X)$ has a right adjoint $\coim_f$}.
\end{blist}
Of course, a statement such as \qq{binary products exist} actually means
\begin{quote}
  \qq{for all $X,Y$, there exists $W$ and $p\colon W\to X$, $q\colon W\to Y$ such that for any $T$ and $a\colon T\to X$, $b\colon T\to Y$, there exists $h\colon T\to W$ such that $a = p \circ h$ and $b = q\circ h$, and for any $k\colon T\to W$ such that $a = p \circ k$ and $b = q\circ k$, we have $h=k$}.
\end{quote}
The notation $\Sub(X)$ denotes the poset of subobjects of $X$, i.e.\ of monomorphisms into $X$ modulo isomorphism.
In order to remain first-order, the axiom of dual images should be interpreted as asserting the existence of each individual value of $\coim_f$ with its universal property, i.e.
\begin{quote}
  \qq{for any $f\colon X\to Y$, and any monomorphism $m\colon A\to X$, there exists a monomorphism $n\colon B \to Y$ such that for any monomorphism $q\colon C\to Y$, we have that $q$ factors through $n$ if and only if there exists a pullback of $q$ along $f$ which factors through $m$}.
\end{quote}
A Heyting category is called a \textbf{Heyting pretopos} if it also satisfies:
\begin{blist}
\item \emph{Positivity/extensivity:} \qq{there exist binary coproducts which are disjoint and pullback-stable}.
\item \emph{Exactness:} \qq{every equivalence relation is a kernel}.
\end{blist}
It is a \textbf{\Pi-pretopos} if it also satisfies:
\begin{blist}
\item \emph{Local cartesian closure:} \qq{for every arrow $f\maps X\to Y$, the pullback functor $f^*\maps \bS/Y\to\bS/X$ has a right adjoint $\Pi_f$}
\end{blist}
Finally, if it additionally satisfies:
\begin{blist}
\item \emph{Power objects:} \qq{every object $X$ has a power object $P X$}
\end{blist}
(which, together with finite limits, implies all the other axioms of a \Pi-pretopos) it is called an \textbf{(elementary) topos}.
See, e.g.,~\cite[\S A.1]{ptj:elephant} for more details.

In a pretopos, every mono and every epi is regular, and thus it is \emph{balanced} (every monic epic is an isomorphism).
Furthermore, the self-indexing of any pretopos is a stack for its coherent topology.
Of course, first-order versions of these statements can be proven, in the language of categories, from the above axioms.


\begin{rmk}
  Since all of our axioms are formulated in first-order logic, all the categorical operations such as finite limits, exponentials, etc.\ are expressed as pure existence statements.
  Strictly speaking this means that notation such as ``$A\times B$'' or ``$X^Y$'' is an abuse, since we have no product or exponential \emph{operations} (and we are not assuming any axiom of choice allowing us to construct such operations).
  However, such abuses are standard in mathematics and cause no problems; formally they are little different from the notation $\setof{x | \ph(x)}$ in material set theory, whose separation axioms are also formally expressed as pure existence statements.
  In fact, we could make this precise by extending our language to include such operations on terms and proving formally that it can still be interpreted in categories satisfying only the pure existence axioms (thereby moving in the direction of type theory), but we leave this to the interested reader.
\end{rmk}

Some other important axioms are the following:
\begin{blist}
\item \emph{Booleanness:} \qq{every subobject has a complement}.
\item \emph{Full classical logic:} for any formula \ph, we have $\ph\join\neg\ph$.
\item \emph{Infinity:} \qq{there exists a natural number object (\nno)}.
\item \emph{Choice:} \qq{every regular  epimorphism splits}.
\item \emph{Enough projectives:} \qq{Every object admits a regular epimorphism from an object which is projective with respect to regular epimorphisms.}
\end{blist}
(In the absence of cartesian closure, the definition of an \nno\ should be taken with arbitrary parameters; see~\cite[A2.5.3]{ptj:elephant}.
On the other hand, since every epi is regular in a pretopos, we can omit any occurrences of the adjective ``regular''.)

Each of the above axioms (beyond those of a Heyting pretopos) is, more or less obviously, a counterpart of some axiom of material set theory.
This is perhaps least obvious in the case of Booleanness, which we regard as a structural counterpart of \ddo-classical logic.
However, in \S\ref{sec:sets} we observed that in material set theory, \ddo-classical logic is equivalent to the assertion that every sub\emph{set} has a complement.
See also \autoref{thm:wpt-bool} and \autoref{thm:boolean}.

We end this section by observing that each material axiom implies its corresponding structural axiom for the category of sets.

\begin{thm}\label{thm:iz-topos}
  If \bV\ satisfies the core axioms of material set theory, then the sets and functions in \bV\ form a Heyting pretopos $\bbSet(\bV)$.
  Moreover:
  \killspacingtrue
  \begin{enumerate}
  \item If \bV\ satisfies exponentiation, then $\bbSet(\bV)$ is locally cartesian closed.\label{item:iz-topos-lcc}
  \item If \bV\ satisfies the power set axiom, then $\bbSet(\bV)$ is a topos.
  \item If \bV\ satisfies \ddo-classical logic, then $\bbSet(\bV)$ is Boolean.
  \item If \bV\ satisfies full classical logic, so does $\bbSet(\bV)$.
  \item If \bV\ satisfies infinity and exponentials, then $\bbSet(\bV)$ has a \nno.\label{item:iz-topos-nno}
  \item If \bV\ satisfies the presentation axiom, then $\bbSet(\bV)$ has enough projectives.
  \item If \bV\ satisfies the axiom of choice, then so does $\bbSet(\bV)$.
  \end{enumerate}
  \killspacingfalse
\end{thm}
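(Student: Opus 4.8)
The plan is to verify the defining axioms of a Heyting pretopos for $\bbSet(\bV)$ one by one from the core axioms, and then to dispatch the seven enumerated clauses individually. The organizing principle is that, by extensionality, a monomorphism into a set $X$ is represented by an actual subset $A\subseteq X$, so $\Sub(X)$ is just the poset of subsets of $X$ ordered by inclusion; this lets me translate every categorical construction into a set-theoretic one whose existence reduces to a separation or replacement axiom. The recurring point is that the interesting operations are cut out by \emph{bounded} ($\ddo$) formulas, so that $\ddo$-separation alone suffices.

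First I would produce finite limits. A terminal object is a singleton $\{\emptyset\}$, and the equalizer of $f,g\colon X\to Y$ is the subset $\{x\in X : f(x)=g(x)\}$, which exists by $\ddo$-separation. Binary products are the delicate point: the elements of $a\times b$ are the Kuratowski pairs $(x,y)=\{\{x\},\{x,y\}\}$, and I would realize $a\times b$ by iterated applications of limited $\ddo$-replacement, pairing, and union, the essential bookkeeping being to exhibit at each stage a previously-constructed set bounding the elements of the values (everything ultimately sitting inside iterated subsets of $a\cup b$). Given finite limits, the image of $f\colon X\to Y$ is $\{y\in Y : \exists x\in X.\,f(x)=y\}$, again obtained by $\ddo$-separation; this gives the regular epi--mono factorization, and pullback-stability of surjections is routine. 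Disjoint, pullback-stable coproducts come from the tagging trick $a+b=(\{\emptyset\}\times a)\cup(\{\{\emptyset\}\}\times b)$, and exactness---that every equivalence relation is a kernel pair---follows by forming the quotient set of equivalence classes, which $\ddo$-separation and limited replacement produce. Finally the coherent and Heyting structure on $\Sub(X)$ is purely set-theoretic: finite unions and intersections of subsets exist by union and $\ddo$-separation and commute with preimages, while the dual image along $f\colon X\to Y$ is $\coim_f(A)=\{y\in Y : \forall x\in X.\,(f(x)=y\imp x\in A)\}$, whose defining formula is $\ddo$ because its quantifier ranges over $x\in X$, so it too exists by $\ddo$-separation.

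For the enumerated clauses I would argue as follows. (i) Exponentiation gives the function set $b^a$; local cartesian closure is its fibrewise refinement, the dependent product $\Pi_f$ along $f\colon X\to Y$, which I would cut out as a subobject of an exponential by a $\ddo$-condition, so exponentiation with $\ddo$-separation suffices. (ii) A power set $Pa$ serves directly as a power object, since its global elements---the subsets of $a$---biject with subobjects of $a$, the generic membership relation being $\{(x,S) : x\in S\}\subseteq a\times Pa$. (iii) Booleanness asserts that every subobject, hence every subset, has a complement, which is exactly the reformulation of $\ddo$-classical logic recorded in \S\ref{sec:sets}. (iv) Full classical logic transfers because, by its inductive definition, each satisfaction statement $\bbSet(\bV)\ss_\rho\ph$ unwinds into a formula of the material language, to which the material instance of $\ph\join\neg\ph$ applies. (v) With infinity providing $\omega$ (with zero $\emptyset$ and successor $x\mapsto x\cup\{x\}$) and exponentials providing the recursion, $\omega$ is an \nno: the iterated map into $X$ lives in the exponential $X^\omega$ and is pinned down by a $\ddo$-recursion condition, while exponentials reduce the parametrized form of the universal property to the unparametrized one. (vi) A projective set $b$ as in the presentation axiom is precisely an object projective with respect to surjections, and the surjection $b\epi a$ it supplies exhibits enough projectives. (vii) Choice furnishes, for any surjection $p\colon b\epi a$ whose fibres are all inhabited, a function $a\to b$ selecting a point of each fibre, i.e.\ a section, so every regular epi splits.

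I expect the main obstacle to be precisely the finite-limit constructions from the deliberately weak limited $\ddo$-replacement---rather than from full replacement or power sets. At each stage one must verify that the relevant assignment is single-valued and $\ddo$, and, crucially, that the elements of its values are uniformly contained in some already-constructed set, since that boundedness hypothesis is exactly what limited replacement demands; threading this bound through the several nested steps needed to build $a\times b$ is the fiddly heart of the argument. A secondary subtlety, and the reason the core axioms were chosen as they were, is checking that the coherent and Heyting operations on $\Sub(X)$---above all the dual image $\coim_f$---are defined by genuinely $\ddo$ formulas, so that $\ddo$-separation, not full separation, already suffices.
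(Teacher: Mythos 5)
Most of your proposal coincides with the paper's own proof: the same bookkeeping with Kuratowski pairs and limited \ddo-replacement for products (bounding everything by unions of $a\cup b$), the same tagging construction of disjoint coproducts, equivalence classes assembled by limited \ddo-replacement for exactness, \ddo-separation for equalizers, images, unions, and dual images, and essentially the paper's arguments for clauses (ii)--(iv), (vi), (vii). For clause (i) your sketch is vaguer than the paper's but repairable: sections over a fiber $f^{-1}(j)$ are not elements of any single exponential $x^a$, so one must either use a partial-function trick (embed $\Pi_f(g)$ into $b\times (x+1)^a$ by \ddo-separation) or, as the paper does, cut each fiber out of $x^{f^{-1}(j)}$ and assemble the total set by limited \ddo-replacement, each fiber element being a subset of $a\times x$.

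The genuine gap is in clause (v), the \nno. You assert that the recursion map ``lives in the exponential $X^\omega$ and is pinned down by a \ddo-recursion condition.'' Indeed \ddo-separation carves out the subset $Y = \{ f\in X^\omega : f(0)=x_0 \wedge \forall n\in\omega. f(s(n))=t(f(n))\}$, but nothing in your argument shows that $Y$ is \emph{inhabited}, and that is the entire content of the clause. In this weak theory one cannot form the intersection of all inductive subsets of $\omega\times X$ (there is no power set and no full separation), so existence must be proved by finite approximations: for each $n\in\omega$ consider functions defined on $\{ m : m\in n\}$ satisfying the recursion equations (a set constructed using exponentials in $\Set/\omega$ and \ddo-separation), prove by induction on $n$ that such an approximation exists at each stage and that any two agree there, and take the union. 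That induction is available only for \ddo-formulas, and only because of the final minimality clause in the material axiom of infinity---which is precisely why the paper's axiom of infinity includes that clause, and which your proposal never invokes. As written, your clause (v) presupposes the recursion theorem it is supposed to prove.
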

\begin{proof}
  We first show that $\bbSet(\bV)$ is a category.
  Using pairing and limited \ddo-replacement, we can form, for any set $a$ and any $x\in a$, the set $p_{a,x}$ of all pairs $\{x,y\}$ for $y\in a$.
  Again using limited \ddo-replacement, we can form the set of all the $p_{a,x}$ for $x\in a$, and take its union, thereby obtaining the set of all pairs $\{x,y\}$ for $x,y\in a$.
  Applying this construction twice to $a\cup b = \bigcup\{a,b\}$, we can then use \ddo-separation to cut out the set $a\times b$ of Kuratowski ordered pairs $(x,y) = \{\{x\},\{x,y\}\}$ where $x\in a$ and $y\in b$.
  With this in hand we can define function composition and identities using \ddo-separation.

  Finite limits are straightforward: we already have products, $\{\emptyset\}$ is a terminal object, and \ddo-separation supplies equalizers.
  The construction of pullback-stable images, unions, and dual images is also easy from \ddo-separation.
  A stable and disjoint coproduct $a+b$ can as usual be obtained as a subset of $\{0,1\}\times (a\cup b)$.
  And if $r\subseteq a\times a$ is an equivalence relation, \ddo-separation supplies the equivalence class of any $x\in a$, and limited \ddo-replacement then supplies the set of all such equivalence classes.

  Exponentiation clearly implies cartesian closedness.
  For local cartesian closedness, suppose given $f\maps a\to b$ and $g\maps x\to a$.
  Then the fiber of $\Pi_f(g)\to b$ over $j\in b$ should be the set of all functions $s\colon f\inv(j) \to x$ such that $g\circ s = 1_{f\inv(j)}$; this can be cut out of $x^{f\inv(j)}$ by \ddo-separation.
  Finally, the entire set $\Pi_f(g)$ can be constructed from these and limited \ddo-replacement, since each element of each fiber is a subset of $a\times x$.

  The relationship between power sets and power objects is analogous.
  We have just recalled that \ddo-classical logic implies every subset has a complement, and the implication for full classical logic is evident.

  Now suppose that \bV\ satisfies infinity and exponentials, and let $\omega$ be as in the axiom of infinity; we define $0\in \omega$ and $s\colon \omega\to\omega$ in the obvious way.
  Suppose given $A\xto{g} B \xleftarrow{t} B$; we must construct a unique function $f\colon A\times \omega \to B$ such that $f(1_A\times 0) = g \pi_1$ and $f(1_A\times s) = t f$.
  Let $R = \setof{ (a,b)\in \omega\times\omega | a\in b}$ with projection $\pi_2\colon R\to \omega$, and let $X$ be the exponential $(B\times \omega \to \omega)^{(A\times R \to \omega)}$ in $\Set/\omega$; thus an element of $X$ is equivalent to an $n\in \omega$ together with a function $f\colon A \times \setof{ m | m \in n } \to B$.
  Using \ddo-separation, we have the subset $Y\subseteq X$ of those $f\in X$ such that $f(a,0) = g(a)$ and $f(a,s(m)) = t(f(a,m))$ whenever $s(m)\in n$.
  We then prove by induction that for all $n\in\omega$ there exists an $f\in Y$ with $n\in \mathrm{dom}(f)$ and that for any two such $f,f'$ we have $f(n)=f'(n)$.
  The union of $Y$ is then the desired function.

  The presentation axiom, of course, is literally the statement that $\bbSet (\bV)$ has enough projectives.
  And finally, if \bV\ satisfies choice, then from any surjection $p\colon e \to b$ we can construct, using limited \ddo-replacement, the set $\setof{ p^{-1}(x) | x\in b}$, and applying the material axiom of choice to this gives a section of $p$.
\end{proof}

\begin{rmk}\label{rmk:translation}
  We have stated \autoref{thm:iz-topos} in the form ``given a model of material set theory, we can construct a category with certain properties'' because we find it easiest to conceptualize in that way.
  Of course, this requires the model of material set theory to exist in some ambient world in which we can perform this construction.

  A more elementary way to interpret this ``construction'' is as giving a way to translate a formula \ph\ in the language of category theory into a formula in the language of material set theory, by simply writing down what the assertion ``\ph\ holds in the category of sets'' would mean in terms of the basic notions of set theory.
  The content of the theorem then becomes that if we assume certain axioms of material set theory, then the \emph{translations of} certain category-theoretic axioms are provable---and therefore, so is any consequence of those axioms.
  (This is equivalent to performing the above model-theoretic construction with the generic model of a material set theory in its syntactic category.) In this way we can obtain a relative consistency result with the weakest possible metatheoretic assumptions.
  This same sort of interpretation will also be possible for the reverse construction in \S\ref{sec:constr-mat}.
\end{rmk}

\begin{rmk}
  The proof of \autoref{thm:iz-topos} is unaffected by the possible presence of atoms in \bV.
  This will remain true for the theorems in later sections giving similar correspondences for the other axioms.
\end{rmk}


\section{Constructive well-pointedness}
\label{sec:constr-well-point}

In \S\ref{sec:struct-set} we considered only standard category-theoretic properties, but not every Heyting pretopos deserves to be called a model of structural set theory.
The distinguishing characteristic of a \emph{set}, as opposed to an object of some more general category, is that a set is determined by its elements and nothing more.
This is expressed by the following property.

\begin{defn}\label{def:wpt}
  A Heyting category \bS\ is \textbf{constructively well-pointed} if it satisfies the following (in which $1$ refers to a terminal object).
  \killspacingtrue
  \begin{enumerate}[(a)]
  \item If $m\maps A\mono X$ is monic and every $1\too[x] X$ factors through $m$, then $m$ is an isomorphism \emph{(1 is a strong generator)}.\label{item:wpt-strgen}
  \item Every regular epimorphism $X\epi 1$ splits \emph{(1 is projective)}.\label{item:wpt-proj}
  \item If $1$ is expressed as the union of two subobjects $1 = A\cup B$, then either $A$ or $B$ must be isomorphic to $1$ \emph{(1 is indecomposable)}.\label{item:wpt-indec}
  \item There does not exist a map $1\to 0$ \emph{(1 is nonempty)}.\label{item:wpt-nondeg}
  \end{enumerate}
  \killspacingfalse
\end{defn}

\begin{rmk}\label{rmk:nonelem-wpt}
  Of course, since regular epis are stable under pullback, $1$ is projective if and only if for any regular epi $Y\xepi{p} X$, every global element $1\to X$ lifts to $Y$.
  Likewise, since unions are stable under pullback, $1$ is indecomposable if and only if whenever $X = A\cup B$, every global element $1\to X$ factors through either $A$ or $B$.
  In particular, if \bS\ is locally small in some external set theory, then \bS\ is constructively well-pointed if and only if $\bS(1,-)\maps \bS\to\Set$ is a conservative coherent functor.
\end{rmk}

We immediately record the following.

\begin{prop}\label{thm:set-wpt}
  If \bV\ satisfies the core axioms of material set theory, then $\bbSet(\bV)$ is constructively well-pointed.
\end{prop}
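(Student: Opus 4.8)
The plan is to exploit the fundamental dictionary between the two notions of element. Since the terminal object of $\bbSet(\bV)$ is $1 = \{\emptyset\}$, a global element $1\to X$ is exactly a function selecting a single member $x\in X$; thus the global elements of $X$ correspond bijectively to its set-theoretic members, and each clause of \autoref{def:wpt} unwinds to an elementary statement provable from the core axioms. I would check the four clauses in turn.

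The strong-generator clause and the nonemptiness clause are immediate. For the former, a monomorphism $m\maps A\mono X$ is an injection of sets, and the hypothesis that every $1\too[x]X$ factors through $m$ says precisely that $m$ is surjective, i.e.\ every $x\in X$ is $m(a)$ for some $a\in A$; injectivity makes this $a$ unique, so \ddo-separation cuts out the inverse function $\setof{(x,a) \mid m(a)=x}$ and $m$ is an isomorphism. For the latter, the initial object is $0=\emptyset$, and a map $1\to 0$ would supply a member of the empty set as the value at $\emptyset\in 1$; since $\emptyset$ has no members, no such map exists.

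The projectivity and indecomposability clauses are where care is required, since their correct reading is the intuitionistically delicate one. Recalling that every epi in a pretopos is regular and that regular epis in $\bbSet(\bV)$ are exactly the surjections (their images, formed by \ddo-separation, being all of the codomain), a regular epimorphism $X\epi 1$ onto $1=\{\emptyset\}$ asserts that $X$ is \emph{inhabited}, $\exists x.\,x\in X$---not merely that it is nonempty. The witnessing element is then literally a splitting $1\to X$. Similarly, the union of subobjects $A,B\mono 1$ is the image of $A+B\to 1$, so the hypothesis $1 = A\cup B$ unwinds to $\emptyset\in A \join \emptyset\in B$; in the first disjunct $A=\{\emptyset\}\iso 1$ and in the second $B\iso 1$, as required.

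The main obstacle is conceptual rather than computational: one must resist the classical reflex in the last two clauses. A regular epi onto $1$ encodes genuine inhabitedness (which is exactly why $1$ is projective here constructively) rather than the weaker non-emptiness, and the disjunction arising from the union must be read as an honest intuitionistic disjunction---the whole point being that these are the constructively correct formulations. Once the dictionary and these readings are in place, every verification reduces to extensionality together with \ddo-separation.
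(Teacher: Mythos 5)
Your proof is correct and takes essentially the same approach as the paper's: both unwind each clause of constructive well-pointedness through the bijection between global elements $1\to X$ and set-theoretic members of $X$, using that monos are injections, that (regular) epis are surjections, and that a surjection onto $1$ or a union covering $1$ yields an inhabited witness. The additional details you supply (constructing the inverse of a bijection via \ddo-separation, stressing inhabitedness over non-emptiness) are fine elaborations of steps the paper leaves implicit.
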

\begin{proof}
  Functions $1\to X$ in $\bbSet(\bV)$ are in canonical correspondence with elements of $X$, and a function is monic just when it is injective.
  Thus, if $m$ is monic and every map from $1$ factors through it, then it must be bijective, and hence an isomorphism; thus $1$ is a strong generator.
  Next, the epics in $\bbSet(\bV)$ are the surjections, and if $X\xepi{p} 1$ is a surjection, then $X$ must be inhabited, hence $p$ splits.
  And if $1 = A\cup B$, then the unique element of $1$ must be in either $A$ or $B$ by definition of unions, hence either $A$ or $B$ must be inhabited.
  Finally, $\emptyset$ has no elements, so $1$ is nonempty.
\end{proof}

Recall that classically, a topos is said to be \emph{well-pointed} if $1$ is a generator (i.e. for $f,g\maps X\toto Y$, having $f x=g x$ for all $1\too[x] X$ implies $f=g$) and there is no map $1\to 0$ (1 is nonempty).
Thus, any constructively well-pointed topos is well-pointed in the classical sense.
Conversely, the following is well-known (see, for instance,~\cite[VI.1 and VI.10]{mm:shv-gl}).

\begin{lem}\label{thm:wpt-bool}
  Let \bS\ be a Heyting category satisfying full classical logic, and in which $1$ is a nonempty strong generator.
  Then \bS\ is Boolean and constructively well-pointed.
\end{lem}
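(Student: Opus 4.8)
The plan is to observe first that conditions \eqref{item:wpt-strgen} and \eqref{item:wpt-nondeg} of \autoref{def:wpt} (strong generator, nonempty) are given outright, so only Booleanness, projectivity \eqref{item:wpt-proj}, and indecomposability \eqref{item:wpt-indec} remain to be proved. The linchpin of all three will be the claim that the terminal object has no nontrivial subobjects, i.e.\ $\Sub(1) = \{0,1\}$; once this is in hand, the rest follows quickly.

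To prove $\Sub(1)=\{0,1\}$, I would take an arbitrary subobject $U\mono 1$ and apply full classical logic to the formula \qq{there exists an arrow $1\to U$}. If such a global element $s\colon 1\to U$ exists, then since the composite of $s$ with $U\mono 1$ is the unique endomap $\mathrm{id}_1$, the mono $U\mono 1$ is split epic, hence an isomorphism, so $U\iso 1$. If no such $s$ exists, then every global element $1\to U$ factors (vacuously) through $0\mono U$, so by the strong-generator hypothesis the map $0\mono U$ is an isomorphism and $U\iso 0$. This dichotomy is exactly where full classical logic does its work: intuitionistically $\Sub(1)$ can be much larger.

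With $\Sub(1)=\{0,1\}$ established, I would derive the three remaining properties as follows. For indecomposability: if $1 = A\cup B$ then $A,B\in\{0,1\}$, and $A=B=0$ is impossible since it would give $1\iso 0$ and hence a map $1\to 0$, contradicting nonemptiness; so one of $A,B$ is $1$. For projectivity: given a regular epi $X\epi 1$, apply classical logic to \qq{there exists an arrow $1\to X$}. A global element of $X$ is automatically a section of $X\to 1$ (every endomap of $1$ is $\mathrm{id}_1$); and if $X$ had no global element, strong generation would force $X\iso 0$, making $0\epi 1$ a regular epi, which forces $1\iso 0$ and again a map $1\to 0$. For Booleanness: given $m\colon A\mono X$, it suffices by strong generation to show every $x\colon 1\to X$ factors through $A\join\neg A$. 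Pulling $A$ back along $x$ gives a subobject $x^*A\mono 1$, which by the claim is either $1$ or $0$; in the first case $x$ factors through $A$, and in the second case I would use that $x^*$ is a homomorphism of Heyting algebras to compute $x^*(\neg A)=\neg(x^*A)=\neg 0 = 1$, so that $x$ factors through $\neg A$.

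The main obstacle I anticipate is the last case of the Booleanness argument: justifying that $x^*A\iso 0$ implies $x$ factors through $\neg A$. The slick route uses that pullback functors preserve the full Heyting structure, including pseudocomplements; if one prefers to avoid invoking that, one can instead factor $x$ as $1\epi \mathrm{im}(x)\mono X$ and show $\mathrm{im}(x)\meet A = 0$ by noting that pullback along the regular epi $1\epi\mathrm{im}(x)$ is injective on subobjects (via Frobenius reciprocity, $\exists_e e^* = \mathrm{id}$ on a cover $e$, together with $e^*0=0$). Either way, this compatibility of pullback with pseudocomplements is the one genuinely categorical input beyond the bookkeeping with global elements.
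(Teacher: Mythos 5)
Your proof is correct; both of the justifications you offer for the step ``$x^*A\iso 0$ implies $x$ factors through $\neg A$'' are valid in a Heyting category. The engine is the same as in the paper---full classical logic supplies a dichotomy, and strong generation turns ``no global elements'' into ``initial''---but your decomposition is genuinely different. The paper states the dichotomy for arbitrary objects ($X$ is initial or it isn't), concludes that every non-initial object has a global element (which yields projectivity and indecomposability in one stroke), and then proves Booleanness by contradiction, pulling back the subobject $A\cup\neg A$ along a global element witnessing its failure to be all of $X$. You instead isolate two-valuedness, $\Sub(1)=\{0,1\}$, as the key lemma and derive Booleanness directly, pulling back $A$ itself along each global element. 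That direct argument is precisely the one the paper uses later in \autoref{thm:wp-bool-tv}, where Booleanness is shown equivalent to two-valuedness over constructive well-pointedness; in effect you prove two-valuedness from classical logic and strong generation and then run that equivalence, which makes the logical structure arguably cleaner (no reductio in the Booleanness step beyond the dichotomy itself). A further merit of your write-up is that you flag and justify the one nontrivial categorical input---that pullback preserves Heyting negation, or alternatively the Frobenius argument on the cover $1\epi \mathrm{im}(x)$---which the paper's proof uses silently in the clause ``if $V$ is initial, then $x^*(A)$ must also be initial, which implies that $x$ factors through $\neg A$.'' The only (cosmetic) economy on the paper's side is that its single dichotomy on arbitrary objects serves all three remaining properties at once, whereas you invoke classical logic separately for $\Sub(1)$ and for projectivity.
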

\begin{proof}
  For any object $X$, either $X$ is initial or it isn't.
  If it isn't, then there must be a global element $1\to X$, since otherwise the monic $0\mono X$ would be an isomorphism (since $1$ is a strong generator).
  Therefore, if $X$ is not initial, then $X\to 1$ is split epic.
  Now suppose that $X$ is any object such that $X\to 1$ is regular epic; since $0\to 1$ is not regular epic, it follows that $X$ is not initial, and so $X\to 1$ must in fact be split epic; thus $1$ is projective.
  Also, if $1 = A\cup B$, then $A$ and $B$ cannot both be initial, so one of them has a global element; thus $1$ is indecomposable.

  Now let $A\mono X$ be a subobject and assume that $A\cup \neg A$ is not all of $X$.
  Then since $1$ is a strong generator, there is a $1\too[x] X$ not factoring through $A\cup \neg A$.
  Let $V = x^*(A\cup \neg A)$.
  Then $V$ is a subobject of $1$.
  If $V$ is not initial, then it has a global element and hence is all of $1$, so $x$ factors through $A\cup \neg A$.
  But if $V$ is initial, then $x^*(A)$ must also be initial, which implies that $x$ factors through $\neg A$.
  This is a contradiction, so $A\cup \neg A = X$; hence \bS\ is Boolean.
\end{proof}


\begin{cor}
  Let \bS\ be a topos satisfying full classical logic, which is well-pointed in the classical sense.  Then \bS\ is Boolean and constructively well-pointed.
\end{cor}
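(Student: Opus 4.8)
The plan is to deduce this directly from \autoref{thm:wpt-bool}. A topos is in particular a Heyting category, and by hypothesis \bS\ satisfies full classical logic; moreover, classical well-pointedness supplies exactly the two facts that $1$ is nonempty (there is no map $1\to 0$) and that $1$ is a generator. The only additional ingredient demanded by \autoref{thm:wpt-bool} is that $1$ be a \emph{strong} generator, so the entire proof reduces to upgrading ``generator'' to ``strong generator,'' and here is where I would use the subobject classifier that a topos provides.

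To carry this out, suppose $m\maps A\mono X$ is monic and every global element $1\too[x] X$ factors through $m$; I must show $m$ is an isomorphism. Let $\chi_m\maps X\to\Omega$ classify $m$, so that for any $x\maps 1\to X$, the element $x$ factors through $m$ precisely when $\chi_m\circ x = \top$. The factorization hypothesis thus says that $\chi_m$ and the constant map $\top\circ{!_X}\maps X\to 1\to\Omega$ agree on every global element of $X$. Since $1$ is a generator, this forces $\chi_m = \top\circ{!_X}$, which means $m$ classifies the same subobject of $X$ as $1_X$; hence $m$ is an isomorphism. Note that this step uses only the classifying map and the generator property, not classical logic.

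Having shown that $1$ is a nonempty strong generator, I would then invoke \autoref{thm:wpt-bool} to conclude that \bS\ is Boolean and constructively well-pointed. The main (and essentially only) obstacle is the middle step, the passage from generator to strong generator; but once the factorization condition is rephrased via $\chi_m$, it is immediate, so I do not expect any genuine difficulty. The two remaining points---that a topos is a Heyting category and that classical nonemptiness coincides with the nonemptiness hypothesis of \autoref{thm:wpt-bool}---are routine.
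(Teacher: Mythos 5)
Your proof is correct and takes exactly the paper's route: the paper's entire proof is the observation that in a topos any generator is a strong generator, followed by an appeal to \autoref{thm:wpt-bool}. The only difference is that you spell out, via the subobject classifier, the standard fact that the paper cites without proof, and your classifier argument for it is valid.
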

\begin{proof}
  In a topos, any generator is a strong generator.
\end{proof}

Thus, in the presence of full classical logic, our notion of constructive well-pointedness is equivalent to the usual notion of well-pointedness.
However, \autoref{thm:set-wpt} shows that a constructively well-pointed topos not satisfying full classical logic need not be Boolean.
One can also construct examples showing that in the absence of full classical logic, a Boolean topos in which $1$ is a nonempty generator need not be \emph{constructively} well-pointed.
It is true, however, even intuitionistically, that if $1$ is projective, indecomposable, and nonempty in a Boolean topos, it must also be a generator; see~\cite[V.3]{awodey:thesis}.

\begin{rmk}
Other authors have also recognized the importance of explicitly assuming projectivity and indecomposability of $1$ in an intuitionistic context.
In~\cite{awodey:thesis} a topos in which $1$ is projective, indecomposable, and nonempty was called \emph{hyper\-local} (but in~\cite{ptj:pp-bag-hloc,ptj:elephant} that word is used for a stronger, non-elementary, property).
And in~\cite{palmgren:cetcs}, indecomposability of $1$ is assumed explicitly, while projectivity of $1$ is deduced from a factorization axiom.
It is also worth noting that in an ambient classical logic, a Grothendieck topos is \emph{local}, in the geometric sense that its global sections functor $\bS(1,-):\bS \to \Set$ has a right adjoint, if and only if $1\in\bS$ is projective, indecomposable, and nonempty.
See~\cite{nlab:localgm}.
\end{rmk}

We note that most classical properties of a well-pointed topos make use of projectivity and indecomposability of $1$, and many of these are still true intuitionistically as long as the category is \emph{constructively} well-pointed.

\begin{lem}\label{thm:pin-props}
  Let \bS\ be a constructively well-pointed Heyting category.  Then:
  \killspacingtrue
  \begin{enumerate}
  \item A morphism $p\colon Y\to X$ is regular epic if and only if every map $1\to X$ factors through it.\label{item:pp1}
  \item Likewise, $f\colon Y\to X$ is monic if and only if every map $1\to X$ factors through it in at most one way, and an isomorphism if and only if every map $1\to X$ factors through it uniquely.\label{item:pp1a}
  \item Given two subobjects $A\mono X$ and $B\mono X$, we have $A\le B$ if and only if every map $1\to X$ which factors through $A$ also factors through $B$.\label{item:pp2}
  \item $X$ is initial if and only if there does not exist any morphism $1\to X$.\label{item:pp3}
  \end{enumerate}
  \killspacingfalse
\end{lem}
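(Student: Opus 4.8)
The plan is to translate each categorical condition into a condition on global elements, using the strong-generator axiom of \autoref{def:wpt} together with the appropriate universal construction (equalizer, image factorization, intersection, or least subobject), and invoking projectivity of $1$ wherever regular epis appear. Before treating the four clauses I would record one preliminary fact: in any constructively well-pointed Heyting category, $1$ is a \emph{generator} in the usual sense, i.e.\ if $f,g\maps X\toto Y$ agree after composing with every $x\maps 1\to X$, then $f=g$. This follows by forming the equalizer $E\mono X$ of $f$ and $g$ (available since \bS\ has finite limits): every global element of $X$ factors through $E$ by hypothesis, so $E\mono X$ is an isomorphism because $1$ is a strong generator, whence $f=g$. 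I expect to use this pointwise principle repeatedly.

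For clause \ref{item:pp1}, the forward direction is precisely the reformulation of projectivity of $1$ noted in \autoref{rmk:nonelem-wpt}. Conversely, if every $1\to X$ factors through $p\maps Y\to X$, I would take the regular-epi/mono factorization $p = m\circ e$ provided by regularity, with $m\maps I\mono X$ monic; every global element of $X$ then factors through $m$, so $m$ is an isomorphism by the strong-generator axiom, and hence $p$ is regular epic. Clause \ref{item:pp1a} is handled similarly: a monic $f$ clearly admits at most one factorization of any given global element, and conversely the at-most-one-factorization hypothesis forces $f$ to be monic via the generator principle (given $a,b\maps Z\toto Y$ with $f\circ a = f\circ b$, the maps $a\circ z$ and $b\circ z$ are two factorizations of $f\circ a\circ z\maps 1\to X$ through $f$ for each $z\maps 1\to Z$, hence equal, so $a=b$). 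The isomorphism statement then combines the monic criterion (from uniqueness of factorizations) with the strong-generator axiom (from existence of factorizations).

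For clause \ref{item:pp2}, I would form the intersection $A\cap B\mono A$ as the pullback of $A\mono X$ along $B\mono X$; the hypothesis says exactly that every global element of $A$ factors through $A\cap B$ (using the universal property of the pullback, since the corresponding element of $X$ factors through both $A$ and $B$), so $A\cap B\mono A$ is an isomorphism by the strong-generator axiom, giving $A\le B$. The converse is immediate. For clause \ref{item:pp3}, a map $1\to X$ into an initial $X$ would yield a map $1\to 0$, contradicting nonemptiness of $1$; conversely, if there is no map $1\to X$, then the least subobject $0\mono X$ of $X$ (which is monic, as in any coherent category) has every global element of $X$ factoring through it \emph{vacuously}, so it is an isomorphism by the strong-generator axiom, and $X$ is initial.

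The only genuinely non-formal point, and the step I would be most careful about, is this preliminary upgrade from the strong-generator axiom (which is phrased only for monomorphisms) to the pointwise generator principle for arbitrary parallel pairs; once that is in hand, every clause is a routine reduction to global elements, modulo the standard facts that \bS\ admits image factorizations and possesses a strict initial object $0$ with $0\mono X$ monic.
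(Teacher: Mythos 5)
Your proof is correct and takes essentially the same approach as the paper's: each clause is reduced to a statement about global elements and settled by applying the strong-generator axiom to a suitable monomorphism---the image of $p$ in \ref{item:pp1}, a kernel-pair/equalizer mono in \ref{item:pp1a}, the pullback $A\cap B$ in \ref{item:pp2}, and $0\mono X$ in \ref{item:pp3}---with projectivity of $1$ supplying the forward direction of \ref{item:pp1}. The only cosmetic differences are that the paper establishes monicity in \ref{item:pp1a} via the canonical mono $Y\to Y\times_X Y$ rather than your equalizer-based generator principle, and obtains the isomorphism criterion from monic-plus-regular-epic rather than by applying the strong-generator axiom directly to the mono.
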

\begin{proof}
  The ``only if'' part of~\ref{item:pp1} follows because $1$ is projective.
  For the ``if'' part, note that a map $p\colon Y\to X$ in a regular category is regular epic iff its image is all of $X$, while if every $1\to X$ factors through $p$ then it must factor through $\mathrm{im}(p)$ as well; hence $\mathrm{im}(p)$ is all of $X$ since $1$ is a strong generator.

  The ``only if'' parts of~\ref{item:pp1a} are obvious.
  If $f\colon Y\to X$ is injective on global elements, then the canonical monomorphism $Y\to Y\times_X Y$ is bijective on global elements, and hence an isomorphism; thus $Y$ is monic.
  And if $f$ is bijective on global elements, then by this and by~\ref{item:pp1} it must be both monic and regular epic, hence an isomorphism.

  The ``only if'' part of~\ref{item:pp2} is also obvious.
  If every $1\to X$ which factors through $A$ also factors through $B$, then in the pullback
  \[\vcenter{\xymatrix{A\cap B \pullbackcorner \ar[r]\ar[d] & B \ar[d]\\
      A\ar[r] & X}}
  \]
  the map $A\cap B\to A$ must be an isomorphism, since $1$ is a strong generator; hence its inverse provides a factorization of $A$ through $B$.

  Finally, the ``only if'' part of~\ref{item:pp3} is nonemptiness, while if $X$ has no global elements, the map $0\to X$ is bijective on global elements, hence an isomorphism.
\end{proof}

In particular, it follows that for constructively well-pointed categories, Booleanness coincides with ``two-valued-ness'':

\begin{lem}\label{thm:wp-bool-tv}
  Let \bS\ be a constructively well-pointed Heyting category; the following are equivalent.
  \begin{enumerate}
  \item $\bS$ is Boolean.\label{item:btv-bool}
  \item Every subterminal object is either terminal or initial.\label{item:btv-tv}
  \end{enumerate}
\end{lem}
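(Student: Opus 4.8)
The plan is to prove the two implications separately, drawing throughout on the four components of constructive well-pointedness in \autoref{def:wpt} and on \autoref{thm:pin-props}.

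The implication $\ref{item:btv-bool}\Rightarrow\ref{item:btv-tv}$ is short, and I do not expect any trouble with it. Given a subterminal object $V\mono 1$, Booleanness supplies a complement, so that $1 = V\cup\neg V$ with $V\cap\neg V = 0$. Since $1$ is indecomposable (the third clause of \autoref{def:wpt}), one of $V$ and $\neg V$ must be isomorphic to $1$: in the first case $V$ is terminal, and in the second $V = V\cap 1 = V\cap\neg V = 0$ is initial.

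The real content is the converse $\ref{item:btv-tv}\Rightarrow\ref{item:btv-bool}$. Fix a subobject $A\mono X$; the goal is $A\cup\neg A = X$. Because $1$ is a strong generator, \autoref{thm:pin-props}\ref{item:pp2} reduces this to showing that every global element $x\maps 1\to X$ factors through $A\cup\neg A$. First I would form the pullback $x^*A$, which is a subobject of $1$ and hence subterminal, so hypothesis~\ref{item:btv-tv} applies and $x^*A$ is either terminal or initial. When $x^*A$ is terminal, the pullback projection $x^*A\to 1$ is an isomorphism, and a diagram chase around the defining pullback square exhibits $x$ as factoring through $A$.

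The one step needing a genuine categorical input is the remaining case, where $x^*A$ is initial and I must produce a factorization of $x$ through $\neg A$. My primary approach is to use that the pullback functor $x^*\maps\Sub(X)\to\Sub(1)$ preserves Heyting negation---a standard property of the subobject fibration of a Heyting category (see \cite{ptj:elephant})---giving $x^*(\neg A) = \neg(x^*A) = \neg 0 = 1$, so that $x$ factors through $\neg A$ exactly as before. Should one wish to avoid invoking preservation of negation, the same conclusion is available concretely: factoring $x$ through its image $\bar x\mono X$ and using that $0$ is strict initial in a coherent category, $x^*A = 0$ forces $\bar x\cap A = 0$, whence $\bar x\le\neg A$ by the universal property of $\neg A$ as the largest subobject disjoint from $A$. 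Either way $x$ factors through $A\cup\neg A$, and a final appeal to \autoref{thm:pin-props}\ref{item:pp2} yields $A\cup\neg A = X$, so that \bS\ is Boolean. I expect this initial-object case to be the only point requiring care, and the plan is to dispatch it via negation-preservation, holding the image argument in reserve.
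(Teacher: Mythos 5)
Your proof is correct and follows essentially the same route as the paper: Booleanness plus indecomposability of $1$ for one direction, and for the converse the reduction via the strong generator to pulling back $A$ along each global element and applying the subterminal dichotomy. The only difference is that you spell out the initial-object case (via preservation of Heyting negation under pullback, or the image/strictness argument), a step the paper's proof asserts without elaboration.
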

\begin{proof}
  Assuming~\ref{item:btv-bool}, if $A$ is subterminal, then for its Heyting complement $\neg A$ we have $A\cup \neg A\cong 1$.
  By indecomposability of $1$, it follows that either $A$ is terminal or $\neg A$ is terminal, and the latter implies that $A$ is initial.

  Conversely, assuming~\ref{item:btv-tv}, let $A\mono X$ be a monomorphism and $\neg A \mono X$ its Heyting complement.
  Since $1$ is a strong generator, to show $A\cup\neg A \cong X$ it suffices to show every $x:1\to X$ factors through $A$ or $\neg A$.
  But given $x$, the pullback $x^*(A)$ is subterminal, hence either terminal or initial.
  In the former case, $x$ factors through $A$; while in the latter case it factors through $\neg A$.
\end{proof}

\begin{rmk}
  In terms of the internal logic, indecomposability of $1$ corresponds to the \emph{disjunction property} (if $\ph\vee\psi$ is true, then either $\ph$ is true or $\psi$ is true), while projectivity of $1$ corresponds to the \emph{existence property} (if $\exists x.\ph(x)$ is true, then there is a particular $a$ such that $\ph(a)$ is true).
  We might also call nonemptiness of $1$ the \emph{negation property} ($\bot$ is not true).
\end{rmk}


By a \emph{structural set theory}, we will informally mean an extension of the axiomatic theory describing a constructively well-pointed Heyting pretopos.
Not many structural set theories have been given particular names, but here are a few.
\begin{blist}
\item Lawvere's \textbf{Elementary Theory of the Category of Sets (ETCS)}, defined in~\cite{lawvere:etcs}, is the theory of a well-pointed topos with a \nno\ satisfying full classical logic and the axiom of choice.
\item Palmgren's \textbf{Constructive ETCS (CETCS)}, defined in~\cite{palmgren:cetcs}, is the theory of a constructively well-pointed $\Pi$-pretopos with a \nno\ and enough projectives.
\item We will refer to the theory of a constructively well-pointed topos with a \nno\ as \textbf{Intuitionistic ETCS (IETCS)}.
\end{blist}

\section{Structural set theory as set theory}
\label{sec:sst-as-setth}

So far, we have expressed the axioms of structural set theory in a form familiar to category theorists.
However, they can equivalently be formulated in a much more ``set-theoretic''  way, as was done in~\cite{lawvere:etcs}.
Besides its intrinsic interest, this reformulation facilitates the comparison with material set theories.

First of all, we formulate a structural counterpart of the notion of \ddo-formula.
For simplicity, we henceforth extend our language with a particular object-term ``$1$'', and modify the terminal-object axiom to assert simply that \qq{$1$ is terminal}.
Now we define:
\begin{blist}
\item An arrow-variable $x\colon 1\to X$ whose domain is $1$ is a \textbf{\ddo-variable}.
\item A context containing only \ddo-variables is a \textbf{\ddo-context}.
\item An equality $(f=g)$ of arrow-terms $f,g\colon 1\to X$ with source $1$ is a \textbf{\ddo-atomic formula}.
\item A quantifier over a \ddo-variable is a \textbf{\ddo-quantifier}.
\item A formula whose only atomic subformulas are \ddo-atomic and whose only quantifiers are \ddo-quantifiers is a \textbf{\ddo-formula}.
\end{blist}
Note that a \ddo-formula may contain non-\ddo\ \emph{free} variables.
We can now reformulate the basic structure of a Heyting category as a separation axiom.

\begin{prop}\label{thm:ddo-sep}
  Let \bS\ be a category with finite limits in which $1$ is a strong generator.  Then the following are equivalent.
  \begin{enumerate}
  \item \bS\ is a constructively well-pointed Heyting category.\label{item:ddosep-phc}
  \item \bS\ satisfies the schema of \emph{\ddo-separation}: for any \ddo-formula $\ph(x)$ with free \ddo-variable $x\colon 1\to X$, \qq{there exists a subobject $S\mono X$ such that any map $x\colon 1\to X$ factors through $S$ if and only if $\ph(x)$ holds}.\label{item:ddosep-sep}
  \end{enumerate}
\end{prop}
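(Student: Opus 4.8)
The plan is to prove the two implications separately, relying throughout on a principle that already follows from finite limits together with $1$ being a strong generator: \emph{every subobject is determined by the global elements that factor through it}. Indeed, for subobjects $A\mono X$ and $B\mono X$ one has $A\le B$ iff every $x\colon 1\to X$ factoring through $A$ also factors through $B$ --- form the pullback $A\cap B$ and note that $A\cap B\to A$ is monic and, by hypothesis, surjective on global elements, hence an isomorphism. From this I would extract the two tools used below: a map is monic iff it is injective on global elements (apply the strong-generator property to the equalizer of the two legs of its kernel pair), and a monomorphism is an isomorphism iff it is surjective on global elements.

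For (i) \imp (ii) I would induct on the structure of the $\ddo$-formula $\ph$, interpreting a $\ddo$-formula with free $\ddo$-variables $x_1\colon1\to X_1,\dots,x_n\colon1\to X_n$ as a subobject $\mm\ph\mono X_1\times\cdots\times X_n$ and proving simultaneously that $\langle x_1,\dots,x_n\rangle$ factors through $\mm\ph$ iff $\ph$ holds of those elements. Each arrow-term of source $1$ induces a morphism $X_1\times\cdots\times X_n\to Y$ (replace the $\ddo$-variables by projections), so a $\ddo$-atomic formula becomes an equalizer; $\top$ and $\bot$ give the whole object and the least subobject; $\meet$ and $\join$ give intersection and union, the latter requiring \emph{coherency} for existence and, crucially, \emph{indecomposability} of $1$ (in the pullback-stable form of \autoref{rmk:nonelem-wpt}) to know a global element of $\mm\ph\cup\mm\psi$ factors through one of the two; $\imp$ (hence $\neg$) gives Heyting implication, where I use that $x^*$ preserves Heyting implication and that in $\Sub(1)$ the relation $U\le V$ is, by the strong-generator principle, equivalent to ``$U\iso1$ implies $V\iso1$''; $\im$ gives the image along a product projection, where \emph{projectivity} of $1$ is exactly what lifts a global element of the image to a witness; and $\coim$ gives the dual image, where the Beck--Chevalley isomorphism (valid in any Heyting category) for the pullback of the projection along $\langle x_1,\dots,x_n\rangle$ reduces factoring through $\coim_\pi\mm\ph$ to the fibre subobject $\{\,y:(x_1,\dots,x_n,y)\in\mm\ph\,\}$ being all of $Y$, which the strong-generator principle reads off as $\ph$ holding for every $y$. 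Thus each of the four clauses of \autoref{def:wpt} is consumed by exactly one logical operation.

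For (ii) \imp (i) I would use $\ddo$-separation to build the Heyting structure and then check the remaining well-pointedness clauses, verifying each universal property by comparing global elements as above. Images are cut out by $\exists y.(f\circ y=x)$, finite unions by $(x\in A)\join(x\in B)$ --- where $x\in A$ abbreviates the $\ddo$-formula $\exists a.(m_A\circ a=x)$ --- and dual images $\coim_f(A)$ by $\forall x.(f\circ x=y\imp x\in A)$; pullback-stability of unions is again a comparison of global elements. The least subobject $0\mono1$ (cut out by $\bot$) is an initial object: it has no global elements, giving nonemptiness in the sense of \autoref{def:wpt} and, by the equalizer argument, uniqueness of maps out of it, while a map $0\to Z$ exists because the projection $0\times Z\to0$ is monic (its kernel pair has no global elements) and vacuously surjective on global elements, hence an isomorphism. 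Indecomposability is immediate from the construction of unions, since $1=A\cup B$ forces $\mathrm{id}_1$ through $A$ or $B$; and projectivity holds because a regular epimorphism $p\colon Y\epi X$ factors through the mono cut out by $\exists y.(p\circ y=x)$, which a regular epi through a mono forces to be invertible, so every global element of $X$ lifts.

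The genuinely delicate step is \emph{regularity}: I must show the corestriction $e\colon Y\to\mathrm{im}(f)$ produced above is an honest regular epimorphism and that regular epimorphisms are pullback-stable. My plan is to prove that covers (maps surjective on global elements) coincide with regular epimorphisms. That a cover $e$ is the coequalizer of its kernel pair is the crux: given $h\colon Y\to W$ annihilating that kernel pair, the subobject of $\mathrm{im}(f)\times W$ cut out by $\exists y.(e\circ y=s\meet h\circ y=w)$ (with $s,w$ the two coordinates) is, by the strong-generator principle, the graph of a unique $\bar h$ with $\bar h\circ e=h$, exhibiting the universal property; conversely every regular epimorphism is a cover by the projectivity just established; and covers are obviously stable under pullback, since a global element of the base lifts through the original cover and pairs with itself into the pullback. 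Once regular epis are identified with covers, the factorization $e$ followed by a mono is the required regular-epi/mono factorization and the remaining clauses follow. This descent argument --- not the inductions, which are routine --- is where the real content of the proposition lies.
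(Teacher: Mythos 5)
Your proposal is correct, and its overall skeleton matches the paper's proof: the same structural induction for (i)$\Rightarrow$(ii), interpreting $\ddo$-formulas as subobjects of a product of the codomains of the free $\ddo$-variables, with projectivity of $1$ consumed by $\exists$, indecomposability by $\join$, nonemptiness by $\bot$, and the strong-generator property by $\forall$ and $\imp$; and, for (ii)$\Rightarrow$(i), the same use of $\ddo$-separation to manufacture images, unions, bottom subobjects, and dual images, with indecomposability, nonemptiness, and projectivity read off from those constructions. The one place you genuinely diverge is the step you rightly single out as the crux: regularity. The paper takes \cE\ to be the maps surjective on global elements and \cM\ the monomorphisms, checks that $(\cE,\cM)$ is a pullback-stable factorization system, and then invokes the general categorical fact that such a system (with \cM\ the monos) makes the category regular with \cE\ exactly the regular epimorphisms, deducing projectivity of $1$ only afterwards; you instead prove directly that every cover is the coequalizer of its kernel pair, using $\ddo$-separation to cut out the graph of the induced map and the strong-generator principle to see that this graph is single-valued, total, and hence invertible over its domain, and you obtain projectivity independently (a regular epi factors through its separation-image, which it forces to be invertible). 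Your route is more self-contained---it never appeals to the nontrivial external theorem about stable factorization systems, replacing it with a short argument that exploits the separation schema you are assuming anyway---at the cost of a little bookkeeping you leave implicit: the equality $\bar h \circ e = h$ is first verified on global elements and must then be upgraded to an equality of morphisms, which requires knowing that $1$ is a generator; this does follow from your hypotheses (equalize the two maps and apply the strong-generator property), so it is compression rather than a gap. Both arguments are sound; the paper's is shorter on the page, yours is lighter on prerequisites.
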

\begin{proof}
  First we assume~\ref{item:ddosep-phc} and prove~\ref{item:ddosep-sep}.
  As usual, in~\ref{item:ddosep-sep} we are allowing \ph\ to exist in an arbitrary context \Gm, and the variables occurring in \Gm\ are not restricted to be \ddo.
  To make the induction go through, we prove a more general statement.
  In addition to an arbitrary context \Gm, we suppose \Th\ to be a \ddo-extension of \Gm\ (that is, \Th\ consists of \ddo-variables and $(\Gm,\Th)$ is a context).

  Let $\rho\colon \Gm \to \bS$ be a variable assignment, and let $\rho(\Th)$ denote the product in \bS\ of the images under \rho\ of the codomains of all the \ddo-variables in \Th.
  For example, if $\Th = (x\colon 1\to X, y\colon 1\to Y)$, then $\rho(\Th) = \rho(X)\times \rho(Y)$.
  (If \bS\ does not have assigned products, we simply chose some such product.)
  Let $\ph(\Th)$ be a formula in context \Gm; we will prove, by structural induction on \ph, that there exists a subobject $S\mono \rho(\Th)$ such that $t\colon 1\to \rho(\Th)$ factors through $S$ if and only if $\ph(t)$ holds.
  This implies~\ref{item:ddosep-sep} by taking \Th\ to be a singleton.
  
  We begin by applying the pullback functor $\rho(\Th)^*\colon \bS \to \bS/\rho (\Th)$ to $\rho$, obtaining a new variable assignment $\rho_\Th\colon \Gm \to \bS/\rho(\Th)$.
  Now for any variable $x\colon 1\to X$ in \Th, the corresponding projection $\rho(\Th) \to \rho(X)$ induces a map
  \[\pi_x\colon 1_{\rho (\Th)}\to \rho(\Th)^*\rho(X) = \rho_\Th(X)\]
  in $\bS/\rho (\Th)$.
  We define $\rho_\Th' \colon (\Gm,\Th) \to \bS/\rho (\Th)$ to extend $\rho_\Th$ by $\rho_\Th'(x) = \pi_x$ for each such $x$.
  
  The variable assignment $\rho_\Th'$ allows us to interpret each object- and arrow-term in context $(\Gm,\Th)$ in $\bS/\rho (\Th)$.
  In particular, each \ddo-term is interpreted by a morphism with domain $\rho (\Th)$.
  If $f$ and $g$ are two such \ddo-terms, we let $\mm{f=g} \mono \rho(\Th)$ denote the equalizer of $\rho_\Th'(f)$ and $\rho_\Th'(g)$.
  It is clear that $t\colon 1\to \rho (\Th)$ factors through $\mm{f=g}$ if and only if $(f=g)(t)$ holds.

  Now we can build up $\mm{\ph}$ for all other formulas $\ph$ in the usual inductive way using the Heyting category structure of \bS, with intersections and unions interpreting $\meet$ and $\join$, the Heyting implication for $\imp$, images for $\im$, and dual images for $\coim$.
  It is easy to verify, using all parts of the definition of constructive well-pointedness, that at each step the desired property is maintained.

  Conversely, assume \ddo-separation~\ref{item:ddosep-sep} and that $1$ is a strong generator.
  The argument of \autoref{thm:pin-props}\ref{item:pp1a} still applies to show that a morphism is monic iff it is injective on global elements.
  Let \cM\ denote the class of monos and \cE\ the class of morphisms that are surjective on global elements.
  Because $1$ is a strong generator, any morphism in \cE\ is extremal epic (i.e.\ factors through no proper subobject of its codomain).
  Moreover, \cE\ is evidently stable under pullback.

  For any map $f\colon Y\to X$, apply \ddo-separation to the formula $\exists y\in Y. (f(y)=x)$ to obtain a monic $m\colon S\mono X$.
  The pullback of $m$ along $f$ is a monic which is bijective on global elements, hence an isomorphism; thus $f$ factors through $m$, and the factorization $e\colon Y\to S$ is in \cE\ by construction.
  Therefore, $(\cE,\cM)$ is a pullback-stable factorization system, from which it follows that \bS\ is a regular category and that \cE\ is exactly the class of regular epics---and hence $1$ is projective.

  Now given monos $m\colon A\mono X$ and $n\colon B\mono X$, apply \ddo-separation to the formula $\exists a\in A.(m(a)=x) \join \exists b\in B.(n(b)=x)$ to obtain a monic $S\mono X$.
  \autoref{thm:pin-props}\ref{item:pp2} still applies to show that $A\subseteq S$ and $B\subseteq S$ and that if $A\subseteq T$ and $B\subseteq T$ then $S\subseteq T$; thus $S=A\cup B$.
  The defining property of $S$ (it contains precisely those $1\to X$ factoring through either $A$ or $B$) is evidently stable under pullback.
  Similarly, from the formula $\ph(x) = \bot$ we obtain a pullback-stable bottom element $0\mono X$, so \bS\ is a coherent category.
  Moreover, by the construction of unions and empty subobjects, it follows that $1$ is indecomposable and nonempty.
  Finally, dual images can similarly be constructed by applying \ddo-separation to a formula with a universal \ddo-quantifier.
\end{proof}


Further categorical properties can also be reformulated in set-theoretic terms.

\begin{prop}\label{thm:boolean}
  A constructively well-pointed Heyting category is Boolean if and only if it satisfies \emph{\ddo-classical logic}: $\ph\vee\neg\ph$ for all \ddo-formulas $\ph$.
\end{prop}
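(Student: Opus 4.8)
The plan is to run everything through \autoref{thm:ddo-sep}, which tells us that the constructively well-pointed Heyting category \bS\ satisfies \ddo-separation. Consequently, any \ddo-formula \ph\ all of whose free variables have been assigned values by some $\rho$ determines a subterminal object $\mm{\ph}\mono 1$, and $\bS\ss_\rho\ph$ holds exactly when $\mm{\ph}$ is terminal; moreover $\mm{\neg\ph}=\neg\mm{\ph}$ (Heyting complement in $\Sub(1)$), and $\bS\ss_\rho(\ph\join\psi)$ iff $\bS\ss_\rho\ph$ or $\bS\ss_\rho\psi$, the latter using indecomposability of $1$. Combined with \autoref{thm:wp-bool-tv}, which identifies Booleanness with the condition that every subterminal object be terminal or initial, this reduces both implications to elementary manipulations of subterminal objects.

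For the forward direction, I would assume \bS\ is Boolean and let \ph\ be a \ddo-formula together with an arbitrary assignment $\rho$ of \emph{all} its free variables (the ambient context being implicitly universally quantified). Then $\mm{\ph}\mono 1$ is subterminal, so by \autoref{thm:wp-bool-tv} it is either terminal or initial. In the first case $\bS\ss_\rho\ph$; in the second case $\mm{\neg\ph}=\neg\mm{\ph}\cong 1$ is terminal, so $\bS\ss_\rho\neg\ph$. Either way $\bS\ss_\rho(\ph\join\neg\ph)$, which is exactly \ddo-classical logic.

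For the converse, I would assume \ddo-classical logic and, invoking \autoref{thm:wp-bool-tv}, reduce to showing that every subterminal object is terminal or initial. Given $a\colon A\mono 1$, I would apply \ddo-classical logic to the \ddo-formula
\[ \ph \;\equiv\; \exists x. (m\circ x = 1_1) \]
in the context $(X,\,m\colon X\to 1)$, interpreting $X$ as $A$ and the (non-\ddo) free arrow-variable $m$ as $a$. Since $a$ is monic its image is $A$ itself, so $\mm{\ph}\cong A$; hence $\bS\ss_\rho\ph$ holds precisely when $a$ is split epic, i.e.\ when $A$ is terminal, while $\bS\ss_\rho\neg\ph$ holds precisely when $\neg A\cong 1$, i.e.\ when $A$ is initial. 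The instance $\bS\ss_\rho(\ph\join\neg\ph)$ then gives, by indecomposability of $1$, that $A$ is terminal or initial.

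The routine but essential point to get right is the correspondence between metatheoretic satisfaction of a \ddo-formula and terminality of the associated $\mm{\ph}$---in particular that $\bS\ss_\rho\neg\ph$ corresponds to \emph{initiality} of $\mm{\ph}$, not to the mere failure of its terminality; this is exactly where the intuitionistic content lives and where careless classical reasoning would trivialize the statement. The only genuine design choice is the parametrized formula \ph\ in the converse that ``names'' an arbitrary subterminal object: using a free arrow-variable $m\colon X\to 1$ as a parameter and reading off its image is what makes an arbitrary subterminal expressible, since a closed \ddo-formula need not cut out a prescribed subobject of $1$.
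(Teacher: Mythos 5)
Your proof is correct, but it takes a different route from the paper's. The paper argues directly with arbitrary subobjects: for the direction from \ddo-classical logic to Booleanness it applies excluded middle to the formula \qq{$x$ factors through $S$} (free \ddo-variable $x\cin A$, with $S\mono A$ a parameter), obtaining that every global element of $A$ factors through $S$ or through $\neg S$, whence $S\cup\neg S=A$ because $1$ is a strong generator; conversely, Booleanness makes each classifying subobject $\mm{\ph}$ complemented, and validity of $\ph\join\neg\ph$ follows via indecomposability. You instead factor both directions through \autoref{thm:wp-bool-tv}, so that all the work happens at the level of subterminal truth values: Booleanness becomes two-valuedness, the forward direction is the terminal-or-initial dichotomy for $\mm{\ph}\mono 1$, and the backward direction uses the parametrized formula $\exists x. (m\circ x=1_1)$ to name an arbitrary subterminal. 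This is a legitimate modularization---in effect, the pullback-along-global-elements argument inside the paper's proof of \autoref{thm:wp-bool-tv} does the job that the direct manipulation of $S\mono A$ does in the paper's proof of this proposition---and your flagged subtlety is handled correctly: $\bS\ss_\rho\neg\ph$ corresponds to initiality of $\mm{\ph}$, which for subterminals is indeed equivalent to non-terminality by \autoref{thm:pin-props}\ref{item:pp3}, though the disjunction \qq{terminal or initial} remains intuitionistically nontrivial. What the paper's version buys is self-containedness (no appeal to \autoref{thm:wp-bool-tv}) and an explicit complement for every subobject; what yours buys is that the formula-to-subobject dictionary of \autoref{thm:ddo-sep} only ever needs to be invoked over the terminal object.
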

\begin{proof}
  For a subobject $S\mono A$, let $\neg S \mono A$ be the Heyting negation.
  By well-pointedness, $x\colon 1\to A$ factors through $\neg S$ if and only if it does not factor through $S$.
  Since \qq{$x$ factors through $S$} is \ddo, giving \ddo-classical logic every such $x$ factors through $S$ or $\neg S$; so by well-pointedness, $S\cup \neg S = A$ and $S$ is complemented.

  Conversely, if the category is Boolean, then any \ddo-formula \ph\ has a classifying subobject $\mm{\ph}$, and negation of formulas corresponds to the Heyting negation.
  Thus, if $\mm{\ph}$ is complemented, $\ph \vee \neg\ph$ must be valid.
\end{proof}

\begin{prop}\label{thm:wp-powerobj}
  A constructively well-pointed Heyting category is a topos if and only if it satisfies the following axiom:
  \begin{blist}
  \item \qq{for any $X$ there is an object $P X$ and a relation $[\ordin_X] \mono X\times P X$ such that for any $S\mono X$, there exists a unique $s\colon 1\to  P X$ such that for all $x\colon 1\to X$, we have that $x$ factors through $S$ if and only if $\pair(x,s)$ factors through $[\ordin_X]$}.
  \end{blist}
\end{prop}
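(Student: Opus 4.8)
The plan is to reduce the categorical power-object property to its action on global elements, where constructive well-pointedness gives full control. For the forward direction, suppose $\bS$ is a topos, so each $X$ has a categorical power object $PX$ with membership $[\ordin_X]\mono X\times PX$ satisfying the usual universal property for an arbitrary parametrizing object $Y$. Specializing to $Y=1$ and identifying $X\times 1$ with $X$, a subobject $S\mono X$ is the same as a relation $R\mono X\times 1$, and its classifying map is a unique global element $s\colon 1\to PX$ with $S=(1_X\times s)^*[\ordin_X]$. By \autoref{thm:pin-props}\ref{item:pp2} this pullback is determined by its global elements, which yields exactly the stated biconditional that $x$ factors through $S$ iff $\pair(x,s)$ factors through $[\ordin_X]$; uniqueness of $s$ is inherited from uniqueness of the classifying map.

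The substance is the reverse direction: assuming the global-element axiom, I must upgrade it to the full universal property, producing for every $Y$ and every $R\mono X\times Y$ a unique $r\colon Y\to PX$ with $R=(1_X\times r)^*[\ordin_X]$. The idea is to assemble $r$ from its values on global elements. For each $y\colon 1\to Y$ let $R_y\mono X$ be the fiber of $R$, i.e.\ the pullback of $R$ along $x\mapsto\pair(x,y)$; the hypothesis supplies a unique $r_y\colon 1\to PX$ classifying $R_y$. To organize these into a morphism I would use $\Delta_0$-separation (\autoref{thm:ddo-sep}, available since $\bS$ is a constructively well-pointed Heyting category): ``$\pair(x,y)$ factors through $R$'' is captured by a $\Delta_0$-formula $\ph_R(x,y)$ (existentially quantifying over a global element of the object $R$ whose composite into $X\times Y$ is $\pair(x,y)$), and similarly ``$\pair(x,s)$ factors through $[\ordin_X]$'' by some $\ph_\in(x,s)$, so that
\[ \psi(y,s)\;\equiv\;\forall x\colon 1\to X.\,\bigl(\ph_R(x,y)\Iff\ph_\in(x,s)\bigr) \]
is a $\Delta_0$-formula in the $\Delta_0$-variables $y$ and $s$. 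Applying $\Delta_0$-separation with ambient object $Y\times PX$ yields a subobject $G\mono Y\times PX$ through which a global element $\pair(y,s)$ factors exactly when $\psi(y,s)$ holds.

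From here the argument is a sequence of global-element checks. First, for each $y$ the element $r_y$ is the unique $s$ with $\psi(y,s)$, since $\ph_R(x,y)$ holds iff $x$ factors through $R_y$; hence $G\to Y$ is bijective on global elements and so, by \autoref{thm:pin-props}\ref{item:pp1a}, an isomorphism. I then define $r\colon Y\to PX$ as the composite of $(G\to Y)\inv$ with the other projection, so that $r\circ y=r_y$ for every $y$. Verifying $R=(1_X\times r)^*[\ordin_X]$ reduces, via \autoref{thm:pin-props}\ref{item:pp2}, to global elements: $\pair(x,y)$ factors through $R$ iff $x$ factors through $R_y$ iff $\pair(x,r_y)$ factors through $[\ordin_X]$ iff $\pair(x,r\circ y)$ does, which is precisely factoring through $(1_X\times r)^*[\ordin_X]$. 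Uniqueness of $r$ follows because any competitor $r'$ must have $r'\circ y$ classifying $R_y$ for all $y$, whence $r'\circ y=r_y=r\circ y$, and parallel maps agreeing on all global elements coincide since $1$ is a strong (hence ordinary) generator.

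The main obstacle, and the only place where genuine work is needed, is the passage from the pointwise data $(r_y)_y$ to an honest morphism $r$, i.e.\ showing $G\to Y$ is an isomorphism. This is exactly where constructive well-pointedness is indispensable: projectivity and indecomposability of $1$ are what make $\Delta_0$-separation (\autoref{thm:ddo-sep}) produce a subobject with the correct global elements, and strong generation is what converts ``bijective on global elements'' into ``isomorphism''. Everything else is formal once $R$ and $[\ordin_X]$ are recognized as determined by their global sections.
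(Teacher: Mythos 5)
Your proposal is correct and takes essentially the same route as the paper's proof: the paper likewise handles the forward direction by the universal property of power objects, and for the converse uses \ddo-separation to carve out the subobject $F\mono Y\times P X$ of pairs $\pair(y,s)$ satisfying the biconditional, observes that $F\to Y$ is invertible because $1$ is a strong generator, and then verifies $f^*([\ordin_X])\cong R$ and uniqueness via \autoref{thm:pin-props}. Your extra elaborations (the explicit fiberwise elements $r_y$, the check that the relevant formulas are \ddo, and the generator argument for uniqueness) are just details the paper leaves implicit.
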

\begin{proof}
  The necessity of this axiom follows from the universal property of power objects.
  Conversely, assuming it, let $R\mono X\times Y$ be any subobject.
  Using \ddo-separation, let $F\mono Y\times P X$ contain exactly those pairs $\pair(y,s)$ such that for all $x\colon 1\to  X$, we have that  $\pair(x,y)$ factors through $R$ if and only if $\pair(x,s)$ factors through $[\ordin_X]$.
  By the given axiom, for any $y\colon 1 \to Y$ there is a unique $s\colon 1\to P X$ such that $x$ factors through $y^*R$ if and only if $\pair(x,s)$ factors through $[\ordin_X]$, and this if and only if $\pair(y,s)$ factors through $F$.
  Therefore, since $1$ is a strong generator, the projection $F\to Y$ is invertible, and so $F$ defines a map $f\colon Y\to P X$.
  By \autoref{thm:pin-props}\ref{item:pp2}, we have $f^*([\ordin_X]) \cong R$, and uniqueness is likewise easy.
\end{proof}

\begin{prop}\label{thm:wp-lcc}
  A constructively well-pointed Heyting category is cartesian closed if and only if it satisfies the following axiom:
  \begin{blist}
  \item \qq{For any $X$ and $Y$, there is an object $Y^X$ and an arrow $\mathrm{ev}\colon Y^X \times X\to Y$ such that for any $f\colon X\to Y$, there is a unique $z\colon 1\to Y^X$ such that for all $x\colon 1\to X$ we have $\mathrm{ev}\circ \pair(z,x) = f \circ x$}.
  \end{blist}
\end{prop}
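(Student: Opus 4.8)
The plan is to follow the same template as \autoref{thm:wp-powerobj}: necessity of the axiom is immediate from the universal property of the exponential specialized to global elements, while for sufficiency I will bootstrap the stated \emph{global-element} universal property up to the full universal property over an arbitrary base object, using $\ddo$-separation (available by \autoref{thm:ddo-sep}, since a constructively well-pointed Heyting category has finite limits and a strong generator $1$) to assemble a graph and then using strong generation to convert it into a morphism. Throughout, the role played by $PX$ and $[\ordin_X]$ in the power-object argument is now played by $Y^X$ and $\mathrm{ev}$.

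For necessity, assume $\bS$ is cartesian closed, with exponential $Y^X$ and evaluation $\mathrm{ev}\colon Y^X\times X\to Y$. Given $f\colon X\to Y$, I transpose $f\circ\pi_X\colon 1\times X\to Y$ (where $\pi_X\colon 1\times X\to X$) to get $z\colon 1\to Y^X$ with $\mathrm{ev}\circ(z\times 1_X)=f\circ\pi_X$. Restricting along a global element $x\colon 1\to X$ via $\pair(z,x)=(z\times 1_X)\circ\pair(\mathrm{id}_1,x)$ and using $\pi_X\circ\pair(\mathrm{id}_1,x)=x$ yields $\mathrm{ev}\circ\pair(z,x)=f\circ x$ for all $x$, giving existence. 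For uniqueness, if $z_1,z_2$ both have this property then the un-transposes $\mathrm{ev}\circ(z_i\times 1_X)\colon 1\times X\to Y$ agree on every global element $\pair(\mathrm{id}_1,x)$ of $1\times X$; as these are all the global elements (since $1\times X\cong X$), the two maps agree because $1$ is a strong generator, and uniqueness of transposes then forces $z_1=z_2$.

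For sufficiency, assume the axiom and let $g\colon T\times X\to Y$ be arbitrary; I must produce a unique $\hat g\colon T\to Y^X$ with $\mathrm{ev}\circ(\hat g\times 1_X)=g$. Applying $\ddo$-separation to the $\ddo$-formula in free $\ddo$-variables $t\colon 1\to T$ and $z\colon 1\to Y^X$,
\[ \forall x\colon 1\to X.\ \bigl(\mathrm{ev}\circ\pair(z,x) = g\circ\pair(t,x)\bigr), \]
I obtain a subobject $G\mono T\times Y^X$ through which $\pair(t,z)$ factors exactly when this holds. The axiom (applied to the transpose of $g\circ\pair(t,-)$) says precisely that for each $t\colon 1\to T$ there is a unique such $z$, so the projection $G\to T$ is bijective on global elements and hence an isomorphism by \autoref{thm:pin-props}\ref{item:pp1a}; composing its inverse with $G\to Y^X$ defines $\hat g$. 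To check $\mathrm{ev}\circ(\hat g\times 1_X)=g$, it suffices by strong generation to test on global elements $\pair(t,x)$ of $T\times X$, where $(\hat g\times 1_X)\circ\pair(t,x)=\pair(\hat g\circ t,x)$ and the defining property of $G$ gives $\mathrm{ev}\circ\pair(\hat g\circ t,x)=g\circ\pair(t,x)$. Uniqueness of $\hat g$ follows because any competitor $h$ satisfies $h\circ t=\hat g\circ t$ for every $t$ by the uniqueness clause of the axiom, whence $h=\hat g$ since $1$ is a strong generator.

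The only genuinely delicate points, both minor, are confirming that the separating formula is legitimately $\ddo$ — its atomic parts are equalities of arrows $1\to Y$ and its sole quantifier ranges over the $\ddo$-variable $x\colon 1\to X$ — and keeping the bookkeeping of the canonical isomorphism $1\times X\cong X$ straight in the necessity direction. Everything else is a routine transcription of the power-object argument in \autoref{thm:wp-powerobj}.
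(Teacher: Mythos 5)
Your proof is correct and is essentially the paper's own argument: the paper's proof of this proposition is literally the single word ``Analogous'' (referring to \autoref{thm:wp-powerobj}), and your writeup is exactly that analogy carried out --- necessity from the universal property, and sufficiency by using \ddo-separation to build the graph $G\mono T\times Y^X$ of the would-be transpose, invoking strong generation (via \autoref{thm:pin-props}) to invert the projection, and verifying the exponential laws on global elements.
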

\begin{proof}
  Analogous.
\end{proof}

We will see in \autoref{thm:cc-lcc} that in the presence of a structural replacement axiom, the axiom in \autoref{thm:wp-lcc} also implies \emph{local} cartesian closure, just as with limited \ddo-replacement and exponentiation in material set theory.

\begin{rmk}
  The expressive power of \ddo-formulas depends on the other categorical structure that is present.
  For instance, in a cartesian closed category, morphisms $f\colon X\to Y$ are equivalent to morphisms $1 \to Y^X$, so \ddo-separation implies separation for formulas including quantifiers over arbitrary arrow-variables.
  Similarly, in a topos we can effectively quantify over subobjects of any $A$ (that is, objects $S$ equipped with a monomorphism $S\mono A$) by quantifying over morphisms $1\to P A$; the mono $S\mono A$ is determined only up to isomorphism by its classifying map $1\to P A$, but by isomorphism-invariance (\autoref{thm:isoinvar-truth}) this is invisible to our logic.
\end{rmk}

\begin{cnv}\label{cnv:elements}
  From now on we will use the following more ``set-theoretic'' terminology and notation for structural set theories.
  We write \Set\ for the category in question, and call its objects \emph{sets} and its arrows \emph{functions}.
  We speak of morphisms $1\to X$ as \emph{elements} of $X$, and we write $x\cin X$ to mean $x\maps 1\to X$.
  If $f\colon X\to Y$ is a function and $x\cin X$, we write $f(x)\cin Y$ for $f\circ x\colon 1\to Y$.
  Similarly, we speak of monomorphisms $S\mono X$ as \emph{subsets} of $X$ and write $S\csub X$.

  The notations $\cin$ and $\csub$ are a hybrid of the familiar set-theoretic $\in$ and $\subseteq$ with the type-theoretic $x\colon A$.
  Unlike the former, but like the latter,
  they are not propositions of the theory that can be true or false, but rather typing judgments which indicate what sort of thing their left-hand argument is.
  By contrast, given $x\cin X$ and $S\csub X$, we write $x\in S$ for the statement \qq{$1\too[x] X$ factors through $S\mono X$}, which \emph{is} a proposition of the theory.
  Similarly, for $S,T\csub X$, we write $S\subseteq T$ for the statement \qq{$S\mono X$ factors through $T\mono X$}.
\end{cnv}

\section{Structural separation, fullness, and induction}
\label{sec:more-sst}

There are several axioms from material set theory for which we have not yet considered structural versions.
In this section, we consider full separation, fullness, and induction; in the next we consider collection and replacement.

The structural axiom of full separation simply generalizes \autoref{thm:ddo-sep}\ref{item:ddosep-sep} to unbounded quantifiers.
\begin{blist}
\item \emph{Separation:} For any formula $\ph(x)$, \qq{for any set $X$, there exists $S\csub X$ such that for any $x\cin X$, we have $x\in S$ if and only if $\ph(x)$}.
\end{blist}

\begin{lem}\label{thm:set-sep}
  If \bV\ satisfies the core axioms together with full separation, then $\bbSet(\bV)$ satisfies separation.
\end{lem}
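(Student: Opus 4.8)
The plan is to reduce structural separation in $\bbSet(\bV)$ to material full separation, by way of the translation of formulas described in \autoref{rmk:translation}. The essential observation is that, for each \emph{fixed} structural formula $\ph$, the assertion $\bbSet(\bV)\ss_\rho\ph$ is expressible by a material formula in the data of the assignment $\rho$. This is because every ingredient of $\bbSet(\bV)$ is materially definable: its objects are the sets of $\bV$, its arrows $A\to B$ are the functions from $A$ to $B$ (sets of Kuratowski pairs with the function property), and composition, identities, and equality of parallel arrows all unwind to material statements about these sets. So I would first produce this translation, then cut out the desired subset.

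Concretely, fix a structural formula $\ph(x)$ with free $\ddo$-variable $x\colon 1\to X$ in a context $\Gm$, and fix a material assignment $\rho$ of the variables of $\Gm$ to sets and functions of $\bV$. By induction on the structure of the fixed formula $\ph$ I would define a material formula $\ph^*(x)$, whose free material variable $x$ ranges over the elements of the set $X=\rho(X)$ (together with the material parameters coming from $\rho$), such that for every element $x\in X$ — equivalently every $x\cin X$ in $\bbSet(\bV)$, under the canonical correspondence recorded in the proof of \autoref{thm:set-wpt} — one has $\bV\ss\ph^*(x)$ iff $\bbSet(\bV)\ss\ph(x)$. The clauses are the expected ones: a $\ddo$-atomic equality $(f=g)$ of elements becomes material equality of the corresponding elements; $\meet,\join,\imp,\neg$ are preserved; an object-quantifier $\im Z$ or $\coim Z$ becomes the corresponding \emph{unbounded} material quantifier over all sets $Z$; and an arrow-quantifier over $f\colon Z\to W$ (with $Z,W$ already assigned) becomes an unbounded material quantifier guarded by the $\ddo$-formula \qq{$f$ is a function from $Z$ to $W$}.

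Having built $\ph^*$, I would invoke material full separation to form
\[ S = \setof{x\in X | \ph^*(x)}, \]
a subset of $X$, hence an injection $S\mono X$, i.e.\ a subobject $S\csub X$ in $\bbSet(\bV)$. By the defining property of $S$ together with the inductive correspondence above, an element $x\cin X$ satisfies $x\in S$ (that is, $1\too[x]X$ factors through $S$) precisely when $\bbSet(\bV)\ss\ph(x)$; this is exactly the required instance of structural separation. Isomorphism-invariance of truth (\autoref{thm:isoinvar-truth}) guarantees that the conclusion is insensitive to the incidental choices — of the terminal object $1=\{\emptyset\}$, of product sets, etc.\ — made while unwinding the satisfaction relation.

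The bulk of the work, and the only genuinely delicate step, is verifying the inductive correspondence between $\ph^*$ and $\ph$: one must check that the dependent-type structure of the language (arrow-variables whose type depends on object-variables) is tracked correctly through the quantifier clauses, using the context restriction of \autoref{defn:lang-cat} that forbids quantifying over an object-variable occurring in the type of a free arrow-variable. I would stress that full (unbounded) material separation is exactly what the argument consumes, and that $\ddo$-separation would \emph{not} suffice: a structural formula may quantify over arbitrary objects and arrows of $\bbSet(\bV)$, and these translate into genuinely unbounded material quantifiers over all sets and all functions, so $\ph^*$ lies outside the $\ddo$ fragment. This is precisely the feature distinguishing the present lemma from the bounded case already treated in \autoref{thm:ddo-sep}.
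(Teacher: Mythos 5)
Your proposal is correct and follows essentially the same route as the paper: the paper's proof simply invokes the translation of \autoref{rmk:translation} to turn the structural formula $\ph$ into a material formula $\phhat$, and then applies material full separation to form $S = \setof{x\in X | \phhat(x)}$, exactly as you do. Your write-up merely makes explicit the inductive definition of the translation (including the passage from object- and arrow-quantifiers to unbounded material quantifiers, which is indeed why full rather than \ddo-separation is required) that the paper leaves implicit.
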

\begin{proof}
  As observed in \autoref{rmk:translation}, any formula \ph\ in $\bbSet(\bV)$ in the language of categories may be translated into a formula \phhat\ in \bV\ in the language of material set theory.
  If \bV\ satisfies material separation, then $S = \setof{x\in X | \phhat(x)}$ has the desired property for structural separation.
\end{proof}

The structural axiom of fullness is also a direct translation of the material one.
\begin{blist}
\item \emph{Fullness:} \qq{for any sets $X,Y$ there exists a relation $R\csub M\times X\times Y$ such that $R\epi M\times X$ is regular epic, and for any relation $S\csub X\times Y$ such that $S\epi X$ is regular epic, there exists an $s\cin M$ such that $(s,1)^*R \subseteq S$}.
\end{blist}

\begin{lem}\label{thm:set-fullness}
  If \bV\ satisfies the core axioms together with fullness, then $\bbSet(\bV)$ satisfies structural fullness.
\end{lem}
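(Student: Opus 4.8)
The plan is to follow the translation philosophy of \autoref{rmk:translation}: once the structural fullness axiom is spelled out in terms of global elements, it becomes an almost verbatim transcription of material fullness, so I would extract the witnessing data directly from the material witness.

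First I would unwind what the structural axiom asks for in $\bbSet(\bV)$. A relation $R\csub M\times X\times Y$ is, up to isomorphism (using isomorphism-invariance of truth, \autoref{thm:isoinvar-truth}), a genuine subset $R\subseteq M\times X\times Y$ in \bV. Since regular epis in $\bbSet(\bV)$ are exactly the surjections, the condition that $R\epi M\times X$ be regular epic says precisely that for every $(m,x)\in M\times X$ there is some $y$ with $(m,x,y)\in R$; equivalently, writing $R_m = \setof{(x,y) | (m,x,y)\in R}$, each $R_m$ is an entire relation from $X$ to $Y$. Dually, a subobject $S\csub X\times Y$ with $S\epi X$ regular epic is just an entire relation from $X$ to $Y$, and the pullback $(s,1)^*R$ along the element $s\cin M$ naming $m\in M$ computes fiberwise to $R_m$. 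Thus the structural axiom amounts to the assertion that there is a set $M$ and a family $(R_m)_{m\in M}$ of entire relations from $X$ to $Y$ such that every entire relation $S$ from $X$ to $Y$ contains some $R_m$---which is exactly material fullness.

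Concretely, then, I would apply material fullness to $a=X$ and $b=Y$ to obtain a set $m$ of entire relations from $X$ to $Y$ such that every entire relation from $X$ to $Y$ contains some $g\in m$. Taking $M := m$, I would use \ddo-separation on the product $M\times X\times Y$ (which exists by the proof of \autoref{thm:iz-topos}) to form
\[ R = \setof{ (g,x,y)\in M\times X\times Y | (x,y)\in g }. \]
The defining condition $(x,y)\in g$ is \ddo, so this is legitimate. Surjectivity of $R\to M\times X$ is immediate from entirety of each $g\in m$. For the universal property, given an entire relation $S\csub X\times Y$---which I may take to be a genuine subset $S\subseteq X\times Y$ up to isomorphism---material fullness supplies some $g\in m$ with $g\subseteq S$; letting $s\cin M$ be the element naming $g$, the fiber computation gives $(s,1)^*R = g \subseteq S$, as required.

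The proof has no deep content; the only things to keep straight are the two translations---between structural subobjects and material subsets (handled by isomorphism-invariance), and between \qq{$R\epi M\times X$ regular epic} and \qq{each fiber is entire}---together with checking that the formula cutting out $R$ is bounded so that \ddo-separation applies. So the main, and rather minor, obstacle is bookkeeping of exactly the kind anticipated in \autoref{rmk:translation}, rather than any genuine mathematical difficulty.
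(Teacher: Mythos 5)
Your proof is correct and takes essentially the same route as the paper, whose entire proof is the remark that the argument is ``just like the proofs for exponentials and power sets in \autoref{thm:iz-topos}''---i.e.\ exactly the direct construction you give: apply the material axiom, cut out the witnessing relation $R$ by \ddo-separation, and check the structural universal property via the fiberwise/global-element translation. You have simply written out the bookkeeping that the paper leaves implicit.
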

\begin{proof}
  Just like the proofs for exponentials and power sets in \autoref{thm:iz-topos}.
\end{proof}

\begin{rmk}
  We have stated the axiom of fullness in a form referring explicitly to global elements, analogous to the axioms appearing in Propositions \ref{thm:wp-powerobj} and \ref{thm:wp-lcc}.
  Thus, unlike the properties of being a topos or being locally cartesian closed, this axiom is only suitable in a well-pointed category.
  However, by interpreting the above axiom in the stack semantics of~\cite{shulman:stacksem}, we can obtain a fullness axiom that makes sense more generally.
  (In fact, it is exactly the categorical version of fullness from~\cite{vdbdb:nonwellfounded,vdbm:pred-i-exact}, but lacking their `smallness' conditions).
\end{rmk}

The axiom of induction, of course, only makes sense in the presence of the axiom of infinity.
The structural axiom of infinity (existence of an \nno) asserts that functions can be constructed by recursion, which implies Peano's induction axiom in the sense that any subset $S\csub N$ which contains $0\cin N$ and is closed under $s\colon N\to N$ must be all of $N$.
(The proof of \autoref{thm:iz-topos}\ref{item:iz-topos-nno} essentially shows that the converse holds in any \Pi-pretopos.)
It follows from \ddo-separation that \ddo-formulas can be proven by induction; the axiom of full induction extends this to arbitrary formulas.
\begin{blist}
\item \emph{Induction:} For any formula $\ph(x)$ with free variable $x\cin N$, where $N$ is an \nno, \qq{if $\ph(0)$ and $\forall n\cin N.(\ph(x)\imp \ph(s x))$, then $\ph(x)$ for all $x\cin N$}.
\end{blist}
Just as in material set theory, infinity and full separation together imply full induction.  We also have:

\begin{prop}\label{thm:set-ind}
  If \bV\ satisfies the core axioms of material set theory, and also the axioms of infinity, exponentials, and induction, then $\bbSet(\bV)$ has an \nno\ and satisfies induction.
\end{prop}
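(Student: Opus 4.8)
The plan is to obtain structural induction in $\bbSet(\bV)$ by transporting it across the logical translation of \autoref{rmk:translation} to the material axiom of full induction in \bV. The existence of an \nno\ requires nothing new: it is exactly the content of \autoref{thm:iz-topos}\ref{item:iz-topos-nno}, which under the present hypotheses (infinity and exponentials) produces an \nno. By isomorphism-invariance of truth (\autoref{thm:isoinvar-truth}) the induction schema need only be checked for one chosen \nno, so I would fix $N$ to be the set $\omega$ of the material infinity axiom, equipped with the structure $0=\emptyset$ and $s(x)=x\cup\{x\}$ exhibited in that proof. Recall also (as in \autoref{thm:set-wpt}) that the elements $x\cin N$, i.e.\ the maps $1\to\omega$ in $\bbSet(\bV)$, correspond canonically to the members $n\in\omega$ of \bV.

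Given a structural formula $\ph(x)$ with free \ddo-variable $x\cin N$, sitting in some ambient context $\Gm$ of parameters, I would form its translation $\phhat$ into the language of material set theory, exactly as in the proof of \autoref{thm:set-sep}. The key bookkeeping step is a dictionary lemma: for any assignment of the parameters of $\Gm$ (to sets and functions of \bV) and any element $x\cin N$ corresponding to $n\in\omega$, one has $\bbSet(\bV)\ss\ph(x)$ if and only if $\bV\ss\phhat(n)$. This is proved by a routine structural induction on \ph, running in parallel with the recursive definition of the translation, and is the same compatibility between the category-of-sets construction and the translation that already underlies \autoref{thm:iz-topos} and \autoref{thm:set-sep}.

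With the dictionary in hand the argument is short. Under the identifications $0\leftrightarrow\emptyset$ and $s\leftrightarrow(x\mapsto x\cup\{x\})$, the structural hypotheses $\ph(0)$ and $\forall n\cin N.\,(\ph(n)\imp\ph(sn))$ translate precisely into the hypotheses $\phhat(\emptyset)$ and $\forall n\in\omega.\,(\phhat(n)\imp\phhat(n\cup\{n\}))$ of the material full-induction schema. The parameters of $\Gm$ become the ambient free variables over which that schema is implicitly quantified (per the convention of \autoref{notn:contexts}), so for each fixed assignment I may invoke material full induction on $\phhat$ to conclude $\phhat(n)$ for all $n\in\omega$. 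Translating back through the dictionary yields $\ph(x)$ for all $x\cin N$, which is exactly structural induction.

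The only genuine obstacle is the dictionary lemma, and even there the single point to watch is that substituting the inductive variable commutes with translation: that the translation of $\ph$ with $x$ instantiated to the element $n$ agrees with $\phhat(n)$. This holds on the nose because \ddo-variables over $N$ are interpreted by elements of $\omega$ with no intervening quantifier or subobject manipulation, so I expect this to reduce to the same inductive verification used in the earlier theorems rather than to require any new idea.
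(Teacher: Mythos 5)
Your proposal is correct and follows essentially the same route as the paper: the paper's proof also takes $N=\omega$, translates the structural formula $\ph(x)$ into a material formula $\phhat$ via the translation of \autoref{rmk:translation} (as in \autoref{thm:set-sep}), and applies material full induction, with the \nno{} itself supplied by \autoref{thm:iz-topos}\ref{item:iz-topos-nno}. The ``dictionary lemma'' you spell out is exactly the compatibility the paper leaves implicit in the phrase ``can be rewritten as a formula in \bV,'' so your writeup is just a more detailed version of the same argument.
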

\begin{proof}
  As in the proof of \autoref{thm:set-sep}, any formula $\ph(x)$ in $\bbSet(\bV)$ with $x\cin N$ (where $N = \omega$) can be rewritten as a formula in \bV, to which the material axiom of induction can be applied.
\end{proof}

There is no particularly natural structural ``axiom of foundation,'' although in \S\ref{sec:conclusion} we will mention a somewhat related property.
We can, however, formulate structural axioms which are closely related to the material axiom of set-induction.
\begin{blist}
\item \emph{Well-founded induction:} For any formula $\ph(x)$ with free variable $x\cin A$, \qq{if $A$ is well-founded under the relation $\prec$, and moreover if $\ph(y)$ for all $y\prec x$ implies $\ph(x)$, then in fact $\ph(x)$ for all $x\cin A$}.
\item \emph{Extensional well-founded induction:} The same, but requiring $A$ to also be extensional.
\end{blist}
These axioms are implied by separation, since then we can form $\setof{ x\cin A | \ph(x) }$ and apply the definition of well-foundedness.
We can also say:

\begin{prop}
  If \bV\ satisfies the core axioms of material set theory, and also the axioms of set-induction and Mostowski's principle, then $\bbSet(\bV)$ satisfies extensional well-founded induction.
  If \bV\ additionally has power sets, then $\bbSet(\bV)$ satisfies well-founded induction.
\end{prop}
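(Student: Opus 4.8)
The plan is to reduce both statements, via the translation of \autoref{rmk:translation} together with the well-pointedness of $\bbSet(\bV)$, to an application of material set-induction along the genuine membership relation. Throughout, recall that in $\bbSet(\bV)$ a set $A$ is a set of \bV, its elements $x\cin A$ are the actual members $x\in A$, and a relation $\prec$ on $A$ is an honest binary relation in \bV. Under this dictionary the structural hypotheses that $(A,\prec)$ is well-founded, respectively extensional, translate into the corresponding material statements, and the structural formula $\ph$ with its inductive hypothesis translates into a material formula $\phhat$ together with its material inductive hypothesis.

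For the first statement, $(A,\prec)$ is a well-founded \emph{extensional} relation in \bV, so Mostowski's principle supplies a transitive set $T$ and an isomorphism $(A,\prec)\iso(T,{\in})$ of relations. Being a bijection carrying $\prec$ to the restriction of ${\in}$, this is an isomorphism of the structures $(A,\prec)$ and $(T,{\in})$ in $\bbSet(\bV)$, so by \autoref{thm:isoinvar-truth} it suffices to prove the induction principle for $(T,{\in})$. Since $T$ is transitive, for $t\in T$ the set of $\prec$-predecessors of $t$ is exactly $t$ itself, so the transported inductive hypothesis reads: $\phhat(x)$ for all $x\in t$ implies $\phhat(t)$. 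I would then apply material set-induction to the formula $\chi(x)\;:\equiv\;(x\in T\imp\phhat(x))$: in the step case, transitivity of $T$ shows that every $x\in y$ already lies in $T$, so the hypothesis $\chi(x)$ for all $x\in y$ delivers $\phhat(x)$ for all $x\in y$ and hence $\phhat(y)$. Set-induction then gives $\chi$ for all sets, i.e.\ $\phhat$ on all of $T$, which is the desired conclusion.

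For the second statement $(A,\prec)$ need not be extensional, so Mostowski's principle does not apply directly; moreover the naive argument---separating $\setof{a\cin A | \ph(a)}$ and invoking well-foundedness---is unavailable, since $\ph$ is unbounded and we have no full separation. Here power sets enter. I would first extensionalize $(A,\prec)$ by forming the largest bisimulation $\sim$ on $A$ as the union of all bisimulations, which constitute a \ddo-definable subset of $P(A\times A)$; one checks that $\sim$ is an equivalence relation and that the quotient $A/{\sim}$ (again constructed with power sets) is well-founded and extensional. Applying Mostowski's principle to $A/{\sim}$ yields a transitive set $T$ and a collapse $c\maps A\to T$ with $c(a)=\setof{c(b) | b\prec a}$. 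I would then transport $\ph$ along the (possibly non-injective) collapse by setting $\theta(t)\;:\equiv\;\forall a\in A.(c(a)=t\imp\phhat(a))$ and prove $\theta$ over $(T,{\in})$ by set-induction exactly as before. The step case is the only new point: given $t\in T$ and any $a$ with $c(a)=t$, each $b\prec a$ has $c(b)\in c(a)=t$, so $\theta(c(b))$ yields $\phhat(b)$; the inductive hypothesis for $\ph$ then gives $\phhat(a)$, whence $\theta(t)$. Thus $\theta$ holds on all of $T$, and since $c(a)\in T$ for every $a$ we recover $\phhat(a)$ for all $a\cin A$.

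The main obstacle is the construction in the second statement: assembling the extensional quotient of $(A,\prec)$ from power sets and verifying that it is well-founded, extensional, and correctly collapsed by $c$ (equivalently, justifying the relevant instance of well-founded recursion). Once that reduction to the extensional case is in place, both statements are routine applications of Mostowski's principle, isomorphism-invariance of truth, and material set-induction.
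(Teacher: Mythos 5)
Your proof is correct and follows essentially the same route as the paper: Mostowski's principle plus material set-induction relativized to the transitive collapse for the extensional case, and (with power sets) passage to the extensional quotient via the largest bisimulation to reduce the general case to it. The paper states this only as a three-sentence sketch; your transport of $\ph$ along the non-injective collapse via $\theta(t)$ correctly fills in the detail it leaves implicit.
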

\begin{proof}
  By Mostowski's principle, any extensional well-founded relation is isomorphic to a transitive set.
  Thus, set-induction performed over the resulting transitive set can be used for inductive proofs over the original well-founded relation.
  Finally, with power sets, any well-founded relation can be collapsed to a well-founded extensional one (an ``extensional quotient'' in the sense of \S\ref{sec:constr-mat}).
\end{proof}

\section{Structural collection and replacement}
\label{sec:strong-ax}

We now turn to structural versions of the collection and replacement axioms.
Various such axioms have been proposed in the context of ETCS (see~\cite{lawvere:etcs,cole:cat-sets,osius:cat-setth,mclarty:catstruct}), but none of these seem to be quite appropriate in an intuitionistic or predicative theory.
Hence our axioms must be different from all previous proposals (though they are similar to the replacement axiom of~\cite{mclarty:catstruct}).

The intuition behind structural collection is that since the elements of a set in a structural theory are not themselves sets, instead of ``collecting'' sets as \emph{elements} of another set we must collect them as a family indexed \emph{over} another set.
Also, since the language of category theory is two-sorted, it is unsurprising that we have to assert collection for objects and morphisms separately.
In fact, we find it conceptually helpful to formulate \emph{three} axioms of collection, although the third one is automatically satisfied.

Remember that all formulas exist in an unstated ambient context \Gm.
(In particular, the object-variable $U$ in the first two axioms below must belong to \Gm.)
\begin{blist}
\item \emph{Collection of sets:} For any formula $\ph(u,X)$ with specified free variables $u\cin U$ and $X$,
  \qq{if for every $u$ there exists an $X$ with $\ph(u,X)$, then there exists a regular epi $V\xepi{p} U$ and an $A$ in $\Set/V$ such that for every $v \cin V$ we have $\ph(pv,v^*A)$}.
\item \emph{Collection of functions:} For any formula $\ph(u,X,Y,f)$, with specified free variables $u\cin U$, $X$, $Y$, and $f\colon X\to Y$,
  \qq{for any $A,B$ in $\Set/U$, if for all $u\cin U$ there exists $u^*A\too[f] u^*B$ with $\ph(u,u^*A,u^*B,f)$, then there exists a regular epi $V\xepi{p} U$ and a function $p^*A\too[g] p^*B$ in $\Set/V$ such that for all $v\cin V$, we have $\ph(pv,(pv)^*A, (pv)^*B, v^*g)$}.
\item \emph{Collection of equalities:}
  \qq{For any set $U$, any $A,B$ in $\Set/U$, and any functions $f,g\colon A\to B$ in $\Set/U$, if $u^*f = u^*g$ for every $u\cin U$, then there is a regular epi $V\xepi{p} U$ such that $p^*f = p^*g$.}
\end{blist}
We will abbreviate a formula $\ph(u,X,Y,f)$ as in the axiom of collection of functions by $\ph(u,f)$, with the object-variables $X$ and $Y$ implicitly present as the domain and codomain of $f$.

We will say that \Set\ ``satisfies collection'' if it satisfies all three of the above axioms.
However, one, and sometimes two, of these are redundant.

\begin{prop}\label{thm:collmor}
    Let \Set\ be a constructively well-pointed Heyting category.
    \killspacingtrue
    \begin{enumerate}
    \item \Set\ satisfies collection of equalities.\label{thm:colleq}
    \item If \Set\ satisfies full separation and is a \Pi-pretopos, then it satisfies collection of functions.\label{item:collmor2}
    \end{enumerate}
    \killspacingfalse
\end{prop}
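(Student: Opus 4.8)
For (i), I would show that in fact $f=g$ outright, so that one may take $V=U$ and $p=1_U$ (an isomorphism, hence a regular epi); this is the sense in which collection of equalities is ``automatically satisfied.'' Let $E_0\mono A$ be the equalizer of $f,g\colon A\toto B$. Given any element $a\cin A$, let $u\cin U$ be its image under the structure map $A\to U$; then $a$ factors through the fibre $u^*A\mono A$ as some $\tilde a\cin u^*A$, and since $u^*f=u^*g$ by hypothesis, naturality of pullback gives $f\circ a=g\circ a$. Thus $a$ factors through $E_0$. As this holds for every $a\cin A$ and $E_0\mono A$ is monic, strong generation of $1$ forces $E_0\to A$ to be an isomorphism (\autoref{thm:pin-props}\ref{item:pp1a}), i.e.\ $f=g$.

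For (ii), the idea is to replace the fibrewise \emph{choice} of functions (which we cannot make) by the object of \emph{all} such functions. Using local cartesian closure I would form the internal hom $E := B^A$ in $\Set/U$, with structure map $q\colon E\to U$ and universal evaluation $\mathrm{ev}\colon q^*A\to q^*B$ over $E$. Since base change preserves exponentials, for each $u\cin U$ the elements $e\cin E$ with $q(e)=u$ correspond bijectively, via $e\mapsto e^*\mathrm{ev}$, to the functions $u^*A\to u^*B$; this dictionary between codes and functions is what drives the whole argument.

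Next I would cut out of $E$ the subobject of codes for functions satisfying $\ph$. Concretely, I claim that the condition ``$\ph$ holds of $q(e)$, of the fibres $(q(e))^*A$ and $(q(e))^*B$, and of the function $e^*\mathrm{ev}$'' can be written as a genuine formula $\Psi(e)$ with free variable $e\cin E$ in the ambient context \Gm. The translation replaces the object-arguments $X,Y$ of $\ph$ by the fibres of $A,B$ over $u=q(e)$: each quantifier over an element of $X$ becomes a quantifier over those elements of $A$ lying over $u$, each quantifier over a function $X\to Y$ becomes a quantifier over those elements of $E$ lying over $u$ (again by the dictionary), application of functions is interpreted by $\mathrm{ev}$, and quantifiers of $\ph$ over arbitrary sets and functions are left untouched. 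Because $\ph$ may contain unbounded quantifiers, so may $\Psi$, which is precisely why full separation—rather than merely \ddo-separation—is needed; applying it yields $W := \setof{e\cin E | \Psi(e)}\mono E$.

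Finally I would assemble the conclusion. Setting $p\colon W\mono E\xto{q}U$, the hypothesis says exactly that every $u\cin U$ equals $q(e)$ for some $e\in W$, i.e.\ $p$ is surjective on global elements, hence regular epic by \autoref{thm:pin-props}\ref{item:pp1}. Take $V:=W$ and let $g\colon p^*A\to p^*B$ be the restriction of $\mathrm{ev}$ along $W\mono E$. For $v\cin W$ with image $e\cin E$ and $u=p(v)=q(e)$, one has $v^*g=e^*\mathrm{ev}$, the function coded by $e$, and $\Psi(e)$ holds since $e\in W$; unwinding the translation gives $\ph(pv,(pv)^*A,(pv)^*B,v^*g)$, as required. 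The main obstacle is the middle step: verifying that the fibrewise statement $\ph(u,u^*A,u^*B,e^*\mathrm{ev})$ really is captured by a first-order formula $\Psi(e)$ over $\Set$, so that full separation applies. This requires a careful, isomorphism-invariant (\autoref{thm:isoinvar-truth}) bookkeeping of how quantifiers over the fibres $u^*A$, $u^*B$ and over functions between them are re-expressed as guarded quantifiers over $A$, $B$, and $E$; everything else is routine.
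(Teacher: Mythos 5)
Your proposal is correct and is essentially the paper's own proof. For (i) the paper runs the same equalizer argument, concluding $f=g$ and taking $V=U$, $p=1_U$; it merely packages ``every global element of $A$ equalizes $f$ and $g$'' via the dual image $\coim_a E$ along the structure map $a\colon A\to U$ before invoking strong generation, where you chase global elements of $A$ directly. For (ii) the paper performs exactly your construction: it forms your object $E$ (there called $C=(B\to U)^{(A\to U)}$ in $\Set/U$), cuts out the codes satisfying $\ph$ by full separation, obtains the regular epi onto $U$ from well-pointedness, and restricts the evaluation map.

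The one point worth flagging is your handling of what you call the main obstacle. The formula $\Psi(e)$ should not be obtained by relativizing quantifiers of $\ph$ to elements of $A$, $B$, $E$ lying over $u$: as literally described, that recipe does not cover occurrences of the free variables $X,Y,f$ of $\ph$ interacting with other (possibly quantified) objects, e.g.\ a quantifier over arrows $Z\to X$ or a composite passing through $X$. Instead one expresses the substitution of the fibres for $X,Y,f$ by quantifying over objects equipped with cones exhibiting them as pullbacks of $A\to U$ and $B\to U$ along $q(e)$, together with an arrow between them corresponding to $e$ via the evaluation map; by isomorphism-invariance (\autoref{thm:isoinvar-truth}) this is unambiguous, and the existential and universal forms agree. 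This is the same convention already required to make sense of the \emph{statement} of collection of functions (which mentions $\ph(u,u^*A,u^*B,f)$), so it is not an additional obstacle, which is why the paper applies separation to the formula $\ph(f_c)$ without comment.
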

\begin{proof}
  To show~\ref{thm:colleq},
  let $E \xto{e} A$ be the equalizer of $f$ and $g$ and let $S$ be the dual image $\coim_a E$, where $A\xto{a} U$ is the structure map of $A$ as an object of $\Set/U$.
  The assumption that $u^*f = u^*g$ for every $u\cin U$ implies that every such $u$ is contained in $S$.
  Since $1$ is a strong generator, this implies $S\cong U$, and therefore $E\cong A$ and so $f=g$; thus we can take $V=U$ and $p=1_U$.

  To show~\ref{item:collmor2}, let $C= (B\to U)^{(A\to U)}$ be the exponential in $\Set/U$, with projection $C \too[g] U$.
  Then each $c\cin C$ corresponds to a map $f_c\maps (gc)^*A \to (gc)^*B$.
  Using the axiom of separation, find a subobject $V\xmono{m} C$ such that $c\cin C$ is contained in $V$ iff $\ph(f_c)$.
  By assumption, every $u\cin U$ lifts to some $c\cin V$, so since \Set\ is well-pointed, the projection $V\xepi{gm} U$ is regular epi.
  Finally, there is an evident map $h\maps (gm)^*A\to (gm)^*B$ such that $\ph(gmc,c^*h)$ for all $c\cin V$.
\end{proof}

The appropriate structural formulation of the axiom of \emph{replacement} is a bit more subtle.
The idea is that if we modify the hypotheses of collection by asserting \emph{unique} existence, then passage to a cover $V\epi U$ should be unnecessary.
As with collection, we may expect three versions for sets, functions and equalities.
The second two of these are easy to state, and follow from collection.

\begin{prop}\label{thm:replacement}
  Let \Set\ be a constructively well-pointed Heyting category.
  \killspacingtrue
  \begin{enumerate}
  \item \Set\ always satisfies \emph{replacement of equalities}: \qq{for any set $U$, any $A,B$ in $\Set/U$, and any functions $f,g\colon A\to B$ in $\Set/U$, if $u^*f = u^*g$ for every $u\cin U$, then $f=g$}.\label{item:repl-eq}
  \item If \Set\ satisfies collection of functions, then it satisfies \emph{replacement of functions}: For any formula $\ph(u,f)$ as in collection of functions,
    \qq{for any $A,B$ in $\Set/U$, if for every $u\cin U$ there exists a unique $u^*A \too[f] u^*B$ such that $\ph(u, f)$, then there exists $A\too[g] B$ in $\Set/U$ such that for each $u\cin U$ we have $\ph(u, u^*f)$}.\label{item:repl-mor}
  \end{enumerate}
  \killspacingfalse
\end{prop}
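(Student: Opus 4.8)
The plan is to treat the two parts in order, using part~(i) as a lemma inside part~(ii). Part~(i) requires no new work: it is exactly what the proof of \autoref{thm:collmor}\ref{thm:colleq} establishes. There, from the hypothesis $u^*f = u^*g$ for all $u\cin U$, one forms the equalizer $E\xto{e}A$ of $f,g$, takes its dual image $S=\coim_a E\mono U$ along the structure map $a\colon A\to U$, and notes that every $u\cin U$ factors through $S$; since $1$ is a strong generator this forces $S\cong U$, hence $E\cong A$, i.e.\ $f=g$. No cover $V\epi U$ is ever needed, so that argument proves replacement of equalities directly, and I would simply cite it.

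For part~(ii) the strategy is collection followed by descent. First I would feed the hypothesis into collection of functions, obtaining a regular epi $V\xepi{p}U$ and a map $h\colon p^*A\to p^*B$ in $\Set/V$ with $\ph(pv,v^*h)$ for every $v\cin V$. It remains to descend $h$ to a map $g\colon A\to B$ over $U$, and the uniqueness hypothesis is precisely what supplies the descent datum. Concretely, I would form the kernel pair $\pi_1,\pi_2\colon V\times_U V\toto V$; since $p\pi_1=p\pi_2$, the objects $\pi_1^*p^*A$ and $\pi_2^*p^*A$ coincide (and likewise for $B$), so $\pi_1^*h$ and $\pi_2^*h$ are parallel maps in $\Set/(V\times_U V)$. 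For any $w\cin V\times_U V$ with $v=\pi_1 w$, $v'=\pi_2 w$, and $u:=pv=pv'$, both $v^*h$ and $(v')^*h$ are maps $u^*A\to u^*B$ satisfying $\ph(u,-)$, hence are equal by the assumed uniqueness; as $w^*(\pi_i^*h)=(\pi_i w)^*h$, this says $\pi_1^*h$ and $\pi_2^*h$ agree on every global element of $V\times_U V$, so part~(i), applied over the base $V\times_U V$, yields $\pi_1^*h=\pi_2^*h$.

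Finally, I would invoke the descent of slice morphisms along a regular epimorphism: because $p$ is regular epic, so is its pullback $p^*A\to A$, which is therefore the coequalizer of its kernel pair $(V\times_U V)\times_U A\toto p^*A$. This is the Hom-sheaf part of the statement (recalled in \S\ref{sec:struct-set}) that the self-indexing is a stack, and unlike object descent it needs only regularity, not exactness. The condition $\pi_1^*h=\pi_2^*h$ makes $p^*A\xto{h}p^*B\to B$ coequalize this kernel pair, producing a unique $g\colon A\to B$ in $\Set/U$ with $p^*g=h$. For each $u\cin U$, since $p$ is regular epic and $1$ is projective (\autoref{thm:pin-props}\ref{item:pp1}) there is $v\cin V$ with $pv=u$, whence $u^*g=v^*p^*g=v^*h$ and so $\ph(u,u^*g)=\ph(pv,v^*h)$ holds. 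I expect the only real friction to be the bookkeeping in the descent step---identifying $\pi_1^*p^*A$ with $\pi_2^*p^*A$ and checking that pointwise uniqueness upgrades to the equality $\pi_1^*h=\pi_2^*h$ of descent data---after which the descent itself is routine.
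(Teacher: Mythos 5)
Your proposal is correct and follows essentially the same route as the paper's proof: part~(i) is the argument already given for collection of equalities, and part~(ii) applies collection of functions, uses the uniqueness hypothesis together with replacement of equalities over the kernel pair $V\times_U V$ to obtain $\pi_1^*h=\pi_2^*h$, and then descends $h$ along the regular epi $p$. The only difference is cosmetic: where the paper simply cites that the self-indexing is a prestack, you unfold that fact into the explicit coequalizer argument (correctly noting it needs only regularity, not exactness), which is a fair elaboration rather than a different proof.
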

\begin{proof}
  The proof of \autoref{thm:collmor}\ref{thm:colleq} already shows~\ref{item:repl-eq}.
  For~\ref{item:repl-mor}, collection of functions gives us a regular epimorphism $V\xepi{p} U$ and a function $h\colon p^*A \to p^*B$ in $\Set/V$ such that $\ph(p v, v^*h)$ for any $v\cin V$.
  If $(r,s)\colon V\times_U V\toto V$ is the kernel pair of $p$, then for every $z = (v_1,v_2)\cin V\times_U V$ we have $p v_1 = p v_2 = u$, say, and thus $\ph(u, v_1^*h)$ and $\ph(u, v_2^*h)$.
  By uniqueness, $v_1^*h = v_2^*h$, and so
  \[z^*r^*h = v_1^* h = v_2^* h = z^* s^* h.
  \]
  By replacement of equalities, $r^*h = s^*h$.
  But the self-indexing of \Set\ is a prestack, so $h$ descends to $g\colon A\to B$ with $p^*g = h$.
  Since $p$ is regular epic, for any $u\cin U$ there is a $v\cin V$ with $p v = u$; thus $u^* h = v^* p^* h = v^* g$, whence $\ph(u,u^*h)$.
\end{proof}

These two replacement axioms imply, in particular, that \emph{universal properties are reflected by global elements}.
Rather than give a precise statement of this, we present a paradigmatic example.

\begin{prop}\label{thm:cc-lcc-mor}
  Let \Set\ be a constructively well-pointed Heyting category satisfying replacement of functions, and suppose we have $A,B,E$ in $\Set/X$ and a morphism $v\colon E\times_X A\to B$ such that for all $x\cin X$, $x^*v$ exhibits $x^*E$ as an exponential $(x^*B)^{(x^*A)}$ in \Set.
  Then $v$ exhibits $E$ as an exponential $B^A$ in $\Set/X$.
\end{prop}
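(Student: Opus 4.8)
The plan is to verify directly the universal property of the exponential in $\Set/X$: writing $p\colon T\to X$ for the structure map of an arbitrary object $T$ of $\Set/X$, and noting that a morphism $g\colon T\times_X A\to B$ over $X$ is the same thing as a morphism $\tilde g\colon p^*A\to p^*B$ in $\Set/T$, I must produce for each such $g$ a \emph{unique} transpose $\hat g\colon T\to E$ over $X$ satisfying $v\circ(\hat g\times_X 1_A)=g$. Since transposition is effected by the single fixed morphism $v$, naturality in $T$ is automatic, so establishing existence and uniqueness for each fixed $T$ is exactly what is required.

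For existence I would build $\hat g$ fibrewise and glue it with replacement of functions. For each $t\cin T$, with image $x=p(t)\cin X$, restricting $g$ to the fibre over $t$ gives a morphism $g_t=t^*\tilde g\colon x^*A\to x^*B$ in $\Set$; since $x^*v$ exhibits $x^*E$ as $(x^*B)^{(x^*A)}$ by hypothesis, there is a \emph{unique} element $\hat g(t)\cin x^*E$ whose image under $x^*v$ is $g_t$. Now a morphism $T\to E$ over $X$ is the same as a section $1_T\to p^*E$ in $\Set/T$, so I would apply \emph{replacement of functions} (our standing hypothesis; see \autoref{thm:replacement}\ref{item:repl-mor} for the precise statement) in the ambient category $\Set$ with $U=T$, taking the two objects of $\Set/T$ to be $1_T$ and $p^*E$ (whose fibres over $t\cin T$ are $1$ and $x^*E$) and taking $\ph(t,f)$ to be the formula asserting that $f\cin x^*E$ is the transpose of $t^*\tilde g$ under $x^*v$. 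Fibrewise unique existence then yields a morphism $1_T\to p^*E$ over $T$, i.e. the desired $\hat g\colon T\to E$ over $X$, with $t^*\hat g=\hat g(t)$ for all $t\cin T$.

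To check the equation, observe that both $v\circ(\hat g\times_X 1_A)$ and $g$ are parallel morphisms $T\times_X A\to B$, so since $1$ is a strong generator it suffices to compare them on global elements. A global element of $T\times_X A$ is a pair $(t,a)$ with $t\cin T$ and $a$ a global element of the fibre $x^*A$ over $x=p(t)$, and there both sides evaluate to $g_t(a)$ by the defining property of $\hat g(t)$; hence $v\circ(\hat g\times_X 1_A)=g$. For uniqueness, if $\hat g'$ is another transpose, then restricting its defining equation over each $t\cin T$ and invoking the \emph{uniqueness} of the fibrewise transpose shows $t^*\hat g'=\hat g(t)=t^*\hat g$; thus the equalizer of $\hat g$ and $\hat g'$ contains every global element of $T$, and since $1$ is a strong generator it is all of $T$, whence $\hat g=\hat g'$ (the same use of strong generation as in \autoref{thm:pin-props}\ref{item:pp1a}).

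The main obstacle I anticipate is bookkeeping rather than conceptual difficulty: one must express the fibrewise transpose condition---that $x^*v$ is the pulled-back evaluation and $g_t=t^*\tilde g$ the pulled-back $g$---as a genuine formula $\ph(t,f)$ in the language of categories, so that replacement of functions truly applies, and one must be careful to invoke replacement in $\Set$ with $U=T$ rather than ``inside'' $\Set/T$, since a slice of a well-pointed category is generally not itself well-pointed. Once these identifications are in place, the hypothesis that each $x^*v$ is a fibre exponential supplies exactly the unique fibrewise data that replacement of functions converts into the global transpose.
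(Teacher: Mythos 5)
Your proof is correct, and its skeleton matches the paper's: produce fibrewise transposes via the hypothesis, glue them with replacement of functions, and finish the equation and uniqueness checks using well-pointedness. The implementation differs in a way worth recording, though. The paper fixes the test object $D$ in $\Set/X$ and applies replacement of functions over the base $U=X$, gluing the fibrewise morphisms $g_x\colon x^*D\to x^*E$ (obtained from the full universal property of $x^*E$ applied to $C=x^*D$) into a morphism $D\to E$ in $\Set/X$, and then deduces $f=v\circ(g\times 1)$ and uniqueness from replacement of equalities (\autoref{thm:replacement}\ref{item:repl-eq}). You instead apply replacement over the test object itself, $U=T$, gluing global elements $\hat g(t)\cin x^*E$ into a section of $p^*E$; thus you only ever use the $C=1$ instance of the fibre exponential property, and you settle the equation and uniqueness by evaluating on global elements and invoking strong generation of $1$ (together with equalizers). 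The two routes are essentially equivalent in strength, since replacement of equalities is itself proved from strong generation; what your variant buys is the observation that the element-level universal property of each $x^*v$ suffices, in the spirit of \autoref{thm:wp-lcc}, whereas the paper's choice of base $X$ keeps the argument uniform with the later descent-style arguments in \autoref{thm:repl-ctxt} and \autoref{thm:cc-lcc}. Your closing caution---that replacement must be invoked in $\Set$ over $U=T$ rather than ``internally'' to the non-well-pointed slice $\Set/T$---is exactly right, and the bookkeeping you defer (expressing the transpose condition as an isomorphism-invariant formula involving pullback data) is no worse than what the paper's own formula $x^*f = x^*v\circ(g_x\times 1)$ already requires.
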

\begin{proof}
  The assumption means that for any object $C$ and any morphism $f\colon C\times x^*A \to x^*B$, there exists a unique $g\colon C\to x^*E$ such that $f = x^*v \circ (g\times 1)$.
  In particular, given any $D$ in $\Set/X$ and morphism $f\colon D\times_X A\to B$, for every $x\cin X$ there is a unique $g_x\colon x^*D\to x^*E$ such that $x^*f = x^*v \circ (g_x\times 1)$.
  By replacement of functions, we have $g\colon D\to E$ such that $x^*f = x^*v \circ (x^*g \times 1)$ for each $x\cin X$, or equivalently $x^* f = x^*(v\circ (g\times 1))$.
  By replacement of equalities, $f=v\circ (g\times 1)$.
  Moreover, if $f = v\circ (h\times 1)$ for some $h\colon D\to E$, then for each $x\cin X$ we have $x^*f = x^*v \circ (x^*h \times 1)$.
  By the universal property of the exponential $x^*E$, we have $x^*h = g_x = x^*g$, and hence $h=g$ by replacement of equalities.
\end{proof}

Note that all such universal properties are automatically pullback-stable.

We would also like an axiom of ``replacement of sets,'' which would ensure that the \emph{existence} of an object satisfying a universal property is also reflected by global elements.
Since no structural theory can determine an object of a category more uniquely than up to unique isomorphism, one natural such statement would be:
\begin{blist}
\item \emph{Replacement of sets:}
  \qq{if for every $u\cin U$ there is a set $A$ with $\ph(u,A)$ which is unique up to unique isomorphism, then there is a $B$ in $\Set/U$ such that $\ph(u,u^*B)$ for all $u\cin U$}.
\end{blist}
If \Set\ is a pretopos (and in particular exact), then replacement of sets follows from collection by a proof analogous to that of \autoref{thm:replacement}, using the fact that in this case the self-indexing is a stack for the regular topology.
Moreover, just as in material set theory, we have:

\begin{prop}\label{thm:collsep}
  If \bS\ is a well-pointed Heyting category which satisfies full classical logic and replacement of sets, then it also satisfies full separation.
\end{prop}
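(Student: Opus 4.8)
The plan is to mirror the material-set-theory fact noted in \S\ref{sec:sets} that replacement together with full classical logic implies full separation. In that argument one uses classical logic, element by element, to decide whether the separating formula holds, replaces each element by a one- or zero-element set accordingly, and then takes a union; I would carry out the structural analogue, where ``take the union'' is replaced by the observation that the resulting family is already a monomorphism into $X$.

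First I would record that $\bS$ is constructively well-pointed: since it satisfies full classical logic and $1$ is a nonempty strong generator, \autoref{thm:wpt-bool} applies, so in particular all of \autoref{thm:pin-props} is at our disposal. Now fix a formula $\ph(x)$ with free variable $x\cin X$, which is the instance of separation we wish to establish, and introduce the auxiliary formula
\[
  \psi(x,A) \;:\equiv\; \bigl(\ph(x)\meet\text{``$A$ is terminal''}\bigr)\join\bigl(\neg\ph(x)\meet\text{``$A$ is initial''}\bigr),
\]
in which ``$A$ is terminal'' and ``$A$ is initial'' are the evident first-order assertions about the object-variable $A$. By full classical logic, $\ph(x)\join\neg\ph(x)$ holds for every $x\cin X$, so for each such $x$ there exists an $A$ with $\psi(x,A)$; moreover this $A$ is unique up to unique isomorphism, since both $1$ and $0$ have exactly one automorphism and $1\niso 0$ by nonemptiness of $1$.

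Next I would apply \emph{replacement of sets}, with $U=X$ and $u=x$, to the formula $\psi$, obtaining $B$ in $\Set/X$ — that is, a morphism $B\to X$ — with $\psi(x,x^*B)$ for all $x\cin X$; thus the fibre $x^*B$ is terminal when $\ph(x)$ holds and initial when $\neg\ph(x)$ holds. To finish I would verify that $B\to X$ is monic and separates $\ph$. For monicity, by \autoref{thm:pin-props}\ref{item:pp1a} it suffices that each $x\cin X$ factor through $B\to X$ in at most one way; but a factorization of $x$ is precisely a global element of $x^*B$, and $x^*B$ is either $1$, with a single global element, or $0$, with none (as $1$ is nonempty). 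Writing $S\mono X$ for the resulting subobject, $x\in S$ holds iff $x^*B$ has a global element, iff $x^*B$ is terminal, iff $\ph(x)$; hence $S$ is the required separating subobject.

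The main obstacle is arranging the input to replacement of sets correctly: one must phrase $\psi$ so that the witnessing object is genuinely unique up to a \emph{unique} isomorphism (which is exactly why the two canonical choices $1$ and $0$, having no nontrivial automorphisms, are used), and then read monicity off the fibres. Both points hinge on nonemptiness of $1$ — to guarantee $1\niso 0$ and that $0$ has no global elements — together with \autoref{thm:pin-props}\ref{item:pp1a}.
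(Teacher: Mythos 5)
Your proof is correct and is essentially the paper's own argument: the same auxiliary formula $\psi(x,A)$ (\qq{$A$ terminal and $\ph(x)$, or $A$ initial and $\neg\ph(x)$}), the same appeal to replacement of sets over $U=X$, and the same reading-off of a monomorphism $B\mono X$ classifying $\ph$ from the fibres; the extra details you supply (constructive well-pointedness via \autoref{thm:wpt-bool}, monicity via \autoref{thm:pin-props}\ref{item:pp1a}) are simply left implicit in the paper. One small imprecision: uniqueness up to unique isomorphism of the witness $A$ comes from the fact that $\ph(x)$ and $\neg\ph(x)$ cannot both hold (so all witnesses for a given $x$ are terminal, or all are initial) together with the triviality of automorphisms of $0$ and $1$ --- the fact that $1\niso 0$ is not what is needed there, though nonemptiness of $1$ is indeed needed later to see that an initial fibre has no global elements.
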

\begin{proof}
  For any formula $\ph(x)$ with free variable $x\cin X$, let $\psi(x,A)$ be \qq{$A$ is terminal and $\ph(x)$, or $A$ is initial and $\neg\ph(x)$}.
  Since $\forall x.(\ph(x)\vee\neg\ph(x))$ holds by classical logic, and initial and terminal objects are unique up to unique isomorphism, by replacement of sets we have a $B$ in $\Set/X$ such that for any $x\cin X$, if $\ph(x)$ then $x^*B$ is terminal, while if $\neg\ph(x)$ then $x^*B$ is initial.
  It follows that $B\to X$ is monic, and is the subobject classifying $\ph$ required by the separation axiom.
\end{proof}

However, for most other purposes replacement of sets is basically useless, since hardly ever is a set determined up to \emph{absolutely} unique isomorphism, only an isomorphism which is compatible with some additional functions whose existence is being asserted at the same time.
For example, a cartesian product $A\times B$ is only determined up to an isomorphism which is unique \emph{such that} it respects the projections $A\times B\to A$ and $A\times B\to B$ with which the product comes equipped.

In order to state a useful version of replacement, we recall the notion of \emph{context extension} from \autoref{notn:struct-context}: if $(\Gm,\Th)$ is a context, then we can write $\ph(\Theta)$ in context \Gm\ if \ph\ is a formula whose free variables all lie in $(\Gm,\Th)$.
Since all contexts are finite, we can write formulas like \qq{there exists \Th\dots} or \qq{for all \Th\dots} in context $\Gm$, abbreviating a finite list of ordinary quantifiers.
Similarly, if $(\Gm,\Xi,\Th)$ is a context, then for a given $\Xi$ in context \Gm\ we can write \qq{there exists \Th\ extending \Xi\dots}.
For instance, if $\Xi = (X,Y)$ and $\Th = (f\colon X \to Y)$, then \qq{there exists \Th\ extending $(A,B)$\dots} means \qq{there exists $f\colon A\to B$\dots}.

We will write $\Th'$ for a context obtained from \Th\ by replacing all of its variables by fresh ones (hereafter denoted with primes, $X\mapsto X'$), so that $(\Gm,\Th,\Th')$ is also a context.
We write $\Phi_\Th$ for a list of arrow-variables $\phi_X\colon X\to X'$, one for each object-variable $X$ in \Th; thus $(\Gm,\Th,\Th',\Phi_\Th)$ is also a context.
Finally, we will write
\begin{center}
\qq{$\Phi_\Th$ is an isomorphism $\Th\cong\Th'$}
\end{center}
for the formula $\ph(\Th,\Th',\Phi_\Th)$ in context \Gm\ which asserts that
\begin{enumerate}
\item for every object-variable $X$ in \Th, $\phi_X$ is an isomorphism; and
\item for every arrow-variable $f\colon X\to Y$ in \Th, we have $\phi_Y \circ f = f' \circ \phi_X$.
\end{enumerate}
If $X$ is in $\Gm$ rather than \Th, then $\phi_X$ denotes the identity $1_X$, and likewise for $Y$.

We can also talk about contexts and extensions of contexts in slice categories.
A particularly important example is that the context $(X,x\cin X)$ can be instantiated in $\Set/U$ by the pair $(U^*U,\Delta_U)$ consisting of the object $U^*U = U\times U$ and the morphism $\Delta_U\colon 1_U\to U^*U$.
Moreover, this pair is the \emph{universal} such instantiation over $U$, in that for any $u\colon 1\to U$ we have an isomorphism $u^*(U^*U,\Delta_U) \cong (U,u)$.
Similarly, for any $V\too[p] U$ we can consider $(V^*U,(1_V,p))$, which has the analogous property that $v^*(V^*U,(1_V,p)) \cong (U, p v)$ for any $v\colon 1\to V$.

\begin{prop}\label{thm:coll-ctxt}
  A constructively well-pointed Heyting category satisfying collection also satisfies \emph{collection of contexts:} for any formula $\ph(U,u\cin U,\Th)$ (in any context $\Gm$, as always),
  \begin{blist}
  \item \qq{for every $U$, if for every $u\cin U$ there exists $\Th$ extending $(U,u)$ such that $\ph(U,u,\Th)$, then there exists a regular epi $V\xepi{p} U$ and a $\Th'$ in $\Set/V$ extending $(V^*U,(1_V,p))$ such that for every $v \cin V$ we have $\ph(U,pv,v^*\Th')$}.
\end{blist}
\end{prop}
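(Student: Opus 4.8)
The plan is to induct on the number of variables in the context extension $\Th$, using \emph{collection of sets} and \emph{collection of functions} as the two cases for appending a single variable. Write $\Th = (\Th_0, \ze)$ with $\ze$ the last variable; since no variable of $\Th$ refers to $\ze$, the truncation $\Th_0$ is again a context extension of $(U, u\cin U)$. Let $\psi(U, u, \Th_0)$ be the formula asserting that there exists $\ze$ extending $(U, u, \Th_0)$ with $\ph(U, u, \Th_0, \ze)$. Then the hypothesis of the proposition for $\ph$ is exactly the hypothesis of collection of contexts for the shorter extension $\Th_0$ applied to $\psi$. By the inductive hypothesis we therefore obtain a regular epi $p\colon V\epi U$ and an instantiation $\Th_0'$ of $\Th_0$ in $\Set/V$, extending $(V^*U, (1_V, p))$, such that $\psi(U, pv, v^*\Th_0')$ holds for every $v\cin V$; unwinding $\psi$, this says that for every $v\cin V$ there exists $\ze$ with $\ph(U, pv, v^*\Th_0', \ze)$.

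It remains to collect the single variable $\ze$ over the new base $V$. If $\ze$ is an object-variable, I apply collection of sets with base $V$; if $\ze$ is an arrow-variable $f\colon X\to Y$, then $X$ and $Y$ occur earlier, so $\Th_0'$ (or the universal family $V^*U$ when the endpoint is $U$, or a constant family $V^*Z$ when it lies in $\Gm$) supplies objects $A, B$ in $\Set/V$ to serve as source and target, and I apply collection of functions with base $V$ and these $A, B$. Either way I obtain a further regular epi $q\colon W\epi V$ and an instantiation $Z$ of $\ze$ in $\Set/W$ (an object over $W$, respectively a morphism $q^*A\to q^*B$ over $W$) with $\ph(U, pqw, (qw)^*\Th_0', w^*Z)$ for every $w\cin W$. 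Put $P := pq$, which is a regular epi since regular epis compose in the regular category $\Set$, and $\Th' := (q^*\Th_0', Z)$. Pseudofunctoriality of pullback gives $w^*\Th' \cong ((qw)^*\Th_0', w^*Z)$, so isomorphism-invariance of truth (\autoref{thm:isoinvar-truth}) yields $\ph(U, Pw, w^*\Th')$ for every $w\cin W$; and $\Th'$ extends $(W^*U, (1_W, P))$ because $q^*(V^*U, (1_V, p)) \cong (W^*U, (1_W, P))$. The base case $\Th = ()$ is trivial, taking $V = U$ and $p = 1_U$.

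The step needing care, and the main obstacle, is checking that the predicates fed to collection of sets and of functions over $V$ are genuine formulas. The trouble is that $\ph(U, pv, v^*\Th_0', \ze)$ mentions the fiber $v^*\Th_0'$, which depends on $v\cin V$, whereas $\Th_0'$ is a fixed family over $V$ and the language has no object- or arrow-constants (save $1$) with which to name it. These fibers must therefore be reconstructed internally, which is possible because $\Set$ satisfies $\ddo$-separation (\autoref{thm:ddo-sep}): for each component $a_i\colon A_i\to V$ of $\Th_0'$ the fiber over $v$ is the subobject $\setof{z\cin A_i | a_i(z) = v}$, cut out by a $\ddo$-formula, and the fiber morphisms are definable likewise. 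I thus replace $\ph(U, pv, v^*\Th_0', \ze)$ by the formula that existentially quantifies candidate objects and arrows, asserts via these $\ddo$-conditions that they are the fibers of $\Th_0'$ over $v$, and then asserts $\ph$ of them together with $\ze$. Since the fibers are pinned down only up to canonical isomorphism, this reformulation is legitimate only by isomorphism-invariance of truth (\autoref{thm:isoinvar-truth}); granting it, the two collection axioms apply verbatim, with the ambient context $\Gm$ harmlessly enlarged to carry the parameters $V$, $p$, and $\Th_0'$ delivered by the inductive hypothesis.
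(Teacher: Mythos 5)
Your proof is correct and follows exactly the paper's approach: the paper's entire proof reads ``Simply apply collection of sets and functions repeatedly, using the fact that regular epimorphisms compose,'' which is precisely your induction on the length of $\Th$ with composed regular epis. Your final paragraph about naming the fibers $v^*\Th_0'$ inside the formula fed to the collection axioms fills in a detail the paper treats as standard abuse of notation (resolved, as you say, by isomorphism-invariance of truth), so it is a legitimate elaboration rather than a deviation.
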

\begin{proof}
  Simply apply collection of sets and functions repeatedly, using the fact that regular epimorphisms compose.
\end{proof}

We can now improve ``replacement of sets'' to a more useful axiom.
\begin{blist}
\item \emph{Replacement of contexts:} for any formula $\ph(U,u\cin U,\Th)$,
  \qq{for any $U$, if for every $u\cin U$ there is an extension $\Th$ of $(U,u)$ such that $\ph(U,u,\Th)$, and moreover this extension $\Th$ is unique up to a unique isomorphism of contexts $\Phi_\Th$, then there is an extension $\Th'$ of $(U^*U, \Delta_U)$ in $\Set/U$ such that $\ph(U,u,u^*\Th')$ for all $u\cin U$}.
\end{blist}

\begin{prop}\label{thm:repl-ctxt}
  Any constructively well-pointed Heyting pretopos satisfying collection also satisfies replacement of contexts.
\end{prop}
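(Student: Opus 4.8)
The plan is to run the proof of \autoref{thm:replacement} ``one level up'', descending an entire context of objects \emph{and} arrows along a cover rather than a single morphism. The new ingredient is that, since \Set\ is a pretopos, its self-indexing is a \emph{stack} (not merely a prestack) for the regular topology, so object-valued data and not just morphisms can be glued; this is exactly the remark made before \autoref{thm:collsep} about ``replacement of sets''.

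First I would apply collection of contexts (\autoref{thm:coll-ctxt}) to the hypothesis, obtaining a regular epimorphism $V\xepi{p} U$ and a context $\Th_V$ in $\Set/V$ extending $(V^*U,(1_V,p))$ such that $\ph(U,pv,v^*\Th_V)$ holds for every $v\cin V$. The goal is to descend $\Th_V$ along $p$ to a context $\Th'$ in $\Set/U$ extending $(U^*U,\Delta_U)$; here the base parts match up correctly, since pulling the diagonal $\Delta_U$ back along $p$ identifies $p^*(U^*U,\Delta_U)\cong(V^*U,(1_V,p))$.

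Let $r,s\colon V\times_U V\toto V$ be the kernel pair of $p$. For each $z=(v_1,v_2)\cin V\times_U V$ we have $pv_1=pv_2$, so $z^*r^*\Th_V=v_1^*\Th_V$ and $z^*s^*\Th_V=v_2^*\Th_V$ are two extensions of $(U,pv_1)$ each satisfying \ph, whence by the uniqueness-up-to-unique-isomorphism hypothesis there is a \emph{unique} context isomorphism between them. The central construction is to assemble these pointwise isomorphisms into a global descent datum $\theta\colon r^*\Th_V\to s^*\Th_V$ over $V\times_U V$. For each object-component $X$ of $\Th_V$ I would apply replacement of functions (\autoref{thm:replacement}\ref{item:repl-mor}) to the formula ``$f$ is the $X$-component of the unique context isomorphism $z^*r^*\Th_V\cong z^*s^*\Th_V$'', whose uniqueness hypothesis is exactly the given one; this produces a morphism over $V\times_U V$ restricting at each $z$ to the required isomorphism, and well-pointedness (\autoref{thm:pin-props}\ref{item:pp1a}) then shows it is itself an isomorphism. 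The cocycle condition on $V\times_U V\times_U V$ holds at every global element by the \emph{uniqueness} of the pointwise context isomorphisms, hence holds outright by replacement of equalities (\autoref{thm:replacement}\ref{item:repl-eq}), and the arrow-components of $\Th_V$ are compatible with $\theta$ for the same reason. The stack property of the self-indexing of \Set\ now glues the object-components to objects over $U$, the prestack property descends the arrow-components, and these assemble into $\Th'$ extending $(U^*U,\Delta_U)$ with $p^*\Th'\cong\Th_V$ over the base. Finally, given any $u\cin U$, since $p$ is regular epic and $1$ is projective I can lift $u$ to some $v\cin V$ with $pv=u$, so that $u^*\Th'\cong v^*p^*\Th'\cong v^*\Th_V$; as $\ph(U,pv,v^*\Th_V)$ holds, isomorphism-invariance of truth (\autoref{thm:isoinvar-truth}) yields $\ph(U,u,u^*\Th')$.

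I expect the main obstacle to be the descent bookkeeping rather than any conceptual difficulty: matching the two base contexts $(V^*U,(1_V,p))$ and $(U^*U,\Delta_U)$ under pullback and descent, separating $\Th$ into object- and arrow-components and applying the stack versus prestack properties to each, and verifying that the glued data genuinely extends $(U^*U,\Delta_U)$ and not merely some isomorphic copy. The conceptual heart---that unique pointwise isomorphisms automatically supply a cocycle, hence descent data---is precisely the mechanism already exploited in \autoref{thm:replacement}, here upgraded from a prestack to a stack argument by using the exactness of the pretopos.
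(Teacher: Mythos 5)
Your proposal is correct and follows essentially the same route as the paper's own proof: apply collection of contexts, use the uniqueness hypothesis together with replacement of functions and equalities to produce a descent datum on the kernel pair satisfying the cocycle condition, and then invoke the stack property of the self-indexing (exactness of the pretopos) to descend the context, concluding by surjectivity of the cover. Your write-up merely makes explicit some bookkeeping the paper leaves implicit (the per-component application of replacement, the identification $p^*(U^*U,\Delta_U)\cong(V^*U,(1_V,p))$, and the final use of projectivity of $1$ with isomorphism-invariance of truth).
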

\begin{proof}
  By \autoref{thm:coll-ctxt}, there is a regular epi $p\colon V\epi U$ and a $\Th'$ in $\Set/V$ such that for every $v \cin V$ we have $\ph(U,pv,v^*\Th')$.
  Let $(r,s)\colon V\times_U V \toto V$ be the kernel pair of $f$, and consider $r^*\Th'$ and $s^*\Th'$ in $\Set/V\times_U V$.
  The assumption implies that when pulled back along any $z\colon 1\to V\times_U V$, these two contexts become uniquely isomorphic.
  Thus, by replacement of functions and equalities applied some finite number of times, we actually have a unique isomorphism $r^*\Th'\cong s^*\Th'$ in $\Set/V\times_U V$.
  Uniqueness implies that this isomorphism satisfies the cocycle condition over $V\times_U V\times_U V$.
  Thus, since \Set\ is a stack for its regular topology (applied some finite number of times), the entire context $\Th'$ descends to some $\Th''$ in $\Set/U$.
  Since $p$ is surjective, the desired property for $\Th''$ follows.
\end{proof}

I do not know whether
replacement of contexts implies collection. 
But as in material set theory, replacement suffices in the classical world.

\begin{prop}\label{thm:class-repcoll}
  Over ETCS, the following are equivalent.
  \begin{enumerate}
  \item Replacement of contexts.
  \item Collection of sets.
  \item Full separation and collection (of both sets and functions).
  \end{enumerate}
\end{prop}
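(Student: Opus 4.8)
The plan is to prove the cycle $(i)\Rightarrow(ii)\Rightarrow(iii)\Rightarrow(i)$, using the numbering of the proposition and writing \Set\ for the model of ETCS (a well-pointed topos with \nno, full classical logic, and choice). The step $(iii)\Rightarrow(i)$ is immediate from \autoref{thm:repl-ctxt}: \Set\ is a constructively well-pointed Heyting pretopos, and $(iii)$ supplies collection, so replacement of contexts follows. The content is therefore in the other two steps.

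For $(ii)\Rightarrow(iii)$ I would first extract full separation from collection of sets, using classical logic and choice. Given a formula $\psi(x)$ with $x\cin X$, apply collection of sets to the formula $\ph(x,A)$ asserting \qq{$A$ is terminal and $\psi(x)$, or $A$ is initial and $\neg\psi(x)$}; classical logic makes this entire over $X$. Collection yields a regular epi $p\colon V\epi X$ and a family $A$ over $V$ whose fibre over $v$ is terminal exactly when $\psi(pv)$ holds. Choice splits $p$ by a section $\sigma$, and $\sigma^*A\to X$ is then a monomorphism, since each of its fibres over a global element is subterminal and hence it is injective on global elements (\autoref{thm:pin-props}); by the same lemma $x$ factors through $\sigma^*A$ iff its fibre is inhabited iff $\psi(x)$, so $\sigma^*A$ is the subobject required by full separation. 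Now $(iii)$ follows: collection of equalities holds by \autoref{thm:collmor}\ref{thm:colleq}, collection of functions by \autoref{thm:collmor}\ref{item:collmor2} (a topos is a $\Pi$-pretopos and we now have full separation), and collection of sets is $(ii)$.

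The main obstacle is $(i)\Rightarrow(ii)$, the structural counterpart of the classical fact that replacement implies collection. First, replacement of contexts specializes to replacement of sets by taking $\Th$ to be a single object-variable, so \autoref{thm:collsep} already gives full separation, which I use freely together with the theory of ordinals it supports. The real difficulty is that replacement of contexts demands witnessing data unique up to a \emph{unique} isomorphism, whereas a set with $\ph(u,-)$ is unique only up to a (highly non-unique) isomorphism. The remedy is to rigidify by well-ordering: for each $u\cin U$ let $\alpha(u)$ be the least ordinal admitting a bijection with some $X$ satisfying $\ph(u,X)$ — this least exists by the least-ordinal principle, available from classical logic with separation and replacement, and every such $X$ is well-orderable by choice — and let $\Th=(C,\le)$ with $C$ the ordinal $\alpha(u)$ itself viewed as a well-ordered set. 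Since any two isomorphic well-ordered sets are \emph{uniquely} isomorphic, $\Th$ is unique up to a unique isomorphism of contexts, so replacement of contexts applies and produces a family $(C',\le')$ over $U$ whose fibre over each $u$ is isomorphic to some $X$ with $\ph(u,X)$. By isomorphism-invariance of truth (\autoref{thm:isoinvar-truth}), $\ph(u,u^*C')$ then holds for every $u$, so the underlying family $C'\to U$ witnesses collection of sets with the trivial cover $V=U$.

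The one point I would need to spell out carefully is the ordinal theory and least-ordinal principle available in ETCS augmented by replacement; everything else is routine pullback and descent bookkeeping. This mirrors the ZF proof that replacement (with foundation) yields collection, with well-orderings supplying the canonical, rigid representatives that the cumulative hierarchy provides in the material setting.
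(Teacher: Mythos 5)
Your proposal is correct and follows essentially the same route as the paper: the paper likewise reduces everything to showing that replacement of contexts implies collection of sets (citing Propositions \ref{thm:collmor}, \ref{thm:collsep} and \ref{thm:repl-ctxt} for the remaining implications), and proves that key implication by exactly your rigidification trick --- augmenting the context with a well-ordering of minimal order type, which is unique up to unique isomorphism, so that replacement of contexts applies. Your explicit extraction of full separation from collection of sets (splitting the cover by choice and using the terminal/initial dichotomy) merely spells out what the paper leaves implicit in its appeal to \autoref{thm:collsep} over ETCS, so the two arguments are the same in substance.
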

\begin{proof}
  By Propositions \ref{thm:collmor}\ref{item:collmor2}, \ref{thm:collsep}, and \ref{thm:repl-ctxt}, it suffices to prove that replacement of contexts implies collection of sets.
  Given a formula $\ph(u,X)$, we can construct a context \Th\ including both a set $X$ and a binary relation on it, and let $\psi(u,\Th)$ assert that $\ph(u,X)$ holds and that the given relation is a well-ordering, which has the smallest possible order-type among well-ordered sets $X$ such that $\ph(u,X)$.
  Since classically, any set admits a smallest well-ordering which is unique up to unique isomorphism, we can apply replacement of contexts to $\psi$ and thereby deduce collection of sets.
\end{proof}

As promised, replacement of contexts implies that \emph{the existence of objects satisfying a universal property is reflected by global elements}.
Continuing the example of \autoref{thm:cc-lcc-mor}, we have the following.

\begin{prop}\label{thm:cc-lcc}
  Let \Set\ be a constructively well-pointed Heyting pretopos satisfying replacement of contexts.
  If \Set\ is cartesian closed, then it is locally cartesian closed.
\end{prop}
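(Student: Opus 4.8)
The plan is to reduce the statement to the construction of exponentials in each slice $\Set/X$, and then to build those exponentials fibrewise using replacement of contexts together with \autoref{thm:cc-lcc-mor}. I would invoke the standard fact that a finitely complete category is locally cartesian closed precisely when each of its slices is cartesian closed; since \Set\ is a Heyting pretopos it has all finite limits, so every slice already has finite products, and it suffices to produce, for each set $X$ and each pair of objects $\pi_A\colon A\to X$ and $\pi_B\colon B\to X$ of $\Set/X$, an exponential $B^A$ in $\Set/X$. I also note that the hypotheses of \autoref{thm:cc-lcc-mor} are met: replacement of equalities always holds (\autoref{thm:replacement}\ref{item:repl-eq}), and replacement of functions follows from replacement of contexts by exactly the fibre device used below (taking the context to contain only the domain, codomain, and the single arrow, pinned down as fibres).

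First I would assemble the data fibrewise. For each $x\cin X$ the pullbacks $x^*A$ and $x^*B$ exist, and cartesian closedness of \Set\ supplies an exponential $(x^*B)^{(x^*A)}$ together with its evaluation map, in the elementwise form of \autoref{thm:wp-lcc}; this exponential, the two fibres, and the evaluation are all determined up to a unique isomorphism compatible with the evaluation and the pullback projections. I would package this as a formula $\ph(X,x,\Th)$, over an ambient context containing $X$, $A$, $B$ and the two structure maps, where the extension \Th\ of $(X,x)$ consists of: object-variables $P_A,P_B$ together with projections $p_A\colon P_A\to A$ and $p_B\colon P_B\to B$ constraining them to be the pullbacks of $\pi_A,\pi_B$ along $x$; an object-variable $E$; and arrow-variables exhibiting a product $E\times P_A$ and an evaluation $E\times P_A\to P_B$ with the universal property of the exponential. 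By the preceding remarks, for every $x\cin X$ such a \Th\ exists and is unique up to a unique isomorphism of contexts, so replacement of contexts applies.

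Applying replacement of contexts over $U=X$ yields an instantiation $\Th'$ of \Th\ in $\Set/X$: objects $P_A',P_B',E'$ and morphisms $p_A'\colon P_A'\to A$, $p_B'\colon P_B'\to B$, and $v'\colon E'\times_X P_A'\to P_B'$ whose pullback along each $x\cin X$ satisfies \ph. The next step is to identify $P_A'$ with $A$ and $P_B'$ with $B$. For every $a\cin A$ lying over $x=\pi_A(a)$, the condition on $\Th'$ makes the fibre map $(P_A')_x\to A_x$ an isomorphism, so $a$ factors uniquely through $p_A'$; since $1$ is a strong generator, \autoref{thm:pin-props}\ref{item:pp1a} (applied in the well-pointed category \Set) then forces $p_A'$ to be an isomorphism in $\Set/X$, and likewise for $p_B'$. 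Transporting $v'$ across these isomorphisms gives a morphism $v\colon E'\times_X A\to B$ in $\Set/X$ such that, for each $x\cin X$, $x^*v$ exhibits $x^*E'$ as the exponential $(x^*B)^{(x^*A)}$. \autoref{thm:cc-lcc-mor} then shows that $v$ exhibits $E'$ as an exponential $B^A$ in $\Set/X$, completing the construction.

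I expect the main obstacle to be the careful formulation of \ph\ and the verification of its unique-up-to-unique-isomorphism hypothesis: the fibres $x^*A,x^*B$ and the product $E\times P_A$ must be introduced as genuine context-variables, constrained only by their universal properties, rather than treated as given operations, and one must check that an isomorphism of the \emph{whole} context \Th\ respecting the evaluation and projections is unique. Once this is set up correctly, the remaining steps---the identification of the fibre objects with $A$ and $B$ via well-pointedness, and the appeal to \autoref{thm:cc-lcc-mor}---are routine, and they realize the slogan that replacement of contexts makes the existence of an object with a universal property be reflected by global elements.
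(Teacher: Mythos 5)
Your proposal is correct and takes essentially the same route as the paper's proof: reduce local cartesian closure to exponentials in each slice $\Set/X$, package the fibrewise exponential data (fibres, product, evaluation) as a context extension, verify uniqueness up to unique isomorphism, apply replacement of contexts over $X$, and conclude via \autoref{thm:cc-lcc-mor}. The additional details you spell out---that replacement of functions follows from replacement of contexts by the same fibre device, and that the pulled-back objects $P_A'$, $P_B'$ are identified with $A$, $B$ using well-pointedness and \autoref{thm:pin-props}---are precisely what the paper compresses into the phrase that the instantiation ``consists essentially of'' an object $C$ and a morphism $c\colon C\times_X A\to B$.
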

\begin{proof}
  Let $A,B$ be in $\Set/X$; we want to construct the exponential $(B\to X)^{(A\to X)}$.
  For $x\cin X$, let $\ph(x,E,e)$ assert that $e\colon E\times x^*A \to x^*B$ exhibits $E$ as the exponential $(x^*B)^{x^*A}$.
  This formula can be phrased as $\psi(X,x,\Th)$ for some \Th, although we note that \Th\ involves not just $E$ and $e$ but also object-variables for $x^*A$, $x^*B$, and $E\times x^*A$, and arrow-variables giving the projections that exhibit these as two pullbacks and a cartesian product, respectively.

  Now, since \Set\ is cartesian closed, for every $x\cin X$ there exists $E$ and $e$ with $\ph(x,E,e)$, and such are unique up to a unique isomorphism which respects all the structure.
  Therefore, replacement of contexts supplies an instantiation of \Th\ extending $(X^*X, \Delta_X)$ in $\Set/X$.
  This consists essentially of an object $C$ and a morphism $c\colon C\times_X A \to B$.
  The conclusion of replacement of contexts implies that $C$ and $c$ satisfy the hypotheses of \autoref{thm:cc-lcc-mor}, so they must be an exponential $(B\to X)^{(A\to X)}$ as desired.
\end{proof}

We end this section by extending \autoref{thm:iz-topos} to the axiom of collection.

\begin{thm}\label{thm:iz-ext-seprepcoll}
  If \bV\ satisfies the core axioms of material set theory along with collection, then $\bbSet(\bV)$ satisfies structural collection.
\end{thm}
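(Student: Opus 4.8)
The plan is to reduce the statement to its two nontrivial components---collection of sets and collection of functions---and to treat each by translating the structural formula into material set theory and then appealing to material (strong) collection. First, collection of equalities needs no hypothesis: by \autoref{thm:set-wpt} the category $\bbSet(\bV)$ is constructively well-pointed, so \autoref{thm:collmor}\ref{thm:colleq} applies directly. Thus only collection of sets and collection of functions remain, and throughout the ambient ground context \Gm\ of $\ph$ simply rides along as parameters, which strong collection accommodates without change.

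For collection of sets, suppose given a structural formula $\ph(u,X)$ with $u\cin U$ and assume that for every $u\cin U$ there exists $X$ with $\ph(u,X)$. As in the proof of \autoref{thm:set-sep}, I would translate $\ph$ into a material formula $\phhat(u,X)$ in \bV, so that the hypothesis reads: for every $u\in U$ there is a set $X$ with $\phhat(u,X)$. The naive move would be to form $V = $ the set of pairs $(u,X)$ with $u\in U$ and $\phhat(u,X)$ by separation; but since we assume only the core axioms together with collection, we have no full separation with which to carve this out, and \emph{this is exactly the obstacle the proof must navigate}. Instead I would apply material collection to the formula $\theta(u,p)$ asserting \qq{$p=(u,X)$ for some set $X$ with $\phhat(u,X)$}, which is entire on $U$ by hypothesis. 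Strong collection then yields a set $V$ such that every $u\in U$ occurs as a first coordinate of some element of $V$ (the entire half), and every element of $V$ is of the form $(u,X)$ with $\phhat(u,X)$ (the bi-entire half, the first coordinate recovering $u$ uniquely). The first projection $p\colon V\to U$ is therefore surjective, hence regular epic in $\bbSet(\bV)$.

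It then remains to build the family $A$ in $\Set/V$ whose fiber over $v=(u,X)$ is $X$. For this I would extract the set of second coordinates by $\ddo$-separation out of $\bigcup\bigcup V$, take its union $W$, and cut out of $V\times W$ the set $A$ of pairs $(v,z)$ with $z$ belonging to the second coordinate of $v$, again by $\ddo$-separation; the projection $A\to V$ is the desired object of $\Set/V$. For each $v=(u,X)\cin V$ we have $pv=u$ and the fiber $v^*A\cong X$, so $\phhat(u,X)$---hence, by the translation together with isomorphism-invariance (\autoref{thm:isoinvar-truth})---the statement $\ph(pv,v^*A)$ holds, as required. Every step after the single application of collection uses only pairing, union, $\ddo$-separation, and limited $\ddo$-replacement, all of which are core axioms.

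Collection of functions is entirely parallel. Given $A,B$ in $\Set/U$ and the assumption that for each $u\cin U$ some $f\colon u^*A\to u^*B$ satisfies $\ph(u,f)$, I would translate to $\phhat$ and apply material collection to the formula \qq{$p=(u,f)$ with $f\colon A_u\to B_u$ and $\phhat(u,A_u,B_u,f)$}, obtaining a surjection $p\colon V\to U$ for which each $v\in V$ is a pair $(u,f)$ of the stated kind. Pulling $A$ and $B$ back along $p$ and defining $g\colon p^*A\to p^*B$ fiberwise by $g(v,a)=(v,f(a))$ where $v=(u,f)$---its graph again cut out by $\ddo$-separation---produces a morphism in $\Set/V$ with $v^*g=f$, so that $\ph(pv,(pv)^*A,(pv)^*B,v^*g)$ holds. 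The single genuine subtlety, and the reason the result is not a one-line appeal to material collection, is the absence of full separation: it forces one to package $u$ together with its witness into a single object of collection rather than separating a subset afterwards. Once that packaging is arranged, everything reduces to routine constructions available from the core axioms alone.
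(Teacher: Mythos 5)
Your proof is correct and follows essentially the same route as the paper's: package each index $u$ with its witness into a Kuratowski pair, apply material strong collection to get a bi-entire set $V$ whose first projection is the required surjection, build the family over $V$ by unions plus $\ddo$-separation, and conclude via isomorphism-invariance. The only differences are cosmetic: you spell out the ``analogous'' collection-of-functions case and explicitly dispatch collection of equalities via \autoref{thm:collmor}, both of which the paper leaves implicit.
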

\begin{proof}
  If \bV\ satisfies material collection, then given the setup of collection of sets for some formula $\ph$, let $\psi(u,X)$ assert that $X$ is a Kuratowski ordered pair of the form $(u,X')$ with $\ph(u,X')$.
  By material collection, let $V$ be a set such that for any $u\in U$, there is an $X\in V$ with $\psi(u,X)$, and for any $X\in V$, there is a $u\in U$ with $\psi(u,X)$.
  Then every element of $V$ is an ordered pair whose first element is in $U$, so there is a projection $V\to U$, which by assumption is surjective.
  We can then form $B = \bigcup_{(u,X')\in V} X'$, and by \ddo-separation cut out
  \[ A = \setof{((u,X'),a) \in V\times B | a \in X'}.
  \]
  Then $A$ is an object of $\bbSet(\bV)/V$, and for any $v=(u,X')\in V$, $v^*A$ is isomorphic to $X'$.
  Thus, by isomorphism-invariance, we have $\ph(u,v^*A)$, as required.
  Collection of functions is analogous.
\end{proof}

As far as I can tell, however, the material axiom of replacement does not seem to imply the structural axiom of replacement of contexts (in the absence of classical logic).
This, I feel, is one of the reasons (though not the only one) that the material axiom of collection is necessary in practice.
For example, it is well-known that while some types of ``infinite iteration'' can be performed in a topos with a \nno, and consequently some sorts of ``free structures'' can be proven to exist (c.f.~\cite{jw:at-toposes}), this is not always the case.
However, infinite iteration (and more generally, transfinite iteration over a well-founded relation) is always possible in a well-pointed topos satisfying our structural axioms of induction and replacement.
In~\cite{shulman:stacksem} we will see how to extend this observation to the non-well-pointed case.

\section{Constructing material set theories}
\label{sec:constr-mat}

So far we have summarized the axioms of material set theory and structural set theory, and explained how most axioms of material set theory are reflected in structural properties of the resulting category of sets.
We now turn to the opposite construction: how to recover a material set theory from a structural one.
A ``set'' in material set theory, of course, contains much more information than a ``set'' in structural set theory, namely the membership relations between its elements, their elements, and so on.
This gives rise to the idea of modeling a ``material set'' by a \emph{graph} or \emph{tree} with nodes depicting sets and edges depicting membership.

This basic idea was used by~\cite{cole:cat-sets,mitchell:topoi-sets,osius:cat-setth} in the first equiconsistency proofs of ETCS with BZC, and nearly identical constructions have been used for relative consistency proofs by others such as~\cite{aczel:afa} and~\cite{mathias:str-maclane}.
Here we perform essentially the same construction, but with an attention to details that ensures everything works in an intuitionistic or predicative context.

For the rest of this section, we work in a constructively well-pointed \Pi-pretopos with an \nno, denoted \Set.
I do not know whether an analogous construction can be made to work under weaker assumptions than this.

\begin{defns}\ 
  \killspacingtrue
  \begin{enumerate}
  \item A \textbf{graph} is a set $X$, whose elements are called \textbf{nodes}, equipped with a binary relation $\prec$.
  \item If $x\prec y$ we say that $x$ is a \textbf{child} of $y$.
  \item A \textbf{pointed} graph is one equipped with a distinguished node $\star$ called the \textbf{root}.
  \item A pointed graph is \textbf{accessible} if for every node $x$ there exists a path $x = x_n \prec \cdots \prec x_0 = \star$ to the root.
  \item For any node $x$ of a graph $X$, we write $X/x$ for the full subgraph of $X$ consisting of those $y$ admitting some path to $x$.
    It is, of course, pointed by $x$, and accessible.
  \end{enumerate}
  \killspacingfalse
  ``Accessible pointed graph'' is abbreviated \apg.
\end{defns}

\begin{rmk}
  The hypothesis on \Set\ is necessary to formalize ``accessibility'' and to define $X/x$.
  Specifically, a pointed graph $X_{\prec} \mono X\times X$ with root $\star\cin X$ is accessible if for every $x\cin X$, there exists a nonzero finite cardinal $[n]$ and a map $[n] \to X_{\prec}$ realizing a path from $x$ to $\star$.
  (Recall that a finite cardinal is the pullback along a map $1\to N$ of the second projection $N_< \to N$ of the strict order relation on the \nno.)
  The definition of $X/x$ is similar, using the fact that \ddo-separation can be applied to functions in a \Pi-pretopos.
\end{rmk}

We are using the terminology of~\cite{aczel:afa}.  The idea is that an
arbitrary graph represents a collection of material-sets with $\prec$
representing the membership relation between them, a pointed graph
represents a \emph{particular} set (the root) together with all the
data required to describe its hereditary membership relation, and an
\apg\ does this without any superfluous data (all the nodes bear some
relation to the root).  Thus, an arbitrary \apg\ can be considered a
picture of a possibly non-well-founded set: the root represents the
set itself, its children represent the elements of the set, and so on.

We will henceforth restrict attention to graphs for which $\prec$ is
well-founded, thereby ensuring that the models of material set theory
we construct satisfy the axiom of foundation.  It is also possible, by
changing this requirement, to construct models satisfying the various
axioms of anti-foundation (see~\cite{aczel:afa}), but we will not do
that here.  Recall the definition:

\begin{defn}\label{defn:well-founded}
  A subset $S$ of a graph $X$ is \textbf{inductive} if for any node
  $x\cin X$, if all children of $x$ are in $S$, then $x$ is also in
  $S$.  A graph $X$ is \textbf{well-founded} if any inductive subset
  of $X$ is equal to all of $X$.
\end{defn}

In the presence of classical logic, well-foundedness is equivalent to
saying that every inhabited subset of $X$ has a $\prec$-minimal element,
but in intuitionistic logic that version is too strong:

\begin{prop}\label{thm:clwf-lem}
  If there exists a graph $X$ containing nodes $u,v$ with $u\prec v$, and such that every inhabited subset $S\csub X$ has a $\prec$-minimal element (i.e.\ there exists an $x\in S$ such that $y\nprec x$ for all $y\in S$), then excluded middle holds (i.e.\ $\Set$ is Boolean).
\end{prop}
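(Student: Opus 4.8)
The plan is to prove \ddo-classical logic, i.e.\ $\ph\vee\neg\ph$ for every \ddo-formula $\ph$, and then invoke \autoref{thm:boolean} to conclude that \Set\ is Boolean. This is a structural, intuitionistically careful rendering of the classical derivation of excluded middle from the principle that every inhabited set has a $\prec$-minimal element: the single available edge $u\prec v$ is exactly what is needed to let a minimal element ``detect'' the truth value of $\ph$. So I would fix an arbitrary \ddo-formula $\ph$ and aim to establish $\ph\vee\neg\ph$.

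The key construction is the subset
\[ S = \setof{ x\cin X | (x = v) \vee (x = u \wedge \ph) } \csub X, \]
which exists by \ddo-separation (available via \autoref{thm:ddo-sep}), since $x=v$ and $x=u$ are \ddo-atomic and $\ph$ is \ddo. Now $v\in S$, so $S$ is inhabited, and the hypothesis supplies a $\prec$-minimal element $x_0\cin S$: that is, $x_0\in S$ and $y\nprec x_0$ for every $y\in S$. From $x_0\in S$ I obtain the disjunction $(x_0=v)\vee(x_0=u\wedge\ph)$, on which I perform disjunction elimination (legitimate intuitionistically, since the disjunction has been \emph{proved}, not decided). In the second case $\ph$ holds outright. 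In the first case $x_0=v$, and I argue for $\neg\ph$: \emph{assuming} $\ph$ would make $(u=u\wedge\ph)$ hold, hence $u\in S$; but then $u\prec v = x_0$ contradicts minimality of $x_0$. Either way $\ph\vee\neg\ph$ holds, so \autoref{thm:boolean} yields Booleanness.

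I do not expect a genuine obstacle; the whole content lies in choosing $S$ so that its minimal element is forced, through the edge $u\prec v$, to decide $\ph$. The only points demanding care are intuitionistic hygiene---never deciding $\ph$ itself, but only case-splitting on the established membership $x_0\in S$ and deriving $\neg\ph$ under a hypothetical assumption of $\ph$---and the bookkeeping that licenses treating $\ph$ as a closed proposition. For the latter, because $1$ is a strong generator it suffices to verify $\ph\vee\neg\ph$ after instantiating the ambient free variables at global elements; once so instantiated, $S$ is a bona fide subobject of $X$ and the minimality hypothesis applies verbatim.
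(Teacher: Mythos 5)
Correct, and essentially the paper's own argument: the paper forms the very same subset $S$ of $X$ cut out by $(x=v)\vee(x=u\wedge P)$, takes a $\prec$-minimal element, and runs exactly your two-case analysis, using the edge $u\prec v$ to refute minimality in the case $x_0=v$. The only difference is packaging: the paper instantiates the argument at a subterminal object $P$ and concludes Booleanness via two-valuedness (\autoref{thm:wp-bool-tv}), whereas you instantiate it at a \ddo-formula \ph\ and conclude via \ddo-classical logic (\autoref{thm:boolean}); the two routes are interchangeable here.
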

\begin{proof}
  I learned this proof from Zhen Lin Low and Daniil Frumin.
  By \autoref{thm:wp-bool-tv}, to prove $\Set$ is Boolean it suffices to prove it is two-valued.
  Thus, let $P$ be a subterminal object, i.e.\ a proposition, and let $S = \setof{ y\cin X | y=v \vee (y=u \wedge P)}$.
  Then $S$ is inhabited (by $v$), so it has a $\prec$-minimal element $y$.
  On the one hand, if $y=v$, then $\neg P$, for if $P$ then $u\in S$ and so $y$ would not be $\prec$-minimal in $S$.
  But on the other hand, if $(y=u \wedge P)$, then $P$.
\end{proof}

Amusingly (and perhaps surprisingly), it follows that ``classical well-foundedness'' implies ``inductive'' well-foundedness even in the absence of classical logic:

\begin{cor}
  If $X$ is a graph in which every inhabited subset $S\csub X$ has a $\prec$-minimal element, then it is well-founded.
\end{cor}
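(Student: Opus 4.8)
The plan is to deduce inductive well-foundedness from \autoref{thm:clwf-lem} without ever having to decide whether $X$ contains an edge. Let $S\csub X$ be an inductive subset; by \autoref{thm:pin-props} it suffices to prove that every $x\cin X$ lies in $S$, since then the mono $S\mono X$ is bijective on global elements and hence an isomorphism. So I fix $x\cin X$ and aim to show $x\in S$.

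By the definition of inductiveness, $x\in S$ will follow once I show that every child of $x$ lies in $S$, i.e.\ that $y\prec x$ implies $y\in S$ for all $y\cin X$. The key observation is that I may prove this implication by \emph{assuming} its hypothesis: suppose $y\prec x$. Then $X$ contains the edge $y\prec x$, so the hypotheses of \autoref{thm:clwf-lem} are satisfied, and that proposition tells me $\Set$ is Boolean. In other words, the mere presence of a child of $x$ already upgrades the ambient logic to a classical one, which is exactly what is needed to run the usual argument; if on the other hand $x$ has no children, the implication $y\prec x\imp y\in S$ holds vacuously, and inductiveness still yields $x\in S$. This is how both the ``$x$ has a child'' and ``$x$ has no child'' situations are handled uniformly, with no case split on edge-existence.

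It then remains to carry out the classical argument under the assumption that $\Set$ is Boolean. Here I would form $T=\setof{z\cin X | z\notin S}$ using \ddo-separation, and observe that if $T$ were inhabited it would have a $\prec$-minimal element $m$; minimality forces every child of $m$ to satisfy $\neg\neg(\cdot\in S)$, which by Booleanness means every child of $m$ actually lies in $S$, so inductiveness yields $m\in S$, contradicting $m\in T$. Hence $T$ has no global element and is initial by \autoref{thm:pin-props}. Booleanness then gives $z\in S\join z\notin S$ for each $z\cin X$, and the second disjunct is impossible since $T\iso 0$; thus $z\in S$ for all $z$, i.e.\ $S=X$. In particular the assumed child $y$ lies in $S$, completing the proof that $y\prec x$ implies $y\in S$, and therefore that $x\in S$.

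The only genuinely delicate point is the logical bookkeeping in the second and third paragraphs: I am invoking the conclusion ``$\Set$ is Boolean'' of \autoref{thm:clwf-lem} under the temporary hypothesis $y\prec x$, and then using the resulting excluded middle for the \ddo-propositions ``$z\in S$''. This is legitimate precisely because \autoref{thm:clwf-lem} is an implication whose antecedent (existence of an edge) is discharged by the assumption $y\prec x$; all uses of excluded middle occur inside the scope of that assumption, so no classical principle leaks out into the final, fully constructive statement that $X$ is well-founded.
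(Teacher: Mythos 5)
Your proof is correct and follows essentially the same route as the paper: fix an inductive $S$ and $x\cin X$, reduce to showing every child of $x$ lies in $S$, and under the assumption $y\prec x$ invoke \autoref{thm:clwf-lem} to obtain Booleanness, whence the classical minimal-element argument gives $S=X$. The only difference is that you spell out the classical step (forming $T=\setof{z\cin X \mid z\notin S}$ and deriving a contradiction from a minimal element of $T$), which the paper compresses into the single phrase ``thus $X$ is well-founded, so $S=X$.''
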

\begin{proof}
  Let $S$ be inductive and $x\cin X$; we want to show $x\in S$.
  Since $S$ is inductive, it suffices to show that for any $y\prec x$ we have $y\in S$.
  But if $y\prec x$, then by \autoref{thm:clwf-lem} excluded middle holds; thus $X$ is well-founded, so $S=X$ and hence $x\in S$.
\end{proof}

We will say no more about classical well-foundedness.
The intuitionistically correct \autoref{defn:well-founded} of well-foundedness suffices to perform proofs
by induction on a well-founded graph: by proving that a given subset
is inductive, we conclude it is the whole graph.  In order to prove a
\emph{statement} by well-founded induction, we must apply separation
to the statement to turn it into a subset; thus either the statement
must be \ddo\ or we must have a stronger separation axiom.  We can
also use well-founded induction in more than one variable, since if
$X$ and $Y$ are well-founded then so is $X\times Y$.

We record some observations about well-founded graphs.

\begin{lem}\label{thm:wf-hered}
  Any subset of a well-founded graph is well-founded with the induced relation.
\end{lem}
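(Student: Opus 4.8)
The plan is to reduce well-foundedness of a subgraph to well-foundedness of the ambient graph $X$ by transporting an inductive subset back up along the inclusion. Let $A\csub X$ carry the induced relation (so $a\prec b$ in $A$ iff the images of $a,b$ satisfy $a\prec b$ in $X$), and suppose $T\csub A$ is inductive for this relation; I must show $T=A$. Since the composite $T\mono A\mono X$ is again monic, $T$ is also a subset of $X$, and I would apply $\ddo$-separation (\autoref{thm:ddo-sep}\ref{item:ddosep-sep}) to the formula $\ph(x)=(x\in A\imp x\in T)$ to form the subset $S=\setof{x\cin X | x\in A\imp x\in T}\csub X$. Note that $\ph$ is genuinely $\ddo$: each of $x\in A$ and $x\in T$ unfolds to an existentially $\ddo$-quantified equality of arrows with source $1$, and an implication of $\ddo$-formulas is again $\ddo$, so the separation axiom does apply.

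The key step is to verify that $S$ is inductive in $X$, using only intuitionistic reasoning. So fix $x\cin X$ and assume every child of $x$ in $X$ lies in $S$; to see $x\in S$, assume $x\in A$ and show $x\in T$. Because $T$ is inductive in $A$, it suffices to check that every child $b\cin A$ of $x$ with respect to the induced relation lies in $T$. Any such $b$ is, after inclusion, a child of $x$ in $X$, hence lies in $S$, i.e.\ $b\in A\imp b\in T$; as $b$ is an element of $A$ this yields $b\in T$. Here I would flag that ``$b\in T$ as a subobject of $X$'' and ``$b\in T$ as a subobject of $A$'' coincide, since $T\mono X$ factors as $T\mono A\mono X$ and $A\mono X$ is monic. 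Thus all induced children of $x$ lie in $T$, so $x\in T$ by inductiveness of $T$, giving $x\in S$.

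Finally, well-foundedness of $X$ (\autoref{defn:well-founded}) forces $S=X$, so every $x\cin X$ satisfies $x\in A\imp x\in T$; in particular every element $a\cin A$ lies in $T$. Hence every map $1\to A$ factors through the mono $T\mono A$, and since $1$ is a strong generator (\autoref{def:wpt}\ref{item:wpt-strgen}) this mono is an isomorphism, i.e.\ $T=A$. The only real subtlety is the bookkeeping just noted---confirming that $\ph$ lies in the $\ddo$ fragment so that $\ddo$-separation is available, and that membership in $T$ is unambiguous whether $T$ is regarded as a subset of $A$ or of $X$; everything else is a direct transcription of the inductive hypotheses and uses no excluded middle.
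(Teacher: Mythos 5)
Your proof is correct and is essentially the paper's proof: the subset $S=\setof{x\cin X | x\in A\imp x\in T}$ you construct by \ddo-separation is precisely the Heyting implication $(A\imp T)$ in $\Sub(X)$, and the paper's one-line argument is exactly that this Heyting implication is inductive in $X$, which is what your global-elements verification spells out. The only cosmetic difference is the final step, where the paper concludes $T=A$ purely lattice-theoretically (from $(A\imp T)=X$ one gets $A = A\wedge(A\imp T)\le T$) while you invoke the strong-generator property; both are valid.
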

\begin{proof}
  For any $Z\csub X$, if $S\csub Z$ is inductive in $Z$, then
  the Heyting implication $(Z\imp S) \subseteq X$ is inductive in $X$.
\end{proof}

\begin{lem}\label{thm:wf-no-cycles}
  If $X$ is a well-founded graph, then there does not exist any cyclic
  path $x = x_n \prec \cdots \prec x_0 = x$ in $X$.
\end{lem}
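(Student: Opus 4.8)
The plan is to rule out cycles by extracting from a hypothetical cycle an inhabited subset every node of which has a child inside it, and then to show that \emph{no} well-founded graph admits such a subset.

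I would first record the general obstruction. Suppose $C\csub X$ is inhabited and has the property that every node of $C$ has a child lying in $C$; I claim $X$ is then not well-founded, contradicting the hypothesis. Form the Heyting complement $\neg C\csub X$, which exists since \Set\ is a Heyting category. I would verify that $\neg C$ is \emph{inductive} in the sense of \autoref{defn:well-founded}: if $y\cin X$ is a node all of whose children lie in $\neg C$, then $y$ itself lies in $\neg C$, for if instead $y\in C$ then by hypothesis $y$ has some child $z\prec y$ with $z\in C$, whereas $z\in\neg C$ since $z$ is a child of $y$, a contradiction; thus $\neg(y\in C)$, which by well-pointedness (as in the proof of \autoref{thm:boolean}) means exactly $y\in\neg C$. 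Well-foundedness then forces $\neg C=X$, so that $C=C\cap\neg C=0$. But $C$ is inhabited, so this yields a map $1\to 0$, contradicting nonemptiness of $1$ (\autoref{def:wpt}).

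It remains to produce such a $C$ from a cyclic path $x=x_n\prec\cdots\prec x_0=x$. I would take $C$ to be the image of the path, equivalently the subset $\setof{y\cin X | \exists i.\ y=x_i}$ carved out by \ddo-separation; it is inhabited because $x=x_0$ factors through it. The required closure property holds node by node: given $y=x_i$ with $i<n$ we have the child $x_{i+1}\prec x_i=y$ lying in $C$, while if $i=n$ then $y=x_n=x_0$ and the child $x_1\prec x_0=y$ (which exists as $n\ge 1$) lies in $C$. Feeding this $C$ into the general obstruction completes the argument.

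The step I expect to require the most care is the honest internal formalization: recall that a path is a map out of a nonzero finite cardinal $[n+1]$ rather than an externally indexed list. Producing the ``next'' node $x_{i+1}$ uniformly then relies on the case split between $i<n$ and $i=n$, which is legitimate because equality and order on a finite cardinal are decidable; and extracting from $y\in\mathrm{im}(p)$ an actual index with $x_i=y$ uses projectivity of $1$. Everything else is routine Heyting-category reasoning, together with \ddo-separation to form $C$ and the equivalence $y\in\neg C\iff\neg(y\in C)$ noted above.
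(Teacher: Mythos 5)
Your proof is correct and takes essentially the same route as the paper: the paper's one-line argument likewise forms the Heyting complement of the cycle's node-set $\{x_0,\dots,x_n\}$ and observes that it is inductive but not all of $X$, contradicting well-foundedness. Your additional packaging (the ``general obstruction'' step and the internal formalization of paths via finite cardinals, decidability, and projectivity of $1$) merely spells out details the paper leaves implicit.
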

\begin{proof}
  If there were, then the Heyting complement of $\{ x_0, \dots,
  x_n\}\subseteq X$ would be inductive but not all of $X$.
\end{proof}

\begin{lem}\label{thm:root-decid}
  If $X$ is a well-founded graph and $x\cin X$, then $x\colon 1\to X/x$
  is a complemented subobject.  In particular, it is decidable whether
  or not a node of a well-founded \apg\ is the root.
\end{lem}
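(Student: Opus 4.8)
The plan is to exhibit an explicit complement for the root inside $X/x$: the subobject of those nodes that possess a parent. First I would note that $X/x$ is again well-founded by \autoref{thm:wf-hered}, and is accessible with root $x$ by construction. Using \ddo-separation (\autoref{thm:ddo-sep}) I form the subobject $P\csub X/x$ cut out by the \ddo-formula asserting that $y$ has a parent, i.e.\ that some $w\cin X/x$ satisfies $y\prec w$; thus a global element $y\cin X/x$ lies in $P$ exactly when it has a parent in $X/x$. I claim $P$ is a complement of the root $x\colon 1\to X/x$.

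To see $\{x\}\cap P=0$, suppose $x\in P$, so $x\prec w$ for some $w\cin X/x$. Since $w$ lies in $X/x$ it admits a path $w=w_k\prec\cdots\prec w_0=x$, and prepending the edge $x\prec w$ yields a cyclic path from $x$ to itself, contradicting \autoref{thm:wf-no-cycles}. Hence $\neg(x\in P)$, so the pullback $x^*P$ is a subterminal object with no global element and is therefore initial by \autoref{thm:pin-props}; that is, $\{x\}\cap P=0$.

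The main obstacle is showing $\{x\}\cup P=X/x$, since one must resist trying to decide $y=x$ directly — in a non-Boolean category that is precisely the question at issue. Because $1$ is a strong generator, by \autoref{thm:pin-props} it suffices to prove that every $y\cin X/x$ factors through $\{x\}\cup P$. By accessibility $y$ admits a path to $x$, and since $1$ is projective (the existence property) I may extract a concrete such path $y=y_n\prec\cdots\prec y_0=x$ of a definite length $n\cin N$. As $N\cong 1+N$, the subobject $\{0\}\csub N$ is complemented, so by indecomposability either $n=0$ or $n\neq 0$: if $n=0$ the path is trivial and $y=x$, so $y$ factors through $\{x\}$; if $n\neq 0$ then $y\prec y_{n-1}$ with $y_{n-1}\cin X/x$, so $y\in P$. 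In either case $y\in\{x\}\cup P$, and hence $\{x\}\cup P=X/x$. Together with the previous paragraph this shows $P$ is a complement of $\{x\}$, so $x\colon 1\to X/x$ is a complemented subobject.

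For the final assertion, any well-founded \apg\ $Y$ with root $\star$ satisfies $Y=Y/\star$ by accessibility, so applying the above with $X=Y$ and $x=\star$ shows $\star\colon 1\to Y$ is complemented; equivalently, for each node it is decidable whether or not it is the root.
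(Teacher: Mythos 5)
Your proof is correct and takes essentially the same route as the paper's: both arguments extract a path of length $n\cin N$ from a node to $x$, use complementedness of $\{0\}\csub N$ to decide $n=0$ versus $n>0$, and invoke \autoref{thm:wf-no-cycles} to rule out equality with the root in the positive case. The only difference is presentational: you package the complement explicitly as the subobject $P$ of nodes admitting a parent and verify disjointness and covering separately, whereas the paper establishes the pointwise dichotomy $y=x$ or $y\neq x$ and leaves the passage to a complemented subobject implicit.
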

\begin{proof}
  By definition, every node $y\cin X/x$ admits some path to $x$, of
  length $n\cin N$ say.  Since $0$ is a complemented subobject of $N$,
  either $n=0$, in which case $y=x$, or $n>0$.  If $n>0$, then if
  $x=y$ there would be a cyclic path from $x$ to itself; hence in this
  case $x\neq y$.  Thus for all $y$, either $y=x$ or $y\neq x$, as
  desired.
\end{proof}

\begin{defn}
  We will write $X\csl x$ for the complement of $x$ in $X/x$, i.e.\
  the set of nodes admitting a path to $x$ of length $>0$.  In
  particular, $X\csl \star$ is the complement of the root.
\end{defn}

We also need our \apgs\ to satisfy
the axiom of extensionality.

\begin{defn}
  An graph $X$ is \textbf{extensional} if for any two
  nodes $x$ and $y$, if $z\prec x \Leftrightarrow z\prec y$ for all $z$,
  then $x=y$.
\end{defn}

For non-well-founded graphs, this definition would have to be
strengthened in one of various possible ways (see~\cite{aczel:afa}).

\begin{rmk}
  An equivalent characterization of the universe of extensional
  well-founded \apgs\ can be obtained by working with well-founded
  \emph{rigid trees} instead.  A \emph{tree} is an \apg\ in which
  every node $x$ admits a \emph{unique} path to the root, and it is
  \emph{rigid} if for any node $z$ and any children $x\prec z$ and
  $y\prec z$, if $X/x \cong X/y$, then $x=y$.
  Every extensional well-founded \apg\ $X$ has an ``unfolding'' into a
  well-founded rigid tree $X^t$, whose nodes are the paths in $X$, and
  conversely every well-founded rigid tree is the unfolding of some
  extensional well-founded \apg.  Rigid trees are used
  by~\cite{cole:cat-sets,mitchell:topoi-sets} and~\cite{mm:shv-gl},
  while extensional relations are used by~\cite{osius:cat-setth}
  and~\cite{ptj:topos-theory}, but they result in essentially
  equivalent theories.  (In the non-well-founded case, however,
  extensional relations seem more generally applicable than rigid
  trees; see~\cite{aczel:afa}.)
\end{rmk}

We now record some observations about extensionality and
well-foundedness.

\begin{defn}
  An \textbf{initial segment} of a graph $X$ is a subset $Y\csub
  X$ such that $x\prec y \in Y$ implies $x\in Y$.
\end{defn}

\begin{lem}\label{thm:ext-hered}
  Any initial segment of an extensional graph is extensional with the
  induced relation.\qed
\end{lem}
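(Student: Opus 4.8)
The plan is to reduce extensionality of the initial segment $Y$ to that of the ambient graph $X$, by observing that the initial-segment condition lets us replace a quantifier over nodes of $Y$ by a quantifier over all nodes of $X$. The key point is that since $Y\csub X$ is an initial segment, whenever $z\prec x$ with $x\in Y$ we automatically have $z\in Y$; thus every child (in $X$) of a node of $Y$ already lies in $Y$, and the induced relation on $Y$ retains all the children that each $Y$-node possesses in $X$. Since \Set\ is well-pointed, it suffices to argue with global elements throughout.

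Concretely, I would take $x,y\cin Y$ satisfying the hypothesis of extensionality for $Y$, namely that $z\prec x \Iff z\prec y$ for every $z\cin Y$, and then verify that the same bi-implication holds for every $z\cin X$. If $z\prec x$, then because $x\in Y$ and $Y$ is an initial segment we get $z\in Y$, so the $Y$-hypothesis applies and yields $z\prec y$; symmetrically $z\prec y$ forces $z\in Y$ (as $y\in Y$) and hence $z\prec x$. Therefore $z\prec x\Iff z\prec y$ for all $z\cin X$, and extensionality of $X$ forces $x=y$ as elements of $X$. Since $Y\mono X$ is monic it is injective on global elements (\autoref{thm:pin-props}), so $x=y$ holds in $Y$ as well, establishing that $Y$ is extensional with the induced relation.

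I do not expect any genuine obstacle: the argument is a pure first-order manipulation that uses neither well-foundedness nor any classical principle, so it goes through intuitionistically (the initial-segment condition supplies the membership $z\in Y$ outright, with no case split or choice required). The only step demanding a moment's care is the quantifier upgrade from $z\cin Y$ to $z\cin X$, which is exactly what the downward-closure defining an initial segment is designed to provide.
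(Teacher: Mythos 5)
Your proof is correct, and it is exactly the straightforward argument the paper has in mind: the lemma is stated with its proof omitted (marked \qed) precisely because the initial-segment condition makes the children of a node of $Y$ in $Y$ coincide with its children in $X$, so extensionality transfers immediately. Your quantifier-upgrade step and the appeal to injectivity of the mono on global elements are the right (and only) points to check.
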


\begin{lem}\label{thm:wf-rigid}
  If $X$ is a well-founded extensional graph and $X/x \cong X/y$, then
  $x=y$.
\end{lem}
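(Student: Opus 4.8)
The plan is to prove the statement by well-founded induction on $X$ (in the sense of \autoref{defn:well-founded}), invoking extensionality only at the very last step. Concretely, I would introduce the subset
\[
T = \setof{ x\cin X | \forall y\cin X.\,(X/x\cong X/y \imp x=y) }
\]
and show that it is inductive; well-foundedness then forces $T=X$, which is exactly the claim. Throughout, an isomorphism $X/x\cong X/y$ is read as an isomorphism of pointed graphs, i.e.\ one carrying the root $x$ to the root $y$.

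Before running the induction I must check that $T$ can actually be formed, i.e.\ that its defining formula is separable. The quantifier $\forall y\cin X$ is a \ddo-quantifier, so the only worry is the predicate $X/x\cong X/y$, which prima facie asserts the \emph{existence} of an isomorphism. But \Set\ is a \Pi-pretopos, so—using the \nno\ to express the ``path'' relation exactly as in the construction of $X/x$—I can form over $X\times X$ the two pulled-back slice-families together with their internal object of $\prec$- and root-preserving isomorphisms; the subobject $D\mono X\times X$ of pairs $(x,y)$ admitting such an isomorphism is then \ddo-definable, and $T$ is cut out of $X$ by \ddo-separation (\autoref{thm:ddo-sep}). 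Thus no separation beyond the ambient Heyting structure is required, which keeps the argument legitimate in the predicative/intuitionistic setting.

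For the inductive step I would fix $x$, assume every child of $x$ lies in $T$, and suppose $\theta\colon X/x\cong X/y$ is a pointed isomorphism, so $\theta(x)=y$. For each child $x'\prec x$ one has $X/x'\subseteq X/x$ (append $x'\prec x$ to any path), and inside $X/x$ the nodes with a path to $x'$ are precisely those of $X/x'$; hence $\theta$ restricts to a pointed isomorphism $X/x'\cong X/\theta(x')$. Since $x'\in T$ and $\theta(x')\cin X$, this yields $\theta(x')=x'$. Therefore $\theta$ fixes every child of $x$, so the set of children of $y=\theta(x)$—being the $\theta$-image of the set of children of $x$—coincides with the set of children of $x$; that is, $z\prec x \Iff z\prec y$ for all $z$. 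By extensionality $x=y$, so $x\in T$ and the induction closes.

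The bookkeeping here is routine, and I do not expect a serious obstacle; the two points deserving care are as follows. First, pointedness may be assumed without loss: in the accessible well-founded graph $X/x$ the root is the unique node to which every node has a path, since two such nodes would produce a cyclic path, contradicting \autoref{thm:wf-no-cycles}; hence any graph isomorphism $X/x\cong X/y$ automatically matches roots. Second—and this is the genuine subtlety rather than a real difficulty—one must confirm the \ddo-expressibility of $X/x\cong X/y$ sketched above, so that the whole argument stays within the weak separation available in a constructively well-pointed \Pi-pretopos. Everything else is a direct application of well-founded induction together with extensionality.
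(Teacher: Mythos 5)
Your proof is correct and takes essentially the same route as the paper's: well-founded induction on $x$, restricting the isomorphism to children so as to apply the inductive hypothesis, and then concluding by extensionality (your ``image of a bijection'' step is just a repackaging of the paper's symmetric application of induction to the children of $y$ via $g^{-1}$). Your attention to the \ddo-definability of the relation $X/x\cong X/y$ is warranted in this weak setting and matches exactly how the paper itself handles it, in the proof of \autoref{thm:ext-quotient}, via \ddo-separation and local exponentials.
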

\begin{proof}
  We prove this by well-founded induction.  If $g\colon X/x\toiso
  X/y$, then for all $x'\prec x$ we have $X/x' \cong X/g(x')$, where
  $g(x')\prec y$; hence by induction $x' = g(x')$.  Similarly, we have
  $y' = g^{-1}(y')$ for all $y'\prec y$.  By extensionality, $x=y$.
\end{proof}

In the language of~\cite{aczel:afa}, \autoref{thm:wf-rigid} says that
well-founded extensional graphs are ``Finsler-extensional.''

\begin{lem}\label{thm:wf-rigid-global}
  Any automorphism of a well-founded extensional graph is the
  identity.  Therefore, any two parallel isomorphisms of well-founded
  extensional graphs are equal.
\end{lem}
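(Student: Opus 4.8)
The plan is to reduce everything to \autoref{thm:wf-rigid} together with the fact that $1$ is a strong generator. Since we are working in a constructively well-pointed $\Pi$-pretopos, to prove that an automorphism $f\colon X\to X$ of a well-founded extensional graph equals $1_X$ it suffices, by strong generation (\autoref{thm:pin-props}\ref{item:pp1a}), to show that $f(x) = x$ for every global element $x\cin X$. So I would fix such an $x$ and aim to produce an isomorphism $X/x \cong X/f(x)$, at which point \autoref{thm:wf-rigid} forces $x = f(x)$.

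The one step with genuine content is verifying that $f$ restricts to an isomorphism $X/x \cong X/f(x)$. Here I would use that an isomorphism of graphs preserves \emph{and} reflects $\prec$, i.e.\ that $f\times f$ restricts to an automorphism of $X_\prec \mono X\times X$. Consequently $f$ carries any path $y = y_n \prec \cdots \prec y_0 = x$ to a path $f(y) = f(y_n)\prec\cdots\prec f(y_0) = f(x)$, and $f^{-1}$ performs the inverse operation; in the formalism of the remark following the \apg\ definitions, postcomposition with $f$ is a bijection between maps $[n]\to X_\prec$ realizing a path to $x$ and those realizing a path to $f(x)$. Hence $f$ sends the subobject of nodes admitting a path to $x$ bijectively onto the subobject of nodes admitting a path to $f(x)$, which is exactly the statement that $f$ restricts to an isomorphism $X/x \cong X/f(x)$ respecting the induced relations.

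Granting this, the first assertion follows at once: \autoref{thm:wf-rigid}, applied to the isomorphism $X/x \cong X/f(x)$, yields $x = f(x)$; since this holds for all $x\cin X$ and $1$ is a strong generator, $f = 1_X$. For the second assertion I would argue that if $g,h\colon X\to Y$ are parallel isomorphisms of well-founded extensional graphs, then $h^{-1}\circ g\colon X\to X$ is an automorphism of the well-founded extensional graph $X$, so by the first part $h^{-1}\circ g = 1_X$, and composing with $h$ gives $g = h$.

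I expect the main obstacle to be precisely the restriction step $X/x \cong X/f(x)$, as it is the only place that touches the structural encoding of ``path'' and ``descendant subgraph'' rather than being a formal consequence of \autoref{thm:wf-rigid} and strong generation. Everything else is routine.
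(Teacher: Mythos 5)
Your proof is correct, but it is organized differently from the paper's. The paper proves the first assertion by a direct well-founded induction: the subset $\setof{ x\cin X | f(x)=x }$ (formed by \ddo-separation) is inductive, because if $f(x')=x'$ for all $x'\prec x$ then, $f$ being an isomorphism, one gets $z\prec x \Iff z\prec f(x)$ for all $z$, whence $f(x)=x$ by extensionality; the second assertion then follows just as you argue. You instead reuse \autoref{thm:wf-rigid}: an automorphism restricts to an isomorphism of \apgs\ $X/x \toiso X/f(x)$, and rigidity then forces $x=f(x)$. Your restriction step is sound, and it is essentially the same fact the paper uses silently inside its own proof of \autoref{thm:wf-rigid} (``for all $x'\prec x$ we have $X/x'\cong X/g(x')$''). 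What the paper's route buys is brevity and self-containedness: the induction is two lines and never needs to manipulate paths or the subobject $X/x$, so no unfolding of the encoding of ``admits a path to $x$'' is required. What your route buys is economy of inductions: no new well-founded induction is performed, and it makes explicit that triviality of automorphisms is a formal consequence of pointed rigidity plus the fact that isomorphisms respect descendant subgraphs. Two small points: the fact you need at the end is that $1$ is a generator (parallel maps agreeing on all global elements are equal), which follows from strong generation applied to the equalizer of $f$ and $1_X$, rather than \autoref{thm:pin-props}\ref{item:pp1a} as you cite, which characterizes when a single map is monic or invertible; and your deduction of the second assertion via the automorphism $h^{-1}\circ g$ is exactly what the paper's ``Therefore'' abbreviates.
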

\begin{proof}
  Let $f\colon X\toiso X$ be an isomorphism; we prove by well-founded
  induction that $f(x)=x$ for all $x\in X$.  But if $f(x')=x'$ for all
  $x'\prec x$, then extensionality immediately implies $f(x)=x$.
\end{proof}

\begin{defn}
  We write $\bbV(\Set)$ for the class of well-founded extensional
  \apgs\ in \Set.
\end{defn}

Our goal is now to show that $\bbV(\Set)$ is a model of a material set
theory.  We consider two \apgs\ to be \emph{equal}, i.e.\ to represent
the same material-set, when they are isomorphic.  And we model the
membership relation $\ordin$ in the expected way:

\begin{defn}
  If $X$ is an \apg, the children of its root are called its
  \textbf{members}.  We write $|X|$ for the set of members of $X$.  If
  $X$ and $Y$ are \apgs, we write $X\iin Y$ to mean that $X\cong Y/y$
  for some member $y\cin |Y|$.
\end{defn}

Here is our omnibus theorem.

\begin{thm}\label{thm:str->mat}
  Let \Set\ be a constructively well-pointed \Pi-pretopos with a \nno,
  and let \ph\ be a formula of material set theory.
  Then $\bbV(\Set)\ss\ph$ whenever any of the following holds.
  \begin{blist}
  \item \ph\ is the axiom of extensionality, empty set, pairing,
    union, exponentiation, infinity, foundation, transitive closures,
    Mostowski's principle, or an instance of \ddo-separation.
  \item \Set\ satisfies structural fullness and \ph\ is material fullness.
  \item \Set\ is a topos and \ph\ is the power set axiom.
  \item \Set\ satisfies structural separation and \ph\ is an instance
    of material separation.
  \item \Set\ satisfies structural collection and \ph\ is an instance
    of material collection.
  \item \Set\ satisfies replacement of contexts and \ph\ is an instance
    of material replacement.
  \item \Set\ is Boolean and \ph\ is an instance of \ddo-classical
    logic.
  \item \Set\ satisfies full classical logic and \ph\ is an instance
    of full classical logic.
  \item \Set\ satisfies induction and \ph\ is an instance of full
    induction.
  \item \Set\ satisfies the axiom of choice and \ph\ is the material
    axiom of choice.
  \item \Set\ satisfies extensional well-founded induction and \ph\ is
    an instance of set-induction.
  \end{blist}
\end{thm}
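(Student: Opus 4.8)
The plan is to verify each listed clause separately, all relative to the one fixed interpretation in which a ``material set'' is a well-founded extensional \apg, equality is isomorphism, and membership is $\iin$. As explained in \autoref{rmk:translation}, this is really a uniform translation of material formulas into the language of categories, under which each material quantifier ``$\forall$ set'' or ``$\exists$ set'' becomes a quantifier over objects of \Set\ carrying the structure of a well-founded extensional \apg; so for every clause I must show that the translation of the corresponding material axiom is provable in \Set\ from the stated hypothesis. Before touching the axioms I would isolate the two constructions that do all the work. The first is a \emph{sum} operation: from a set $I$ and an $I$-indexed family of well-founded extensional pointed graphs---an object $E\to I$ of $\Set/I$ together with a section picking out roots and a fiberwise relation---I build a single pointed graph on $E+1$ whose new root has the fiber-roots as its children, which is accessible, and well-founded whenever each fiber is. The second is the \emph{extensional quotient}, collapsing a well-founded graph along the ``isomorphic downset'' relation to obtain a well-founded extensional graph with the same membership behaviour at the root. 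Pairing, union, separation, power set, fullness, infinity, collection and replacement will all be instances of ``apply the sum construction to a suitable family, then take the extensional quotient.''

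With those in hand the hypothesis-free clauses go quickly. Extensionality is \autoref{thm:wf-rigid} repackaged: if $X$ and $Y$ have the same members, then $x\mapsto X/x$ (injective up to isomorphism by rigidity) matches $|X|$ with $|Y|$, and the fiberwise isomorphisms assemble---canonically, by \autoref{thm:wf-rigid-global}---into $X\cong Y$. The empty set is the one-node \apg; pairing is the sum over $I=1+1$; union of $X$ is the sum of the downsets $X/z$ over the grandchildren $z$ of its root; \ddo-separation applies the sum construction to the subobject of $|X|$ cut out by \autoref{thm:ddo-sep}. Exponentiation builds the set of material functions between two \apgs\ using the local exponentials of the $\Pi$-pretopos; infinity builds the family of finite von Neumann ordinals by recursion on the \nno, sums it, and gets the minimality clause from \nno-induction; foundation is immediate from well-foundedness of the graphs. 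Mostowski's principle is essentially the statement that \Set's objects of study here are precisely well-founded extensional graphs, so the collapse of such a relation is realized by the corresponding \apg; transitive closures come from summing the downsets $X/y$ over all nodes $y$.

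The conditional clauses each route a structural hypothesis through the same machinery. Power objects (the topos case) supply, for an \apg\ $X$, the family of subsets of $|X|$ needed to build the power set, and structural fullness supplies the analogous family for material fullness; structural (full) separation provides the subobject of $|X|$ when the separating formula is unbounded. Full and \ddo-classical logic pass straight through, since the translation of a propositional instance is again such an instance, using \autoref{thm:boolean} to turn Booleanness of \Set\ into structural \ddo-classical logic. For collection and replacement the existential ``there is a set $A$ with $\ph$'' ranges over the proper class $\bbV(\Set)$: structural collection converts ``for each $u\cin U$ there is an \apg\ $A_u$'' into an honest family over a cover $V\epi U$, while replacement of contexts does the same over $U$ itself when $A$ is unique up to unique isomorphism; applying the sum construction and extensional quotient then yields the required material-set. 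Induction and set-induction follow from structural (extensional) well-founded induction, reading set-induction as induction along $\prec$ and using that $\iin$ is well-founded in the structural sense.

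The step I expect to be the main obstacle is the extensional quotient, which must be carried out with only a $\Pi$-pretopos and an \nno---no power objects and no replacement. The linchpin is the relation ${\approx}\mono X\times X$ identifying nodes with isomorphic downsets. Two ingredients make it constructible: first, each downset $X/x$ is cut out by a \ddo-formula (a path from $y$ to $x$ of some length $n\cin N$, bounded via the \nno), so the downsets form an object of $\Set/X$; second, since a $\Pi$-pretopos is cartesian closed, the remark following \autoref{thm:wp-lcc} upgrades \ddo-separation to separation over arrow-variables, letting me cut out ``there is a graph-isomorphism $X/x\cong X/y$'' as an honest subobject. I would then check $\approx$ is an equivalence relation, form the quotient by exactness of the pretopos, and verify by well-founded induction that the quotient is well-founded, extensional (distinct classes have non-isomorphic downsets, by \autoref{thm:wf-rigid}), and has the same members at the root as $X$. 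A secondary difficulty is the bookkeeping in collection and replacement: matching material replacement's ``unique $y$'' with the ``unique up to unique isomorphism'' hypothesis of replacement of contexts requires \autoref{thm:wf-rigid-global} to guarantee that the mediating isomorphisms are themselves unique, so that the context machinery genuinely applies.
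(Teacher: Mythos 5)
Your overall architecture (translate material formulas, represent sets by well-founded extensional \apgs, build each required set by a ``sum of a family'' construction followed by an extensional quotient) is exactly the paper's, and most of the clause-by-clause reasoning matches the paper's lemmas. But the step you yourself flag as the crux is wrong as you propose to carry it out. You claim that for an \emph{arbitrary} well-founded \apg\ $X$, quotienting by the relation $x\approx y$ defined by $X/x\cong X/y$ yields an extensional graph, citing \autoref{thm:wf-rigid}; that lemma applies only to graphs already known to be extensional, so the appeal is circular, and the claim itself is false. Take $X$ with root $r$, children $x_1\prec r$ and $x_2\prec r$, a single child $y\prec x_1$, and two childless children $z_1,z_2\prec x_2$. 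Then $y,z_1,z_2$ all have isomorphic one-node downsets and are identified, while $X/x_1\ncong X/x_2$ (a two-chain versus a node with two children), so $[x_1]\neq[x_2]$; yet in the quotient both $[x_1]$ and $[x_2]$ have the single child $[y]$, so the quotient is not extensional. Equivalently: $\approx$ is a bisimulation but not the \emph{largest} bisimulation, and it is only the largest one whose quotient is forced to be extensional (cf.\ \autoref{thm:wfext-bisid}); the paper constructs that largest bisimulation only when \Set\ is a topos or satisfies full separation (\autoref{thm:ext-quotient-strong}), neither of which is available in the unconditional clauses. This is precisely why the paper proves only the restricted \autoref{thm:ext-quotient}: if $X/x$ is extensional for every node $x$ admitting a path of length $n$ to the root ($n$ an \emph{external} natural number), then iterating your one-step quotient, with an argument that each step lowers the depth of non-extensionality by one, produces an extensional quotient after $n$ external iterations. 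The paper moreover remarks that constructing \emph{all} extensional quotients in a \Pi-pretopos with a \nno\ seems impossible, so your stronger lemma is not something one should expect to patch.

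The gap is load-bearing, not cosmetic. Where your recipe sums a family of already-extensional fibers under a fresh root (pairing, union, collection, replacement, the \ddo-translation), the input satisfies the paper's hypothesis with $n=1$, a single quotient step suffices, and your argument survives unchanged. But exponentiation and fullness---unconditional and conditional clauses of the theorem, respectively---require representing Kuratowski ordered pairs, i.e.\ the cartesian-product graph of \autoref{thm:cart-prod}. In that graph, whenever $X/x\cong Y/y$ the nodes representing $\{x\}$ and $\{x,y\}$ are \emph{not} identified by one quotient step (their downsets have roots with one and two children, hence are non-isomorphic) and yet acquire equal children sets after that step; the paper needs $n=3$ there, i.e.\ genuinely iterated quotients. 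So under your single-step lemma the exponentiation clause does not go through. The repair is to replace your lemma by the paper's depth-indexed one and then verify, construction by construction, the finite external depth below which each graph you build is already extensional---which is exactly the bookkeeping the paper's proofs of \autoref{thm:struct-emppairun} through \autoref{thm:struc-rep} perform.
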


\begin{rmk}
  As in \autoref{rmk:translation}, we can
  also interpret this theorem as giving a translation of formulas in the
  language of material set theory into formulas in the language of
  structural set theory, and thereby obtain an equiconsistency result
  under weaker assumptions on the metatheory.
\end{rmk}

We will prove this theorem with a series of lemmas, but first we need
to introduce some auxiliary notions leading up to the construction of
\emph{extensional quotients}.

\begin{defn}
  Let $X$ and $Y$ be graphs.  A \textbf{simulation} from $X$ to $Y$ is
  a function $f\colon X\to Y$ such that
  \begin{enumerate}
  \item if $x'\prec x$, then $f(x')\prec f(x)$, and
  \item if $y\prec f(x)$, then there exists an $x'\prec x$ with $f(x')=y$.
  \end{enumerate}
  A \textbf{bisimulation} from $X$ to $Y$ is a relation $R\csub
  X\times Y$ such that both projections $R\to X$ and $R\to Y$ are
  simulations (where $R$ is considered as a full subgraph of $X\times
  Y$).  A bisimulation is \textbf{bi-entire} if $R\to X$ and $R\to Y$
  are surjective.
\end{defn}

Note that if $f\colon X\to Y$ is a simulation, then the relation
$(1,f)\colon X\to X\times Y$ is a bisimulation, which is bi-entire iff
$f$ is surjective.

An example of a simulation is the inclusion of an initial
segment.  The following lemma says that for extensional well-founded
graphs, these are the only simulations.

\begin{lem}\label{thm:sim-inj}
  If $X$ is well-founded and extensional, then any simulation $f\colon
  X\to Y$ is injective, and isomorphic to the inclusion of an initial
  segment.
\end{lem}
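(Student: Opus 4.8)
The plan is to prove both assertions by well-founded induction on $X$, with extensionality doing the work at the inductive step, and then to identify the image of $f$ as an initial segment. Since \Set\ is constructively well-pointed it suffices throughout to reason with global elements; in particular, by \autoref{thm:pin-props}\ref{item:pp1a} a function is monic exactly when it is injective on global elements, and by \autoref{thm:pin-props}\ref{item:pp2} a subobject containment may be checked on global elements.

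First I would prove injectivity. Let $P(x)$ be the formula $\forall x'\cin X.\,(f(x)=f(x')\imp x=x')$. This is a \ddo-formula, since its only quantifier ranges over elements of $X$ and its atomic subformulas are equalities of global elements together with instances of $\prec$; hence by \autoref{thm:ddo-sep} we may form $\setof{x\cin X | P(x)}$ and prove $\forall x.\,P(x)$ by well-founded induction (\autoref{defn:well-founded}). For the inductive step, assume $P(w)$ for every $w\prec x$ and suppose $f(x)=f(x')$; by extensionality it is enough to show $z\prec x \iff z\prec x'$ for all $z$. If $z\prec x$, then $f(z)\prec f(x)=f(x')$, so the second defining condition of a simulation produces $z'\prec x'$ with $f(z')=f(z)$, and the inductive hypothesis $P(z)$ forces $z=z'$, whence $z\prec x'$. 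The reverse implication is symmetric, applying the second simulation condition at $x$ and the inductive hypothesis at the lifted child of $x$. Extensionality then gives $x=x'$, completing the induction.

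Next I would record that $f$ both preserves and reflects $\prec$: preservation is the first simulation condition, and for reflection, if $f(x')\prec f(x)$ the second condition produces $x''\prec x$ with $f(x'')=f(x')$, so injectivity gives $x''=x'$ and hence $x'\prec x$. Thus $f$ is an order-reflecting embedding. Now let $Z=\mathrm{im}(f)\csub Y$. The image factorization $X\epi Z\mono Y$ has monic first factor (because $f$ is monic), and \Set\ is balanced, so $f$ exhibits an isomorphism $X\cong Z$ of underlying sets, which by the previous sentence is an isomorphism of graphs for the relation induced on $Z$ from $Y$. To see $Z$ is an initial segment, it suffices to check on global elements: given $z\in Z$ and $y\prec z$, write $z=f(x)$ via the isomorphism $X\cong Z$; then $y\prec f(x)$, and the second simulation condition yields $x'\prec x$ with $f(x')=y$, so $y=f(x')\in Z$. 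Hence, up to the isomorphism $X\cong Z$, the map $f$ is the inclusion of the initial segment $Z$.

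I expect the only real subtlety to be logical rather than categorical: one must verify that $P(x)$ is genuinely \ddo\ so that ordinary well-founded induction (rather than the stronger separation of later sections) is licensed, and one must be careful that the extensionality step invokes the inductive hypothesis at the correct children in each of the two directions. The categorical content—passing freely between statements about $f$ and statements about its global elements, and using balancedness to turn a monic regular epi into an isomorphism—is routine given constructive well-pointedness.
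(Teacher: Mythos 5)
Your proof is correct and takes essentially the same route as the paper's: well-founded induction on $x$ with the hypothesis quantifying over the second variable, using the simulation property to lift children and extensionality to conclude $x=x'$, followed by the observation that an injective simulation is (isomorphic to) the inclusion of an initial segment. The additional care you take---verifying that the induction formula is \ddo\ so that \ddo-separation licenses the induction, and spelling out the image factorization and the initial-segment check on global elements---simply fills in details the paper leaves implicit.
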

\begin{proof}
  We show by well-founded induction that $f(x_1)=f(x_2)$ implies
  $x_1=x_2$.  Suppose $f(x_1)=f(x_2)$; then for any $z_1\prec x_1$, we
  have $f(z_1)\prec f(x_1) = f(x_2)$, so since $f$ is a simulation
  there is a $z_2\prec x_2$ with $f(z_2)=f(z_1)$.  Hence $z_1=z_2$ by
  induction, so $z_1\prec x_2$.  Dually, $z_2\prec x_2$ implies
  $z_2\prec x_1$, so by extensionality $x_1=x_2$.  Finally, any
  injective simulation must be an initial segment.
\end{proof}

\begin{lem}\label{thm:bisim-wf}
  If $R$ is a bi-entire bisimulation from $X$ to $Y$, then $X$ is
  well-founded if and only if $Y$ is so.
\end{lem}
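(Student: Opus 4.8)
The plan is to prove only the implication that well-foundedness of $X$ gives well-foundedness of $Y$, since the reverse follows by symmetry: the transpose relation $R^{\mathrm{op}}\csub Y\times X$, with $(y,x)\in R^{\mathrm{op}} \Iff (x,y)\in R$, has the same two projections as $R$ (after the coordinate swap) and is therefore again a bi-entire bisimulation, now from $Y$ to $X$. So assume $X$ is well-founded and let $T\csub Y$ be an arbitrary inductive subset; I must show $T=Y$. The key move is to transport $T$ back across $R$: using \ddo-separation (available by \autoref{thm:ddo-sep}, as \Set\ is a constructively well-pointed Heyting category), define
\[ S = \{\, x\cin X \mid \forall y\cin Y.\,((x,y)\in R \imp y\in T) \,\}, \]
the set of nodes of $X$ all of whose $R$-partners lie in $T$. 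The universal (rather than existential) quantifier here is essential, and the defining predicate is a \ddo-formula because each of ``$(x,y)\in R$'' and ``$y\in T$'' abbreviates a \ddo-formula (an existential equality over $R$, respectively over $T$).

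The heart of the argument is to verify that $S$ is inductive in $X$. Reasoning with global elements, suppose every child $x'\prec x$ lies in $S$; I must show $x\in S$, i.e.\ that $y\in T$ for every $y$ with $(x,y)\in R$. Fixing such a $y$ and using that $T$ is inductive, it suffices to show $y'\in T$ for every child $y'\prec y$. This is exactly where the simulation structure enters: since $(x,y)\in R$ and $y'\prec y$, the fact that the projection $R\to Y$ is a simulation produces a child $x'\prec x$ together with $(x',y')\in R$. By hypothesis $x'\in S$, and $(x',y')\in R$, so by the defining property of $S$ we get $y'\in T$. This reasoning is intuitionistically valid and, notably, uses only the $R\to Y$ half of the bisimulation (the $R\to X$ half is what would drive the symmetric direction).

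Since $X$ is well-founded and $S$ is inductive, $S=X$. To conclude I would pass back to $Y$ using well-pointedness: given any $y\cin Y$, bi-entirety means the projection $R\to Y$ is a regular epimorphism, so by projectivity of $1$ the element $y$ lifts to some $(x,y)\in R$; as $x\in S=X$, the defining property of $S$ forces $y\in T$. Thus every global element of $Y$ factors through $T\mono Y$, and since $1$ is a strong generator this monomorphism is an isomorphism, i.e.\ $T=Y$; hence $Y$ is well-founded. I expect the main obstacle to be conceptual rather than technical: pinning down the correct $S$ (with the universal quantifier) and recognizing that inductiveness of $S$ must be \emph{fed} by inductiveness of $T$ precisely through the back-lifting of children supplied by the simulation condition. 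Once $S$ is chosen correctly, the inductive verification and the final descent along $R$ are routine, and no appeal to excluded middle is needed at any point.
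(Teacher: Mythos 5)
Your proof is correct and is essentially the paper's own argument: you transport the inductive subset of $Y$ back to the subset $S\csub X$ of nodes all of whose $R$-partners lie in it (the paper calls this $T$), verify inductiveness using exactly the simulation property of the projection $R\to Y$, and conclude by well-foundedness of $X$ together with surjectivity of $R\to Y$. The only differences are cosmetic — swapped variable names, the explicit remark that $S$ is cut out by \ddo-separation, and the spelled-out appeal to projectivity and strong generation of $1$, all of which the paper leaves implicit.
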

\begin{proof}
  Suppose $X$ is well-founded and $S\subseteq Y$ is inductive.  Let
  $T\csub X$ consist of those $x\cin X$ such that $R(x,y)$ implies
  $y\in S$; we show $T$ is inductive.  Suppose $x\cin X$ is such
  that $x'\prec x$ implies $x'\in T$, and that $R(x,y)$.  Then
  for any $y'\prec y$ there is an $x'\prec x$ with $R(x',y')$, whence
  $x'\in T$ and thus $y'\in S$.  So since $S$ is inductive, $y$ must
  be in $S$.  Thus $x\in T$, so $T$ is inductive.  Since $X$ is
  well-founded, $T=X$, and then since $R$ is bi-entire, $S=Y$; thus
  $Y$ is well-founded.
\end{proof}

\begin{lem}\label{thm:bisim-iso}
  Any bi-entire bisimulation between extensional well-founded graphs
  must be an isomorphism.
\end{lem}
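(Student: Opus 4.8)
The plan is to show that the bi-entire bisimulation $R \csub X\times Y$ is the graph of an isomorphism of graphs $X \cong Y$. Since $R$ is bi-entire, both projections $p\colon R\to X$ and $q\colon R\to Y$ are \emph{surjective} simulations, so it suffices to prove that each of them is also \emph{injective}: a bijective simulation is automatically an isomorphism of graphs. Indeed, if $p$ is a bijection, then simulation condition~(ii) already forces it to reflect the relation, for given $r',r\cin R$ with $p(r')\prec p(r)$, condition~(ii) produces some $r''\prec r$ with $p(r'')=p(r')$, and injectivity gives $r''=r'$, hence $r'\prec r$; together with condition~(i) this yields $r'\prec r \Leftrightarrow p(r')\prec p(r)$, and similarly for $q$. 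Granting injectivity of both, $f = q\circ p\inv\colon X\to Y$ is then an isomorphism of graphs whose graph is exactly $R$, which is precisely what it means for $R$ to be an isomorphism.

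The main work is to prove $p$ injective, by well-founded induction on $X$ (in the style of \autoref{thm:wf-rigid}). I will use the predicate $P(x)$ on $x\cin X$ asserting that any $y_1,y_2\cin Y$ with $R(x,y_1)$ and $R(x,y_2)$ satisfy $y_1=y_2$; since $Y$'s elements are global elements and the only quantifiers and atomic subformulas are \ddo, \ddo-separation turns $P$ into a subset of $X$ and the induction is legitimate. For the inductive step, assume $P(x')$ for every $x'\prec x$ and suppose $R(x,y_1)$ and $R(x,y_2)$; I will deduce $y_1=y_2$ from extensionality of $Y$, by showing $y_1$ and $y_2$ have the same children. Let $z\prec y_1$. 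Since $q$ is a simulation with $q(x,y_1)=y_1$, condition~(ii) produces a child $(x',z)\prec (x,y_1)$ of $R$ with $q(x',z)=z$, so in particular $x'\prec x$. Now $x'\prec x = p(x,y_2)$, so applying condition~(ii) for the simulation $p$ at the node $(x,y_2)$ produces a child $(x',z'')\prec (x,y_2)$ of $R$ with $p(x',z'')=x'$, whence $z''\prec y_2$. We now have $R(x',z)$ and $R(x',z'')$ with $x'\prec x$, so the inductive hypothesis $P(x')$ gives $z=z''$, and therefore $z\prec y_2$. By the symmetric argument $z\prec y_2$ implies $z\prec y_1$, so extensionality of $Y$ yields $y_1=y_2$, completing the induction and proving $p$ injective.

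Entirely symmetrically---now using well-foundedness of $Y$ and extensionality of $X$---the projection $q$ is injective. By the first paragraph, injectivity together with surjectivity makes both $p$ and $q$ isomorphisms of graphs, and $R$ is the graph of $q\circ p\inv$, so $R$ is an isomorphism. The point I expect to require the most care is the interleaving of the two simulation conditions in the inductive step: one must first pull a child $z$ of $y_1$ back along $q$ to obtain a node $x'\prec x$, then push $x'$ forward along $p$ over $y_2$ to obtain a matching $z''\prec y_2$, and only then is the inductive hypothesis at $x'$ applicable to conclude $z=z''$. A secondary, mild subtlety is verifying that the induction predicate $P(x)$ is \ddo, so that \autoref{defn:well-founded} applies without appeal to full separation.
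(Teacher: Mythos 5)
Your proof is correct and takes essentially the same route as the paper's: the same well-founded induction on $x\cin X$ showing that $R(x,y_1)$ and $R(x,y_2)$ force $y_1=y_2$ (interleaving the two simulation conditions exactly as the paper does, then invoking extensionality of $Y$), followed by the symmetric argument in the other direction. Your packaging via injectivity of the projections $p,q$ and the composite $q\circ p\inv$, and your check that the induction predicate is \ddo\ so that \ddo-separation applies, are just more explicit renderings of the paper's ``functional in both directions and bi-entire, hence an isomorphism.''
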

\begin{proof}
  Let $R\csub X\times Y$ be such.  We show by well-founded induction
  that if $R(x,y_1)$ and $R(x,y_2)$, then $y_1=y_2$.  For if $z_1\prec
  y_1$, then since $R$ is a bisimulation, there is a $w\prec x$ with
  $R(w,z_1)$, and again since $R$ is a bisimulation, there is a
  $z_2\prec y_2$ with $R(w,z_2)$.  By induction, $z_1=z_2$, so that
  $z_1\prec y_2$ for any $z_1\prec y_1$.  By symmetry, for any
  $z_2\prec y_2$ we have $z_2\prec y_1$, hence by extensionality
  $y_1=y_2$.

  By symmetry, if $R(x_1,y)$ and $R(x_2,y)$ then $x_1=x_2$, so $R$ is
  functional in both directions.  Since it is also bi-entire, it must
  be an an isomorphism.
\end{proof}

\begin{lem}\label{thm:wfext-bisid}
  If $X$ is well-founded, then it is extensional if and only if every
  bisimulation from $X$ to $X$ is contained in the identity.
\end{lem}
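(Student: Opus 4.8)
The plan is to prove the two implications separately, after unwinding the definition of a bisimulation $R\csub X\times X$ into its familiar ``zig-zag'' form: $R(x,y)$ holds together with the property that whenever $z\prec x$ there is some $z'\prec y$ with $R(z,z')$, and symmetrically whenever $z\prec y$ there is some $z'\prec x$ with $R(z',z)$. (This is just the statement that both projections $R\to X$ are simulations, read off from the product relation on $X\times X$.) Only the forward direction will use well-foundedness.

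For the forward direction, assume $X$ is extensional and let $R$ be any bisimulation from $X$ to itself; I would show that $R(x,y)$ implies $x=y$ by well-founded induction on $x$, in direct parallel with the proof of \autoref{thm:bisim-iso}. The inductive predicate is $P(x)\equiv$ \qq{for all $y$, $R(x,y)$ implies $x=y$}. Assuming $P(x')$ for all $x'\prec x$ and given $R(x,y)$, I would check $z\prec x\iff z\prec y$ for every $z$: from $z\prec x$ the first zig-zag clause yields $z'\prec y$ with $R(z,z')$, and the induction hypothesis applied to $z$ forces $z=z'$, so $z\prec y$; the reverse inclusion is symmetric, using the hypothesis applied to the child of $x$ produced by the second clause. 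Extensionality then gives $x=y$, so $P(x)$ holds, and $\setof{x\cin X | P(x)}$ is inductive, hence all of $X$.

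For the reverse direction, assume every bisimulation from $X$ to itself is contained in the diagonal $\Delta_X$, and suppose $z\prec x\iff z\prec y$ for all $z$. The idea is to exhibit one bisimulation that relates $x$ and $y$, namely $R=\Delta_X\cup\{(x,y)\}$, which exists since finite unions of subobjects do. Verifying the zig-zag conditions is immediate: on the diagonal part one takes the same node as witness, while for the pair $(x,y)$ the two clauses reduce exactly to the hypothesis $z\prec x\iff z\prec y$. By assumption $R\subseteq\Delta_X$, and since $(x,y)\in R$ this forces $x=y$; thus $X$ is extensional. (Notably this direction uses neither well-foundedness nor any prior extensionality.)

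The only delicate point is the one recurrent throughout this section: because we work merely in a \Pi-pretopos with an \nno\ and are not assuming full separation, I must ensure the inductive predicate $P$ is \ddo\ so that \autoref{thm:ddo-sep} produces $\setof{x\cin X | P(x)}$ as a genuine subobject on which to run the induction. This holds because $R$ is a fixed subobject and the sole quantifier in $P$ is the bounded \qq{for all $y\cin X$}.
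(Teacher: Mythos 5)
Your proof is correct, and your reverse direction coincides with the paper's: both exhibit $R=\Delta_X\cup\{(x,y)\}$ as a bisimulation (using only the hypothesis $z\prec x\Leftrightarrow z\prec y$) and then invoke containment in the identity, with no use of well-foundedness. The forward direction, however, takes a genuinely more direct route. The paper argues that a bisimulation $R$ with $R(x_1,x_2)$ restricts to a \emph{bi-entire} bisimulation from $X/x_1$ to $X/x_2$ (a step it justifies by ordinary induction on the length of paths, using accessibility and the \nno), then applies \autoref{thm:bisim-iso} to obtain an isomorphism $X/x_1\cong X/x_2$, and finally \autoref{thm:wf-rigid} to conclude $x_1=x_2$. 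You instead run a single well-founded induction on the predicate $P(x)\equiv{}$\qq{for all $y$, $R(x,y)$ implies $x=y$}, invoking extensionality once in the inductive step. This inlines the same underlying inductive idea but avoids both the restriction-to-$X/x_i$ argument (so you never need bi-entireness or any path machinery) and the dependence on the two earlier lemmas; what you give up is only the economy of reusing results the paper needs elsewhere anyway. Your closing remark that $P$ must be a \ddo-formula, so that \ddo-separation (\autoref{thm:ddo-sep}) can produce the subset on which the induction runs, identifies exactly the delicate point in this predicative setting, and your zig-zag unwinding of the bisimulation condition agrees with the paper's definition via the two projections.
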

\begin{proof}
  Suppose first $X$ is well-founded and extensional and $R$
  is a bisimulation with $R(x_1,x_2)$.  Then $R$ is a bi-entire
  bisimulation from $X/x_1$ to $X/x_2$ (by ordinary induction on
  length of paths), so by \autoref{thm:bisim-iso} it must be an
  isomorphism $X/x_1\cong X/x_2$.  But $X$ is well-founded and
  extensional, so by \autoref{thm:wf-rigid}, $x_1=x_2$.

  Now suppose that every bisimulation from $X$ to $X$ is contained in
  the identity, and also that $x,y\cin X$ are such that $z\prec x
  \Leftrightarrow z\prec y$ for all $z$.  Define $R(a,b)$ to hold if
  either $a=b$, or $a=x$ and $b=y$.  Then $R$ is a bisimulation, and
  if it is contained in the identity, then $x=y$; hence $X$ is
  extensional.
\end{proof}

In the language of~\cite{aczel:afa}, \autoref{thm:wfext-bisid} says
that for well-founded graphs, extensionality is equivalent to ``strong
extensionality.''

\begin{cor}\label{thm:parallel-sim}
  If $Y$ is well-founded and extensional, then any two simulations
  $f,g\colon X\toto Y$ are equal.
\end{cor}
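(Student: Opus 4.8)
The plan is to package the pair $(f,g)$ as a bisimulation on $Y$ and then invoke \autoref{thm:wfext-bisid}. Concretely, I would form the subobject $R\csub Y\times Y$ which is the image of the induced map $(f,g)\colon X\to Y\times Y$; reasoning with global elements (which is legitimate since $1$ is a strong generator), $R$ consists precisely of the pairs $(f(x),g(x))$ for $x\cin X$. If I can show that $R$ is a bisimulation from $Y$ to $Y$, then \autoref{thm:wfext-bisid}, applicable because $Y$ is well-founded and extensional, forces $R$ to be contained in the diagonal $\Delta_Y$, and this will immediately give $f=g$.

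The work is in verifying that both projections $\pi_1,\pi_2\colon R\to Y$ are simulations, where $R$ inherits its relation from the product graph $Y\times Y$ in which $(a_1,a_2)\prec(b_1,b_2)$ holds iff $a_1\prec b_1$ and $a_2\prec b_2$. Monotonicity is immediate: if $(f(x'),g(x'))\prec(f(x),g(x))$ in $R$ then in particular $f(x')\prec f(x)$, so $\pi_1$ preserves $\prec$, and symmetrically for $\pi_2$. For the back condition on $\pi_1$, suppose $y\prec f(x)$; since $f$ is a simulation there is some $x'\prec x$ with $f(x')=y$, and since $g$ is also a simulation the same $x'\prec x$ yields $g(x')\prec g(x)$. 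Hence $(f(x'),g(x'))\in R$ lies $\prec$-below $(f(x),g(x))$ and has first coordinate $y$, exactly as the simulation condition demands. The back condition for $\pi_2$ follows by the symmetric argument with the roles of $f$ and $g$ exchanged.

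The only real subtlety---and the crux of the whole argument---is that the back condition for $\pi_1$ must supply a predecessor \emph{in the product graph}, decreasing in \emph{both} coordinates at once; this is exactly where it matters that $f$ and $g$ are simulations sharing the \emph{same} source $X$, so that a single $x'\prec x$ simultaneously witnesses $f(x')=y$ and $g(x')\prec g(x)$. Once $R$ is known to be a bisimulation from $Y$ to $Y$, \autoref{thm:wfext-bisid} gives $R\subseteq\Delta_Y$, i.e.\ $R(y_1,y_2)$ implies $y_1=y_2$. Applying this to the pairs $(f(x),g(x))$ comprising $R$ shows $f(x)=g(x)$ for every $x\cin X$, and therefore $f=g$ since $1$ is a strong generator. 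Notably, the argument uses nothing about $X$ beyond its being a graph mapped into $Y$ by the two simulations; neither well-foundedness nor extensionality of $X$ is required.
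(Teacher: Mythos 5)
Your proof is correct and takes exactly the same route as the paper: the paper's one-line proof also forms the image of $(f,g)\colon X\to Y\times Y$, observes that it is a bisimulation, and invokes \autoref{thm:wfext-bisid} to conclude it is contained in the identity, whence $f=g$. You have merely spelled out the verification that the image is a bisimulation (and the final appeal to $1$ being a strong generator), which the paper leaves implicit.
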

\begin{proof}
  The image of $(f,g)\colon X\to Y\times Y$ is a bisimulation,
  hence contained in the identity.
\end{proof}

Therefore, well-founded extensional graphs and simulations form a
(large) preorder.  In the language of material set theory, this
preorder represents the partial order of transitive-sets and subset
inclusions.


\begin{lem}\label{thm:bisim-quot}
  If $X$ is a graph and $R$ is a bisimulation from $X$ to $X$ which is
  an equivalence relation, then its quotient $Y$ inherits a graph
  structure such that the quotient map $[-]\colon X\epi Y$ is a
  simulation.  Also, if $X$ is an \apg, then so is $Y$.
\end{lem}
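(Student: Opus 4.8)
The plan is to construct the quotient categorically and then verify the simulation conditions on global elements, which is legitimate because \Set\ is constructively well-pointed. Since \Set\ is a pretopos it is exact, so the equivalence relation $R\mono X\times X$ is effective: there is a regular epi $q=[-]\colon X\epi Y$ whose kernel pair is $R$. Hence for $a,b\cin X$ we have $q(a)=q(b)$ if and only if $R(a,b)$, and by projectivity of $1$ (\autoref{thm:pin-props}) every $\bar a\cin Y$ is of the form $q(a)$ for some $a\cin X$. I would then define the graph structure on $Y$ to be the image subobject $Y_\prec\mono Y\times Y$ of the composite $X_\prec\mono X\times X\xto{q\times q}Y\times Y$. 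Using projectivity of $1$ once more, a pair $(\bar a,\bar b)\cin Y\times Y$ factors through $Y_\prec$ precisely when there exist $a,b\cin X$ with $q(a)=\bar a$, $q(b)=\bar b$, and $a\prec b$; this is the working description of $\prec$ on $Y$.

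Next I would check that $q$ is a simulation. The first clause, that $x'\prec x$ implies $q(x')\prec q(x)$, is immediate from the definition of $Y_\prec$ as an image. The substance is the second clause: given $\bar v\prec q(x)$, I must produce an $x'\prec x$ with $q(x')=\bar v$. By the description of $\prec$ on $Y$, there are $a,b\cin X$ with $a\prec b$, $q(a)=\bar v$, and $q(b)=q(x)$, the last equality giving $R(b,x)$. Here the bisimulation hypothesis enters: since $R$ is a bisimulation, the projection $R\to X$ onto the coordinate carrying $b$ is a simulation, so from $R(b,x)$ together with the child $a\prec b$ we obtain a child $x'\prec x$ with $R(a,x')$; then $q(x')=q(a)=\bar v$, as required. (As $R$ is symmetric, it is immaterial which projection one invokes.) This is essentially the only place the bisimulation hypothesis is used, so it is the crux of the argument, the rest being bookkeeping.

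Finally, for the \apg\ claim, if $X$ is pointed by $\star\cin X$ then I would point $Y$ by $q(\star)$. For accessibility, given $\bar x\cin Y$ I lift it to some $x\cin X$ with $q(x)=\bar x$ (projectivity of $1$ again), take a path $x=x_n\prec\cdots\prec x_0=\star$ witnessing accessibility of $X$, and apply $q$; since $q$ preserves $\prec$ by the first simulation clause, $q(x)=q(x_n)\prec\cdots\prec q(x_0)=q(\star)$ is a path from $\bar x$ to the root, so $Y$ is accessible and hence an \apg. The one point requiring care throughout is to keep the element-chasing constructively valid --- justifying each ``there exists'' by the surjectivity-on-global-elements and projectivity supplied by constructive well-pointedness rather than by choosing representatives --- and to note that the existence statements involved are $\ddo$, so that they could equally be obtained from \autoref{thm:ddo-sep} if one preferred to avoid global-element reasoning.
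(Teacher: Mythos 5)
Your proof is correct and takes essentially the same route as the paper's: you define $\prec$ on $Y$ as the image of $X_\prec$ under $q\times q$ (the paper's ``minimal relation such that $[-]$ preserves $\prec$''), invoke the bisimulation property of $R$ at exactly the same crux --- turning $q(b)=q(x)$ into $R(b,x)$ and then producing $x'\prec x$ with $R(a,x')$ --- and handle the root and accessibility identically. The only difference is presentational: you make explicit the exactness, kernel-pair, and projectivity-of-$1$ bookkeeping that the paper leaves implicit under its element-style conventions.
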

\begin{proof}
  Define $\prec$ on $Y$ to be minimal such that $[-]$ preserves
  $\prec$, i.e.\ $y_1\prec y_2$ if there exist $x_1\prec x_2$ in $X$
  with $[x_1] = y_1$ and $[x_2]=y_2$.  Now suppose that $y\prec [x]$.
  By definition this means that there exist $z_1\prec z_2$ with
  $[z_1]=y$ and $[z_2] = [x]$, i.e.\ $R(z_2,x)$ holds.  But $R$ is a
  bisimulation, so there exists $x'\prec x$ with $R(z_1,x')$,
  i.e. $[x']=y$; hence $[-]$ is a simulation.  If $X$ is an \apg, we
  define the root of $Y$ to be $[\star]$; accessibility of $Y$ follows
  directly from that of $X$.
\end{proof}

Of course, by \autoref{thm:bisim-wf}, if $X$ is well-founded, then so
is the quotient $Y$.  This gives us a way to construct extensional
quotients in some cases.

\begin{lem}\label{thm:ext-quotient-strong}
  If \Set\ is either a topos or satisfies full separation, then for every well-founded relation $X$ there is a surjective simulation $X\epi \Xbar$ where $\Xbar$ is well-founded and extensional, and an \apg\ if $X$ is.
\end{lem}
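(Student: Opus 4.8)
The plan is to realize $\Xbar$ as the quotient of $X$ by its \emph{largest} self-bisimulation, and then to read off well-foundedness and extensionality from the lemmas already in hand. First I would construct a relation $R\csub X\times X$ which is the union of all bisimulations from $X$ to $X$. This is precisely the step that uses the hypothesis, and it is the main obstacle: in a bare $\Pi$-pretopos there is no way to assemble such a union into a single subobject, but with full separation I can form $R$ directly as the relation relating $x$ and $y$ exactly when some bisimulation $S\csub X\times X$ does, the defining formula containing an unbounded existential quantifier over the object $S$ together with its mono into $X\times X$; while in a topos I instead take the internal union over the subobject of the power object $P(X\times X)$ that picks out the bisimulations. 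Once $R$ exists, everything else is formal.

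Next I would check that $R$ is at the same time a bisimulation and an equivalence relation. It is itself a bisimulation because a union of bisimulations is again one: if $R(x,y)$ is witnessed by some $S$ and $x'\prec x$, then $S$ supplies $y'\prec y$ with $S(x',y')$, so $R(x',y')$, and the other clause is symmetric. It contains the diagonal, since $\Delta_X$ is trivially a bisimulation, hence is reflexive; it is symmetric, because $S\op$ is a bisimulation whenever $S$ is; and it is transitive, because the relational composite of two bisimulations is again a bisimulation (chasing a child through the first factor and then the second). Since \Set\ is exact, this equivalence relation has a quotient $[-]\colon X\epi\Xbar$ with $R$ as its kernel pair, and by \autoref{thm:bisim-quot} the quotient inherits a graph structure making $[-]$ a simulation, and an \apg\ if $X$ is one. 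As a regular epi, $[-]$ is surjective on global elements, so it is a surjective simulation; and $\Xbar$ is well-founded by \autoref{thm:bisim-wf}, exactly as recorded in the remark following \autoref{thm:bisim-quot}.

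Finally I would verify that $\Xbar$ is extensional by invoking \autoref{thm:wfext-bisid}: since $\Xbar$ is already well-founded, it suffices to show that every bisimulation $S$ from $\Xbar$ to $\Xbar$ is contained in the diagonal. Given such an $S$, I pull it back along $[-]\times[-]$ to the relation $\tilde S\csub X\times X$ with $\tilde S(x_1,x_2)\Leftrightarrow S([x_1],[x_2])$. Because $[-]$ is a simulation, $\tilde S$ is again a bisimulation on $X$: if $\tilde S(x_1,x_2)$ and $x_1'\prec x_1$, then $[x_1']\prec[x_1]$, so $S$ yields $w\prec[x_2]$ with $S([x_1'],w)$, and the lifting clause of the simulation $[-]$ produces $x_2'\prec x_2$ with $[x_2']=w$, whence $\tilde S(x_1',x_2')$; the remaining clause is symmetric. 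By the maximality of $R$ we get $\tilde S\subseteq R$, so $S([x_1],[x_2])$ forces $R(x_1,x_2)$, that is $[x_1]=[x_2]$ because $R$ is the kernel pair of $[-]$. Hence $S$ lies in the diagonal of $\Xbar$, and \autoref{thm:wfext-bisid} yields that $\Xbar$ is extensional, completing the construction of the surjective simulation $X\epi\Xbar$ with the desired properties.
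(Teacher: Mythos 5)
Your proposal is correct and follows essentially the same route as the paper's own proof: construct the largest self-bisimulation $R$ (via the internal union over a subobject of $P(X\times X)$ in the topos case, or via an unbounded existential with full separation), observe it is an equivalence relation by maximality since the diagonal, $R^{-1}$, and $R\circ R$ are bisimulations, quotient by it using \autoref{thm:bisim-quot} and \autoref{thm:bisim-wf}, and deduce extensionality from \autoref{thm:wfext-bisid} by pulling any bisimulation on $\Xbar$ back to one on $X$ contained in $R$. The only differences are presentational: you spell out the verification that the pullback of a bisimulation along the quotient map is a bisimulation and that $R$ being the kernel pair gives $[x_1]=[x_2]$, steps the paper leaves implicit.
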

\begin{proof}
  Note that the union of any family of bisimulations is a bisimulation.
  More specifically, if we have a relation $R \csub A\times X\times X$ such that for each $a\cin A$ the relation $a^* R \csub X\times X$ is a bisimulation, then its image $\pi_A R \csub X\times X$ is a bisimulation.
  If \Set\ is a topos, then we can take $A$ to be the set of \emph{all} bisimulations (a subset of $P(X\times X)$), obtaining thereby the \emph{largest} bisimulation.

  A largest bisimulation $R$, if it exists, is necessarily an equivalence relation, since the identity, $R\circ R$, and $R^{-1}$ are all bisimulations as well and hence contained in it.
  Moreover, if $\Xbar$ is its quotient as in \autoref{thm:bisim-quot}, then any bisimulation $S$ on $\Xbar$ induces by pullback a bisimulation on $X$, which is contained in $R$, hence $S$ is contained in the identity; thus by \autoref{thm:wfext-bisid} $Y$ is extensional.
  This completes the topos case.
  If instead \Set\ satisfies full separation, then we can instead construct the largest bisimulation as the set of all $(x,y)\cin X\times X$ such that there exists a bisimulation $R$ from $X$ to $X$ with $(x,y)\in R$, and proceed similarly.
\end{proof}

Without power sets or separation,
it seems impossible to construct all extensional quotients,
but we can still construct them in a useful amount of generality.

\begin{lem}\label{thm:ext-quotient}
  Let $n$ be a fixed \emph{external} natural number, let $X$ be a
  well-founded \apg, and assume that $X/x$ is extensional whenever $x$
  is a node that admits a path of length $n$ to the root.  Then there
  is an extensional well-founded \apg\ $\Xbar$ and a surjective
  simulation $q\colon X\to \Xbar$.
\end{lem}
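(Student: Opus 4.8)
The plan is to argue by induction on the external natural number $n$. The base case $n=0$ is trivial: the only node admitting a path of length $0$ to the root is the root itself, so the hypothesis says $X=X/\star$ is already extensional, and we take $\Xbar=X$ with $q=1_X$. For the inductive step I would produce, from a single well-founded \apg\ $X$ satisfying the hypothesis for $n+1$, a surjective simulation $q_0\colon X\epi X_1$ onto a well-founded \apg\ $X_1$ satisfying the hypothesis for $n$, and then compose $q_0$ with the simulation supplied by the inductive hypothesis applied to $X_1$. Here I use that simulations compose, that a composite of surjective maps is surjective, and the monotonicity remark that \emph{any} node admitting a path of length $\geq n+1$ to the root is a descendant of a node admitting a path of length \emph{exactly} $n+1$, hence has extensional subtree by \autoref{thm:ext-hered}. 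Working throughout on $X$ as a single object (rather than on a family of subtrees indexed over its children) is deliberate, so as to avoid any appeal to collection.

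The reduction step is a single quotient by the relation $R\csub X\times X$ with $R(a,b)$ meaning \qq{$X/a\cong X/b$ as pointed graphs}. The crucial point is that this relation is \emph{definable}, and this is exactly where the \Pi-pretopos hypothesis is used: letting $W\csub X\times X$ be the \ddo-definable object over $X$ whose fiber over $a$ is the node-set of $X/a$, local cartesian closure supplies over $X\times X$ the fiberwise exponential whose fiber over $(a,b)$ is the set of functions $X/a\to X/b$. The statement \qq{there is such a function which is a pointed graph isomorphism} is then a \ddo-formula in $(a,b)$, so \ddo-separation cuts out $R$. One checks directly that $R$ is an equivalence relation and a bisimulation (a pointed isomorphism $X/a\cong X/b$ carries each $a'\prec a$ to some $b'\prec b$ and restricts to an isomorphism $X/{a'}\cong X/{b'}$); hence by \autoref{thm:bisim-quot} its quotient $q_0\colon X\epi X_1$ is a surjective simulation of \apgs, and $X_1$ is well-founded by \autoref{thm:bisim-wf}.

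It remains to verify that $X_1$ satisfies the hypothesis for $n$, i.e.\ that every node of $X_1$ admitting a length-$n$ path to the root has extensional subtree. I would lift such a node to a node $x\cin X$ also admitting a length-$n$ path (using that $q_0$ is a surjective simulation, so preserves and reflects path length, and that the root of $X$ is alone in its $R$-class by well-foundedness), and observe $X_1/q_0(x)=q_0(X/x)$. Every proper descendant $y$ of $x$ (a node admitting a path of length $\geq 1$ to $x$) admits a path to the root of length $\geq n+1$, obtained by concatenation; hence $X/y$ is extensional, so inside $X/x$ all proper descendants are good. For such good nodes, bisimilarity coincides with isomorphism of subtrees by \autoref{thm:bisim-iso}, while $x$ itself cannot be bisimilar to a proper descendant by well-foundedness; therefore $R|_{X/x}$ is precisely the largest bisimulation on $X/x$. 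As in the proof of \autoref{thm:ext-quotient-strong}, the quotient of a well-founded graph by its largest bisimulation is extensional—any bisimulation on the quotient pulls back into $R|_{X/x}$, hence into the identity by \autoref{thm:wfext-bisid}—so $q_0(X/x)$ is extensional, as required.

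The main obstacle I anticipate is bookkeeping rather than conceptual novelty. One must check that $q_0(X/x)$, computed as the induced subgraph of the global quotient $X_1$ on the classes meeting $X/x$, really agrees with the internal quotient $(X/x)\big/(R|_{X/x})$: the $R$-class of a node of $X/x$ may contain nodes lying outside $X/x$, and one has to confirm (using that $q_0$ is a simulation and initial segments are preserved) that no spurious edges arise and that path-length is matched exactly, so that depth-$n$ nodes of $X_1$ genuinely lift to depth-$n$ nodes of $X$. Establishing that $R|_{X/x}$ is in fact the \emph{largest} bisimulation on each such $X/x$—equivalently, that goodness of all the proper descendants forces bisimilarity there to collapse to isomorphism of subtrees—is the technical heart of the argument and the step where the hypothesis on $n$ is really consumed.
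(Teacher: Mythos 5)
Your proposal is correct, and its skeleton coincides with the paper's proof: the same external induction on $n$, the same quotient of $X$ by the relation $R(a,b)\Leftrightarrow(X/a\cong X/b)$ (constructed by \ddo-separation and local exponentials), the same appeal to \autoref{thm:bisim-quot} and \autoref{thm:bisim-wf}, the same lifting of depth-$n$ nodes along the surjective simulation, and composition with the map supplied by the inductive hypothesis. Where you genuinely differ is in the one delicate step: verifying that the quotient satisfies the hypothesis for $n$. The paper does this by a direct, element-level extensionality check on $Y/y$: it takes $y_1,y_2$ with the same children, lifts them, and runs a three-way case analysis using \autoref{thm:root-decid}, \autoref{thm:ext-hered}, \autoref{thm:sim-inj} and \autoref{thm:wf-rigid} (this case analysis is exactly where the referee caught a bug in the original version). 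You instead identify $X_1/q_0(x)$ with the quotient of $X/x$ by $R|_{X/x}$, prove that $R|_{X/x}$ is the largest bisimulation on $X/x$, and rerun the argument of \autoref{thm:ext-quotient-strong}. This is more modular, and it does work: the bookkeeping you flag goes through, since every edge of $X_1$ between classes meeting $X/x$ is witnessed by an edge inside $X/x$ precisely because $R$ is a bisimulation and $X/x$ is an initial segment, and your largest-bisimulation claim is true. The cost of your route is the one step you pass over too quickly: that a bisimulation on a well-founded graph cannot relate a node to a proper descendant is not an immediate consequence of well-foundedness but needs its own (short) well-founded induction --- given $S(a,b)$ with $b$ lying below a child $c$ of $a$, the bisimulation property yields $d\prec b$ with $S(c,d)$ and $d$ properly below $c$, contradicting the inductive hypothesis at $c$ --- and your case analysis over whether each of the two related nodes is the root needs \autoref{thm:root-decid} to be constructively exhaustive. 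That small lemma is doing exactly the job of the paper's troublesome middle case, so your instinct that this is the technical heart is right; once it is supplied, your argument is complete.
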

\begin{proof}
  The proof is by external induction on $n$.  (We will only need this
  lemma for $n\le 3$, so the reader is encouraged not to worry too
  much about what this induction requires of the metatheory.)  The
  base case is easy: since the root $\star$ admits a path of length
  $0$ to itself and $X\cong X/\star$, we can take $\Xbar=X$.

  Now suppose the statement is true for some $n$, and let $X$ satisfy
  the hypothesis for $n+1$.  For any $k$, write $X_{k}$ for the set of
  nodes admitting a path of length $k$ to the root.  Let the relation
  $R$ on $X$ be defined by $R(x,y)$ if there exists an isomorphism
  $X/x \toiso X/y$.  (This can be constructed using \ddo-separation
  and local exponentials in \Set.)  Then $R$ is a bisimulation and an
  equivalence relation, so by \autoref{thm:bisim-quot} it has a
  quotient $Y$ which is again a well-founded \apg, with $[-] : X\to Y$ a simulation.

  We claim that $Y$ satisfies the hypothesis for $n$.  Let $y\in Y_n$
  and suppose that $y_1, y_2\in Y/y$ satisfy $z\prec y_1 \Iff z\prec
  y_2$; we must show $y_1=y_2$.  Applying the simulation property
  inductively, we have $y = [x]$ for $x\in X_n$, and $y_i = [x_i]$
  with $x_i \in X/x$ for $i=1,2$.

  By \autoref{thm:root-decid}, for
  each $i$ either $x_i = x$ or $x_i \in X/w_i$ for some $w_i\prec x$.
  If $x_1 = x$ and $x_2=x$, then of course $y_1 = [x_1]=[x_2]=y_2$.

  If $x_1 = x$ and $x_2\in X/w_2$ with $w_2\prec x$, then $[w_2] \prec [x] = [x_1] = y_1$.
  Thus, by assumption, also $[w_2]\prec y_2 = [x_2]$.
  So by the simulation property, there is an $x_3$ with $[x_3] = [w_2]$ and $x_3\prec x_2$.
  Now $x_3\prec x_2$ and $x_2\in X/w_2$ imply $x_3 \in X/w_2$, while by definition of the quotient, $[x_3] = [w_2]$ means that $X/x_3 \cong X/w_2$, which we can therefore rewrite as $(X/w_2)/x_3 \cong (X/w_2)/w_2$.
  But since $w_2\prec x \in X_n$, we have $w_2 \in X_{n+1}$, so by assumption $X/w_2$ is extensional; thus \autoref{thm:wf-rigid} implies that $x_3=w_2$.
  Since $x_3\prec x_2$, this means $w_2 \prec x_2$, contradicting $x_2 \in X/w_2$.\footnote{I am indebted to the referee for noticing a bug in my original handling of this case and suggesting this correction.}

  Hence the only possible case is when
  $x_i \in X/w_i$ with $w_i\prec x$ for both $i=1,2$.
  This implies that $w_i\in X_{n+1}$, so each $X/w_i$ is
  extensional, and thus so is each $X/x_i$ by \autoref{thm:ext-hered}.
  Therefore, by \autoref{thm:sim-inj}, the quotient map $[-]\colon
  X\to Y$ induces an isomorphism $X/x_i \cong Y/[x_i]= Y/y_i$.  But
  $z\prec y_1 \Iff z\prec y_2$ means that $Y\csl y_1 \cong Y\csl y_2$,
  and hence (by \autoref{thm:root-decid}) also $Y/y_1 \cong Y/y_2$;
  thus we also have $X/x_1 \cong X/x_2$.  Thus, by definition,
  $R(x_1,x_2)$, and so $y_1 = [x_1]=[x_2]=y_2$.

  We have shown that $Y$ satisfies the hypothesis for $n$.  Thus it
  has an extensional quotient $\Ybar$, and so the composite $X\to Y\to
  \Ybar$ is an extensional quotient of $X$.
\end{proof}

This also yields another hypothesis under which \emph{all} extensional quotients exist.

\begin{cor}
  If \Set\ satisfies well-founded induction and replacement of contexts, then for any well-founded \apg\ $X$ there is an extensional well-founded \apg\ $\Xbar$ and a surjective simulation $q\colon X\to \Xbar$.
\end{cor}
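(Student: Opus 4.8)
The plan is to construct, by well-founded induction over $x\cin X$, an extensional quotient of each subgraph $X/x$, and then take $\Xbar$ to be the quotient of $X = X/\star$. Both hypotheses are needed: full well-founded induction lets us induct on the (non-\ddo) statement \qq{$X/x$ has an extensional quotient}, while replacement of contexts is what lets us assemble the quotients of the children of a node into a single family over which the quotient at that node can be built.

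First I would record a uniqueness statement that makes replacement of contexts applicable: if $q\colon X\epi\Xbar$ and $q'\colon X\epi\Xbar'$ are two surjective simulations with $\Xbar,\Xbar'$ extensional and well-founded, then there is a \emph{unique} isomorphism $\Xbar\iso\Xbar'$ with $q'=(\text{iso})\circ q$. Indeed, the image of $(q,q')\colon X\to\Xbar\times\Xbar'$ is a bi-entire bisimulation (the simulation property of each projection follows from that of $q$ and $q'$), hence an isomorphism by \autoref{thm:bisim-iso}; compatibility with $q,q'$ is automatic, and uniqueness is \autoref{thm:wf-rigid-global}. Thus an extensional quotient of a well-founded \apg, when it exists, is unique up to unique isomorphism compatibly with its quotient map---exactly the form of hypothesis required by replacement of contexts.

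Now I would prove \qq{$X/x$ has an extensional quotient} for all $x\cin X$ by well-founded induction. Fix $x$ and assume the statement for every child $y\prec x$, writing $q_y\colon X/y\epi\overline{X/y}$ for the quotient map. By the uniqueness just proved and replacement of contexts (applied over the set $|X/x|$ of children of $x$), the quotients $\overline{X/y}$ assemble into a single object of $\Set/|X/x|$; from this family I form the well-founded \apg\ $G$ whose root is a fresh node $\bar x$ lying above the disjoint union of the $\overline{X/y}$. Since each node of $G$ at distance $1$ from $\bar x$ is the root of some $\overline{X/y}$, and the subgraph of $G$ below that root is $\overline{X/y}$, which is extensional, the graph $G$ satisfies the hypothesis of \autoref{thm:ext-quotient} for the fixed external number $n=1$; that lemma therefore supplies an extensional quotient $\overline{X/x}$ of $G$. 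It remains to produce a surjective simulation $X/x\to\overline{X/x}$: send $x$ to the root, and send a node $z\prec\cdots\prec x$ to the image in $\overline{X/x}$ of $q_y(z)$, where $y\prec x$ is any child with $z\in X/y$. This is where the construction turns on uniqueness: a node $z$ may lie below several children, but the two candidate images both classify the extensional quotient of the common subtree $X/z$, so they agree in the extensional graph $\overline{X/x}$; the resulting map is readily checked to be a surjective simulation. This establishes the inductive step, and applying the conclusion at $x=\star$ gives the desired $\Xbar$ and $q\colon X\epi\Xbar$.

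The main obstacle is the inductive step, and specifically the interplay of the two hypotheses there, since neither suffices alone. Without replacement of contexts one cannot even name the family $(\overline{X/y})_{y\prec x}$ as an object, so the graph $G$ cannot be formed; without full well-founded induction one cannot run the recursion, since \qq{$X/x$ has an extensional quotient} is not a \ddo-formula. The one genuinely delicate point inside the step is the well-definedness of the comparison map on nodes reachable through several children, which rests on the uniqueness established in the first paragraph; everything else---well-foundedness and accessibility of $G$, and the verification that the comparison map is a surjective simulation---is routine.
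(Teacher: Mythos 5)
Your proposal is correct and follows essentially the same route as the paper: well-founded induction on $x\cin X$ over the unbounded statement \qq{$X/x$ has an extensional quotient}, with replacement of contexts (justified by uniqueness of extensional quotients up to unique isomorphism, via \autoref{thm:bisim-iso} together with \autoref{thm:wf-rigid-global}/\autoref{thm:parallel-sim}) assembling the children's quotients into a family over $|X/x|$, then \autoref{thm:ext-quotient} with $n=1$ applied to that family plus a fresh root, and the comparison map made well-defined by extensionality of the quotient. The only cosmetic difference is that the paper explicitly constructs the indexed family $(X/-)$ in $\Set/|X/x|$ as the transitive closure of $\prec$ restricted to $X\times|X/x|$, a detail you gloss but which poses no obstacle.
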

\begin{proof}
  For any well-founded \apg\ $X$, we prove by induction on $x\cin X$ that the \apg\ $X/x$ admits a surjective simulation $X/x\epi Y$ where $Y$ is well-founded and extensional.
  (This requires the axiom of well-founded induction since the existence of such a $Y$ is an unbounded quantifier.)
  The case $x=\star$ then yields the result.

  Thus, suppose this is true for all $y\prec x$, and let $(X/-)$ be the transitive closure of $\prec$ restricted to $X\times |X/x|$, regarded as an object of $\Set/|X/x|$, so that $y^*(X/-) \cong X/y$ for any $y\prec x$.
  If $\Set$ satisfied collection, we could find a surjection $f:A\epi |X/x|$, a pointed graph $p:Y\to A$ in $\Set/A$ (here ``pointed'' means we have a
  section $s\colon A\to Y$ over $A$), and a function $g:f^*(X/-)\to Y$ in $\Set/A$, such that for any $a\cin A$, the pullback $a^*(g)$ is a surjective simulation $X/f(a) \epi a^*Y$ where $a^*Y$ is well-founded and extensional.
  But in fact replacement of contexts is sufficient for this (and we can take $A = |X/x|$), because given $X$, the context extension consisting of an extensional well-founded \apg\ $\Xbar$ and a surjective simulation $q\colon X\to \Xbar$ is unique up to unique isomorphism if it exists by \autoref{thm:bisim-iso} and \autoref{thm:parallel-sim}.\footnote{I am indebted to the referee for pointing this out.}
  Now give $Y+1$ the relation $\prec$ induced from $Y$ with also $s(a)\prec \star$ for all $a\cin A$, where $\star$ is the new point added.
  Then $Y+1$ satisfies the hypotheses of \autoref{thm:ext-quotient} with $n=1$; let $Z$ be its extensional quotient.

  Define $h:X/x\to Z$ by sending $x$ to the image of $\star \cin Y+1$, and sending $z\in X\csl x$ to its image by $g_a$, where $a\cin A$ is such that $z\prec f(a)$.
  This is well-defined since $g$ then restricts to a surjective simulation $X/z \epi Z/h(z)$, and there can be only one $h(z)\cin Z$ with this property by extensionality of $Z$.
\end{proof}

We can now start verifying the axioms of material set theory.

\begin{lem}
  The axiom of extensionality holds.  That is, two well-founded
  extensional \apgs\ $X$ and $Y$ are isomorphic iff $Z\iin X \Iff
  Z\iin Y$ for all $Z$.
\end{lem}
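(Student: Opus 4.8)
The plan is to prove the two directions separately, with the forward direction routine and the reverse one carried by the bisimulation machinery culminating in \autoref{thm:bisim-iso}. For the forward direction, suppose $h\colon X\toiso Y$ is an isomorphism of \apgs. It carries the root of $X$ to the root of $Y$ and restricts to isomorphisms $X/x\cong Y/h(x)$ for every node $x$; in particular it sets up a bijection between the members of $X$ and those of $Y$. Hence for any $Z$ we have $Z\iin X$, i.e.\ $Z\cong X/x$ for some member $x\prec\star$, if and only if $Z\cong Y/h(x)$ for the corresponding member $h(x)\prec\star$ of $Y$, i.e.\ $Z\iin Y$.

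For the reverse direction, assume $Z\iin X \Iff Z\iin Y$ for all $Z$, and define a relation $R\csub X\times Y$ by declaring $R(a,b)$ to hold exactly when either $a$ and $b$ are both roots, or $a$ and $b$ are both non-roots with $X/a\cong Y/b$. By \autoref{thm:root-decid} rootedness is decidable, so this case split is legitimate, and the predicate $X/a\cong Y/b$ cuts out a genuine subobject of $X\times Y$, built from the local families $(X/-)$ and $(Y/-)$ by \ddo-separation and local exponentials exactly as in \autoref{thm:ext-quotient}. The goal is to show that $R$ is a bi-entire bisimulation: then \autoref{thm:bisim-iso} makes it an isomorphism, and since $R$ pairs the roots it is an isomorphism of \apgs, giving $X\cong Y$ as required.

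Verifying that $R$ is a bisimulation splits into the non-root and the root cases. At non-root nodes, an isomorphism $g\colon X/a\toiso Y/b$ sends each child $a'\prec a$ to $g(a')\prec b$ with $X/a'\cong Y/g(a')$, and \autoref{thm:wf-no-cycles} guarantees that a child of a non-root is again a non-root, so the matched pair stays in $R$; the opposite clause is dual. At the root, a child $a'\prec\star$ of $X$ gives $X/a'\iin X$, so by hypothesis $X/a'\iin Y$, i.e.\ $X/a'\cong Y/b'$ for some member $b'\prec\star$ of $Y$, and again no-cycles makes $a'$ and $b'$ non-roots; the dual clause uses the reverse implication. Bi-entirety is the crux, and is where I expect the main obstacle to lie, because the hypothesis only directly matches the \emph{members} (the children of the roots), whereas bi-entirety of $R$ concerns arbitrary nodes. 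To cover a general non-root node $a$ of $X$, I would walk up a path $a=a_k\prec\cdots\prec a_1\prec a_0=\star$ to the root, match the member $a_1\prec\star$ via the hypothesis to get an isomorphism $X/a_1\cong Y/b_1$, and transport $a$ through it to a non-root $b$ of $Y$ with $X/a\cong Y/b$, so that $R(a,b)$ holds; surjectivity of the other projection is dual, via the reverse implication of the hypothesis. This path-and-transport step is what bridges the gap between the member-level hypothesis and the node-level bi-entirety, and it relies on well-foundedness (to guarantee a finite path to the root, via the \nno) together with \autoref{thm:wf-no-cycles} and \autoref{thm:root-decid} to keep straight which nodes are roots.
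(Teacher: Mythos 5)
Your proof is correct, and at its core it is the paper's proof: both treat the ``only if'' direction as immediate, and both reduce the converse to exhibiting a bi-entire bisimulation $R\csub X\times Y$ and invoking \autoref{thm:bisim-iso}, with the root-to-root pairing ensuring the resulting isomorphism is one of pointed graphs. The only real difference is how $R$ is defined. The paper first matches members: for each $x\cin |X|$ the hypothesis gives $X/x\cong Y/y$, where $y$ is unique by \autoref{thm:wf-rigid} and the isomorphism $h_x\colon X/x\toiso Y/g(x)$ is unique by \autoref{thm:wf-rigid-global}; it then defines $R$ by transporting along the chosen $h_x$, so that bi-entirety is immediate from accessibility. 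You instead define $R$ intrinsically ($R(a,b)$ iff both are roots, or both are non-roots with $X/a\cong Y/b$), which lets you bypass the two rigidity lemmas entirely, at the price of making the accessibility ``path-and-transport'' argument for bi-entirety explicit---an argument the paper's write-up leaves implicit in the unproved claim that its $R$ is bi-entire. The two relations in fact coincide: if $a,b$ are non-roots with $X/a\cong Y/b$, accessibility places $a$ in some $X/x$ with $x\cin|X|$, and then $Y/b\cong Y/h_x(a)$ forces $b=h_x(a)$ by \autoref{thm:wf-rigid} applied to $Y$. So the two proofs spend the same total effort, just in different places; your version has the mild advantage of fewer lemma dependencies, while the paper's canonical family $h_x$ makes the bi-entirety check shorter.
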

\begin{proof}
  The ``only if'' direction is clear, so suppose that $Z\iin X \Iff
  Z\iin Y$ for all $Z$.  Then for every $x\cin |X|$, we have $X/x \iin
  Y$, hence $X/x \cong Y/y$ for some $y\cin Y$.  By
  \autoref{thm:wf-rigid}, this $y$ must be unique, and conversely as
  well; hence we have a bijection $g\colon |X|\toiso |Y|$ such that
  for any $x\cin |X|$ there exists an isomorphism $h_x\colon X/x \toiso
  Y/g(x)$ (which must be unique, by \autoref{thm:wf-rigid-global}).
  Define a relation $R$ from $X$ to $Y$ such that $R(a,b)$ holds if:
  \begin{enumerate}
  \item $a=\star$ and $b=\star$, or
  \item there exists $x\cin |X|$ such that $a\in X/x$, $b\in Y/g(x)$,
    and $h_x(a)=b$.
  \end{enumerate}
  Then $R$ is a bi-entire bisimulation, so by \autoref{thm:bisim-iso} it
  is an isomorphism.
\end{proof}

\begin{lem}\label{thm:struct-emppairun}
  The axioms of empty set, pairing, and union hold.
\end{lem}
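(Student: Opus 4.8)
The plan is to handle all three axioms by a single recipe. For each I would write down an explicit well-founded APG whose members are, up to isomorphism, exactly the ones demanded by the axiom, observe that this naive graph is in general not extensional, and repair this by passing to its extensional quotient via \autoref{thm:ext-quotient}. The members of the quotient are then pinned down using the principle that a \emph{surjective} simulation out of a well-founded extensional graph is an isomorphism: by \autoref{thm:sim-inj} any simulation from such a graph is injective and identifies its domain with an initial segment of the codomain, so if it is also surjective it must be invertible. The easiest case, \emph{empty set}, needs no quotient at all: take the terminal object $1$ with the empty relation, pointed by its unique global element. This is vacuously well-founded and extensional, and the root has no children, so it has no members and represents $\emptyset$.

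For \emph{pairing}, given well-founded extensional APGs $X$ and $Y$, I would form the coproduct $W = X + Y + 1$, take the new summand $1$ as root $\star$, keep the relations induced from $X$ and $Y$, and set $\mathrm{root}_X\prec\star$ and $\mathrm{root}_Y\prec\star$. Then $W$ is accessible and well-founded, and the only nodes admitting a path of length $1$ to $\star$ are $\mathrm{root}_X$ and $\mathrm{root}_Y$, for which $W/\mathrm{root}_X\cong X$ and $W/\mathrm{root}_Y\cong Y$ are extensional by hypothesis. Hence \autoref{thm:ext-quotient} with $n=1$ produces a surjective simulation $W\epi\Wbar$ with $\Wbar$ well-founded and extensional. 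By the simulation property the children of the root of $\Wbar$ are exactly the images of $\mathrm{root}_X$ and $\mathrm{root}_Y$, and since $X$ and $Y$ are already extensional the restricted surjective simulations $X\epi\Wbar/[\mathrm{root}_X]$ and $Y\epi\Wbar/[\mathrm{root}_Y]$ are isomorphisms. Thus $Z\iin\Wbar$ holds iff $Z\cong X$ or $Z\cong Y$, which is pairing; note that when $X\cong Y$ the two roots become bisimilar and are merged by the quotient, so $\Wbar$ is correctly a singleton.

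For \emph{union}, given a well-founded extensional APG $X$, let $G\csub X$ be the set of ``grandchildren'' of the root, i.e.\ those $w$ for which there is an $x$ with $w\prec x\prec\star$ (this, and the constructions below, are available by \ddo-separation together with the path machinery of a \Pi-pretopos with an \nno). Let $M$ be the full subgraph of $X$ on the nodes lying in $X/w$ for some $w\in G$; since $z'\prec z\in X/w$ forces $z'\in X/w$, the subset $M$ is an initial segment of $X$, hence well-founded and extensional by \autoref{thm:wf-hered} and \autoref{thm:ext-hered}. Adjoining a fresh root $\star'$ with $w\prec\star'$ for every $w\in G$ gives a well-founded APG $W'$ in which every node at distance $1$ from $\star'$ is some $w\in G$ with $W'/w = X/w$ extensional, so \autoref{thm:ext-quotient} with $n=1$ again yields an extensional quotient $\Wbar'$. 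As before its members are exactly the $X/w$ for $w\in G$, so $Z\iin\Wbar'$ holds iff $Z\cong X/w$ for some $w$ with $w\prec x\prec\star$, that is, iff there is a $Y$ with $Z\iin Y$ and $Y\iin X$, which is the union axiom.

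The main obstacle is precisely extensionality. The naive ``disjoint-union-plus-root'' graphs are in general \emph{not} extensional: two childless nodes coming from $X$ and from $Y$ (or two distinct but bisimilar grandchildren in the union construction) have equal sets of children and so must be identified. The real content therefore lies in invoking \autoref{thm:ext-quotient}, which is why the extensional-quotient machinery was developed, and in each case the hypothesis is the trivial one $n=1$. The remaining work—identifying the members of the quotient through the isomorphism-of-surjective-simulations argument—is the only other non-bookkeeping step; forming the coproducts and fresh roots, defining the grandchild relation, and checking well-foundedness and accessibility are all routine in a constructively well-pointed \Pi-pretopos with an \nno.
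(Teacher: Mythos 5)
Your proposal is correct and follows essentially the same route as the paper: the one-node graph for the empty set, the graph $X+Y+1$ with the two roots made children of a fresh root for pairing, and the subgraph generated by the grandchildren of the root plus a fresh root for union, in each case concluding with the extensional quotient of \autoref{thm:ext-quotient} at $n=1$. Your additional verification that the members of the quotient are as desired (via \autoref{thm:sim-inj} applied to the restricted surjective simulations) just makes explicit what the paper leaves implicit.
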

\begin{proof}
  The empty set is represented by the \apg\ with one node and no
  $\prec$ relations, which has no members.

  If $X$ and $Y$ are extensional well-founded \apgs, let $Z = X + Y +
  1$ with $\prec$ induced from $X$ and $Y$ along with $\star_X \prec
  \star$ and $\star_Y\prec \star$, where $\star$ is the new point
  added.  Since $X$ and $Y$ are extensional, $Z$ satisfies the
  hypothesis of \autoref{thm:ext-quotient} with $n=1$, and its
  extensional quotient represents the pair $\{X,Y\}$.

  Finally, if $X$ is an extensional well-founded \apg, let $\Vert X
  \Vert$ denote the subset of those $x\cin X$ such that $x\prec y\prec
  \star$ for some $y$, let $Y$ be the subset of $X$ consisting of
  those nodes admitting some path to a node in $\Vert X \Vert$, and
  define $Z = Y + 1$ with \prec\ inherited from $Y$ and with $y\prec
  \star$ for each $y\in \Vert X\Vert \subseteq Y$, where $\star$ is
  the new element added.  Then $Z$ satisfies the hypotheses of
  \autoref{thm:ext-quotient} with $n=1$, so it has an extensional
  quotient, which is the desired union $\bigcup X$.
\end{proof}

\begin{lem}\label{thm:cart-prod}
  Cartesian products (using Kuratowski ordered pairs) exist in
  $\bbV(\Set)$.
\end{lem}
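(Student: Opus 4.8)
The plan is to mimic the graph-plus-extensional-quotient strategy of \autoref{thm:struct-emppairun}: build a well-founded pointed graph whose intended members are the Kuratowski pairs, then pass to its extensional quotient via \autoref{thm:ext-quotient}. Let $X,Y$ be well-founded extensional \apgs\ representing the material-sets $A,B$, and set $P=|X|\times|Y|$. Using the family $(X/-)$ over $|X|$ (the transitive closure of $\prec$ on $X\times|X|$, regarded as an object of $\Set/|X|$, so that $a^*(X/-)\cong X/a$ for each $a\cin|X|$) and likewise $(Y/-)$ over $|Y|$, I would pull both back along the projections to obtain over $P$ the families $a\mapsto X/a$ and $b\mapsto Y/b$. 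Over each $(a,b)\cin P$ I then assemble the graph $Z_{a,b}$ picturing $\{\{a\},\{a,b\}\}$: three new nodes $p_{a,b},s_{a,b},d_{a,b}$ with $s_{a,b}\prec p_{a,b}$ and $d_{a,b}\prec p_{a,b}$, the copy of $X/a$ (root called $a$) and the copy of $Y/b$ (root $b$), and edges $a\prec s_{a,b}$, $a\prec d_{a,b}$, $b\prec d_{a,b}$. These assemble into one family over $P$, and I define $Z$ to be its total space together with a single further node $\star$ and edges $p_{a,b}\prec\star$.

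The crucial reason to index all the ``content'' over $P$, rather than hanging a single shared copy of $X$ and $Y$ below $\star$, is accessibility: each fiber $Z_{a,b}$ reaches $\star$ through $p_{a,b}$, so every node of $Z$ reaches $\star$ with no case analysis on whether $|X|$ or $|Y|$ is inhabited (which is not decidable). In particular, when $P=\emptyset$ one simply gets $Z=1$, correctly picturing $A\times\emptyset=\emptyset$. Well-foundedness of $Z$ holds because it adds only finitely many layers on top of the well-founded graphs $X/a,Y/b$: an inductive subset must, layer by layer, contain all of each copy $X/a$ and $Y/b$ (these are initial segments of $X,Y$, hence well-founded by \autoref{thm:wf-hered}), then every $s_{a,b},d_{a,b}$, then every $p_{a,b}$, and finally $\star$. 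The only nodes of $Z$ admitting a path of length $3$ to $\star$ are the roots $a,b$ of the copies (e.g.\ $a\prec s_{a,b}\prec p_{a,b}\prec\star$), and $Z/a\cong X/a$, $Z/b\cong Y/b$ are extensional by \autoref{thm:ext-hered}. Thus \autoref{thm:ext-quotient} applies with $n=3$, yielding an extensional well-founded \apg\ $\overline{Z}$ and a surjective simulation $q\colon Z\epi\overline{Z}$; this $\overline{Z}$ is the candidate $A\times B$.

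Finally I would read off the members. Since $q$ is a surjective simulation, the children of $q(\star)$ are exactly the $q(p_{a,b})$, so $|\overline{Z}|=\{\,q(p_{a,b}):(a,b)\cin P\,\}$. Moreover $Z/p_{a,b}$ is precisely the fiber $Z_{a,b}$ (the fibers are disjoint and joined to $\star$ only through their roots), and restricting $q$ gives a surjective simulation $Z_{a,b}\epi\overline{Z}/q(p_{a,b})$ onto an extensional well-founded \apg; since extensional quotients are unique up to canonical isomorphism (\autoref{thm:bisim-iso}, \autoref{thm:parallel-sim}), $\overline{Z}/q(p_{a,b})$ is the extensional quotient of $Z_{a,b}$. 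By construction this pictures $\{\{X/a\},\{X/a,Y/b\}\}$, i.e.\ the Kuratowski pair $(X/a,Y/b)$; the degenerate case $X/a\cong Y/b$ is handled automatically, since the quotient then identifies $a$ with $b$, hence $s_{a,b}$ with $d_{a,b}$, so that $q(p_{a,b})$ pictures $\{\{X/a\}\}=(X/a,X/a)$. Therefore $W\iin\overline{Z}$ iff $W\cong(X/a,Y/b)$ for some $X/a\iin X$ and $Y/b\iin Y$, which is exactly the defining property of $A\times B$.

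The step I expect to demand the most care is verifying that the single extensional quotient of $Z$ restricts correctly on each fiber and produces honest Kuratowski pairs, especially in the non-decidable degenerate case $a=b$; this is where uniqueness of extensional quotients and the analysis of which nodes admit length-$3$ paths (matching the parenthetical $n\le 3$ in \autoref{thm:ext-quotient}) do the real work. The uniform indexing over $P$ is what keeps the whole construction intuitionistically valid.
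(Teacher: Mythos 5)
Your proof is correct, and it follows the same overall strategy as the paper---erect three new layers of nodes (singletons, doubletons, pairs) over the given \apgs\ and apply \autoref{thm:ext-quotient} with $n=3$---but your intermediate graph is genuinely different, and the difference has content. The paper's $Z$ is leaner: it keeps a \emph{single shared} copy of $X\csl\star$ and of $Y\csl\star$, and adjoins one copy of $|X|$ (the singletons), two copies of $|X|\times|Y|$ (the doubletons and the pairs), and a root, so nothing is duplicated and the indexed family $(X/-)$ is never needed. Your fiber-indexed version, which re-copies $X/a$ and $Y/b$ once for every pair $(a,b)\cin|X|\times|Y|$, is bulkier but buys unconditional accessibility: in the paper's shared-copy graph a node of $X\csl\star$ can only reach $\star$ through some $(x,y)'$, so if $|Y|$ is empty while $X$ has members (an alternative that, as you note, cannot be decided intuitionistically), the paper's $Z$ as literally written is \emph{not} accessible and \autoref{thm:ext-quotient} does not apply to it; the paper's construction implicitly needs the harmless repair of passing to the accessible part $Z/\star$ (still well-founded by \autoref{thm:wf-hered}, with the same depth-$3$ extensionality hypothesis and the same children of the root). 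Your construction makes that repair unnecessary, since every node of a fiber reaches $\star$ through its own $p_{a,b}$. Your closing step---identifying $|\overline{Z}|$ via the simulation property and pinning down $\overline{Z}/q(p_{a,b})$ as \emph{the} extensional quotient of the fiber using \autoref{thm:bisim-iso} and \autoref{thm:parallel-sim}---is more detailed than the paper's one-line conclusion and handles the non-decidable degenerate case $X/a\cong Y/b$ correctly.
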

\begin{proof}
  Let $X$ and $Y$ be extensional well-founded \apgs, and consider the set
  \[ Z =
  (X\csl\star) + (Y\csl\star) +
  |X| + (|X|\times |Y|) +
  (|X|\times |Y|)
  + 1.
  \]
  For $x\cin |X|$ we write $x$ for its image in $(X\csl\star)\mono Z$ and $x'$
  for its image in $|X|\mono Z$.  Similarly, we write $y$ for
  images in $Y\csl\star$, $(x,y)$ for images in the first copy of
  $|X|\times |Y|$, $(x,y)'$ for images in the second copy, and $\star$
  for the final point.  We define $\prec$ on $Z$ as follows:
  \begin{blist}
  \item $\prec$ on $X\csl\star$ and $Y\csl\star$ is induced from
    $X$ and $Y$.
  \item $x\prec x'$ for all $x\cin |X|$.
  \item $x\prec (x,y)$ and $y\prec (x,y)$ for all $y\cin |Y|$.
  \item $x' \prec (x,y)'$ and $(x,y)\prec (x,y)'$ for all $x\cin |X|$
    and $y\cin |Y|$.
  \item $(x,y)'\prec \star$ for all $x\cin |X|$ and $y\cin |Y|$.
  \end{blist}
  It is straightforward to verify that $Z$ is then a well-founded
  \apg.  Since $X$ and $Y$ are extensional, $Z$ satisfies the
  hypothesis of \autoref{thm:ext-quotient} with $n=3$.  Its
  extensional quotient then represents the cartesian product of $X$
  and $Y$.
\end{proof}

Henceforth, we will write $X\otimes Y$ for the extensional well-founded \apg\ constructed above, which represents the cartesian product of the material sets $X$ and $Y$.
We will also reuse the notations such as $(x,y)'$ for the images of these elements in $X\otimes Y$.

\begin{lem}\label{thm:struc-exp}
  $\bbV(\Set)$ satisfies the exponentiation axiom.
\end{lem}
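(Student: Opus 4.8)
The plan is to imitate the cartesian product construction of \autoref{thm:cart-prod}, but to build the exponential \emph{on top of} the already extensional \apg\ $X\otimes Y$ rather than from scratch; this keeps the non-extensional gluing shallow enough that \autoref{thm:ext-quotient} applies with $n=2$. Fix extensional well-founded \apgs\ $X$ and $Y$ and write $A=|X|$, $B=|Y|$. Since \Set\ is a \Pi-pretopos it is cartesian closed, so the exponential $F=B^A$ exists with evaluation $e\colon F\times A\to B$, and (by cartesian closedness together with well-pointedness, cf.\ \autoref{thm:wp-lcc}) global elements $1\to F$ correspond to morphisms $A\to B$. Recall from \autoref{thm:cart-prod} that the members of $X\otimes Y$ are exactly the nodes $(x,y)'$, indexed bijectively by $A\times B$, with $(X\otimes Y)/(x,y)'$ representing the Kuratowski pair $(X/x,Y/y)$.

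First I would define a graph $W$ with node set $\bigl((X\otimes Y)\csl\star\bigr)+F+1$ and $\prec$ given by: the relation induced on $(X\otimes Y)\csl\star$; a relation making the node $g_\phi$ of the $F$-summand have as children precisely the members $(x,e(\phi,x))'$ as $x$ ranges over $A$ (a definable subobject of $F\times(X\otimes Y)\csl\star$, namely the image of $(\phi,x)\mapsto(\phi,(x,e(\phi,x))')$); and $g_\phi\prec\star$ for the new root $\star$ of the final summand. Passing to the accessible part $W/\star$, it is routine that $W$ is a well-founded \apg. Its only nodes at distance $2$ from the root are the members $(x,e(\phi,x))'$, whose down-sets are the full subgraphs $(X\otimes Y)/(x,e(\phi,x))'$ and hence extensional by \autoref{thm:ext-hered}. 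Therefore \autoref{thm:ext-quotient} applies with $n=2$, yielding an extensional well-founded \apg\ \Wbar\ and a surjective simulation $W\epi\Wbar$.

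Next I would identify the members of \Wbar. By construction the down-set of $g_\phi$ represents the graph $\setof{(X/x,Y/\phi(x)) | x\cin A}$ of the material function named by $\phi$, and the surjective simulation carries the root-children $g_\phi$ onto the members of \Wbar. This assignment is injective: a bisimulation relating $g_\phi$ and $g_{\phi'}$ restricts, on the nodes at distance $\ge2$, to the identity (their down-sets are extensional, so \autoref{thm:bisim-iso} and \autoref{thm:wfext-bisid} leave no room), and hence forces the child-sets $\setof{(x,e(\phi,x))' | x\cin A}$ and $\setof{(x,e(\phi',x))' | x\cin A}$ to coincide, i.e.\ $\phi=\phi'$. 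Thus $|\Wbar|\cong F$, and for each global element $\phi\colon1\to F$ the member $\Wbar/[g_\phi]$ represents the graph of the corresponding function $X\to Y$.

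Finally I would verify that \emph{every} member arises this way, i.e.\ that $Z\iin\Wbar$ exactly when $Z$ represents a material function from $X$ to $Y$. Given such a $Z$, each of its members is a Kuratowski pair $(u,v)$ with $u\iin X$ and $v\iin Y$; by extensionality and \autoref{thm:wf-rigid} we have $u\cong X/x$ and $v\cong Y/y$ for unique $x\cin A$ and $y\cin B$, so the members of $Z$ cut out a subobject of $A\times B$, which the single-valuedness and entirety of $Z$ make into the graph of a morphism $\phi\colon A\to B$; then $Z\cong\Wbar/[g_\phi]$. I expect the main obstacle to be precisely this last translation---turning a material function, presented as an \apg, into an honest \Set-morphism $A\to B$ and back---since it is here that one must argue \emph{internally} that the resulting subobject of $A\times B$ is functional and total; by contrast, the shallow depth of the quotient and the injectivity of the member-assignment come out cleanly once $X\otimes Y$ is used as the scaffold.
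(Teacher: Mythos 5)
Your proposal is correct and takes essentially the same route as the paper: the paper builds exactly the same graph $W = ((X\otimes Y)\csl \star) + |Y|^{|X|} + 1$ with $(x,y)' \prec f$ whenever $f(x)=y$, and identifies the members of the result with \Set-morphisms $|X|\to|Y|$ by the same two-way translation (global elements of the exponential give members, and a member's Kuratowski pairs cut out a functional, entire subobject of $|X|\times|Y|$). The only difference is that the paper observes $W$ is \emph{already} extensional, so no quotient is needed, whereas you pass to an extensional quotient via \autoref{thm:ext-quotient} with $n=2$ and then prove injectivity of $\phi\mapsto[g_\phi]$ --- a harmless redundancy that amounts to re-deriving the extensionality of $W$.
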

\begin{proof}
  If $X$ and $Y$ are extensional well-founded \apgs, define
  \[W =
  ((X\otimes Y)\csl \star) +
  |Y|^{|X|} +
  1
  \]
  with $\prec$ induced from $X\otimes Y$, along with $(x,y)' \prec f$ whenever $f(x)=y$ and
  $f\prec \star$ for any $f\in {|Y|}^{|X|}$.  Then $W$ is a
  well-founded \apg, which is in fact already extensional; we claim it represents the material function-set.

  It is clear that if $F\iin W$, then $F\in \bbV(\Set)$ is a function
  from $X$ to $Y$ in the sense of material set theory.  Conversely,
  from any $F\in \bbV(\Set)$ which is a function from $X$ to $Y$,
  consider the subset of $|X|\times |Y|$ determined by those $(x,y)$
  such that the Kuratowski ordered pair $\{ \{ X/x\}, \{X/x,Y/y\}\}$
  is $\iin F$.  This defines a function $|X|\to |Y|$ in \Set, which
  therefore induces an $f\cin |W|$ such that $F\cong W/f$.
\end{proof}

Observe that in particular, we have shown that for $X,Y\in\bbV(\Set)$,
there is a 1-1 correspondence between functions $|X|\to |Y|$ in \Set\
and isomorphism classes of \apgs\ $F\in\bbV(\Set)$ which represent
functions from $X$ to $Y$ in the sense of material set theory.  In
fact, although $\bbV(\Set)$ need not satisfy limited \ddo-replacement,
so that \autoref{thm:iz-topos} does not apply to it directly, we still
have:

\begin{lem}\label{thm:struc-mat-set}
  The sets and functions in $\bbV(\Set)$ form a category
  $\bbSet(\bbV(\Set))$, which can be identified with a full
  subcategory of $\Set$ closed under finite limits, subsets,
  quotients, and local exponentials.  In particular,
  $\bbSet(\bbV(\Set))$ is a \Pi-pretopos.
\end{lem}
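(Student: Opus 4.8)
The plan is to exhibit a fully faithful functor $\Phi\colon \bbSet(\bbV(\Set))\to\Set$ and then show its essential image is closed under the operations constituting a \Pi-pretopos. On objects I would set $\Phi(X)=|X|$, and on a morphism---a material function $F$ from $X$ to $Y$---let $\Phi(F)\colon |X|\to|Y|$ be the \Set-map named by the correspondence recorded just after \autoref{thm:struc-exp}. That correspondence is precisely a bijection between material functions $X\to Y$ and arrows $|X|\to|Y|$ in \Set, so once one checks (routinely, from composition of material functions being relational composition of their graphs) that $\Phi$ respects identities and composition, $\Phi$ is a functor which is a bijection on hom-sets, hence fully faithful. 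This simultaneously shows $\bbSet(\bbV(\Set))$ is a genuine category and identifies it with the full subcategory $\mathcal C\subseteq\Set$ of objects isomorphic to some $|X|$. It then suffices to prove that $\mathcal C$ is closed in \Set\ under finite limits, subobjects, finite coproducts, quotients of equivalence relations, and the local-exponential functors $\Pi_f$; since $\mathcal C$ is full, every such \Set-universal property among objects of $\mathcal C$ restricts to one in $\mathcal C$, and a full subcategory of the \Pi-pretopos \Set\ closed under these operations is itself a \Pi-pretopos, all structure being computed as in \Set\ (the Heyting and dual-image structure then coming for free from local cartesian closure).

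Most of these closures are simple surgeries on \apgs, each followed where necessary by an extensional quotient via \autoref{thm:ext-quotient} at a fixed small depth. A one-element material set realizes $1$, and $|X\otimes Y|\cong|X|\times|Y|$ by \autoref{thm:cart-prod}, giving the terminal object and binary products; equalizers and pullbacks then reduce to the subobject case. For a subobject $S\csub|X|$, I would cap the initial segment of $X$ generated by the members lying in $S$ with a fresh root whose children are exactly those members; this graph is well-founded by \autoref{thm:wf-hered} and satisfies the hypotheses of \autoref{thm:ext-quotient} with $n=1$ (each $X/s$ is extensional by \autoref{thm:ext-hered}), so its extensional quotient $X_S$ has $|X_S|\cong S$, the members $X/s$ being pairwise non-isomorphic by \autoref{thm:wf-rigid}. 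Finite coproducts are the material disjoint unions built from pairing, products, and union (all already available), with $|\emptyset|\cong 0$. For a quotient $q\colon|X|\epi Q$ given by an equivalence relation in \Set, I would build the three-layer graph consisting of a new root, one ``class node'' per element of $Q$, and the hereditary structure of $X$ below, joining each member $x$ to the class node $q(x)$; this satisfies the hypotheses of \autoref{thm:ext-quotient} with $n=2$, and since distinct fibres of $q$ index non-isomorphic families of members, its extensional quotient has exactly $Q$ as its set of members. Together these give finite limits, finite coproducts, and effective quotients in $\mathcal C$, hence the full pretopos structure.

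The one genuinely delicate step---and the main obstacle---is closure under local exponentials, i.e.\ local cartesian closure of $\mathcal C$. One cannot simply invoke \autoref{thm:iz-topos}\ref{item:iz-topos-lcc}, because (as noted just before this lemma) $\bbV(\Set)$ need not satisfy limited \ddo-replacement, so the usual material derivation of local exponentials from exponentiation is unavailable; this is exactly why a direct construction is forced. Given $f\colon A\to S$ and $p\colon C\to A$ with $A,C,S\in\mathcal C$, I would form the total space $E=\Pi_f(p)$ in \Set, which exists since \Set\ is a \Pi-pretopos. Its points over $s\cin S$ are the sections of $p$ over the fibre $f^{-1}(s)$, each of which is the graph of a material function and hence representable as a member-\apg\ built, as in \autoref{thm:struc-exp}, from the \apg\ presentations of $A$ and $C$. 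The task is to assemble this $E$-indexed family of function-\apgs\ into a single well-founded graph over \Set: one stacks the shared hereditary structure of the presentations of $A$ and $C$, a layer of ordered-pair nodes, a layer of section nodes indexed by $E$ (with incidence relations read off from $\Pi_f(p)$), and a root. This is a bounded-depth construction, legitimate in \Set\ by the \Pi-pretopos structure, well-founded, and satisfying the hypotheses of \autoref{thm:ext-quotient} for a fixed external $n$; its extensional quotient $W$ has $|W|\cong E$, and fullness of $\Phi$ transports the universal property of $\Pi_f$ into $\mathcal C$. With local exponentials in hand, $\mathcal C$---and hence $\bbSet(\bbV(\Set))$---is a \Pi-pretopos. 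I expect the bookkeeping of this fibered version of the exponentiation construction, and verifying that the assembled graph genuinely meets the depth hypothesis of \autoref{thm:ext-quotient}, to be where all the real work lies.
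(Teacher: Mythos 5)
Your proposal is correct and takes essentially the same route as the paper's own proof: identify $\bbSet(\bbV(\Set))$ with a full subcategory of $\Set$ via the function correspondence recorded after \autoref{thm:struc-exp}, then verify each closure property by an explicit graph surgery (a capped subgraph for subsets, reducing finite limits to products and subsets, a class-node layer over the quotient object for quotients, and section nodes indexed by $\Pi_f$ atop the ordered-pair nodes of \autoref{thm:cart-prod} for local exponentials). The only real difference is that you invoke \autoref{thm:ext-quotient} in the quotient and local-exponential cases where the paper asserts extensionality of the constructed graphs outright, which is if anything slightly more careful than the paper itself.
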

\begin{proof}
  We have observed closure under products and non-local exponentials.
  Closure under subsets is easy: if $U\csub |X|$, then the
  sub-graph $Y$ of $X$ consisting of the root and all nodes admitting
  a path to $U$ is a well-founded extensional \apg\ with $|Y|\cong U$.
  This then implies closure under finite limits.

  For quotients, if $R$ is an equivalence relation on $|X|$, let
  $q\colon |X|\to Y$ be the quotient of $|X|$ by $R$ in \Set; then the
  \apg\
  \[ Z = (X\csl\star) + Y + 1,\]
  with $\prec$ inherited from $X$ along with $x\prec q(x)$ and $y\prec
  \star$ for all $x\cin |X|$ and $y\cin Y$, is well-founded and
  extensional and has $|Z|\cong Y$.

  Finally, given $f\colon |A|\to |B|$ and $g\colon |X|\to |A|$, we
  have the local exponential $h\colon \Pi_f(g) \to |B|$ in \Set, with
  counit $e\colon \Pi_f(g)\times_{|B|} |A| \to |X|$.  Let $Z$
  represent the material cartesian product of $A$ and $X$ as in
  \autoref{thm:cart-prod}, and consider the \apg\
  \[ W= ((X\otimes Y)\csl\star) + \Pi_f(g) + 1 \]
  with $\prec$ inherited from $X\otimes Y$, along with $(x,y)' \prec j$
  whenever $f(x) = h(j)$ and $e(j,x)=y$, and $j\prec \star$ for all
  $j\in \Pi_f(g)$.  Then $W$ is well-founded and extensional and satisfies
  $|W|\cong \Pi_f(g)$; hence $\bbSet(\bbV(\Set))$ is closed under
  local exponentials.
\end{proof}



\begin{lem}
  If \Set\ satisfies structural fullness, $\bbV(\Set)$ satisfies
  material fullness.
\end{lem}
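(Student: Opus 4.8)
The plan is to mimic the exponentiation construction of \autoref{thm:struc-exp}, replacing the function-set $|Y|^{|X|}$ by the indexing object produced by structural fullness, and replacing the membership condition $f(x)=y$ by the structural relation $R$. First I would fix extensional well-founded \apgs\ $A$ and $B$ representing the material sets $a$ and $b$, and apply structural fullness to the objects $|A|$ and $|B|$ of \Set. This yields an object $M$ and a relation $R\csub M\times |A|\times |B|$ with $R\epi M\times |A|$ regular epic, such that every entire relation $S\csub |A|\times |B|$ (i.e.\ one with $S\epi |A|$ regular epic) contains some fiber $(s,1)^*R$.

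Next I would assemble the \apg
\[ W = ((A\otimes B)\csl\star) + M + 1, \]
with $\prec$ inherited from the cartesian-product \apg\ $A\otimes B$ of \autoref{thm:cart-prod}, together with $(x,y)'\prec s$ whenever $R(s,x,y)$ holds and $s\prec\star$ for every $s\cin M$ (where $\star$ is the adjoined point). As in the earlier lemmas, $W$ is visibly a well-founded \apg. It need not itself be extensional, since distinct indices $s$ may name the same relation; but for each $s\cin M$ the subgraph $W/s$ consists of $s$ together with an initial segment of the extensional \apg\ $A\otimes B$ (the union of the downsets of the pair-nodes $(x,y)'$ with $R(s,x,y)$), so $W/s$ is extensional by \autoref{thm:ext-hered}. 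Hence $W$ satisfies the hypothesis of \autoref{thm:ext-quotient} with $n=1$, and I take $\Wbar$ to be its extensional quotient; this $\Wbar$ represents the desired material set $m$. Since each $W/s$ is already extensional, the quotient map restricts to an isomorphism $W/s \cong \Wbar/[s]$, so each member of $m$ is the \apg\ representing the material relation $\setof{(A/x,B/y) | R(s,x,y)}$, which corresponds under the subset/cartesian-product dictionary of \autoref{thm:struc-mat-set} to the subobject $(s,1)^*R\csub |A|\times|B|$.

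Finally I would verify the two clauses of material fullness. Each member of $m$ is entire because $(s,1)^*R\epi|A|$ is a pullback of the regular epi $R\epi M\times|A|$, and by \autoref{thm:pin-props}\ref{item:pp1} this says exactly that for every member $x$ of $a$ there is a member $y$ of $b$ related to it. Conversely, given any entire material relation $f$ from $a$ to $b$, it is a material subset of $A\otimes B$, hence by \autoref{thm:struc-mat-set} translates to an entire subobject $S\csub|A|\times|B|$; structural fullness then supplies an $s\cin M$ with $(s,1)^*R\subseteq S$, and the corresponding member $g=\Wbar/[s]$ of $m$ satisfies $g\subseteq f$.

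The main obstacle I anticipate is the bookkeeping that translates the material notions (entire relation, and $\subseteq$ between relations) into their structural counterparts (entire subobject of $|A|\times|B|$, and inclusion of subobjects), and in particular checking that the membership relation $(x,y)'\prec s$ installed on $W$ produces, after the extensional quotient, \emph{exactly} the family of relation-\apgs\ indexed by $M$---neither coarser nor finer. Once \autoref{thm:struc-mat-set} is invoked to identify subsets of $A\otimes B$ with subobjects of $|A|\times|B|$, and \autoref{thm:ext-quotient} handles the passage to an extensional representative, the remaining steps are routine but must be stated with care.
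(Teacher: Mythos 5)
Your proposal is correct and follows essentially the same route as the paper: the paper's proof constructs exactly the same \apg\ $W = ((X\otimes Y)\csl\star) + M + 1$ with $(x,y)'\prec m$ when $R(m,x,y)$, applies \autoref{thm:ext-quotient} with $n=1$, and verifies fullness "by a similar argument as for exponentiation." You have simply spelled out the verification details that the paper leaves implicit.
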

\begin{proof}
  Analogously to exponentiation, if $R\csub M\times X\times Y$ is a
  generic set of multi-valued functions from $|X|$ to $|Y|$, consider
  \[W = ((X\otimes Y)\csl\star) + M + 1
  \]
  with $\prec$ induced from $X\otimes Y$ along with $(x,y)'\prec m$ if
  $R(m,x,y)$ and $m\prec \star$ for all $m\cin M$.  Then $W$ is a
  well-founded \apg\ and satisfies the hypothesis of
  \autoref{thm:ext-quotient} with $n=1$.  Its extensional quotient
  represents a generic set of multi-valued functions, by a similar
  argument as for exponentiation.
\end{proof}

\begin{lem}\label{thm:struc-pow}
  If \Set\ is a topos, then $\bbV(\Set)$ satisfies the power set
  axiom, and the inclusion $\bbSet(\bbV(\Set)) \hookrightarrow \Set$
  is a logical functor.
\end{lem}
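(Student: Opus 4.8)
The plan is to imitate the construction of exponentials in \autoref{thm:struc-exp}, replacing the function-set $|Y|^{|X|}$ by the power object $P|X|$, which exists because \Set\ is a topos. The key observation is that, by \autoref{thm:wf-rigid}, the members $X/x$ of $X$ for distinct $x\cin|X|$ are pairwise non-isomorphic, so a subset of $X$ in the sense of material set theory is the same datum as a subset of its set of members, i.e.\ a subobject $S\csub|X|$ in \Set. Since \Set\ is a topos, such subobjects are named by global elements of $P|X|$ via the membership relation $[\ordin_{|X|}]\mono |X|\times P|X|$ (\autoref{thm:wp-powerobj}).

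Concretely, I would form the graph
\[ W = (X\csl\star) + P|X| + 1, \]
with $\prec$ induced from $X$ on $X\csl\star$, with $S\prec\star$ for every $S\cin P|X|$ (where $\star$ is the adjoined node), and with $x\prec S$, for $x\cin|X|\csub X\csl\star$ and $S\cin P|X|$, exactly when $\pair(x,S)$ factors through $[\ordin_{|X|}]$. One checks that $W$ is a well-founded \apg. It need not be extensional, since a subset of $X$ may happen to be isomorphic to a hereditary member of $X$; but every node admitting a path of length $2$ to $\star$ must lie in $|X|\csub X\csl\star$, and for such a node $x$ we have $W/x = X/x$, which is extensional as an initial segment of the extensional graph $X$ (\autoref{thm:ext-hered}). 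Thus $W$ satisfies the hypothesis of \autoref{thm:ext-quotient} with $n=2$, and I would take $PX$ to be its extensional quotient $\Xbar$.

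To see that $PX$ represents the power set, I would argue as for exponentials. The members of $PX$ are the images of the nodes $S\cin P|X|$; the quotient identifies $[S]$ with $[S']$ precisely when the subobjects coincide, so the members of $PX$ biject with subsets of $X$, and $PX/S$ has as its members exactly the $X/x$ with $x\in S$, hence represents the subset of $X$ named by $S$. Conversely, any $F\in\bbV(\Set)$ that is a subset of $X$ (every member of $F$ being a member of $X$) determines, by \ddo-separation together with local exponentials in \Set, the subobject of $|X|$ consisting of those $x\cin|X|$ with $X/x\iin F$; this names a global element of $P|X|$ and hence a member of $PX$. Distinct subsets give non-isomorphic members by extensionality of $X$. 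Therefore $Z\iin PX$ if and only if $Z$ is a subset of $X$, which is the power set axiom.

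Finally, for the logical-functor claim I would use that the inclusion $\bbSet(\bbV(\Set))\hookrightarrow\Set$ is $X\mapsto|X|$, which by \autoref{thm:struc-mat-set} is fully faithful and preserves finite limits. A finite-limit-preserving functor between toposes is logical as soon as it preserves power objects, so it remains to observe that the construction above yields a canonical isomorphism $|PX|\cong P|X|$ under which the material membership subobject of $|X|\times|PX|$ corresponds to $[\ordin_{|X|}]$; this is exactly the assertion that the comparison map $|PX|\to P|X|$ is invertible, i.e.\ that $|-|$ preserves power objects. The main obstacle I expect is the bookkeeping around the extensional quotient: pinning down the correct $n$ in \autoref{thm:ext-quotient}, checking that quotienting does not disturb the root-level members (so that members of $PX$ still biject with subsets of $X$), and tracking the membership relation carefully enough through the quotient to upgrade a bijection on global elements to an actual isomorphism of membership subobjects, which is what the logical-functor statement genuinely requires.
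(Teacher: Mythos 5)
Your proposal is correct, and it is essentially the paper's own construction: the paper forms exactly your graph $W = (X\csl\star) + P|X| + 1$, with the same edge relation, and reads off both the power set axiom and the logical-functor claim from it (the latter being dismissed as ``immediate''). The one point of divergence is actually a point in your favor: the paper asserts that $W$ is \emph{already} extensional, whereas you correctly observe that it need not be and insert an extensional-quotient step. The paper's assertion is false in general: already when $X$ represents $\{\emptyset\}$, the unique non-root node of $X\csl\star$ and the node $\emptyset\cin P|X|$ naming the empty subobject are two distinct childless nodes of $W$, so extensionality fails; the same clash arises whenever some member of $X$ coincides, as a material set, with a subset of $X$. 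Your repair is exactly the right one: every node of $W$ admitting a path of length $2$ to the root lies in $|X|$, where $W/x = X/x$ is extensional by \autoref{thm:ext-hered}, so \autoref{thm:ext-quotient} applies with $n=2$; and your observation that the quotient map does not identify distinct nodes coming from $P|X|$ (via \autoref{thm:wf-rigid}) is precisely what guarantees that the members of $PX$ still biject with subobjects of $|X|$, which in turn yields $|PX|\cong P|X|$ compatibly with the membership relations and hence the logical-functor claim. In short: same approach as the paper, but your write-up supplies a step whose omission in the paper is a genuine (if easily repaired) bug, and it also makes explicit the verifications that the paper leaves implicit.
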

\begin{proof}
  For an extensional well-founded \apg\ $X$, define
  \[ Y = (X\csl \star) + P |X| + \star
  \]
  with $\prec$ induced from $X$ along with $x\prec A$ whenever $x\cin
  |X|$, $A\cin P |X|$, and $x\in A$; and also of course $A\prec
  \star$.  This is an extensional well-founded \apg\ that represents
  the material power set of $X$.  The second statement is immediate.
\end{proof}

\begin{lem}\label{thm:inf}
  $\bbV(\Set)$ satisfies the axiom of infinity.
\end{lem}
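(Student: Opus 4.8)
The plan is to build $\omega$ as an explicit \apg\ on the \nno. Take $W = N+1$, write $\star$ for the node coming from $1$, regard each $n\cin N$ as a node, and define $\prec$ by $m\prec n \Iff m<n$ (the strict order on $N$) for $m,n\cin N$, together with $n\prec\star$ for every $n\cin N$. First I would check that $W$ is a well-founded extensional \apg. Accessibility is immediate since $n\prec\star$. Well-foundedness follows from well-foundedness of $<$ on $N$, a standard consequence of the \nno\ induction principle, together with the fact that the only node above all the naturals is $\star$. Extensionality holds because a node is determined by its set of $\prec$-predecessors: for two naturals this is decidability of $<$ and $=$ on $N$, and a natural is distinguished from $\star$ because the coproduct injections $N\mono W$ and $1\mono W$ are disjoint. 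By construction the members of $W/n$ are exactly the $W/m$ with $m<n$, so $W/n$ is the von Neumann ordinal $n$ and $W/0\cong\emptyset$. Moreover $W\csl\star$, the initial segment of non-root nodes, is just $(N,<)$, which is well-founded and extensional by \autoref{thm:wf-hered} and \autoref{thm:ext-hered}.

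Two of the three clauses of infinity are then routine. For $\emptyset\iin\omega$, note $0\cin N=|W|$ and $W/0\cong\emptyset$. For closure under successor, given a member $W/n$ the node $s(n)\cin N$ is again a member, and its children are $\{m: m<s(n)\}=\{m:m<n\}\cup\{n\}$; hence the members of $W/s(n)$ are those of $W/n$ together with $W/n$ itself, so by the axiom of extensionality (already verified) and the existence of pairs and unions (\autoref{thm:struct-emppairun}) we get $W/s(n)\cong W/n\cup\{W/n\}$, which is therefore $\iin\omega$.

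The substance is the minimality clause. Let $Z$ be any well-founded extensional \apg\ with $\emptyset\iin Z$ that is closed under successor; I must show $W/n\iin Z$ for all $n\cin N$. The obstacle is that ``$W/n\iin Z$'' hides an existential over isomorphisms and so is \emph{not} \ddo, whereas in a bare \Pi-pretopos with \nno\ we have neither full separation nor full induction, so naive induction on $n$ is illegal. My plan is to repackage the hypotheses as honest data. Using projectivity of $1$, choose $z_0\cin |Z|$ with $Z/z_0\cong\emptyset$. Then I build a successor function $\sigma\colon |Z|\to|Z|$: define $\mathrm{Succ}\csub|Z|\times|Z|$ by \ddo-separation (\autoref{thm:ddo-sep}) from the \ddo\ formula asserting that the $\prec$-predecessors of $w'$ are exactly those of $w$ together with $w$ itself. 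By rigidity and extensionality of $Z$ (\autoref{thm:wf-rigid}) this bounded condition is equivalent to $Z/w'\cong Z/w\cup\{Z/w\}$; hence successor-closure of $Z$ makes $\mathrm{Succ}$ entire and extensionality makes it functional, so by \autoref{thm:pin-props} it is the graph of a function $\sigma$. Recursion on the \nno\ now produces $r\colon N\to|Z|$ with $r(0)=z_0$ and $r(s(n))=\sigma(r(n))$.

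Finally I would prove by \ddo-induction the genuinely \ddo\ invariant
\[
\Psi(n):\qquad \forall v\cin Z.\;\bigl(v\prec r(n)\;\Iff\;\exists m\cin N.(m<n\wedge r(m)=v)\bigr),
\]
whose base case is that $z_0$ has no children (since $Z/z_0\cong\emptyset$) and whose inductive step is exactly the defining property of $\sigma$ together with $m<s(n)\Iff(m<n\vee m=n)$. From $\Psi$ the two simulation conditions for $r\colon(N,<)\to Z$ are read off directly, so $r$ is a simulation; by \autoref{thm:sim-inj} it is injective and an initial-segment inclusion, and in particular restricts to an isomorphism $W/n\cong Z/r(n)$ with $r(n)\cin|Z|$. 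Thus $W/n\iin Z$ for every $n$, i.e.\ $\omega\subseteq Z$, completing the proof. The one step demanding care is entirety of $\mathrm{Succ}$, where one passes from the material statement $Z/w\cup\{Z/w\}\iin Z$ back to the bounded defining condition via rigidity of $Z$; once that is in place, everything else is bounded and routine.
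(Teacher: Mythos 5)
Your proof is correct and takes essentially the same approach as the paper: the paper constructs the identical \apg\ $\omega = N+1$ (with $<$ on $N$ and $n\prec\star$ for all $n\cin N$) and dismisses the minimality clause with the one-line remark that it ``follows from the universal property of the \nno.'' Your construction of $\sigma$ via \ddo-separation and rigidity, the recursion $r\colon N\to|Z|$, and the bounded invariant $\Psi(n)$ proved by \ddo-induction are exactly the details that remark stands in for, so you have supplied the verification the paper omits rather than found a different route.
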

\begin{proof}
  Let $\omega = N + 1$, with $\prec$ defined to be $<$ on $N$ together
  with $n\prec \star$ for all $n\in N$.  This is an extensional
  well-founded \apg\ which represents the von Neumann ordinal \omega.
  The infinity axiom follows from the universal property of the \nno.
\end{proof}

\begin{lem}\label{thm:struc-ac}
  If \Set\ satisfies the axiom of choice, then so does $\bbV(\Set)$.
\end{lem}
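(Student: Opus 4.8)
The plan is to reduce the material choice principle in $\bbV(\Set)$ to a single application of the structural axiom that every regular epimorphism in $\Set$ splits, and then to repackage the resulting section as a material function using the function-set construction of \autoref{thm:struc-exp}. So let $A$ be a well-founded extensional \apg\ representing a material set $a$, and suppose the material hypothesis holds, namely that every member of $a$ has a member. Since the members of $a$ are (up to isomorphism) the subtrees $A/x$ for $x\cin|A|$, and $Z\iin A/x$ means $Z\cong A/w$ for some child $w\prec x$, this hypothesis says exactly that every $x\cin|A|$ has a child. Using \ddo-separation I would form the subset $C\csub|A|\times A$ of pairs $(x,w)$ with $w\prec x$; the projection $\pi\colon C\to|A|$ is then surjective on global elements, hence a regular epimorphism, since $\Set$ is a pretopos in which $1$ is a strong generator.

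First I would apply the structural axiom of choice to split $\pi$, obtaining a section that selects for each $x\cin|A|$ a child $\gamma(x)\prec x$; thus we get $\gamma\colon|A|\to A$ with every $\gamma(x)$ lying in $\Vert A\Vert$ (take $y=x$ in the definition of $\Vert A\Vert$). Next I would pass to $B=\bigcup A$ with its extensional quotient map $q$ as in \autoref{thm:struct-emppairun}: each node of $\Vert A\Vert$ becomes a member of $B$ under $q$, so, viewing $\gamma(x)$ as a node of the graph $Z$ used to build $\bigcup A$, the composite $\phi=q\circ\gamma\colon|A|\to|B|$ is a function in $\Set$ from the members of $a$ to the members of $\bigcup a$.

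By \autoref{thm:struc-exp} and the correspondence recorded immediately after its proof, this $\phi$ is represented by an \apg\ $F\in\bbV(\Set)$ that is a material function $f$ from $a$ to $\bigcup a$, with $f$ sending the member $A/x$ to $B/\phi(x)$. It then remains to verify the choice condition. Here I would use that $A/\gamma(x)$ is an initial segment of $A$, hence extensional by \autoref{thm:ext-hered}, and that the union construction leaves the subtree below $\gamma(x)$ intact; consequently the restriction of $q$ to this subtree is a surjective simulation out of an extensional well-founded graph, so by \autoref{thm:sim-inj} it is an isomorphism $A/\gamma(x)\cong B/\phi(x)$. Since $\gamma(x)\prec x$ gives $A/\gamma(x)\iin A/x$, we conclude $f(A/x)=B/\phi(x)\cong A/\gamma(x)\iin A/x$, which is precisely $f(x)\iin x$.

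The conceptual heart of the argument is this one use of structural choice on $\pi\colon C\epi|A|$; everything else is translation between the material and structural pictures. The step most in need of care is the final verification of $f(x)\iin x$, since it requires tracking the identification $B/\phi(x)\cong A/\gamma(x)$ through the extensional quotient built into $\bigcup A$: one must check both that the relevant subtree survives the union construction and that the quotient restricts there to an isomorphism, which is exactly where extensionality of the subtrees $A/\gamma(x)$ and \autoref{thm:sim-inj} are needed.
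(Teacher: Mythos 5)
Your proof is correct and takes essentially the same route as the paper's: both reduce material choice in $\bbV(\Set)$ to a single application of structural choice, splitting the projection onto $|A|$ from the set of pairs consisting of a member and a chosen child, and then convert the resulting section into a material choice function. The only difference is that you work out in full the final repackaging step (via the union construction, the function-set correspondence of \autoref{thm:struc-exp}, and \autoref{thm:sim-inj}) which the paper compresses into ``from $s$ we can easily construct a material choice function.''
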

\begin{proof}
  Let $X$ be a well-founded extensional \apg\ such that $X\csl x$ is
  inhabited for each $x\cin |X|$.  If $Y\csub X\times |X|$ consists
  of those $(y,x)$ such that $y\in X\csl x$, then the projection $Y\to
  |X|$ is surjective, and hence has a section, say $s$.  From $s$ we
  can easily construct a material choice function for $X$.
\end{proof}

I do not know any way to show that having enough projectives in \Set\ implies the presentation axiom for $\bbV(\Set)$; the problem is that a projective object in \Set\ seemingly need not admit any structure of a extensional well-founded \apg.

\begin{lem}\label{thm:struc-tc}
  $\bbV(\Set)$ has transitive closures.
\end{lem}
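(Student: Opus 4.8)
The plan is to realize the transitive closure of a material set $X$ as the \apg\ whose members are exactly the \emph{hereditary} members of $X$, that is, all nodes of $X$ except the root. Given an extensional well-founded \apg\ $X$, I would set
\[ Z = (X\csl\star) + 1, \]
with $\prec$ inherited from $X$ on the summand $X\csl\star$ together with $x\prec\star$ for every $x\cin X\csl\star$, where $\star$ now denotes the freshly adjoined root. Since $X\csl\star$ is an initial segment of $X$ (a predecessor of a node of path-length $>0$ again has path-length $>0$), the relation on $X\csl\star$ agrees with that of $X$, so $Z/x\cong X/x$ for each $x\cin X\csl\star$, and the members of $Z$ are precisely the $X/x$ with $x\cin X\csl\star$, i.e.\ the hereditary members of $X$.

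I would first check that $Z$ is an extensional well-founded \apg. Well-foundedness follows from \autoref{thm:wf-hered} for the initial segment $X\csl\star$, since adjoining a single maximal node preserves it, and accessibility is immediate as every old node is now a direct child of $\star$. For extensionality, $X\csl\star$ is extensional by \autoref{thm:ext-hered}, while $\star$ cannot share its set of children with any old node $x$: the children of $\star$ are all of $X\csl\star$, which contains $x$, whereas $x$ is not among its own children by \autoref{thm:wf-no-cycles}. Hence $Z$ is already extensional (alternatively it satisfies the hypotheses of \autoref{thm:ext-quotient} with $n=1$, and one may pass to its extensional quotient).

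The two remaining closure facts are routine. For transitivity, a member of a member of $Z$ is some $X/x'$ with $x'\prec x$ and $x\cin X\csl\star$; the initial-segment property gives $x'\cin X\csl\star$, so $X/x'\iin Z$. For the inclusion $X\subseteq Z$, every member $X/x$ of $X$ has $x$ a child of the root of $X$, so $x\cin X\csl\star$ and $X/x\iin Z$.

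The real work, and the step I expect to be the main obstacle, is minimality: for an arbitrary transitive $T'$ with $X\subseteq T'$ I must show $Z\subseteq T'$, equivalently $X/x\iin T'$ for every $x\cin X\csl\star$. Here well-founded induction runs the wrong way (knowing the members of $X/x$ lie in $T'$ does not by itself place $X/x$ in $T'$), so I would instead induct \emph{downward from the root} on path-length: writing $X_k$ for the set of nodes admitting a path of length $k$ to the root, I prove by ordinary induction on $k\cin N$ that $X/x\iin T'$ for all $x\cin X_k$. When $k=1$, $x$ is a member of $X$, so $X/x\iin X\subseteq T'$; for the step, any $x\cin X_{k+1}$ has $x\prec x'$ with $x'\cin X_k$, so $X/x\iin X/x'\iin T'$ and transitivity of $T'$ yields $X/x\iin T'$. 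Accessibility then exhausts $X\csl\star$. The subtlety is that this is an induction over a \emph{statement}, so $X/x\iin T'$ must be presented as a subobject; but, exactly as for the relation $R$ built in \autoref{thm:ext-quotient}, the existence of an isomorphism $X/x\cong T'/t$ with $t\cin|T'|$ is expressible using \ddo-separation and the local exponentials available in the \Pi-pretopos \Set, so the predicate is \ddo\ and the induction on $N$ is legitimate with no extra separation axiom.
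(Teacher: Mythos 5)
Your construction is exactly the paper's: it takes $T=(X\csl\star)+1$ with $\prec$ inherited from $X$ together with $x\prec\star$ for all non-root nodes, and simply asserts in one sentence that this well-founded extensional \apg\ represents the transitive closure. Your added verifications---extensionality of the new root via \autoref{thm:wf-no-cycles}, transitivity and $X\subseteq Z$ from the initial-segment property, and minimality by induction on path-length with the predicate $X/x\iin T'$ rendered \ddo\ exactly as in \autoref{thm:ext-quotient}---are correct fillings-in of what the paper leaves implicit.
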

\begin{proof}
  If $X$ is a well-founded extensional \apg, let $T = (X\csl \star) +
  1$ with $\prec$ inherited from $X$ along with $x\prec \star$ for all
  nodes $x\cin X$ (not just all members).  Then $T$ is a well-founded
  extensional \apg\ which represents the transitive closure of $X$.
\end{proof}

\begin{lem}\label{thm:struc-most}
  $\bbV(\Set)$ satisfies Mostowski's principle.
\end{lem}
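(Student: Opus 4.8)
The plan is to collapse the given relation directly into a transitive set, without forming an extensional quotient, by exploiting the fact that the relevant graph becomes \emph{already} extensional once a root is added. First I would unwind what a material well-founded extensional relation $(A,\prec)$ in $\bbV(\Set)$ amounts to structurally. Here $A$ is an extensional well-founded \apg, so by \autoref{thm:wf-rigid} the elements of $A$ (those $x$ with $x\iin A$) are in bijection with the structural set $|A|$ of members, via $a\mapsto A/a$. The relation $\prec$ is a material subset of the product $A\otimes A$, whose members biject with $|A|\times|A|$ (\autoref{thm:cart-prod}), so by the closure properties of \autoref{thm:struc-mat-set} it corresponds to a structural subobject $R\csub|A|\times|A|$ with $R(a,b)$ iff $A/a\prec A/b$. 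The crucial—and I expect hardest—step is to check that the \emph{material} well-foundedness and extensionality of $(A,\prec)$ translate into the \emph{structural} well-foundedness and extensionality of the graph $(|A|,R)$ in $\Set$. This rests on the dictionary, again via \autoref{thm:struc-mat-set}, identifying material subsets $Y\subseteq A$ with structural subobjects $U\csub|A|$; under it a subset is material-inductive exactly when it is structurally inductive (so material well-foundedness becomes structural well-foundedness), and material extensionality of $\prec$ is literally structural extensionality of $R$.

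Next I would build the collapsing \apg. Set $\Xbar=|A|+1$, with $\prec$ given by $R$ on the summand $|A|$ together with $a\prec\star$ for every $a\cin|A|$, where $\star$ is the added point, taken as root. Accessibility is immediate, and well-foundedness follows because $(|A|,R)$ is well-founded while $\star$ sits maximally on top (it is never a child). For extensionality, two non-root nodes are separated by extensionality of $(|A|,R)$, while a non-root $a$ cannot share the root's children $|A|$: that would force $a\in\{c\mid R(c,a)\}$, i.e.\ $R(a,a)$, contradicting \autoref{thm:wf-no-cycles}. Thus $\Xbar$ is a well-founded extensional \apg, hence an element of $\bbV(\Set)$. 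Since every non-root node lies immediately below $\star$ and paths in $\Xbar$ never pass through $\star$, one has $(\Xbar/a)/c\cong\Xbar/c$ for any $c\cin|A|$ with $R(c,a)$; consequently every element of an element of the material-set $T:=\Xbar$ is again an $\Xbar/c$, so $T$ is transitive.

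Finally I would produce the isomorphism of relations. Because $|\Xbar|\cong|A|$, the identity $|A|\to|\Xbar|$ induces, via the function correspondence of \autoref{thm:struc-exp}, a material function $\phi\colon A\to T$ sending $A/a\mapsto\Xbar/a$, which is a bijection by \autoref{thm:wf-rigid}. It remains to match the relations: $A/a\prec A/b$ holds iff $R(a,b)$, whereas $\Xbar/a\iin\Xbar/b$ means $\Xbar/a\cong(\Xbar/b)/c\cong\Xbar/c$ for some child $c$ of $b$, which by \autoref{thm:wf-rigid} forces $c=a$ and is therefore again equivalent to $R(a,b)$. Hence $\phi$ is an isomorphism $(A,\prec)\cong(T,\in)$ onto a transitive set, which is precisely Mostowski's principle. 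The genuine care is all in the material/structural dictionary of the first paragraph; once $(|A|,R)$ is known to be a well-founded extensional graph in $\Set$, the construction and verification of $\Xbar$ are routine applications of the earlier lemmas.
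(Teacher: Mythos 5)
Your proposal is correct and takes essentially the same route as the paper's own proof: both use the closure properties of $\bbSet(\bbV(\Set))$ in \Set\ (\autoref{thm:struc-mat-set}) to turn the material well-founded extensional relation into a structural well-founded extensional graph, adjoin a new root on top ($|A|+1$ with $a\prec\star$ for all $a$), and then check that the resulting well-founded extensional \apg\ is transitive and isomorphic to the original relation. You simply spell out the verifications (extensionality of the augmented graph via the no-cycles lemma, transitivity, and the matching of relations via \autoref{thm:wf-rigid}) that the paper compresses into ``we then verify.''
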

\begin{proof}
  Since $\bbSet(\bbV(\Set))$ is closed in \Set\ under finite limits
  and subsets, any well-founded extensional graph $X$ constructed in
  the material set theory $\bbV(\Set)$ will induce such a graph in
  $\Set$.  Therefore, $X+1$, with $\prec$ induced from $X$ along with
  $x\prec \star$ for all nodes $x\cin X$, is a well-founded extensional
  \apg, i.e.\ an object of $\bbV(\Set)$.  We then verify that it is
  transitive and isomorphic to $X$ in $\bbV(\Set)$.
\end{proof}

We now turn to the axiom schemata,
for which we need to translate material formulas in $\bbV(\Set)$ into structural ones in \Set.
We have already been doing this informally, but for the axiom schemata it is worth being more formal.
First of all, we translate a material \emph{context} \Gm\ (which is just a list of variables) into a structural context $\Gm_{\ordiin}$ by replacing each variable with a context that specifies the underlying data of an \apg: objects $X$ and $R$ and arrows $R \to X\times X$ and $\star\colon 1\to X$.
If $\rho\colon \Gm\to\bbV(\Set)$ is a assignment of material variables to extensional well-founded \apg{}s in \Set, there is an obvious corresponding assignment $\rho_{\ordiin}\colon \Gm_{\ordiin} \to \Set$.

Now, if \ph\ is a material formula in a context \Gm, we define a structural formula $\ph_{\ordiin}$ in context $\Gm_{\ordiin}$ follows:
\begin{blist}
\item We replace the material ``equality'' symbol $=$ by isomorphism $\cong$ of \apgs.
\item We replace the material ``membership'' symbol $\in$ by the relation $\iin$.
\item The connectives are unchanged.
\item We replace quantifiers over material-sets by quantifiers over well-founded extensional \apgs.
  For example, $\exists x. \ph(x)$ becomes \qq{there exists a well-founded extensional \apg\ $X$ such that $\ph_{\ordiin}(X)$}.
\end{blist}
It is straightforward to see that $\bbV(\Set) \ss_\rho \ph$ if and only if $\Set \ss_{\rho_{\ordiin}} \ph_{\ordiin}$.
This translation works quite well for the schemata involving arbitrary formulas.

\begin{lem}\label{thm:struc-sep}
  If \Set\ satisfies separation, then so does $\bbV(\Set)$.
\end{lem}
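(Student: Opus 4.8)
The plan is to reduce material separation to structural separation applied to the member set of an \apg, exactly as the reverse direction \autoref{thm:set-sep} reduced structural separation to material separation. Fix an instance: a material formula $\ph(x)$ (in some ambient context whose variables I treat throughout as parameters) and a set $a$, represented by a well-founded extensional \apg\ $A$. The members of $a$ correspond bijectively to the elements $x\cin|A|$, the element itself being represented by the sub-\apg\ $A/x$. I want to carve out the subobject $S\csub|A|$ of those $x$ for which $\bbV(\Set)\ss\ph(A/x)$, and then reassemble an \apg\ on $S$ using closure under subsets from \autoref{thm:struc-mat-set}.

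The one delicate point is expressing ``the element indexed by the varying $x$ satisfies $\ph$'' as a single structural formula in the free \ddo-variable $x\cin|A|$. Because the structural language has no operation producing objects from arrows, I cannot form ``$A/x$'' as an object-term depending on $x$, and so cannot literally substitute $A/x$ into the translation $\ph_{\ordiin}$. Instead I would quantify over an \apg\ isomorphic to $A/x$: define $\psi(x)$, with free \ddo-variable $x\cin|A|$ and the data of $A$ as parameters, to be \qq{there is a well-founded extensional \apg\ $W$ with $W\cong A/x$ and $\ph_{\ordiin}(W)$}, where the clause $W\cong A/x$ is spelled out structurally as the existence of a bi-entire bisimulation from $W$ onto the subgraph of $A$ on the nodes admitting a path to $x$, carrying the root of $W$ to $x$ (the predicate \qq{$n$ admits a path to $x$} being itself a structural formula in $n$ with parameter $x$, just as in the definition of $X/x$). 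Since $A$ is well-founded and extensional, such a $W$ is unique up to isomorphism by \autoref{thm:bisim-iso} and \autoref{thm:wf-rigid}; hence by isomorphism-invariance of truth (\autoref{thm:isoinvar-truth}) the clause $\ph_{\ordiin}(W)$ is insensitive to the choice of $W$ and holds exactly when $\ph_{\ordiin}(A/x)$ does. By the translation fact recorded just before the lemma (that $\bbV(\Set)\ss_\rho\ph$ iff $\Set\ss_{\rho_{\ordiin}}\ph_{\ordiin}$), the formula $\psi(x)$ therefore holds in \Set\ precisely when $\bbV(\Set)\ss\ph(A/x)$.

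With $\psi$ in hand, I would apply the structural separation axiom to obtain $S\csub|A|$ through which $x\cin|A|$ factors exactly when $\psi(x)$ holds. By \autoref{thm:struc-mat-set}, the subgraph $B\csub A$ consisting of the root together with all nodes admitting a path to $S$ is a well-founded extensional \apg\ with $|B|\cong S$, and $B/x\cong A/x$ for every $x\in S$. Finally I would verify the defining property of separation, $Z\iin B \Iff (Z\iin A \wedge \ph(Z))$ for all $Z\in\bbV(\Set)$: if $Z\iin B$ then $Z\cong B/x\cong A/x$ for some $x\in S$, so $Z\iin A$ and, since $\psi(x)$ holds, $\ph(Z)$; conversely $Z\iin A$ with $\ph(Z)$ gives $Z\cong A/x$ for some $x\cin|A|$ with $\psi(x)$, whence $x\in S$ and $Z\iin B$. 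Thus $B$ represents $\setof{x\in a | \ph(x)}$. The main obstacle is the formula-expressibility in the middle step; everything else — translating the parameters, building $B$, and checking its membership relation — is routine given the earlier lemmas.
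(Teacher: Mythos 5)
Correct, and essentially the paper's own proof: the paper likewise applies structural separation to the formula $\ph_{\ordiin}(A/a)$ in the free variable $a\cin|A|$ to obtain $U\csub|A|$, then takes $B$ to be the root of $A$ together with all nodes admitting a path to $U$, and checks that $C\iin B$ iff $C\iin A$ and $\ph_{\ordiin}(C)$. The only difference is that you spell out how ``$\ph_{\ordiin}(A/x)$'' becomes a genuine formula of the structural language with free \ddo-variable $x$ (via an existentially quantified well-founded extensional \apg\ $W\cong A/x$, unique up to isomorphism by \autoref{thm:bisim-iso}, together with isomorphism-invariance of truth, \autoref{thm:isoinvar-truth}), a point the paper leaves implicit.
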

\begin{proof}
  Let $\ph(x)$ be a formula and $A\in\bbV(\Set)$.
  Using separation in \Set, let $U \csub |A|$ consist of precisely those $a\cin A$ such that $\ph_{\ordiin}(A/a)$.
  Define $B$ to consist of the root of $A$ together with all nodes admitting a path to some node in $U$.
  Then $B$ is a well-founded extensional \apg, and for any $C\iin A$ we have $C\iin B$ iff $\ph_{\ordiin}(C)$.
\end{proof}

\begin{lem}\label{thm:struc-fullbool}
  If \Set\ satisfies full classical logic, then so does $\bbV(\Set)$.
\end{lem}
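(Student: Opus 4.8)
The plan is to exploit the translation $\ph\mapsto\ph_{\ordiin}$ set up just before \autoref{thm:struc-sep}, exactly as in the proof of that lemma but even more directly, since here no construction inside $\bbV(\Set)$ is required. Recall that this translation is defined by recursion on the structure of the material formula, leaving the propositional connectives unchanged; in particular it commutes with both $\join$ and $\neg$.

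First I would fix an arbitrary instance of full classical logic in $\bbV(\Set)$: a material formula $\ph$, say in a context $\Gm$, together with an assignment $\rho\colon\Gm\to\bbV(\Set)$ of the free variables to well-founded extensional \apgs. I must show $\bbV(\Set)\ss_\rho \ph\join\neg\ph$. By the equivalence $\bbV(\Set)\ss_\rho\psi \iff \Set\ss_{\rho_{\ordiin}}\psi_{\ordiin}$ recorded just before \autoref{thm:struc-sep}, applied with $\psi = \ph\join\neg\ph$, it suffices to show $\Set\ss_{\rho_{\ordiin}}(\ph\join\neg\ph)_{\ordiin}$. The key step is then simply that $(\ph\join\neg\ph)_{\ordiin}$ is literally $\ph_{\ordiin}\join\neg\ph_{\ordiin}$, because the translation fixes the connectives $\join$ and $\neg$. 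This is an instance of full classical logic in the language of categories, and hence holds in \Set\ by hypothesis; therefore $\Set\ss_{\rho_{\ordiin}}\ph_{\ordiin}\join\neg\ph_{\ordiin}$, which is exactly what we needed.

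There is essentially no obstacle here: all the real work has already been done in setting up the translation and verifying that it preserves satisfaction. The one point to check — immediate from the inductive definition of $\ph_{\ordiin}$ — is that the translation of a disjunction (resp.\ negation) is the disjunction (resp.\ negation) of the translations, so that a material instance of $\ph\join\neg\ph$ is carried to the structural instance $\ph_{\ordiin}\join\neg\ph_{\ordiin}$ rather than to something weaker. It is worth noting that the \emph{full} case is in fact the easiest precisely because no restriction on the class of formulas is in play: unlike \ddo-classical logic (handled separately via Booleanness, where the translation does not preserve boundedness of quantifiers), here we need only that the translation respects connectives.
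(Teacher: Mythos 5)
Your proof is correct and takes essentially the same approach as the paper: the paper's own (one-sentence) proof likewise observes that full classical logic in \Set\ gives $\ph_{\ordiin}\join\neg\ph_{\ordiin}$ for any material formula $\ph$, which under the satisfaction-preserving translation is exactly an instance of $\ph\join\neg\ph$ in $\bbV(\Set)$. Your version merely makes explicit the two points the paper leaves implicit, namely that the translation commutes with $\join$ and $\neg$ and that satisfaction transfers along it.
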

\begin{proof}
  Classical logic for \Set\ implies $\ph_{\ordiin} \vee \neg \ph_{\ordiin}$ for any formula $\ph$ in $\bbV(\Set)$.
\end{proof}

\begin{lem}\label{thm:struc-coll}
  If \Set\ satisfies collection, then so does $\bbV(\Set)$.
\end{lem}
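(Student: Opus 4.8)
The plan is to reduce material collection in $\bbV(\Set)$ to structural collection in \Set\ via the translation $\ph\mapsto\ph_{\ordiin}$ set up before \autoref{thm:struc-sep}, and then to package the resulting $V$-indexed family of \apgs\ into a single \apg\ by taking an extensional quotient, exactly as was done for pairing and union.

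So let $A\in\bbV(\Set)$ and let $\ph(x,y)$ be a material formula, and suppose $\bbV(\Set)$ satisfies the hypothesis of collection, that for every $x\iin A$ there is a $y$ with $\ph(x,y)$. Since the members of $A$ are exactly the \apgs\ $A/a$ for $a\cin U:=|A|$, and each such $A/a$ is an initial segment of $A$ and hence itself well-founded and extensional (by \autoref{thm:wf-hered} and \autoref{thm:ext-hered}), the translated hypothesis reads: for every $a\cin U$ there exists a well-founded extensional \apg\ $Y$ with $\ph_{\ordiin}(A/a,Y)$, where $A$ is a fixed parameter and $A/{-}$ is formed as a family in $\Set/U$ using the \Pi-pretopos structure. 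First I would apply \autoref{thm:coll-ctxt} (collection of contexts; available since \Set\ is a constructively well-pointed Heyting pretopos satisfying collection, collection of equalities being automatic by \autoref{thm:collmor}) to the context $\Th$ recording the data of an \apg, together with the clauses ``$\Th$ is a well-founded extensional \apg'' and ``$\ph_{\ordiin}(A/u,\Th)$''. This yields a regular epi $p\colon V\epi U$ and a family $\mathcal Y$ of \apgs\ in $\Set/V$, that is, an object $\mathcal Y$ equipped with a fiberwise relation $\prec$ (a subobject of $\mathcal Y\times_V\mathcal Y$) and a root section $s\colon V\to\mathcal Y$, such that for every $v\cin V$ the fiber $v^*\mathcal Y$ is a well-founded extensional \apg\ with $\ph_{\ordiin}(A/pv, v^*\mathcal Y)$.

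Next I would assemble these fibers into one graph $B_0=\mathcal Y+1$, with $\prec$ on $\mathcal Y$ the fiberwise relation (now regarded as a subobject of $\mathcal Y\times\mathcal Y$) and with $s(v)\prec\star$ for every $v\cin V$, where $\star$ is the new point. Every node reaches $\star$ through its fiber and then along $s$, so $B_0$ is accessible; the children of $\star$ are precisely the fiber-roots $s(v)$, and since the relation never crosses fibers, $B_0/s(v)\cong v^*\mathcal Y$. The one genuinely non-formal step, and the point I expect to be the main obstacle, is well-foundedness of $B_0$, which I must deduce from fiberwise well-foundedness. Given an inductive $S\csub\mathcal Y$, each pullback $v^*S$ is inductive in the fiber $v^*\mathcal Y$ (the fiberwise children of a node agree with its $\mathcal Y$-children), hence $v^*S=v^*\mathcal Y$ by well-foundedness of the fiber; since every element $y\cin\mathcal Y$ lies in the fiber over its image in $V$, well-pointedness of \Set\ (1 is a strong generator) then forces $S=\mathcal Y$. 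The point $\star$ joins $S$ once all its children do, so $B_0$ is well-founded.

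Because the children $s(v)$ are exactly the nodes admitting a length-$1$ path to the root, and each $B_0/s(v)\cong v^*\mathcal Y$ is already extensional, $B_0$ satisfies the hypothesis of \autoref{thm:ext-quotient} with $n=1$. Let $B=\overline{B_0}$ be its extensional quotient, with surjective simulation $[-]\colon B_0\epi B$; then $B$ is a well-founded extensional \apg, i.e.\ an object of $\bbV(\Set)$. Its members are the images $[s(v)]$, and since each $B_0/s(v)$ is extensional the restricted simulation $B_0/s(v)\to B/[s(v)]$ is an isomorphism by \autoref{thm:sim-inj}, so $B/[s(v)]\cong v^*\mathcal Y$. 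Finally I would verify both clauses of material collection, using isomorphism-invariance of truth to pass between $\ph$ and its values on isomorphic \apgs. Given $x\iin A$, write $x\cong A/a$ with $a\cin U$; choosing $v\cin V$ with $pv=a$ (possible since $p$ is surjective) produces the member $B/[s(v)]\cong v^*\mathcal Y\iin B$ with $\ph_{\ordiin}(A/a,v^*\mathcal Y)$, hence $\ph(x,B/[s(v)])$. Conversely any $y\iin B$ is $\cong B/[s(v)]\cong v^*\mathcal Y$ for some $v$, and then $A/pv\iin A$ satisfies $\ph(A/pv,y)$. Thus $B$ witnesses material collection.
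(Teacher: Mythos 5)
Your proposal is correct and follows essentially the same route as the paper: translate the hypothesis via $\ph_{\ordiin}$, apply structural collection (which the paper invokes directly where you cite the packaged form, \autoref{thm:coll-ctxt}) to obtain a pointed graph over a cover $V\epi|A|$, adjoin a new root with $s(v)\prec\star$, and take the extensional quotient via \autoref{thm:ext-quotient} with $n=1$. The only difference is that you spell out the fiberwise well-foundedness argument and the final verification of the two collection clauses, which the paper leaves as "easy to show" and implicit, respectively.
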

\begin{proof}
  Suppose that $A\in\bbV(\Set)$ and that \ph\ is a formula such that
  for any $X\iin A$, there exists a $Y\in\bbV(\Set)$ with $\ph(X,Y)$.
  This means that for any $x\cin |A|$, there exists a well-founded
  extensional \apg\ $Y$ such that $\ph_{\ordiin}(A/x,Y)$.  By
  collection in \Set, there is a surjection $V\xepi{p} |A|$ and a
  pointed graph $B$ in $\Set/V$ (where ``pointed'' means we have a
  section $s\colon V\to B$ over $V$) such that for each $v\cin V$,
  $v^*B$ is a well-founded extensional \apg\ and
  $\ph_{\ordiin}(A/p(v),v^*B)$.  It is easy to show that $B$,
  considered as a graph in \Set, is still well-founded.  And since $B$
  is a graph in $\Set/V$, its relation $\prec$ is fiberwise; thus for
  each $v\cin V$ we have $B/s(v) \cong v^*B$, which is therefore
  extensional and accessible.  It follows that $B+1$, with $s(v)\prec
  \star$ for all $v\in V$, satisfies the hypotheses of
  \autoref{thm:ext-quotient} with $n=1$.  Its extensional quotient is
  then the set desired by the material collection axiom.
\end{proof}

\begin{lem}\label{thm:struc-rep}
  If \Set\ satisfies replacement of contexts, then $\bbV(\Set)$ satisfies material replacement.
\end{lem}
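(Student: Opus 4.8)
The plan is to run the argument of \autoref{thm:struc-coll} almost verbatim, but with collection replaced by replacement of contexts. The payoff of the stronger structural hypothesis is precisely that the cover $V \epi |A|$ needed in the collection proof is no longer required, so we obtain the desired family directly over $U = |A|$, which is exactly what material replacement (as opposed to material collection) demands.

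First I would translate the hypothesis. Let $A \in \bbV(\Set)$ and let $\ph$ be a material formula such that for every $X \iin A$ there is a unique $Y$ with $\ph(X, Y)$. Since each $X \iin A$ is $A/x$ for a member $x \cin |A|$, the translation reads: for every $x \cin |A|$ there is a well-founded extensional \apg\ $Y$ with $\ph_{\ordiin}(A/x, Y)$, and, because equality of material-sets is isomorphism of \apgs, this $Y$ is unique up to isomorphism.

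Next I would set up the application of replacement of contexts with $U = |A|$. Let $\Th$ be the context recording the underlying data of a graph---an object-variable for the node-set $Y$, an object-variable and an arrow giving a subobject $R \mono Y \times Y$, and a root $\star \colon 1 \to Y$---and let $\psi(U, u \cin U, \Th)$ assert that $\Th$ is a well-founded extensional \apg\ $Y$ with $\ph_{\ordiin}(A/u, Y)$. (Both clauses are expressible in the structural language: extensionality uses only element-quantifiers, while well-foundedness is the unbounded but perfectly first-order statement that every inductive subobject $S \mono Y$ equals $Y$.) For each $u \cin U$ such a $\Th$ exists by the translated hypothesis. The point that needs care is the remaining hypothesis of replacement of contexts, namely that $\Th$ is unique up to a \emph{unique} isomorphism of contexts: the \apg\ $Y$ is determined up to isomorphism by material uniqueness, any isomorphism of \apgs\ automatically respects $\prec$ and $\star$ and hence extends canonically to an isomorphism of the whole context $\Th$, and by \autoref{thm:wf-rigid-global} any two such isomorphisms of well-founded extensional \apgs\ coincide, so the context-isomorphism is unique. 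This is the step I expect to be the main obstacle, or at least the only genuinely non-routine one.

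Finally I would assemble the result. Replacement of contexts yields an extension $\Th'$ of $(U^*U, \Delta_U)$ in $\Set/U$, that is, a pointed graph $C$ over $U$ (an object $C \to U$ with a fiberwise relation and a root-section $s \colon U \to C$) such that $x^*C \cong Y$ for each $x \cin U$. I then form $C + 1$, carrying the fiberwise relation of $C$ together with $s(x) \prec \star$ for all $x \cin U$, where $\star$ is the adjoined root. As in \autoref{thm:struc-coll} this is a well-founded \apg, and since $(C+1)/s(x) \cong x^*C \cong Y$ is extensional, $C + 1$ satisfies the hypothesis of \autoref{thm:ext-quotient} with $n = 1$; let $B$ be its extensional quotient. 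The members of $B$ are exactly the isomorphism classes of the various $Y$, so $Z \iin B$ holds iff $\ph_{\ordiin}(A/x, Z)$ for some $x \cin |A|$, i.e.\ iff $\exists X \iin A.\ph(X, Z)$. Hence $B$ represents $\setof{Y | \exists X \iin A.\ph(X,Y)}$, which is precisely the set asserted to exist by material replacement.
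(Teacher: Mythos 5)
Your proposal is correct and follows essentially the same route as the paper's proof: translate the material hypothesis, use \autoref{thm:wf-rigid-global} to upgrade "unique up to isomorphism" to "unique up to \emph{unique} isomorphism" so that replacement of contexts applies over $U=|A|$, and then take the extensional quotient of the resulting pointed graph with a root adjoined, exactly as in \autoref{thm:struc-coll}. The paper's version is just a terser statement of the same argument, likewise identifying the appeal to \autoref{thm:wf-rigid-global} as the key point.
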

\begin{proof}
  Suppose that $A\in\bbV(\Set)$ and that \ph\ is a formula such that
  for any $X\iin A$, there exists a unique $Y\in\bbV(\Set)$ with
  $\ph(X,Y)$.  Thus, for any $x\cin |A|$, there exists a well-founded
  extensional \apg\ $Y$ such that $\ph_{\ordiin}(A/x,Y)$, and any two
  such $Y$ are isomorphic.  Since any such isomorphism is unique by
  \autoref{thm:wf-rigid-global}, replacement of contexts in \Set\
  supplies a pointed graph $B$ in $\Set/|A|$ such that for each $x\cin
  |A|$ we have $\ph_{\ordiin}(A/x,x^*B)$.  The extensional quotient of
  $B+1$ then represents the set desired by material replacement.
\end{proof}

\begin{lem}\label{thm:struc-ind}
  If \Set\ satisfies full induction, then so does $\bbV(\Set)$.
\end{lem}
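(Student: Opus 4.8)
The plan is to follow the pattern of \autoref{thm:struc-sep}: translate the given instance of material full induction into a statement about \Set, and then deduce it from the structural axiom of induction applied to the \nno\ $N$. Recall from \autoref{thm:inf} that the material ordinal $\omega\in\bbV(\Set)$ is represented by the \apg\ $N+1$, whose members are exactly the nodes $n\cin N$, and that for each such $n$ the sub-\apg\ $\omega/n$ is the extensional well-founded \apg\ representing the von Neumann ordinal $n$. Consequently, for an extensional well-founded \apg\ $X$, the relation $X\iin\omega$ holds if and only if $X\cong\omega/n$ for some $n\cin N$, so that the translation of the material quantifier ``for all $x\in\omega$'' is ``for all $n\cin N$'', with $x$ instantiated by $\omega/n$.

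Given a material formula $\ph(x)$ with structural translation $\ph_{\ordiin}$ as set up before \autoref{thm:struc-sep}, I would first introduce the structural formula $\psi(n) := \ph_{\ordiin}(\omega/n)$, with single free variable $n\cin N$. This is a legitimate formula in the language of \Set: since $n$ is a global element of $N=|\omega|$ and $\omega$ is a fixed \apg, the sub-\apg\ $\omega/n$ is definable from $\omega$ and $n$ by the $X/x$ construction, which is available in a \Pi-pretopos with a \nno\ (the finite paths to $n$ being detected by $N$). By isomorphism-invariance of truth (\autoref{thm:isoinvar-truth}), the particular representative of $\omega/n$ is immaterial, so $\psi(n)$ is well-defined and agrees with the value of the translated formula when $x$ is instantiated by $\omega/n$.

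Next I would match the hypotheses of the two induction principles. Since $\omega/0\cong\emptyset$, the base clause $\ph(\emptyset)$ of material induction is precisely $\psi(0)$. For the inductive step, the crucial point is that the material von Neumann successor agrees with the \nno\ successor $s$ on these representatives: the members of $\omega/s(n)$ are the ordinals $\omega/0,\dots,\omega/n$, which are exactly the members of $(\omega/n)\cup\{\omega/n\}$, whence $\omega/s(n)\cong(\omega/n)\cup\{\omega/n\}$ in $\bbV(\Set)$. Therefore the material step clause translates into ``$\psi(n)\imp\psi(s(n))$ for all $n\cin N$''. Applying the structural axiom of induction to $\psi$ then yields $\psi(n)$ for every $n\cin N$, i.e.\ $\ph_{\ordiin}(\omega/n)$ for all $n$; unwinding the translation, this is exactly ``$\ph(x)$ for all $x\in\omega$'' in $\bbV(\Set)$.

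The translation bookkeeping is identical to that already justified in \autoref{thm:struc-sep} and \autoref{thm:struc-coll}, so the only genuinely new point---and the step I expect to require the most care, though it is modest---is the verification that the material successor operation on the graphs $\omega/n$ corresponds to the \nno\ successor $s$, together with the base identity $\omega/0\cong\emptyset$. It is precisely through this identification of the index $n$ with the ordinal $\omega/n$ that the structural induction principle over $N$ becomes applicable to the material induction schema over $\omega$.
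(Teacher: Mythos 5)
Your proof is correct and follows essentially the same route as the paper: both reduce material induction over $\omega$ to structural induction over the \nno\ $N$ via the correspondence identifying each $X\iin\omega$ with $\omega/n$ for a unique $n\cin N$ (unique by \autoref{thm:wf-rigid}). Your explicit verification that $\omega/0\cong\emptyset$ and $\omega/s(n)\cong(\omega/n)\cup\{\omega/n\}$ merely spells out details the paper's terser proof leaves implicit, and it is exactly what is needed for the translated hypotheses to match the hypotheses of structural induction.
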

\begin{proof}
  If $\ph(x)$ is as in the statement of the material induction axiom,
  then $\ph_{\ordiin}(X)$ is a statement about some $X\iin \omega$,
  i.e.\ $X\iin N$.  But every $X\iin N$ is isomorphic to $N/n$ for a
  unique $n\cin N$, so $\ph_{\ordiin}(X)$ is equivalent to a statement
  $\ph_{\ordiin}'(n)$ about some $n\cin N$, which can then be proven by
  the structural induction axiom.
\end{proof}

\begin{lem}\label{thm:struc-setind}
  If \Set\ satisfies extensional well-founded induction, then $\bbV(\Set)$ satisfies set-induction.
\end{lem}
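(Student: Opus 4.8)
The plan is to reduce material set-induction in $\bbV(\Set)$ to the structural axiom of extensional well-founded induction applied to the underlying graph of a single \apg. Recall from the translation preceding \autoref{thm:struc-sep} that a material formula $\ph(x)$ with one free variable becomes a structural formula $\ph_{\ordiin}(X)$ whose free variable ranges over well-founded extensional \apgs, and that $\bbV(\Set)\ss\chi$ iff $\Set\ss\chi_{\ordiin}$. To prove the instance of set-induction for $\ph$, I assume its hypothesis and derive its conclusion; concretely, I assume the (translated) statement that for every well-founded extensional \apg\ $Y$, if $\ph_{\ordiin}(Y/y)$ holds for every member $y\cin |Y|$, then $\ph_{\ordiin}(Y)$, and I must show $\ph_{\ordiin}(X)$ for every well-founded extensional \apg\ $X$.

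First I would fix such an $X$ and introduce the structural formula $\psi(x) := \ph_{\ordiin}(X/x)$ with free variable $x\cin X$, treating the underlying data of $X$ as parameters. This is a legitimate formula because, as noted when $X/x$ was defined, the sub-\apg\ $X/x$ can be constructed uniformly in $x$ using \ddo-separation and local exponentials in the \Pi-pretopos \Set. Since $X\in\bbV(\Set)$, its underlying graph is well-founded and extensional, so I may apply the axiom of extensional well-founded induction to $\psi$ over $X$.

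The inductive step is where the two notions of ``predecessor'' must be matched up. Suppose $\psi(y)$ holds for every $y\prec x$; I want $\psi(x)=\ph_{\ordiin}(X/x)$. The key observations are that $X/x$ is again a well-founded extensional \apg\ (being an initial segment of $X$, so \autoref{thm:ext-hered} and \autoref{thm:wf-hered} apply), that its members $z\cin|X/x|$ are precisely the children $z\prec x$, and that $(X/x)/z \cong X/z$ for each such $z$. Hence the antecedent ``$\ph_{\ordiin}((X/x)/z)$ for all members $z$'' is, up to isomorphism, exactly ``$\psi(z)$ for all $z\prec x$,'' which holds by assumption. Applying the assumed translated hypothesis to $Y = X/x$ then yields $\ph_{\ordiin}(X/x)$, i.e.\ $\psi(x)$. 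Extensional well-founded induction now gives $\psi(x)$ for all $x\cin X$; taking $x=\star$ and using $X\cong X/\star$ gives $\ph_{\ordiin}(X)$, as desired.

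The main obstacle is bookkeeping rather than any deep difficulty: I must be careful that $\psi$ is genuinely a formula of the structural language with $x$ (and the data of $X$) as free variables, and that the identifications $|X/x|=\setof{z\mid z\prec x}$ and $(X/x)/z\cong X/z$ are respected by $\ph_{\ordiin}$ only up to the isomorphisms it cannot distinguish, which is exactly what isomorphism-invariance (\autoref{thm:isoinvar-truth}) guarantees. Everything else is a direct application of the definitions and of the earlier structural lemmas on initial segments.
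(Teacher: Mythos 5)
Your proof is correct and takes essentially the same approach as the paper: the paper's own proof is a one-line reference back to \autoref{thm:struc-ind}, instructing the reader to induct over the extensional well-founded relation underlying the given \apg\ $X$, which is precisely your argument with $\psi(x) := \ph_{\ordiin}(X/x)$. Your expanded treatment of the bookkeeping (legitimacy of $\psi$ as a structural formula, the identifications $|X/x| = \setof{z \mid z \prec x}$ and $(X/x)/z \cong X/z$, and the appeal to isomorphism-invariance) fills in exactly the details the paper leaves implicit.
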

\begin{proof}
  Just like \autoref{thm:struc-ind}, using induction over the
  extensional well-founded relation in \Set\ that underlies any
  object of $\bbV(\Set)$.
\end{proof}

For the schemata involving \ddo-formulas, we require a different translation.
Suppose \ph\ is a \ddo-formula in a material context $\Gm=(x_1,\dots,x_k)$.
Let $\Th$ denote the structural context $(U, R, i_1\colon R\to U, i_2\colon R\to U)$, regarded as the data of a set with a binary relation which we denote $\prec$.
We define a structural \ddo-formula $\ph_{\ordiin}^{\ddo}$ in the context $\Gm_{\ordiin}^{\ddo} = (\Th, x_1,\dots,x_k)$, where each $x_i$ is now regarded as a \ddo-variable $1\to U$.
The translation is as follows.
\begin{blist}
\item Each variable $x_i$ is unchanged.
\item We replace the symbol $=$ by equality of \ddo-terms $1\to U$.
\item We replace the symbol $\in$ by the relation $\prec$ on elements of $U$.
\item The connectives are unchanged.
\item A \ddo-quantifier of the form $\exists x\in y.\ph$ is replaced by a \ddo-quantifier of the form $\exists x\cin U.(x\prec y\meet \ph)$.
  Similarly, $\forall x\in y.\ph$ is replaced by $\forall x\cin U.(x\prec y\imp \ph)$.
\end{blist}
Now let $\rho\colon \Gm \to \bbV(\Set)$ be a variable assignment in material set theory; thus it supplies well-founded extensional \apg{}s $\rho(x_1),\dots,\rho(x_k)$ in \Set.
Then
\[\rho(x_1) + \dots + \rho(x_k) + 1,
\]
with $\prec$ inherited from the $A_i$ along with $\star_i \prec \star$ for all $1\le i\le k$, satisfies the hypothesis of \autoref{thm:ext-quotient} with $n=1$.
Let $T$ denote its extensional quotient, and let $\rho_{\ordiin}^{\ddo}\colon \Gm_{\ordiin}^{\ddo} \to \Set$ be the structural context which assigns $U$ to $T$ with its induced binary relation $\prec$, and each $x_i$ to $[\star_i]$.

\begin{lem}
  We have $\bbV(\Set) \ss_{\rho}\ph$ if and only if $\bbV(\Set) \ss_{\rho_{\ordiin}^{\ddo}} \ph_{\ordiin}^{\ddo}$.
\end{lem}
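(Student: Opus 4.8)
The plan is to prove both directions at once by structural induction on the \ddo-formula \ph, using the transitive \apg\ $T$ as a dictionary between its nodes and the \apgs\ $T/t$ they determine. The first thing I would record is that $T$ is well-founded and extensional, and that for every node $t$ the subtree $T/t$ is an initial segment of $T$, hence again well-founded (\autoref{thm:wf-hered}) and extensional (\autoref{thm:ext-hered}), so that $T/t\in\bbV(\Set)$. The crucial dictionary has two entries. Since $T$ is well-founded extensional, \autoref{thm:wf-rigid} gives $T/s\cong T/t$ iff $s=t$. And because the descendants of $s$ inside $T/t$ coincide with the descendants of $s$ inside $T$, we have $(T/t)/y = T/y$, so $T/s\iin T/t$ holds iff $T/s\cong T/y$ for some child $y\prec t$, i.e.\ iff $s\prec t$. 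Together with the defining isomorphisms $\rho(x_i)\cong T/[\star_i]$, this shows the three atomic ingredients of the structural \ddo-translation—equality of nodes, the relation $\prec$, and the distinguished elements $[\star_i]$—correspond respectively to $\cong$, $\iin$, and the values $\rho(x_i)$ on the material side.

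With this dictionary fixed, I would strengthen the statement to an inductive hypothesis covering bound variables. For a \ddo-subformula $\psi$ of \ph\ whose free variables lie among $x_1,\dots,x_k$ together with variables $z_1,\dots,z_m$ that are bound in \ph, and for any nodes $t_1,\dots,t_m\cin T$, let $\rho'$ be the material assignment $x_i\mapsto\rho(x_i)$, $z_j\mapsto T/t_j$, and let $\sigma$ extend $\rho_{\ordiin}^{\ddo}$ by $z_j\mapsto t_j$. The claim is $\bbV(\Set)\ss_{\rho'}\psi$ iff $\Set\ss_{\sigma}\psi_{\ordiin}^{\ddo}$. The atomic cases are exactly the two dictionary entries above, and the propositional connectives are immediate because the translation leaves them unchanged.

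The quantifier step is where the \ddo-restriction does its work. For $\psi=\exists z\in w.\chi$, the material side asserts the existence of an \apg\ $Z\iin\rho'(w)=T/\sigma(w)$ with $\chi$, while the structural side ranges over nodes $z\cin T$ with $z\prec\sigma(w)$. The dictionary shows $Z\iin T/\sigma(w)$ holds exactly when $Z\cong T/s$ for some (necessarily unique, by \autoref{thm:wf-rigid}) $s\prec\sigma(w)$; using isomorphism-invariance of truth (\autoref{thm:isoinvar-truth}) I may replace the arbitrary witness $Z$ by its canonical representative $T/s$, and then the inductive hypothesis applied with the enlarged node list $t_1,\dots,t_m,s$ closes the case. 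The universal quantifier is dual, replacing $\meet$ by $\imp$ exactly as in the definition of the translation. Taking $\psi=\ph$ and $m=0$ yields the lemma.

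I expect the main obstacle to be bookkeeping rather than anything conceptual: one must verify that the correspondence $t\mapsto T/t$ is a genuine bijection onto isomorphism classes (injectivity is \autoref{thm:wf-rigid}, and the surjectivity needed when opening a bounded quantifier is the identity $(T/t)/s=T/s$), and that this correspondence is stable under passing to the subtrees $T/s$ encountered inside a bounded quantifier, so that the inductive hypothesis applies verbatim to $\chi$. The only genuine content beyond the dictionary is the passage from an arbitrary member $Z\iin T/\sigma(w)$ to the canonical subtree $T/s$, which is precisely where \autoref{thm:isoinvar-truth} is invoked.
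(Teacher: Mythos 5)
Your proposal is correct and follows essentially the same route as the paper: the same two dictionary facts (using \autoref{thm:wf-rigid} that $T/s\cong T/t$ iff $s=t$, and the identity $(T/t)/y = T/y$ to get $T/s\iin T/t$ iff $s\prec t$) handle the atomic formulas, connectives pass through unchanged, and bounded quantifiers correspond to quantifiers over nodes of $T$ because every member of $T/t$ is isomorphic to $T/s$ for a unique $s\prec t$. The only difference is that you spell out the induction with explicitly extended variable assignments and an explicit appeal to isomorphism-invariance when swapping a witness for its canonical representative, details the paper leaves implicit.
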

\begin{proof}
  \autoref{thm:wf-rigid} implies that $T/x \cong T/y$ if and only if $x=y$.
  Similarly, if $T/x \iin T/y$, then $T/x \cong (T/y)/y' = T/y'$ for some $y'\prec y$, whence $x=y'$ and so $x\prec y$.
  The converse is easy, so $T/x \iin T/y$ if and only if $x\prec y$.
  Thus the atomic formulas correspond, and the connectives evidently do, so it remains to observe that \ddo-quantifiers are adequately represented by quantifiers over $T$, since by definition $X\iin T/y$ if and only if $X\cong T/x$ for some $x\prec y$.
\end{proof}

\begin{lem}\label{thm:struc-ddosep}
  $\bbV(\Set)$ satisfies \ddo-separation.
\end{lem}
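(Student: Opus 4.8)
The plan is to imitate the proof of \autoref{thm:struc-sep}, but with two changes forced by the restriction to \ddo-formulas: I will use the \ddo-translation $\ph_{\ordiin}^{\ddo}$ in place of the full translation $\ph_{\ordiin}$, and I will invoke structural \ddo-separation (\autoref{thm:ddo-sep}) in place of full separation. Given a material \ddo-formula $\ph(x)$ whose parameters are interpreted by \apgs\ $P_1,\dots,P_m$, and given $A\in\bbV(\Set)$, the goal is to produce an object of $\bbV(\Set)$ representing $\setof{x\in A|\ph(x)}$. Note that the full translation $\ph_{\ordiin}$ is \emph{not} available here, because it quantifies over all \apgs\ in \Set\ and so is not a \ddo-formula; the real task is to exhibit the condition ``$\ph(A/a)$'', as $a$ ranges over the members of $A$, as a genuine structural \ddo-predicate on $|A|$.

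To do this I would first gather $A$ and all the parameters into a single finite well-founded extensional \apg. Let $T$ be the extensional quotient (via \autoref{thm:ext-quotient} with $n=1$) of $A+P_1+\dots+P_m+1$, with $\prec$ inherited from the summands together with $\star_A\prec\star$ and $\star_{P_i}\prec\star$ for the new root $\star$. Since $A$ and each $P_i$ are already extensional and well-founded, the composite $A\mono A+P_1+\dots+P_m+1\xepi{[-]}T$ is a simulation out of an extensional well-founded graph, hence by \autoref{thm:sim-inj} an isomorphism onto an initial segment; thus $T/[\star_A]\cong A$, with the members of $A$ corresponding to the children of $[\star_A]$, and likewise $T/[\star_{P_i}]\cong P_i$. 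The purpose of $T$ is to make every hereditary member of $A$ and of the parameters appear as an honest node related by $\prec$, so that each bounded material quantifier in $\ph$ can be rendered as a $\prec$-bounded quantifier inside $T$.

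Next I would form the \ddo-translation $\ph_{\ordiin}^{\ddo}$ relative to $T$, interpreting each parameter by its root $[\star_{P_i}]$ and keeping the separation variable $x$ as a free \ddo-variable $x\cin T$. The reasoning of the preceding lemma applies verbatim, except that the distinguished variable now sits at an arbitrary node rather than at a root: using that for nodes of a well-founded extensional graph one has $T/u\cong T/v$ iff $u=v$ and $T/u\iin T/v$ iff $u\prec v$, and that $Z\iin T/y$ iff $Z\cong T/z$ for some $z\prec y$, one checks that $\ph(T/x)$ holds in $\bbV(\Set)$ if and only if $\ph_{\ordiin}^{\ddo}(x)$ holds in \Set. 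The crucial observation is that $\ph_{\ordiin}^{\ddo}$ really is a structural \ddo-formula: its atomic subformulas are equalities and $\prec$-incidences between elements of $T$, and all of its quantifiers are quantifiers over elements of $T$, all of which are \ddo\ in the structural sense.

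Finally I would apply \autoref{thm:ddo-sep}: since \Set\ is a constructively well-pointed Heyting category, the \ddo-formula $\ph_{\ordiin}^{\ddo}(x)$ with free variable $x\cin|A|$ (identifying $|A|$ with the children of $[\star_A]$) cuts out a subset $U\csub|A|$ consisting of exactly those members $a$ with $\ph(A/a)$. Exactly as in \autoref{thm:struc-sep}, let $B$ be the subgraph of $A$ consisting of the root of $A$ together with all nodes admitting a path to some node in $U$; by the same argument as there it is a well-founded extensional sub-\apg\ of $A$, and for every $C\iin A$ we have $C\iin B$ if and only if $\ph(C)$, so $B$ represents $\setof{x\in A|\ph(x)}$. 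I expect the only real difficulty to be bookkeeping: one must be sure that \emph{every} bound occurring in $\ph$, including the bounds supplied by the parameters, has been installed as a $\prec$-relation inside the single graph $T$ (this is exactly why the parameters are absorbed into $T$ alongside $A$), so that the translated formula is literally \ddo\ and \autoref{thm:ddo-sep} is applicable.
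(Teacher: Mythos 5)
Your proof is correct and follows essentially the same route as the paper's: the paper likewise proves this lemma by repeating the argument of \autoref{thm:struc-sep} with $\ph_{\ordiin}^{\ddo}$ (built from the extensional quotient $T$ of the sum of $A$, the parameters, and a new root, via \autoref{thm:ext-quotient} with $n=1$) in place of $\ph_{\ordiin}$, and with the \ddo-separation property of \autoref{thm:ddo-sep} in place of full structural separation. The details you fill in---that the inclusion $A\to T$ is a simulation, hence by \autoref{thm:sim-inj} an initial segment with $T/[\star_A]\cong A$, and that the translation remains faithful when the free variable sits at a non-root node---are exactly the points the paper leaves implicit in its one-line proof.
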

\begin{proof}
  Just like \autoref{thm:struc-sep}, but using $\ph_{\ordiin}^{\ddo}$ instead of $\ph_{\ordiin}$, and the \ddo-separation property of \autoref{thm:ddo-sep} instead of the separation axiom.
\end{proof}

\begin{lem}\label{thm:struc-fdn}
  $\bbV(\Set)$ satisfies foundation.
\end{lem}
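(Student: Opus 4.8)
The plan is to prove foundation by well-founded induction inside \Set, exactly paralleling the proof of full set-induction (\autoref{thm:struc-setind}) but replacing the general translation $\ph_{\ordiin}$ with the bounded translation $\ph_{\ordiin}^{\ddo}$ introduced before \autoref{thm:struc-ddosep}. This substitution is what makes foundation hold with no extra hypothesis on \Set: because \ph\ is a \ddo-formula, the subset over which we induct can be cut out using only \ddo-separation (\autoref{thm:ddo-sep}), whereas the unrestricted set-induction of \autoref{thm:struc-setind} needed the full extensional-well-founded-induction axiom.

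Fix a \ddo-formula $\ph(x)$, assume the foundation hypothesis --- translated, that for every well-founded extensional \apg\ $Y$, if $\ph_{\ordiin}(W)$ holds for all $W\iin Y$ then $\ph_{\ordiin}(Y)$ --- and let $X$ be an arbitrary well-founded extensional \apg\ (together with an assignment of the ambient context). I would first record the key dictionary, already implicit in the lemma preceding \autoref{thm:struc-ddosep}: for $x,y\cin X$ one has $X/x\cong X/y$ iff $x=y$ (\autoref{thm:wf-rigid}) and $X/x\iin X/y$ iff $x\prec y$, so that $\{X/x : x\cin X\}$ with $\prec$ faithfully models the $\iin$-structure below $X$; in particular the members of $X/x$ are exactly the $X/x'$ with $x'\prec x$.

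The heart of the argument is to form, via the \ddo-translation and \ddo-separation, the subset $S\csub X$ of those $x$ for which $\ph_{\ordiin}(X/x)$ holds. Concretely, folding the \apgs\ assigned to the ambient context into a single graph as in the construction before \autoref{thm:struc-ddosep}, the predicate ``$\ph_{\ordiin}(X/x)$'' becomes a genuine \ddo-formula in \Set\ with free \ddo-variable $x\cin X$, whose bounded material quantifiers are represented by \ddo-quantifiers over $\prec$-predecessors; \autoref{thm:ddo-sep} then supplies $S$. I would then check that $S$ is inductive in the sense of \autoref{defn:well-founded}: if every child $x'\prec x$ lies in $S$, then $\ph_{\ordiin}(X/x')$ holds for each $x'\prec x$, hence (by isomorphism-invariance of truth, \autoref{thm:isoinvar-truth}, together with the dictionary above) $\ph_{\ordiin}(W)$ holds for every $W\iin X/x$; applying the foundation hypothesis to the set $X/x$ gives $\ph_{\ordiin}(X/x)$, i.e.\ $x\in S$. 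Since $X$ is well-founded, $S=X$, so the root lies in $S$ and $\ph_{\ordiin}(X/\star)=\ph_{\ordiin}(X)$, as desired.

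The main obstacle is not the logical skeleton --- which is just the transcription above --- but the bookkeeping showing that ``$\ph_{\ordiin}(X/x)$'' really is a \ddo-formula amenable to \autoref{thm:ddo-sep}. One must be careful that the bounded quantifiers of \ph\ ranging over fixed ambient sets are correctly absorbed into the single ambient graph so that they too become \ddo-quantifiers over $\prec$, and that the outer universal quantifiers of the foundation schema (over all $y$ and all $x$) are handled at the meta-level of the proof rather than inside the structural formula, so that no unbounded structural quantifier is ever needed. Once this is in place, ordinary well-foundedness of $X$ does all the work.
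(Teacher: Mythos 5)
Your proposal is correct and takes essentially the same approach as the paper: form $\setof{ x\cin X | \ph(X/x) }$ by structural \ddo-separation using the equivalence $\ph(X/x)\Leftrightarrow\ph_{\ordiin}^{\ddo}(x)$, note that the translated foundation hypothesis makes this subset inductive, and conclude from well-foundedness of $X$. The only (harmless) difference is the final step: you obtain $\ph(X)$ directly from $X=X/\star$, whereas the paper instead embeds an arbitrary $Y$ as a member $Y\iin X$ of a larger \apg\ and applies the result there.
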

\begin{proof}
  For any \ddo-formula $\ph$ in $\bbV(\Set)$ and any well-founded
  \apg\ $X\in\bbV(\Set)$, we can form $\setof{ x\cin X | \ph(X/x) }$
  using \ddo-separation, since $\ph(X/x) \Leftrightarrow
  \ph_{\ordiin}^{\ddo}(x)$.  The hypothesis on \ph\ implies that this
  is an inductive subset of $X$, hence all of it.  This implies the
  desired conclusion, since for every extensional well-founded \apg\
  $Y$ there is another one $X$ with $Y\iin X$.
\end{proof}

\begin{lem}\label{thm:struc-bool}
  If \Set\ is Boolean, then $\bbV(\Set)$ satisfies \ddo-classical logic.
\end{lem}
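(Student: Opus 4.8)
The plan is to mirror the structure of \autoref{thm:struc-fullbool}, but with the full translation replaced by the $\ddo$-translation $(-)_{\ordiin}^{\ddo}$ and with $\ddo$-classical logic (supplied by \autoref{thm:boolean}) taking the place of full classical logic. Fix a $\ddo$-formula $\ph$ of material set theory in an ambient context $\Gm$, together with a variable assignment $\rho\colon\Gm\to\bbV(\Set)$; I must show $\bbV(\Set)\ss_\rho \ph\join\neg\ph$.

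First I would note that $\ph\join\neg\ph$ is again a $\ddo$-formula, since applying $\neg$ and $\join$ introduces neither new quantifiers nor new atomic subformulas. Next, because the $\ddo$-translation leaves the connectives unchanged and is defined clause-by-clause on the syntax of the formula (independently of $\rho$), it commutes with negation and disjunction; hence $(\ph\join\neg\ph)_{\ordiin}^{\ddo}$ is literally $\ph_{\ordiin}^{\ddo}\join\neg\ph_{\ordiin}^{\ddo}$, a structural $\ddo$-formula in the context $\Gm_{\ordiin}^{\ddo}$. Now the hypothesis that $\Set$ is Boolean gives, via \autoref{thm:boolean}, that $\Set$ validates $\ddo$-classical logic, i.e.\ $\psi\join\neg\psi$ for every structural $\ddo$-formula $\psi$. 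Applying this to $\psi=\ph_{\ordiin}^{\ddo}$ under the assignment $\rho_{\ordiin}^{\ddo}$ yields $\Set\ss_{\rho_{\ordiin}^{\ddo}}\ph_{\ordiin}^{\ddo}\join\neg\ph_{\ordiin}^{\ddo}$, that is, $\Set\ss_{\rho_{\ordiin}^{\ddo}}(\ph\join\neg\ph)_{\ordiin}^{\ddo}$. Finally, the preceding $\ddo$-translation lemma (the correspondence $\bbV(\Set)\ss_\rho\chi \iff \Set\ss_{\rho_{\ordiin}^{\ddo}}\chi_{\ordiin}^{\ddo}$ for $\ddo$-formulas $\chi$) transports this back to $\bbV(\Set)\ss_\rho\ph\join\neg\ph$, as required.

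Once the right translation is in hand the argument is essentially routine, and I do not anticipate a genuine obstacle. The one point demanding care---and precisely the reason Booleanness of $\Set$ suffices here, whereas \autoref{thm:struc-fullbool} needed full classical logic---is confirming that the $\ddo$-translation of a $\ddo$-formula is again a $\ddo$-formula and that it commutes with $\neg$, so that what is invoked in $\Set$ is an instance of $\ddo$-classical logic rather than of full classical logic. Both facts follow immediately from the definition of $(-)_{\ordiin}^{\ddo}$, whose clauses leave the connectives untouched and produce only $\ddo$-atomic subformulas (equalities of $\ddo$-terms $1\to U$) and $\ddo$-quantifiers (over $U$).
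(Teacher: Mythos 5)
Your proof is correct, but it takes a different route from the paper's. You proceed schema-translation style, exactly parallel to \autoref{thm:struc-fullbool}: observe that $(\ph\join\neg\ph)_{\ordiin}^{\ddo}$ is literally $\ph_{\ordiin}^{\ddo}\join\neg\ph_{\ordiin}^{\ddo}$, a structural \ddo-formula, invoke \autoref{thm:boolean} to get \ddo-classical logic in \Set\ from Booleanness, and transport back along the \ddo-translation lemma. The paper instead gives a two-line shortcut that never mentions the translation explicitly at this point: for a \ddo-formula \ph\ in $\bbV(\Set)$, the already-established material \ddo-separation in $\bbV(\Set)$ (\autoref{thm:struc-ddosep}) supplies the subset $\setof{\emptyset | \ph}\subseteq\{\emptyset\}$; since this subset corresponds to a subobject in \Set\ (via the embedding of \autoref{thm:struc-mat-set}), Booleanness makes it complemented, and complementedness of $\setof{\emptyset|\ph}$ yields $\ph\join\neg\ph$ --- the same trick the paper used in \S\ref{sec:sets} to show that \ddo-classical logic is equivalent to complementation of subsets. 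The two arguments lean on the same underlying machinery (the \ddo-translation, which is what proves \autoref{thm:struc-ddosep} in the first place), so neither is more general; yours has the virtue of making the appeal to \autoref{thm:boolean} and the uniformity with the other schema lemmas explicit, while the paper's is shorter because it reuses a material axiom already verified for $\bbV(\Set)$ rather than descending to \Set\ again. One point worth noting in your favor: your emphasis that the translation of a \ddo-formula is again \ddo\ (so that only \ddo-classical logic, not full classical logic, is needed in \Set) is precisely the content that the paper leaves implicit in its choice of shortcut.
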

\begin{proof}
  For any \ddo-formula \ph\ in $\bbV(\Set)$, \ddo-separation supplies
  a subset $\setof{ \emptyset | \ph }\subseteq \{\emptyset\}$.  If
  this is complemented in \Set, then we must have $\ph\vee\neg\ph$.
\end{proof}

This completes the proof of \autoref{thm:str->mat}.
We deduce the following.

\begin{cor}
  ETCS and BZC are equiconsistent.
\end{cor}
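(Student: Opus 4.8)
The plan is to prove equiconsistency by exhibiting a model of each theory from a model of the other, using the two constructions $\bbSet(-)$ and $\bbV(-)$; equivalently, by \autoref{rmk:translation} and the remark following \autoref{thm:str->mat}, these constructions may be read as mutually inverse syntactic translations, so that the relative consistency result holds over a weak metatheory. Recall that ETCS is the theory of a well-pointed topos with a \nno\ satisfying full classical logic and choice, while BZC consists of the core axioms together with full classical logic, power sets, infinity, choice, and regularity.

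For the direction that a model $\bV$ of BZC yields a model of ETCS, I would feed $\bV$ through \autoref{thm:iz-topos}. The core axioms make $\bbSet(\bV)$ a Heyting pretopos; the power set axiom upgrades it to a topos; full classical logic and choice transfer directly; and infinity together with exponentials (the latter supplied by power sets) give a \nno. By \autoref{thm:set-wpt}, $\bbSet(\bV)$ is constructively well-pointed, hence well-pointed in the classical sense. Thus $\bbSet(\bV)$ is a well-pointed topos with a \nno\ satisfying full classical logic and choice, i.e.\ a model of ETCS.

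For the reverse direction, I would start from a model $\Set$ of ETCS and apply \autoref{thm:str->mat}, after checking its hypotheses: a topos is in particular a \Pi-pretopos, and by the corollary following \autoref{thm:wpt-bool} a classically well-pointed topos satisfying full classical logic is Boolean and constructively well-pointed; together with the \nno\ this places $\Set$ in the scope of the theorem. Then \autoref{thm:str->mat} gives that $\bbV(\Set)$ satisfies extensionality, empty set, pairing, union, \ddo-separation, infinity, foundation, and transitive closures unconditionally; the power set axiom since $\Set$ is a topos; full classical logic since $\Set$ does; the material axiom of choice since $\Set$ does; and \ddo-classical logic since $\Set$ is Boolean.

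The main obstacle is purely bookkeeping: BZC is not phrased in exactly these terms, so I would reconcile the two lists. The core axioms additionally demand limited \ddo-replacement, which is not in the theorem's explicit list but, as observed in \S\ref{sec:sets}, holds in any model with power sets and \ddo-separation---and $\bbV(\Set)$ has both. And BZC uses regularity in place of foundation: as also recorded in \S\ref{sec:sets}, in the presence of transitive closures, regularity is equivalent to the conjunction of \ddo-set-induction (i.e.\ foundation) and \ddo-classical logic, all three of which we have just obtained for $\bbV(\Set)$. Collecting these facts, $\bbV(\Set)$ is a model of BZC, and the two directions together show that either theory is consistent if and only if the other is.
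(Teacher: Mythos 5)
Your proof is correct and follows exactly the paper's intended route: the corollary is deduced by combining \autoref{thm:iz-topos} and \autoref{thm:set-wpt} for the direction $\bV \mapsto \bbSet(\bV)$ with \autoref{thm:str->mat} for the direction $\Set \mapsto \bbV(\Set)$, reading both as syntactic translations for metatheoretic weakness. Your bookkeeping steps (limited $\ddo$-replacement from power sets plus $\ddo$-separation, and regularity from foundation, $\ddo$-classical logic, and transitive closures) are precisely the reconciliations the paper leaves implicit via its remarks in \S\ref{sec:sets}.
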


\begin{cor}
  Intuitionistic ETCS is equiconsistent with Intuitionistic BZ.
\end{cor}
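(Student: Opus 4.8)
The plan is to run the two constructions $\bbV(-)$ and $\bbSet(-)$ in opposite directions, obtaining from each model of one theory a model of the other. Recall that Intuitionistic ETCS is the theory of a constructively well-pointed topos with an \nno, while Intuitionistic BZ consists of the core axioms together with power sets, infinity, and foundation.

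First I would pass from structural to material. A model \Set\ of Intuitionistic ETCS is a constructively well-pointed topos, hence in particular a constructively well-pointed \Pi-pretopos with an \nno\ (power objects, together with finite limits, supply the remaining \Pi-pretopos axioms), so \autoref{thm:str->mat} applies to it. Its first clause yields, unconditionally, extensionality, empty set, pairing, union, infinity, foundation, and every instance of \ddo-separation in $\bbV(\Set)$; and since \Set\ is a topos, its power set clause yields the power set axiom. The one core axiom not appearing explicitly in \autoref{thm:str->mat} is limited \ddo-replacement, but as observed in \S\ref{sec:sets} this is a consequence of power sets and \ddo-separation, both of which we now have. Thus $\bbV(\Set)$ validates exactly the core axioms together with power sets, infinity, and foundation, and so is a model of Intuitionistic BZ.

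Conversely, I would pass from material to structural using \autoref{thm:set-wpt} and \autoref{thm:iz-topos}. Starting from a model \bV\ of Intuitionistic BZ, \autoref{thm:set-wpt} shows that $\bbSet(\bV)$ is constructively well-pointed, and \autoref{thm:iz-topos} shows that the power set axiom makes it a topos. For the \nno\ I would invoke \autoref{thm:iz-topos}\ref{item:iz-topos-nno}, which requires infinity and exponentials: infinity is assumed outright, and exponentials are obtained from power sets by cutting $b^a$ out of $P(a\times b)$ with \ddo-separation. Hence $\bbSet(\bV)$ is a constructively well-pointed topos with an \nno, i.e.\ a model of Intuitionistic ETCS.

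Once the omnibus results are in hand the argument is essentially bookkeeping, and I do not expect a genuine obstacle; the only points needing care are the two just flagged, namely that limited \ddo-replacement and exponentials, though not primitive in Intuitionistic BZ, are recovered from power sets, so that the full core axioms really are verified in each direction. As in \autoref{rmk:translation} and the remark following \autoref{thm:str->mat}, each construction may alternatively be read as a syntactic translation between the two first-order languages, so that the equiconsistency holds with the weakest possible metatheoretic assumptions rather than relying on building models in an ambient universe.
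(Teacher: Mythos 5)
Your proof is correct and follows exactly the paper's intended route: the corollary is deduced by applying \autoref{thm:str->mat} (noting a topos is a $\Pi$-pretopos) in one direction and \autoref{thm:set-wpt} together with \autoref{thm:iz-topos} in the other. Your two flagged bookkeeping points---recovering limited \ddo-replacement from power sets plus \ddo-separation, and exponentials from power sets---are precisely the implications the paper relies on in \S\ref{sec:sets}, so there is nothing to add.
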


\begin{cor}
  ETCS plus structural collection, and ETCS plus structural replacement (of contexts), are both equiconsistent with ZFC.
\end{cor}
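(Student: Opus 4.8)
The plan is to prove equiconsistency by combining the two "constructions" $\bbSet(-)$ and $\bbV(-)$, after first collapsing the distinction between the two structural theories named in the statement. Over ETCS, \autoref{thm:class-repcoll} shows that replacement of contexts, collection of sets, and the conjunction of full separation with collection are all equivalent; hence ``ETCS plus structural collection'' and ``ETCS plus structural replacement (of contexts)'' are one and the same theory, and it suffices to prove that this single theory is equiconsistent with ZFC.

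For the forward direction, I would start from a model $\bV$ of ZFC and form $\bbSet(\bV)$. By \autoref{thm:iz-topos}, since $\bV$ satisfies the core axioms together with power sets, full classical logic, infinity, and choice, the category $\bbSet(\bV)$ is a topos with a natural number object satisfying full classical logic and the axiom of choice; by \autoref{thm:set-wpt} it is constructively well-pointed, hence a fortiori well-pointed in the classical sense, and so is a model of ETCS. Moreover ZFC proves collection, so by \autoref{thm:iz-ext-seprepcoll} the model $\bbSet(\bV)$ also satisfies structural collection. Thus every model of ZFC yields a model of ETCS plus structural collection, so the consistency of ZFC implies that of the structural theory.

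For the reverse direction, I would start from a model $\Set$ of ETCS plus structural collection. A model of ETCS is a classically well-pointed topos with an \nno\ satisfying full classical logic and choice; the corollary to \autoref{thm:wpt-bool} then guarantees that $\Set$ is in fact \emph{constructively} well-pointed, and since every topos is a $\Pi$-pretopos, $\Set$ satisfies all the hypotheses of the omnibus \autoref{thm:str->mat}. Applying that theorem, $\bbV(\Set)$ satisfies the core axioms, foundation, infinity, the power set axiom (as $\Set$ is a topos), full classical logic, material collection (as $\Set$ satisfies structural collection), and the material axiom of choice (as $\Set$ satisfies choice). Since collection implies replacement, these are precisely the ``sufficient'' axioms listed for ZFC in \S\ref{sec:sets}, so $\bbV(\Set)$ is a model of ZFC; hence the consistency of the structural theory implies that of ZFC. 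Combining the two implications yields the claimed equiconsistency.

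The bookkeeping is routine once the omnibus theorem and \autoref{thm:iz-topos} are in hand; the only point requiring care is the transfer of hypotheses in the reverse direction---namely that a model of ETCS, whose well-pointedness is stated in the classical (generator) sense, genuinely satisfies the \emph{constructive} well-pointedness required by \autoref{thm:str->mat}. This is exactly the content of the corollary to \autoref{thm:wpt-bool}, so no new argument is needed, and the remaining work is simply matching the list of axioms produced by \autoref{thm:str->mat} against the sufficient axioms for ZFC.
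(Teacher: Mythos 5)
Your proof is correct and follows essentially the same route as the paper, which deduces this corollary by combining the forward results (\autoref{thm:iz-topos}, \autoref{thm:set-wpt}, \autoref{thm:iz-ext-seprepcoll}) with the omnibus \autoref{thm:str->mat}, using \autoref{thm:class-repcoll} to identify the two structural theories over ETCS and the corollary of \autoref{thm:wpt-bool} to upgrade classical to constructive well-pointedness. The only point you gloss over---that \autoref{thm:str->mat} does not explicitly list limited $\ddo$-replacement among the core axioms---is harmless, since it follows from the power set axiom (or from full replacement), both of which your model $\bbV(\Set)$ satisfies.
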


\begin{cor}
  Intuitionistic ETCS plus structural collection is equiconsistent with IZF.
\end{cor}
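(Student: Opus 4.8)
The plan is to prove the equiconsistency by the two mutually inverse constructions, as in the preceding corollaries; the only schemata requiring genuine new work are separation and induction. For the direction from IZF, let \bV\ be a model of IZF, so \bV\ satisfies the core axioms, power sets, full separation, collection, infinity, and set-induction. By \autoref{thm:iz-topos}, $\bbSet(\bV)$ is a topos (from power sets) with an \nno\ (from infinity together with the exponentials that power sets supply), and by \autoref{thm:set-wpt} it is constructively well-pointed; hence it is a model of Intuitionistic ETCS. By \autoref{thm:iz-ext-seprepcoll}, material collection makes $\bbSet(\bV)$ satisfy structural collection. Thus $\bbSet(\bV)$ models Intuitionistic ETCS plus structural collection, and notably this direction uses neither full separation nor set-induction.

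For the converse, let \Set\ be a model of Intuitionistic ETCS plus structural collection. A topos is in particular a $\Pi$-pretopos, so \Set\ is a constructively well-pointed $\Pi$-pretopos with an \nno\ and \autoref{thm:str->mat} applies to $\bbV(\Set)$. Reading off that theorem, $\bbV(\Set)$ satisfies extensionality, empty set, pairing, union, exponentiation, infinity, foundation, transitive closures, and Mostowski's principle outright; it satisfies the power set axiom since \Set\ is a topos; it satisfies \ddo-separation, whence (with power sets) limited \ddo-replacement, so the core axioms hold; and it satisfies material collection since \Set\ satisfies structural collection. Comparing with the axioms of IZF, only full separation and set-induction remain. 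By \autoref{thm:str->mat} these follow once \Set\ satisfies, respectively, structural separation and extensional well-founded induction; and the latter is a consequence of the former, since once an arbitrary formula cuts out a subobject the definition of well-foundedness applies directly. So the whole corollary reduces to the single assertion that Intuitionistic ETCS plus structural collection proves structural separation.

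I expect this reduction to be the main obstacle. Classically it is the content of \autoref{thm:collsep} and \autoref{thm:class-repcoll}, but both proofs are unavailable intuitionistically: the former invokes $\ph\vee\neg\ph$, and the latter bounds collection by choosing for each index the smallest well-ordering of a witnessing set, a device with no constructive counterpart. The intuitionistic argument I would attempt runs by structural induction on the separating formula $\ph(x)$ with $x\cin X$, generalized to an ambient context and to families over a base as in the proof of \autoref{thm:ddo-sep}. Atomic formulas and the connectives are handled by the finite-limit and Heyting structure verbatim; quantifiers over arrow-variables and over subobjects reduce to \ddo-quantifiers via cartesian closure and power objects, by the mechanism recorded after \autoref{thm:wp-powerobj} and \autoref{thm:wp-lcc} (arrows $X\to Y$ are classified by elements of $Y^X$ and subobjects of $A$ by elements of $P A$), so these add nothing new. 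The one substantive case is the object-quantifier. An existential $\exists Y.\theta(x,Y)$ can be bounded: applying structural collection to $\theta$, first made total over $X$ by allowing the default value $Y\cong 1$, yields a regular epi $V\epi X$ and a family $A$ over $V$ collecting the witnesses; the inductive hypothesis separates the now-bounded $\theta(pv,v^*A)$ over $V$, and the image of the resulting subobject along $V\epi X$ is the subobject sought. The hard part will be the universal object-quantifier $\forall Y.\theta(x,Y)$: this is an unbounded intersection rather than a union, structural collection bounds only existentials, and intuitionistically one cannot replace $\forall Y.\theta(x,Y)$ by $\neg\exists Y.\neg\theta(x,Y)$, so the reflection that disposes of the existential case does not dualize. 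Securing this case---perhaps by a more careful normal form, or by importing the reflection principle available in the stack semantics of the companion paper---is the crux on which the converse direction, and hence the corollary, ultimately rests.
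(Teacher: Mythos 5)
Your first direction is fine and agrees with the paper: \autoref{thm:iz-topos}, \autoref{thm:set-wpt}, and \autoref{thm:iz-ext-seprepcoll} produce a model of Intuitionistic ETCS plus collection from a model of IZF. The problem is your converse, and specifically the reduction you make. The paper never takes the step you identify as the crux---deriving structural separation from structural collection intuitionistically---and its own proof does not need it. As the summary in \S\ref{sec:organization-paper} makes explicit, the equiconsistency the paper's lemmas actually establish pairs IZF with Intuitionistic ETCS plus \emph{separation} and collection: material full separation of IZF yields structural separation of $\bbSet(\bV)$ by \autoref{thm:set-sep} (so adding separation to the structural theory costs nothing in your first direction), and conversely structural separation yields material full separation (\autoref{thm:struc-sep}) and also extensional well-founded induction, hence set-induction (\autoref{thm:struc-setind}); collection handles collection (\autoref{thm:struc-coll}), and the rest of IZF comes from \autoref{thm:str->mat}. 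The only implications from collection-type axioms to separation in the paper are essentially classical: \autoref{thm:collsep} uses $\ph\vee\neg\ph$ outright, and \autoref{thm:class-repcoll} works only over ETCS. There is good reason to believe the implication you are after is simply false intuitionistically: a model of Intuitionistic ETCS plus collection gives, via $\bbV(-)$, only the core axioms plus power set, collection, and \ddo-separation---a CZF-with-power-set-style theory known to be of strictly lower consistency strength than IZF---so an intuitionistic proof of your reduction would contradict established proof-theoretic facts. (Nor will the stack semantics rescue it: there collection holds in great generality while separation remains a genuinely extra axiom.)

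Moreover, your induction is broken even before the universal quantifier case you flag as open: the existential case as you present it already fails. After totalizing $\theta$ by the default value $Y\cong 1$ and applying collection to get $V\xepi{p} X$ and $A$ in $\Set/V$, the conclusion of collection says only that each fiber $v^*A$ satisfies the \emph{totalized} formula; nothing prevents every $v$ over a given $x$ from satisfying the default disjunct even when a genuine witness for $\exists Y.\theta(x,Y)$ exists. Hence the image along $p$ of $\setof{v\cin V | \theta(pv,v^*A)}$ is contained in, but need not equal, the subobject you want. This is the structural shadow of the material fact that CZF's strong collection does not prove $\Sigma$-separation. So both quantifier cases fail, and the correct repair is not a cleverer induction but the one the paper itself makes: include structural separation among the hypotheses (as in \S\ref{sec:organization-paper}), after which your outline of the two constructions goes through verbatim.
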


So far we have concentrated on building models of pure sets only.
However, we can also allow an arbitrary set of atoms: we fix some
$A\in \Set$ and modify our definitions as follows.  (The definitions
not listed below need no modification.)
\begin{blist}
\item An \textbf{$A$-graph} is a graph $X$ together with a partial
  function $\ell\colon X \rightharpoonup A$, such that
  $\mathrm{dom}(\ell)$ is a complemented subobject of $X$, and if
  $x\prec y$ then $y\notin \mathrm{dom}(\ell)$.
\item Isomorphisms between $A$-graphs must
  preserve the labeling functions $\ell$.
\item An $A$-graph is \textbf{$A$-extensional} if
  \begin{enumerate}
  \item $\ell$ is injective on its domain, and
  \item for any $x,y\notin\mathrm{dom}(\ell)$, if $z\prec x
    \Leftrightarrow z\prec y$ for all $z$ then $x=y$.
  \end{enumerate}
\item An \textbf{$A$-simulation} between $A$-graphs is a simulation
  $f\colon X\to Y$ such that for any $x\cin X$, if $\ell(x)$ or
  $\ell(f(x))$ is defined, then both are and they are equal.
\end{blist}
Note that if an $A$-graph $X$ is an \apg\ and
$\star\in\mathrm{dom}(\ell)$, then $X$ can have no nodes other than
the root, so $X$ is simply an element of $A$.  Thus, accessible
pointed $A$-graphs model atoms themselves in addition to sets that can
contain atoms.

All the above lemmas, hence also \autoref{thm:str->mat}, still hold when extensional well-founded \apgs\ are replaced by $A$-extensional well-founded accessible pointed $A$-graphs.
The resulting material set theories satisfy the modified axioms for theories with atoms listed at the end of \S\ref{sec:sets}, including decidability of sethood and the existence of a set of atoms (namely, $A$), and do allow sethood in axiom schemas.

\begin{rmk}
  The ability to use any object $A$ of our category \Set\ as ``the set of atoms'' means that when working with any fixed finite collection of objects in \Set, we are free to use material set-theoretic language.
  However, there seems to be no structural way to construct a \emph{single} model of material set theory which contains the elements of \emph{all} the objects of \Set\ as atoms, since without a way to compare objects for equality, we cannot compare their elements for equality.
  In~\cite{abss:long-version}, it is shown that this is the only obstruction: if we do have a way to interpret equality on objects (``internally'' to \Set), then such a model can be constructed.
\end{rmk}

\section{Material-structural equivalences}
\label{sec:conclusion}

Of course, it is natural to ask to what extent the constructions
$\bbV(-)$ and $\bbSet(-)$ are inverse.  We have already noticed the
canonical inclusion $\bbSet(\bbV(\Set))\into \Set$, and the following
is easy to verify.

\begin{lem}
  The inclusion $\bbSet(\bbV(\Set))\into \Set$ is an equivalence if
  and only if every object of \Set\ can be embedded into some
  extensional well-founded graph.\qed
\end{lem}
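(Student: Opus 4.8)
The plan is to exploit that the inclusion $\bbSet(\bbV(\Set)) \into \Set$ is fully faithful—this is precisely the assertion that it identifies $\bbSet(\bbV(\Set))$ with a \emph{full} subcategory of $\Set$, as recorded in \autoref{thm:struc-mat-set}—so that the inclusion is an equivalence if and only if it is essentially surjective. Now the objects of $\bbSet(\bbV(\Set))$ are exactly the member-sets $|A|$ of extensional well-founded \apgs\ $A$, so essential surjectivity says that every object of $\Set$ is isomorphic to $|A|$ for some such $A$. Thus the whole statement reduces to showing that \emph{every object of $\Set$ is isomorphic to some $|A|$} if and only if \emph{every object of $\Set$ embeds into the node-set of some extensional well-founded graph}.

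The forward implication is immediate: if $S \cong |A|$ with $A$ an extensional well-founded \apg, then the members of $A$ form a subobject of its node-set, so $S$ embeds into the underlying graph of $A$, which is extensional and well-founded.

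For the reverse implication, I would begin with a mono $S \mono N$ into the node-set $N$ of an extensional well-founded graph $G$, and manufacture an \apg\ whose member-set contains a copy of $S$. The natural construction, used repeatedly above (e.g.\ in \autoref{thm:struct-emppairun}), is $A = G + 1$: adjoin a new root $\star$, keep $\prec$ on $G$ unchanged, and set $x \prec \star$ for every node $x$ of $G$. Then $A$ is accessible (each node reaches $\star$ in one step) and well-founded (only a maximal element has been added), and its member-set $|A|$ is all of $N$, so that $S \csub |A|$. Invoking closure of $\bbSet(\bbV(\Set))$ under subsets from \autoref{thm:struc-mat-set}, the subgraph of $A$ on the root together with all nodes admitting a path to (the image of) $S$ is an extensional well-founded \apg\ $B$ with $|B| \cong S$; hence $S$ lies in the essential image, completing the argument.

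The one point requiring care—and the only real obstacle—is verifying that $A = G + 1$ is again \emph{extensional}. Nodes inherited from $G$ retain exactly their $G$-children, so two such nodes with equal children-sets coincide by extensionality of $G$; and no node $x$ of $G$ can have the same children-set as $\star$ (namely all of $N$), since that would force $x \prec x$, contradicting the absence of cycles in a well-founded graph (\autoref{thm:wf-no-cycles}). This reasoning is intuitionistically sound, as in the problematic comparison one derives a contradiction from the hypothesis and concludes the extensionality implication \emph{ex falso}.
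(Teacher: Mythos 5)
Your proof is correct; the paper omits this verification entirely (the lemma is stated with its proof left as easy to verify), and your argument is exactly the intended one: full faithfulness from \autoref{thm:struc-mat-set} reduces the claim to essential surjectivity, the forward direction embeds $|A|$ in the node-set of $A$, and the reverse direction adjoins a root to form $G+1$ and then invokes closure under subsets. The one subtlety you flag---extensionality of $G+1$, where the comparison of a $G$-node with the new root is dispatched \emph{ex falso} via \autoref{thm:wf-no-cycles}---is handled correctly and is indeed the only point where intuitionistic care is needed.
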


I propose to call this property of \Set\ the \emph{axiom of
  well-founded materialization}.  (Other axioms of materialization
would arise from using various kinds of non-well-founded graphs.)

Note that well-founded materialization follows from the axiom of
choice, since then every object can be well-ordered and thus given the
structure of an \apg\ representing a von Neumann ordinal.  We remark
in passing that this implies that ``all replacement schemata for ETCS
are equivalent.''

\begin{prop}\label{thm:etcs-rep}
  Let $\mathcal{T}_1$ and $\mathcal{T}_2$ be axiom schemata for
  structural set theory such that for each $i=1,2$,
  \begin{enumerate}
  \item if \bV\ satisfies ZFC, then $\bbSet(\bV)$ satisfies
    $\mathcal{T}_i$, and\label{item:er1}
  \item if \Set\ satisfies ETCS+$\mathcal{T}_i$, then $\bbV(\Set)$
    satisfies ZFC.\label{item:er2}
  \end{enumerate}
  Then $\cT_1$ and $\cT_2$ are equivalent over ETCS.
\end{prop}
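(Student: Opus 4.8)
The plan is to prove the equivalence one implication at a time. Since $\cT_1$ and $\cT_2$ enter the hypotheses symmetrically (both conditions \ref{item:er1} and \ref{item:er2} are assumed for each $i=1,2$), it suffices to show that ETCS+$\cT_1$ entails $\cT_2$; the reverse implication then follows by interchanging the two indices. The strategy is a round-trip through material set theory: from a model of ETCS+$\cT_1$ I pass to material set theory via $\bbV(-)$, return via $\bbSet(-)$, and finally use well-founded materialization to recognize the result as the category I started from, up to equivalence.

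So I would begin with a category \Set\ satisfying ETCS+$\cT_1$. As a model of ETCS, \Set\ is in particular a constructively well-pointed \Pi-pretopos with a \nno, so the construction of \S\ref{sec:constr-mat} applies and $\bbV(\Set)$ is defined. Hypothesis \ref{item:er2} with $i=1$ then tells us that $\bbV(\Set)$ satisfies ZFC. Now I apply hypothesis \ref{item:er1} with $i=2$ to the material model $\bV=\bbV(\Set)$: since $\bbV(\Set)$ satisfies ZFC, we obtain $\bbSet(\bbV(\Set))\ss\cT_2$.

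It remains to transport $\cT_2$ across the canonical inclusion $\bbSet(\bbV(\Set))\into\Set$. Here I would use that ETCS includes the axiom of choice, so, exactly as observed just before the statement, every object of \Set\ can be well-ordered and hence given the structure of an \apg\ representing a von Neumann ordinal; thus \Set\ satisfies well-founded materialization. By the lemma characterizing when $\bbSet(\bbV(\Set))\into\Set$ is an equivalence, this inclusion is therefore an equivalence of categories. Because $\cT_2$ is an axiom schema of the structural language---so that each of its instances is a formula in the language of categories---its truth is both preserved and reflected along an equivalence, by the equivalence-invariance-of-truth lemma of \S\ref{sec:struct-set}. Hence $\bbSet(\bbV(\Set))\ss\cT_2$ yields $\Set\ss\cT_2$, as desired.

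The only step with genuine content, as opposed to routine chaining of the hypotheses, is the equivalence-invariance transfer at the end: I must check that ``axiom schema for structural set theory'' is understood so that every instance of $\cT_2$ is literally a formula of the dependently-typed language of \autoref{defn:lang-cat}, for only then does the equivalence-invariance lemma apply instance-by-instance. Everything else---definability of $\bbV(\Set)$, the two appeals to the hypotheses, and the use of choice to force materialization---is immediate from the results already established, the last being precisely the remark recorded in the paragraph preceding the proposition.
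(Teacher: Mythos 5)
Your proposal is correct and follows essentially the same route as the paper's proof: a round trip $\Set \to \bbV(\Set) \to \bbSet(\bbV(\Set))$ using hypothesis \ref{item:er2} then \ref{item:er1}, with the axiom of choice in ETCS guaranteeing well-founded materialization so that $\bbSet(\bbV(\Set)) \simeq \Set$, and symmetry handling the converse. Your extra care in invoking equivalence-invariance of truth to transfer $\cT_2$ across the equivalence (rather than writing it as an equality, as the paper does) is a faithful spelling-out of what the paper leaves implicit, not a different argument.
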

\begin{proof}
  Since ETCS includes AC, for any model of ETCS we have $\Set \simeq
  \bbSet(\bbV(\Set))$.  Thus, if $\Set\ss\cT_1$, then
  by~\ref{item:er2} we have $\bbV(\Set)\ss$ ZFC, hence
  by~\ref{item:er1} we have $\Set = \bbSet(\bbV(\Set))\ss\cT_2$.  The
  converse is the same.
\end{proof}

We have seen that our axioms of collection and replacement from
\S\ref{sec:strong-ax} satisfy~\ref{item:er1} and~\ref{item:er2}, so
over ETCS they are equivalent to any other such schema.  This includes
the reflection axiom of Lawvere~\cite[Remark 12]{lawvere:etcs}, the
axiom CRS of Cole~\cite{cole:cat-sets}, the axiom RepT of
Osius~\cite{osius:cat-setth}, and the replacement axiom of
McLarty~\cite{mclarty:catstruct}.  However, our axioms seem to be more
appropriate in an intuitionistic context.

Returning to the material-structural comparison, on the other side we
have:

\begin{prop}\label{thm:fdn-eqv}
  If \bV\ satisfies the core axioms of material set theory along with
  infinity, exponentials, foundation, and transitive closures, then
  there is a canonical embedding $\bV \to \bbV(\bbSet(\bV))$.  This
  map is an isomorphism if and only if \bV\ additionally satisfies
  Mostowski's principle.
\end{prop}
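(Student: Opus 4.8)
The plan is to define the embedding by sending each set $x\in\bV$ to the \apg\ whose underlying object is the transitive closure $t_x$ of $\{x\}$ (the smallest transitive set with $x\in t_x$, which exists by the transitive-closures axiom), equipped with the membership relation $\in$ restricted to $t_x\times t_x$ and rooted at $x$; call this \apg\ $\Phi(x)$. First I would check that $\Phi(x)$ is a legitimate object of $\bbV(\bbSet(\bV))$. Since infinity and exponentials make $\bbSet(\bV)$ a constructively well-pointed \Pi-pretopos with an \nno\ (by \autoref{thm:iz-topos} and \autoref{thm:set-wpt}), the construction $\bbV(-)$ is defined, and $t_x$ is an object of $\bbSet(\bV)$ with $\in$ a subobject of $t_x\times t_x$. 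Accessibility holds because every element of $t_x$ lies at the bottom of a finite $\in$-chain descending from $x$; well-foundedness of $(t_x,\in)$ follows from foundation in $\bV$; and extensionality follows from extensionality in $\bV$, using that membership among nodes of $t_x$ is the inherited one and that $x$ is a member of no node (again by foundation).

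Next I would show $\Phi$ is an embedding of $\in$-structures, i.e.\ that $x=y\iff\Phi(x)\cong\Phi(y)$ and $x\in y\iff\Phi(x)\iin\Phi(y)$. The membership equivalence rests on the observation that for a member $c$ of $x$ one has $\Phi(x)/c\cong\Phi(c)$, since the nodes of $t_x$ admitting a path to $c$ are precisely those of $t_c$; hence $\Phi(a)\iin\Phi(b)$ iff $\Phi(a)\cong\Phi(c)$ for some $c\in b$, which, combined with injectivity, is exactly $a\in b$. Injectivity (reflection of equality) is the standard uniqueness of transitive collapses: an isomorphism $\Phi(x)\cong\Phi(y)$ of pointed $\in$-graphs restricts to an isomorphism of transitive sets $(t_x,\in)\cong(t_y,\in)$, which by $\in$-induction (available since $\bV$ has foundation) must be the identity, forcing $x=y$. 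All of this uses only foundation, extensionality, and transitive closures, so the embedding exists unconditionally.

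Then I would prove the equivalence with Mostowski's principle, noting that since $\Phi$ is always injective and preserves and reflects $\in$, it is an isomorphism precisely when it is essentially surjective. For one direction, assume $\Phi$ is essentially surjective and let $(R,\prec)$ be a well-founded extensional relation in $\bV$; adjoining a new top node $\star$ above all of $R$ yields an \apg\ $R^+=R+1$ in $\bbSet(\bV)$ which is again well-founded and extensional (extensionality of $R^+$ uses extensionality of $R$ together with the absence of self-loops $r\prec r$, which follows from well-foundedness via \autoref{thm:wf-no-cycles}). Surjectivity gives $s\in\bV$ with $\Phi(s)\cong R^+$, and deleting the two roots yields $(\mathrm{TC}(s),\in)\cong(R,\prec)$ with $\mathrm{TC}(s)$ transitive, which is Mostowski's principle. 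Conversely, assuming Mostowski, let $X$ be any well-founded extensional \apg\ in $\bbSet(\bV)$; regarding $(X,\prec)$ as a well-founded extensional relation in $\bV$ and applying Mostowski gives an isomorphism onto a transitive set $(T,\in)$ carrying the root $\star$ to some $x\in T$. Accessibility of $X$ forces every element of $T$ to be a hereditary member of $x$, so $T=t_x$ and $X\cong\Phi(x)$; thus $\Phi$ is essentially surjective, hence an isomorphism.

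The main obstacle I anticipate is the careful matching between the structural notions (a well-founded extensional \apg\ in $\bbSet(\bV)$) and the material ones (a well-founded extensional relation with a distinguished element in $\bV$), and especially verifying intuitionistically that the ``add a top node'' construction $R^+$ is extensional and that deleting the roots recovers exactly $(R,\prec)$ on one side and $\mathrm{TC}(s)$ on the other. The identification of essential surjectivity of $\Phi$ with Mostowski's principle is the crux, and it is here that decidability of roothood (\autoref{thm:root-decid}) and the no-cycles property of well-founded graphs do the real work.
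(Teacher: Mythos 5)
Your proposal is correct and takes essentially the same approach as the paper: the embedding sends $x$ to the \apg\ built from its transitive closure with a root adjoined (your $\mathrm{TC}(\{x\})$ rooted at $x$ coincides, under foundation, with the paper's $\mathrm{TC}(x)+1$), and the ``Mostowski implies isomorphism'' direction is argued the same way. The only divergence is that for ``isomorphism implies Mostowski'' the paper simply cites that $\bbV(\bbSet(\bV))$ satisfies Mostowski's principle (part of \autoref{thm:str->mat}, via \autoref{thm:struc-most}), whereas you re-derive that fact by hand with your $R+1$ construction---which is precisely the argument used to prove \autoref{thm:struc-most}.
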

\begin{proof}
  By assumption, any $x\in \bV$ has a transitive closure
  $\mathrm{TC}(x)$, which is a well-founded extensional graph.  If we
  define $Y = \mathrm{TC}(x)+1$, with $\prec$ induced by $\in$ on
  $\mathrm{TC}(x)$ and with $z\prec \star$ for all $z\in x$, then $Y$
  is a well-founded extensional \apg, i.e.\ an object of
  $\bbV(\bbSet(\bV))$.  This construction gives a map $\bV \to
  \bbV(\bbSet(\bV))$, and it is straightforward to verify that it
  preserves and reflects membership and equality.

  Now if this embedding is an isomorphism, then clearly \bV\ must
  satisfy Mostowski's principle, since $\bbV(\bbSet(\bV))$ does so.
  Conversely, if \bV\ satisfies Mostowski's principle, then every
  $X\in \bbV(\bbSet(\bV))$ is isomorphic in \bV\ to a transitive set,
  and therefore equal in $\bbV(\bbSet(\bV))$ to something in the image
  of the embedding.
\end{proof}

We say that a material and a structural set theory are ``equivalent'' if we have a bijection between
\begin{enumerate}
\item isomorphism classes of models of the material set theory and
\item equivalence classes of models of the structural set theory.
\end{enumerate}

\begin{cor}
  ETCS is equivalent to MOST (i.e.\ BZC plus transitive closures and Mostowski's principle).
\end{cor}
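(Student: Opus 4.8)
The plan is to exhibit the two constructions $\bbSet(-)$ and $\bbV(-)$ as mutually inverse bijections between equivalence classes of models of ETCS and isomorphism classes of models of MOST. Since a model of ETCS is a well-pointed topos with a \nno\ satisfying full classical logic and choice, the corollary to \autoref{thm:wpt-bool} shows it is \emph{constructively} well-pointed; being a topos it is in particular a \Pi-pretopos with a \nno, so both $\bbV(-)$ and \autoref{thm:str->mat} apply to it. Conversely MOST contains BZC, which in turn contains the core axioms, so \autoref{thm:iz-topos} and \autoref{thm:set-wpt} apply to any model of MOST.

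First I would check that each construction lands in the correct theory. Let $\bV\models$ MOST. Then $\bV$ satisfies the core axioms together with full classical logic, power sets, infinity, and choice; power sets supply exponentials, so by the relevant clauses of \autoref{thm:iz-topos} the category $\bbSet(\bV)$ is a topos which is Boolean, satisfies full classical logic, has a \nno, and satisfies choice, and it is constructively well-pointed by \autoref{thm:set-wpt}. Hence $\bbSet(\bV)\models$ ETCS. In the other direction, let \Set\ be a model of ETCS. The unconditional clauses of \autoref{thm:str->mat} give extensionality, empty set, pairing, union, exponentiation, infinity, foundation, transitive closures, Mostowski's principle, and \ddo-separation in $\bbV(\Set)$; since \Set\ is a topos we also get power sets, and since \Set\ satisfies full classical logic and choice we get those as well. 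Foundation is \ddo-set-induction, and full classical logic yields \ddo-classical logic, so in the presence of transitive closures these combine (as noted in \S\ref{sec:sets}) to give regularity. Collecting these, $\bbV(\Set)$ satisfies the core axioms, full classical logic, power sets, infinity, choice, regularity, transitive closures, and Mostowski's principle, i.e. $\bbV(\Set)\models$ MOST.

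It remains to see that the two round trips are the identity. For a model \Set\ of ETCS, choice gives well-founded materialization (every object can be well-ordered, hence made into an \apg\ representing a von Neumann ordinal), so by the Lemma preceding the axiom of well-founded materialization the canonical inclusion $\bbSet(\bbV(\Set))\into\Set$ is an equivalence; thus $\bbSet(\bbV(\Set))$ and \Set\ lie in the same equivalence class. For a model \bV\ of MOST, the hypotheses of \autoref{thm:fdn-eqv} (core axioms, infinity, exponentials, foundation, transitive closures) all hold, and since \bV\ additionally satisfies Mostowski's principle the canonical embedding $\bV\to\bbV(\bbSet(\bV))$ is an isomorphism. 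Both assignments respect isomorphism of material models and equivalence of structural models, by the invariance results of \S\ref{sec:struct-set} (in particular \autoref{thm:isoinvar-truth}). Therefore $[\Set]\mapsto[\bbV(\Set)]$ and $[\bV]\mapsto[\bbSet(\bV)]$ are mutually inverse bijections, and ETCS is equivalent to MOST.

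Essentially all the work is done by earlier results, so the only points needing care are (i) recovering regularity of $\bbV(\Set)$ from foundation, full classical logic, and transitive closures rather than reading it off directly from \autoref{thm:str->mat}, and (ii) confirming that ETCS's axiom of choice is exactly what forces well-founded materialization, which is the property making the structural round trip an equivalence. Neither is a genuine obstacle: the first is the equivalence recorded in \S\ref{sec:sets}, and the second is the remark, immediately following the materialization Lemma, that well-founded materialization follows from choice.
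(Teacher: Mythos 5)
Your proof is correct and follows essentially the same route the paper intends: it assembles \autoref{thm:iz-topos}, \autoref{thm:set-wpt}, and \autoref{thm:str->mat} to show the two constructions land in the right theories, then uses the well-founded materialization lemma (with choice supplying materialization) and \autoref{thm:fdn-eqv} to show the two round trips are an equivalence and an isomorphism respectively. Your added care about deriving regularity of $\bbV(\Set)$ from foundation, \ddo-classical logic, and transitive closures (and conversely foundation from regularity when checking the hypotheses of \autoref{thm:fdn-eqv}) is exactly the bookkeeping the paper leaves implicit via its \S\ref{sec:sets} remarks.
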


\begin{cor}
  ETCS plus structural collection (or replacement) is equivalent to ZFC.
\end{cor}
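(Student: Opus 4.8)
The plan is to exhibit the two constructions $\bbSet(-)$ and $\bbV(-)$ as mutually inverse bijections between isomorphism classes of models of ZFC and equivalence classes of models of ETCS plus structural collection. First I would dispose of the parenthetical ``(or replacement)'': by \autoref{thm:class-repcoll}, over ETCS the axioms of collection (of sets and functions) and of replacement of contexts define the very same theory, so it suffices to treat collection.

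For the forward direction I would check that if $\bV\ss\mathrm{ZFC}$ then $\bbSet(\bV)$ is a model of ETCS plus collection. This is assembled from earlier results. By \autoref{thm:iz-topos}, the core axioms make $\bbSet(\bV)$ a Heyting pretopos, the power set axiom makes it a topos, full classical logic is inherited, infinity together with exponentials (available from power sets) yields an \nno, and choice is inherited; and by \autoref{thm:set-wpt} it is constructively well-pointed. Since a constructively well-pointed topos is in particular well-pointed in the classical sense, $\bbSet(\bV)$ is a model of ETCS. Finally \autoref{thm:iz-ext-seprepcoll} shows that material collection yields structural collection, so $\bbSet(\bV)\ss\mathrm{ETCS}+\text{collection}$.

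For the reverse direction I would check that if $\Set\ss\mathrm{ETCS}+\text{collection}$ then $\bbV(\Set)\ss\mathrm{ZFC}$. Since in a topos every generator is a strong generator, \autoref{thm:wpt-bool} shows that a classically well-pointed topos satisfying full classical logic is constructively well-pointed; as it also carries an \nno\ and a topos is a \Pi-pretopos, $\Set$ is a constructively well-pointed \Pi-pretopos with an \nno, so \autoref{thm:str->mat} applies. That theorem gives for free extensionality, empty set, pairing, union, infinity, foundation, transitive closures, and Mostowski's principle (and \ddo-separation, with limited \ddo-replacement following from power sets); the topos structure yields power sets; full classical logic yields full classical logic; structural collection yields material collection, hence also material replacement; and choice yields material choice. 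Collecting these, $\bbV(\Set)$ satisfies the core axioms together with full classical logic, power sets, replacement, infinity, foundation, and choice, which is exactly ZFC.

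The main obstacle is to verify that the two round-trips are the identity on the respective classes, and this is precisely where the foundation-type axioms and AC intervene. Given $\bV\ss\mathrm{ZFC}$, its axioms include infinity, exponentials, foundation, transitive closures, and Mostowski's principle (the latter being provable in ZFC from replacement and full separation), so \autoref{thm:fdn-eqv} supplies a canonical \emph{isomorphism} $\bV\iso\bbV(\bbSet(\bV))$. Given $\Set\ss\mathrm{ETCS}+\text{collection}$, the axiom of choice built into ETCS implies well-founded materialization---every object can be well-ordered and thus embedded into a von Neumann ordinal---so the canonical inclusion $\bbSet(\bbV(\Set))\into\Set$ is an equivalence $\bbSet(\bbV(\Set))\eqv\Set$. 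Because both constructions are invariant under isomorphism of material models and under equivalence of structural models, these identifications descend to the quotient classes, so $\bbSet(-)$ and $\bbV(-)$ are inverse bijections and the two theories are equivalent in the required sense.
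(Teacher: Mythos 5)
Your proposal is correct and takes essentially the same route as the paper, which states this corollary without proof as an immediate consequence of the preceding results: you assemble exactly the intended ingredients (\autoref{thm:class-repcoll} for the parenthetical, \autoref{thm:iz-topos}, \autoref{thm:set-wpt}, and \autoref{thm:iz-ext-seprepcoll} for one direction, \autoref{thm:str->mat} for the other, and \autoref{thm:fdn-eqv} together with the materialization lemma and the choice-implies-materialization observation for the round trips). Your attention to the fine points---limited \ddo-replacement from power sets, Mostowski's principle being provable in ZFC, and invariance of the constructions under isomorphism/equivalence---matches the paper's implicit reasoning.
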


\begin{cor}
  Intuitionistic ETCS plus well-founded materialization is equivalent to Intuitionistic BZ plus transitive closures and Mostowski's principle (which we may call ``Intuitionistic MOST'').
\end{cor}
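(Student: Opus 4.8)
The plan is to show that the two constructions $\bbSet(-)$ and $\bbV(-)$ restrict to mutually inverse bijections between isomorphism classes of models of Intuitionistic MOST and equivalence classes of models of Intuitionistic ETCS plus well-founded materialization, exactly parallel to the proof that ETCS is equivalent to MOST. Recall that Intuitionistic MOST unfolds as the core axioms together with power sets, infinity, foundation, transitive closures, and Mostowski's principle, while Intuitionistic ETCS is the theory of a constructively well-pointed topos with an \nno. So the two things to establish are (a) that each construction lands in the appropriate class, and (b) that the round-trips $\bbV(\bbSet(\bV))$ and $\bbSet(\bbV(\Set))$ recover \bV\ and \Set\ up to isomorphism and equivalence respectively.

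First I would check the two ``lands in'' claims. If \bV\ is a model of Intuitionistic MOST, then by \autoref{thm:iz-topos} the category $\bbSet(\bV)$ is a topos (from power sets) with an \nno\ (from infinity and the exponentials supplied by power sets), and by \autoref{thm:set-wpt} it is constructively well-pointed; hence $\bbSet(\bV)$ models Intuitionistic ETCS. It also satisfies well-founded materialization: given any set $X$ of \bV, its transitive closure $\mathrm{TC}(X)$ exists, contains $X$ as a subset, and, equipped with the restriction of $\in$, is a well-founded (by foundation) and extensional (by extensionality) graph in $\bbSet(\bV)$, into which $X$ embeds. Conversely, if \Set\ is a model of Intuitionistic ETCS plus well-founded materialization, then \Set\ is a constructively well-pointed topos with an \nno, hence in particular a \Pi-pretopos with an \nno, so \autoref{thm:str->mat} applies: its first clause gives extensionality, empty set, pairing, union, infinity, foundation, transitive closures, Mostowski's principle, and \ddo-separation, while the topos clause gives the power set axiom (which in turn supplies limited \ddo-replacement, completing the core axioms). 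Thus $\bbV(\Set)$ is a model of Intuitionistic MOST.

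It then remains to assemble the round-trips. For \bV\ a model of Intuitionistic MOST, the hypotheses of \autoref{thm:fdn-eqv} are met (power sets supply exponentials), and since \bV\ satisfies Mostowski's principle the canonical embedding $\bV\to\bbV(\bbSet(\bV))$ is an isomorphism. For \Set\ a model of Intuitionistic ETCS plus well-founded materialization, the lemma characterizing when $\bbSet(\bbV(\Set))\into\Set$ is an equivalence applies precisely because well-founded materialization is, by definition, the hypothesis of that lemma; hence $\bbSet(\bbV(\Set))\simeq\Set$. Since $\bbSet(-)$ sends isomorphic material models to equivalent categories and $\bbV(-)$ sends equivalent categories to isomorphic material models (by equivalence-invariance of truth), these inverse maps descend to the desired bijection of classes. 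The proof is thus pure assembly of earlier results; the one step that is not quoted verbatim from a labelled statement, and hence the place to be careful, is verifying well-founded materialization for $\bbSet(\bV)$ via the transitive closure, together with the bookkeeping that power sets simultaneously underwrite the topos structure on $\bbSet(\bV)$ and the material exponentials and limited \ddo-replacement needed to match the axiom lists.
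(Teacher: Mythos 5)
Your proof is correct and follows essentially the same route as the paper, which leaves this corollary as a direct assembly of \autoref{thm:iz-topos}, \autoref{thm:set-wpt}, \autoref{thm:str->mat}, \autoref{thm:fdn-eqv}, and the unlabelled lemma stating that $\bbSet(\bbV(\Set))\into\Set$ is an equivalence precisely when well-founded materialization holds. The one step you argue by hand---that transitive closures, foundation, and extensionality give well-founded materialization for $\bbSet(\bV)$---is exactly the observation underlying the first half of the proof of \autoref{thm:fdn-eqv}, so nothing is missing.
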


\begin{cor}
  Intuitionistic ETCS plus structural collection and well-founded materialization is equivalent to IZF.
\end{cor}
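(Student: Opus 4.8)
The plan is to show that the two constructions $\bbSet(-)$ and $\bbV(-)$ descend to mutually inverse bijections between isomorphism classes of models of IZF and equivalence classes of models of Intuitionistic ETCS with structural collection and well-founded materialization. The correspondence of axioms in both directions is already in hand---it is exactly the content underlying the equiconsistency of these two theories just established from \autoref{thm:str->mat}---so what is genuinely new is that the two round-trips are the identity up to isomorphism (on the material side) and equivalence (on the structural side).

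First I would confirm that each construction lands in the intended theory. If $\bV$ is a model of IZF, then $\bbSet(\bV)$ is a constructively well-pointed topos with a \nno\ by \autoref{thm:iz-topos} and \autoref{thm:set-wpt} (exponentials being supplied by power sets), hence a model of Intuitionistic ETCS; it satisfies structural collection by \autoref{thm:iz-ext-seprepcoll}; and it satisfies well-founded materialization because in IZF every set $a$ has a transitive closure (obtained from set-induction and collection) which, with the membership relation, is an extensional well-founded graph into which $a$ embeds. Conversely, for a model $\Set$ of Intuitionistic ETCS with structural collection---a constructively well-pointed \Pi-pretopos with a \nno---the construction underlying the equiconsistency with IZF, via \autoref{thm:str->mat}, shows that $\bbV(\Set)$ is a model of IZF, the power-set axiom coming from the topos structure. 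Both assignments respect the relevant equivalence relations: $\bbSet(-)$ sends isomorphic material models to equivalent structural models, and $\bbV(-)$ sends equivalent structural models to isomorphic material models by equivalence-invariance of the construction.

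Next I would verify the two round-trips, which is the heart of the matter. For a model $\bV$ of IZF, the hypotheses of \autoref{thm:fdn-eqv} (core axioms, infinity, exponentials, foundation, transitive closures) are all consequences of IZF, so it supplies a canonical embedding $\bV\to\bbV(\bbSet(\bV))$ which is an isomorphism precisely because IZF additionally proves Mostowski's principle---itself a consequence of full separation together with replacement, the latter following from collection. For a model $\Set$ of the structural theory, the lemma characterizing when $\bbSet(\bbV(\Set))\into\Set$ is an equivalence tells us that this inclusion is an equivalence exactly when every object of $\Set$ embeds into an extensional well-founded graph; this is the well-founded materialization axiom, which we have assumed. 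Thus $\bbV(\bbSet(\bV))\cong\bV$ and $\bbSet(\bbV(\Set))\simeq\Set$, so $\bbSet(-)$ and $\bbV(-)$ are mutually inverse at the level of isomorphism and equivalence classes, giving the asserted equivalence of theories.

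The main obstacle is not in the round-trips themselves, since \autoref{thm:fdn-eqv} and the materialization lemma carry them out, but in arranging that their two side-conditions hold. The materialization axiom has been inserted into the hypotheses precisely so that $\bbSet\circ\bbV$ is the identity rather than yielding only the proper inclusion $\bbSet(\bbV(\Set))\into\Set$; dually, on the material side one must check that IZF really does prove Mostowski's principle, so that $\bbV\circ\bbSet$ is an isomorphism and not merely an embedding. Compared with the earlier equivalences in this section, the one point requiring care specific to IZF is that it is collection---rather than classical replacement---that must deliver both the full separation used to run $\bbV$ into IZF and the replacement used for Mostowski's principle.
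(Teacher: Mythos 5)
Your proposal is correct and is essentially the paper's own (implicit) argument: the corollary is stated there without proof, as the immediate combination of \autoref{thm:iz-topos}, \autoref{thm:set-wpt} and \autoref{thm:iz-ext-seprepcoll} (material to structural), \autoref{thm:str->mat} (structural to material), \autoref{thm:fdn-eqv} for the round-trip $\bV\cong\bbV(\bbSet(\bV))$ using that IZF proves transitive closures and Mostowski's principle, and the unnumbered lemma of \S\ref{sec:conclusion} making well-founded materialization precisely the condition for $\bbSet(\bbV(\Set))\into\Set$ to be an equivalence---all of which you identify, including the needed-but-unstated check that $\bbSet(\bV)$ satisfies materialization via transitive closures in IZF. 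The one caveat, which your final paragraph asserts rather than proves, is that nothing in the paper shows \emph{intuitionistically} that structural collection yields the structural separation (and extensional well-founded induction) needed via \autoref{thm:str->mat} for $\bbV(\Set)$ to satisfy IZF's full separation and set-induction---\autoref{thm:collsep} requires full classical logic---but this gap belongs to the corollary as stated rather than to your reconstruction, since the paper's own introduction pairs IZF with ``separation and collection'' on the structural side.
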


\begin{rmk}
  The theory M$_0$ of~\cite{mathias:str-maclane} consists of the core
  axioms together with power sets and full classical logic.  If
  $\bV\ss$ M$_0$, then $\bbV(\bbSet(\bV))$ is precisely the model
  $W_1$ constructed in~\cite[\S2]{mathias:str-maclane}, which
  satisfies M$_0$ plus regularity, transitive closures, and
  Mostowski's principle, and inherits infinity and choice from \bV.
  (In the presence of classical logic and power sets, the axiom of
  infinity is unnecessary for the construction $\bbV(-)$.)
  In particular, if $\bV\ss$ ZBQC, then $\bbV(\bbSet(\bV))\ss$ MOST,
  while $\bbV(\bbSet(\bV))\cong \bV$ if we already had $\bV\ss$ MOST.
\end{rmk}

\begin{rmk}
  Mitchell~\cite{mitchell:booltopoi} observes that for Boolean topoi,
  the constructions we have called $\bbV(-)$ and $\bbSet(-)$ can
  be made into a pair of adjoint functors, modulo some 2-categorical technicalities not addressed therein.
  I expect this is also true
  in a constructive context.
  Moreover, the adjunction should be idempotent,
  and thus induce the above equivalences between
  the images of the two functors.
\end{rmk}

\bibliographystyle{halpha}
\bibliography{all,extra}

\end{document}